\DeclareMathOperator{\Hom}{Hom}
\DeclareMathOperator{\Gal}{Gal}
\DeclareMathOperator{\res}{res}
\DeclareMathOperator{\Spec}{Spec}
\DeclareMathOperator{\Char}{char}
\newtheorem{thm}{Theorem}
\newtheorem{prop}{Proposition}[section]
\newtheorem{lem}[prop]{Lemma}
\newtheorem{cor}[prop]{Corollary}
\newtheorem*{cor*}{Corollary}
\newtheorem{claim}[prop]{Claim}
\theoremstyle{definition}
\newtheorem{defn}[prop]{Definition}
\newtheorem{example}[prop]{Example}
\newtheorem{remark}[prop]{Remark}
\numberwithin{equation}{section}
\newcommand{\F}{\mathbb{F}}
\newcommand{\Z}{\mathbb{Z}}
\newcommand{\Pbb}{\mathbb{P}}
\newcommand{\Abb}{\mathbb{A}}
\newcommand{\Q}{\mathbb{Q}}
\newcommand{\C}{\mathbb{C}}
\newcommand{\Gc}{\mathcal{G}}
\newcommand{\Oc}{\mathcal{O}}
\newcommand{\Sc}{\mathcal{S}}
\newcommand{\Dc}{\mathcal{D}}
\newcommand{\Vc}{\mathcal{V}}
\newcommand{\Wc}{\mathcal{W}}
\newcommand{\mf}{\mathfrak{m}}
\newcommand{\Ibcl}{\mathbf{I}^{\rm CL}}
\newcommand{\Ibc}{\mathbf{I}^{\rm C}}
\newcommand{\Nfr}{\mathbf{N}}
\newcommand{\Mfr}{\mathbf{M}}
\newcommand{\Kln}{K^{\times \ell^n}}
\newcommand{\Nbar}{\overline{\mathbb{N}}}
\newcommand{\Nb}{{\mathbb{N}}}
\newcommand{\Rbf}{\mathbf{R}}
\newcounter{inlineequation}
\begin{document}

\title{Commuting-Liftable Subgroups of Galois Groups II}
\author{Adam Topaz}
\thanks{Research supported in part by a Benjamin Franklin fellowship from the University of Pennsylvania and in part by NSF postdoctoral fellowship DMS-1304114.}
\address{Department of Mathematics, \vskip0pt
University of California, Berkeley, \vskip0pt 
970 Evans Hall \#3840, \vskip0pt
Berkeley, CA 94720-3840 \vskip0pt
USA}
\date{\today}
\email{atopaz@math.berkeley.edu}
\urladdr{www.math.berkeley.edu/~atopaz}
\subjclass[2010]{12E30, 12F10, 12G05, 12J25}
\keywords{local theory, valuations, pro-$\ell$ Galois theory, Galois cohomology, abelian-by-central}

\begin{abstract}
Let $n$ denote either a positive integer or $\infty$, let $\ell$ be a fixed prime and let $K$ be a field of characteristic different from $\ell$.
In the presence of sufficiently many roots of unity in $K$, we show how to recover some of the inertia/decomposition structure of valuations inside the maximal $\ell^n$-abelian Galois group of $K$ using the maximal $\ell^N$-abelian-by-central Galois group of $K$, whenever $N$ is sufficiently large relative to $n$.
\end{abstract}

\maketitle
\tableofcontents

\section{Introduction}
\label{sec:introduction}

The first key step in most strategies towards anabelian geometry is to develop a \emph{local theory}, by which one recovers inertia and/or decomposition groups of ``points'' using the given Galois theoretic information.
In the context of anabelian curves, one should eventually detect inertia/decomposition groups of closed points of the given curve within its \'etale fundamental group.
On the other hand, in the birational setting, this corresponds to detecting inertia/decomposition groups of arithmetically and/or geometrically meaningful places of the function field under discussion within its absolute Galois group. 
The first instance of such a local theory, which predates Grothendieck's anabelian geometry, is Neukirch's group-theoretical characterization of decomposition groups of finite places of global fields.
This was the basis for the celebrated Neukirch-Uchida theorem \cite{Neukirch1969}, \cite{Neukirch1969a}, \cite{Uchida1976}.
The Neukirch-Uchida theorem was expanded by Pop to all higher dimensional infinite finitely generated fields by developing a local theory based on his $q$-Lemma \cite{Pop1994}, \cite{Pop2000}.
The $q$-Lemma deals with the \emph{absolute} pro-$q$ Galois theory of fields of characteristic prime to $q$; as with Neukirch's result, the $q$-Lemma works only in arithmetical situations.

At about the same time, two non-arithmetically based methods were proposed which recover inertia and decomposition groups of valuations using Galois groups.
The first relies on the theory of \emph{rigid elements}, which was first introduced by Ware \cite{Ware1981} and further developed by several authors including \cite{Arason1987}, \cite{Efrat1999}, \cite{Koenigsmann1995}, \cite{Efrat2006}, \cite{Efrat2007} (see below for more details).
Rigid elements have since been extensively used to detect valuations in \emph{large} Galois groups.
For instance, using rigid elements one can recover inertia/decomposition using the full \emph{relative} pro-$\ell$ Galois theory of a field whose characteristic is prime to $\ell$ and which contains $\mu_\ell$ \cite{Engler1994}, \cite{Efrat1995}, \cite{Engler1998}.
Similar results also show how to recover inertia/decomposition in the absolute Galois group of an arbitrary field \cite{Koenigsmann2003}.
In both situations, however, the input is an extremely large Galois group: the maximal pro-$\ell$ Galois group resp. absolute Galois group.
Nevertheless, this method eventually led to the characterization of solvable absolute Galois groups of fields \cite{Koenigsmann2001}, and also the characterization of maximal pro-$\ell$ Galois groups of small rank \cite{Koenigsmann1998}, \cite{Efrat1998}.

The second method is Bogomolov's theory of \emph{commuting-liftable pairs} in Galois groups which was first introduced by Bogomolov in \cite{Bogomolov1991} then further developed together with Tschinkel in \cite{Bogomolov2007}.
Its input is the much smaller maximal pro-$\ell$ \emph{abelian-by-central} Galois group, but it requires that the base field contain an \emph{algebraically closed subfield}.
Nevertheless, this theory was a key technical tool in the local theory needed to settle Bogomolov's program in birational anabelian geometry for function fields over the algebraic closure of finite fields; see Bogomolov-Tschinkel \cite{Bogomolov2008a} in dimension $2$ and Pop \cite{Pop2011} in general. 

Until now, the two approaches -- that of rigid elements versus that of commuting-liftable pairs -- remained almost completely separate.
However, Pop suggested in his Oberwolfach report \cite{Pop2006a} that the two methods should be linked, even in the analogous $\ell^n$-abelian-by-central situation, but unfortunately never followed up with the details.
Also, the work done by Mah\'e, Min\'a\v{c} and Smith \cite{Mah'e2004} in the $2$-abelian-by-central situation, and Efrat-Min\'a\v{c} \cite{Efrat2011a} in special cases of the $\ell$-abelian-by-central situation suggest a connection between the two methods in this analogous context.

This paper provides an approach which unifies the two methods.
At the same time, we provide simpler arguments for the pro-$\ell$ abelian-by-central assertions of \cite{Bogomolov2007}, and prove more general versions of these assertions which assume only that the field contains $\mu_{\ell^\infty}$ and not necessarily an algebraically closed subfield.
We thereby generalize the main results of loc.cit. where the existence of an algebraically closed subfield is essential in the proof.
The following is a summary of the more detailed Theorems \ref{thm:main-thm-intro-maximal} and \ref{thm:main-thm-char-intro}.
\\  \\
\noindent{\bf Summary of Main Theorems. }{\it Let $n \geq 1$ or $n = \infty$ be given, then for all $N \gg n$ the following holds.
Let $K$ be a field such that $\Char K \neq \ell$ which contains $\mu_{2\ell^N}$.
Then there is a group-theoretical recipe which recovers (minimized) inertia and decomposition subgroups in the maximal $\ell^n$-elementary-abelian Galois group of $K$ using the group-theoretical structure of the $\ell^N$-abelian-by-central Galois group of $K$.
Moreover, if $n = 1$ then $N=1$ suffices and if $n \neq \infty$ then one can find (an explicit) $N \neq \infty$ as well.}
\\ \\
For readers' sake, we give a more detailed overview of some of the results mentioned above to see how the results of this paper fit into the larger context.

\subsection{Overview}
\label{sec:overview}

Let $K$ be a field such that $\Char K \neq \ell$ and $\mu_\ell \subset K$.
Denote by $K(\ell)$ the maximal pro-$\ell$ Galois extension of $K$ (inside a chosen separable closure of $K$) so that $\Gc_K := \Gal(K(\ell)|K)$ is the maximal pro-$\ell$ quotient of $G_K$, the absolute Galois group of $K$.
For a subset $\Sc$ of a profinite group, we will denote by $\langle \Sc \rangle$ the \emph{closed} subgroup generated by $\Sc$.

Let $w$ be a valuation of $K(\ell)$ and let $v = w|_K$ denote its restriction to $K$; denote by $k(w)$ the residue field of $w$ and $k(v)$ the residue field of $v$.
We denote the inertia resp. decomposition group of $w|v$ by $T_{w|v}$ resp. $Z_{w|v}$; these are subgroups of $\Gc_K$, and $T_{w|v}$ is a normal subgroup of $Z_{w|v}$.
Recall that $Z_{w|v} / T_{w|v} = \Gc_{k(v)}$ and that the following canonical short exact sequence splits: 
\[ 1 \rightarrow T_{w|v} \rightarrow Z_{w|v} \rightarrow \Gc_{k(v)} \rightarrow 1. \]
Moreover, if $\Char k(v) \neq \ell$, then $T_{w|v}$ is a free abelian pro-$\ell$ group of the same rank as $v(K^\times)/\ell$, and the action of $\Gc_{k(v)}$ on $T_{w|v}$ factors via the $\ell$-adic cyclotomic character.
Thus, if $\Char k(v) \neq \ell$, and $\sigma \in T_{w|v}$, $\tau \in Z_{w|v}$ are given non-torsion elements so that the closed subgroup $\langle \sigma,\tau \rangle$ is non-pro-cyclic, then ${\langle \sigma,\tau \rangle} = {\langle \sigma \rangle} \rtimes {\langle \tau \rangle} \cong \Z_\ell \rtimes \Z_\ell$ is a semi-direct product.

In a few words, the theory of rigid elements in the context of pro-$\ell$ Galois groups (\cite{Engler1994}, \cite{Efrat1995}, \cite{Engler1998}) asserts that the only way the situation above can arise is from valuation theory.
More precisely, let $K$ be a field such that $\Char K \neq \ell$ and $\mu_{\ell} \subset K$.
Suppose that $\sigma,\tau \in \Gc_K$ are non-torsion elements such that ${\langle \sigma,\tau \rangle} = {\langle \sigma \rangle} \rtimes {\langle \tau \rangle}$ is non-pro-cyclic.
Then there exist valuations $w|v$ of $K(\ell)|K$ such that $\Char k(v) \neq \ell$, $v(K^\times) \neq v(K^{\times \ell})$, $\sigma, \tau \in Z_{w|v}$ and $\langle \sigma,\tau \rangle /( \langle \sigma,\tau \rangle \cap T_{w|v})$ is cyclic.
The key technique in this situation is the explicit ``creation'' of valuation rings inside $K$ using rigid elements \cite{Ware1981}, \cite{Arason1987} and so-called ``$\ell$-rigid calculus'' developed, for instance, in \cite{Koenigsmann1995} and/or \cite{Efrat1999}.
Namely, under the assumption that $\Gc_K = {\langle \sigma,\tau \rangle} = {\langle \sigma \rangle} \rtimes {\langle \tau \rangle}$ as above, one shows that $K$ has sufficiently many ``strongly-rigid elements'' (cf. \cite{Koenigsmann1995}) to produce an $\ell$-Henselian valuation $v$ of $K$ with $v(K^\times) \neq v(K^{\times \ell})$ and $\Char k(v) \neq \ell$.

The history of rigid elements is quite rich.
They were first considered by Ware \cite{Ware1981} and others primarily in the context of quadratic forms.
The theory was further developed in the context of valuation theory and/or Galois theory by Arason-Elman-Jacob in \cite{Arason1987}, Engler-Nogueira in \cite{Engler1994}, Koenigsmann in \cite{Koenigsmann1995}, \cite{Koenigsmann2003}, Engler-Koenigsmann in \cite{Engler1998}, Efrat in \cite{Efrat1995}, \cite{Efrat1999}, \cite{Efrat2006}, \cite{Efrat2007} and also by others.

On the other hand, Bogomolov's theory of commuting-liftable pairs deals with a more specialized context.
Here we assume further that $\mu_{\ell^\infty} \subset K$, and choose an isomorphism of Galois modules $\Z_\ell(1) \cong \Z_\ell$.
In this case, we define 
\[ \Pi_K^{a} := \frac{\Gc_K}{[\Gc_K,\Gc_K]}, \ \text{ and } \ \Pi_K^c := \frac{\Gc_K}{[\Gc_K,[\Gc_K,\Gc_K]]}. \]
The group $\Pi_K^a$ is called the \emph{maximal pro-$\ell$ abelian} Galois group of $K$ while $\Pi_K^c$ is called the \emph{ maximal pro-$\ell$ abelian-by-central Galois group} of $K$ -- this terminology and notation was introduced by Pop \cite{Pop2010}.

In the above context, assume again that $\Char k(v) \neq \ell$, then the $\ell$-adic cyclotomic character of $K$ (and of $k(v)$) is trivial. 
Hence, $\Gc_{k(v)}$ acts trivially on $T_{w|v}$; we conclude that $Z_{w|v} \cong T_{w|v} \times \Gc_{k(v)}$ and recall that $T_{w|v}$ is abelian.
Denote by $K^{ab}$ the Galois extension of $K$ such that $\Gal(K^{ab}|K) = \Pi_K^a$, $v^{ab} := w|_{K^{ab}}$ the restriction of $w$ to $K^{ab}$, $T_v := T_{v^{ab}|v}$ and $Z_v := Z_{v^{ab}|v}$; since $\Pi_K^a$ is abelian, $T_v$ and $Z_v$ are independent of choice of $w$.
We deduce that for all $\sigma \in T_v$ and $\tau \in Z_v$, there exist lifts $\tilde\sigma,\tilde\tau \in \Pi_K^c$ of $\sigma,\tau \in \Pi_K^a$ which commute in $\Pi_K^c$.
Since $\Pi_K^c$ is a central extension of $\Pi_K^a$, we conclude that \emph{any lifts} $\tilde\sigma,\tilde\tau \in \Pi_K^c$ of $\sigma,\tau \in \Pi_K^a$ commute as well -- such a pair $\sigma,\tau \in \Pi_K^a$ is called \emph{commuting-liftable}.
The theory of commuting-liftable pairs \cite{Bogomolov2007} asserts that, under the added assumption that $K$ \emph{contains an algebraically closed subfield} $k = \bar k$, the only way a commuting-liftable pair can arise is via a valuation as described above.\footnote{It turns out that $\Char k(v) \neq \ell$ is not needed in order to produce a commuting-liftable pair, under a modified notion of decomposition and inertia. Valuations with residue characteristic equal to $\ell$ can and do arise from commuting-liftable pairs, as we prove in this paper.}

The method of \cite{Bogomolov2007} uses the notion of a ``flag function;'' in particular, this is a homomorphism $K^\times \rightarrow \Z_\ell$ which corresponds, via Kummer theory and our chosen isomorphism $\Z_\ell(1) \cong \Z_\ell$, to an element in $T_v$ for some valuation $v$.
One then considers $\sigma,\tau$ as elements of $\Hom(K^\times,\Z_\ell(1)) \cong \Hom(K^\times/k^\times,\Z_\ell)$, and produces the corresponding map:
\[ \Psi = (\sigma,\tau) : K^\times/k^\times \rightarrow \Z_\ell^2 \subset \Abb^2(\Q_\ell). \]
When one views $K^\times/k^\times = \Pbb_k(K)$ as an infinite dimensional projective space over $k$, the assumption that $\sigma,\tau$ are commuting liftable ensures that $\Psi$ sends \emph{projective lines} to \emph{affine lines}.
This severe restriction on $\Psi$ is then used to show that some $\Z_\ell$-linear combination of $\sigma$ and $\tau$ is a flag function.

As mentioned above, the theory of commuting-liftable pairs was originally outlined by Bogomolov in \cite{Bogomolov1991}, where he also introduced a program in birational anabelian geometry for fields of purely geometric nature -- i.e. function fields over an algebraically closed field of characteristic different from $\ell$ and dimension $\geq 2$ -- which aims to reconstruct such function fields $K$ from the Galois group $\Pi_K^c$.
If $\Char K > 0$, the above technical theorem eventually allows one to detect the decomposition and inertia subgroups of \emph{quasi-divisorial valuations} inside $\Pi_K^a$ using the group-theoretical structure of $\Pi_K^c$ (see Pop \cite{Pop2010}).
In particular, for function fields $K$ over the algebraic closure of a finite field, one can detect the decomposition/inertia structure of \emph{divisorial valuations} inside $\Pi_K^a$ using $\Pi_K^c$.
While Bogomolov's program in its full generality is far from being complete, it has been carried through for function fields $K$ over the algebraic closure of a finite field: by Bogomolov-Tschinkel \cite{Bogomolov2008a} in dimension 2, and by Pop \cite{Pop2011} in general.

In this paper, we obtain analogous results to those in the theory of commuting-liftable pairs, for the $\ell^n$-abelian-by-central and the pro-$\ell$-abelian-by-central situations, by elaborating on and using the theory of rigid elements, while working under far less restrictive assumptions than Bogomolov and Tschinkel's approach.
In particular, we reprove and generalize the main results of \cite{Bogomolov2007} using this method.
We begin by introducing some technical assumptions and notation.

\subsection{Notation}
\label{sec:notation}

For the remainder of the paper, $\ell$ will denote a fixed prime.
A ``subgroup'' in the context of profinite groups will always mean a closed subgroup, and all homomorphisms we consider will be continuous.
Also, the word ``cyclic'' in the context of profinite groups should be understood as ``procyclic.''
In a pro-$\ell$ group $\Gc$, we will use the following standard notation: for $\sigma \in \Gc$, one has $\sigma^{\ell^\infty} = 1$.
This is consistent with saying that $\ell^\infty = 0$ in $\Z_\ell$.
For an abelian group $A$, we will denote by $\widehat A$ the $\ell$-adic completion of $A$; namely:
\[ \widehat A := \varprojlim_n A/\ell^n. \]
Similarly, for a homomorphism of abelian groups $f : A \rightarrow B$, we denote by $\widehat f$ the induced homomorphism $\widehat f : \widehat A \rightarrow \widehat B$.
Also, the notation $\ell^\infty \cdot A$ will be used to denote the intersection $\bigcap_{n \in \Nb} \ell^n \cdot A$.
Namely, $\ell^\infty \cdot A$ is the kernel of the $\ell$-adic completion map $A \rightarrow \widehat A$.

Let $K$ be a field whose characteristic is different from $\ell$.
Let $n$ denote either a positive integer or $n=\infty$ and assume that $\mu_{2\ell^n} \subset K$.
In this case, we denote by $\Gc_K^{a,n}$ the maximal $\ell^n$-abelian Galois group and $\Gc_K^{c,n}$ the maximal $\ell^n$-abelian-by-central Galois groups of $K$.
More explicitly, define $\Gc_K^{(2,n)} := [\Gc_K,\Gc_K] \cdot (\Gc_K)^{\ell^n}$ and $\Gc_K^{(3,n)} = [\Gc_K,\Gc_K^{(2,n)}] \cdot (\Gc_K^{(2,n)})^{\ell^n}$; then one has 
\[ \Gc_K^{a,n} := \Gc_K/\Gc_K^{(2,n)}, \ \text{ and } \ \Gc_K^{c,n} := \Gc_K/\Gc_K^{(3,n)}. \]

Consider the canonical projection $\Pi : \Gc_K^{c,n} \twoheadrightarrow \Gc_K^{a,n}$; we will use additive notation for the abelian pro-$\ell$ groups $\Gc_K^{a,n}$ and $\ker\Pi$.
The map $\Pi$ induces certain operations on $\Gc_K^{a,n}$ with values in $\ker\Pi$, as follows.
First, $[\bullet,\bullet] : \Gc_K^{a,n} \times \Gc_K^{a,n} \rightarrow \ker \Pi$ defined by $[\sigma,\tau] = \tilde\sigma^{-1}\tilde\tau^{-1}\tilde\sigma\tilde\tau$ where $\tilde\sigma,\tilde\tau \in \Gc_K^{c,n}$ are some lifts of $\sigma,\tau \in \Gc_K^{a,n}$.
Since $\Pi$ is a central extension, the map $[\bullet,\bullet]$ is well-defined, and it is well known that $[\bullet,\bullet]$ is bilinear.
Second, $(\bullet)^\pi : \Gc_K^{a,n} \rightarrow \ker \Pi$ defined by $\sigma^\pi = \tilde\sigma^{\ell^n}$ where, again, $\tilde\sigma \in \Gc_K^{c,n}$ is some lift of $\sigma \in \Gc_K^{a,n}$.
Since $\Pi$ is a central extension with kernel killed by $\ell^n$, the map $(\bullet)^\pi$ is well defined.
It is well-known that this map is linear if $\ell \neq 2$, although it is generally not linear if $\ell = 2$.
We will furthermore define $\sigma^\beta := 2 \cdot \sigma^\pi$.
The map $(\bullet)^\beta : \Gc_K^{a,n} \rightarrow \ker \Pi$ is always linear.

In order to account for the possibility of a non-trivial $\ell$-adic cyclotomic character for $K$, we must work with a modified notion of ``commuting-liftable pairs.''
Henceforth, a pair of elements $\sigma,\tau \in \Gc_K^{a,n}$ will be called a {\bf commuting-liftable} pair (or a {\bf CL-pair} for short) provided that $[\sigma,\tau] \in \langle \sigma^\beta,\tau^\beta \rangle$.
Note, if $n = \infty$ (e.g. as in the context of \cite{Bogomolov2007}), then $\sigma,\tau$ form a CL-pair if and only if $[\sigma,\tau] = 0$, as expected.

For a (closed) subgroup $A \leq \Gc_K^{a,n}$, we define
\[ \Ibcl(A) := \{ \sigma \in A \ : \ \forall \tau \in A, \  [\sigma,\tau] \in \langle \sigma^\beta,\tau^\beta \rangle \}. \]
Then $\Ibcl(A)$ is a subgroup\footnote{This is not immediate if $n \neq \infty$, but follows from Theorem \ref{thm:cl-to-c-pairs}. See also Remark \ref{remark:ibcl-intro} and/or \ref{remark:ibcl} for the case $n=1$. See Remark \ref{rem:minimized-inertia-decomp} alongside the main results of the paper to see that this definition of $\Ibcl$ is indeed sufficient for the context of valuation theory.} of $A$; it should be thought of as the ``commuting-liftable center'' of $A$.
The subgroup $A$ will be called a {\bf commuting-liftable} group (or a {\bf CL-group} for short) provided that $A = \Ibcl(A)$.

\begin{remark}
\label{remark:ibcl-intro}
Let $K$ be a field such that $\Char K \neq \ell$ and $\mu_{2\ell} \subset K$, and let $A \leq \Gc_K^{a,1}$ be given.
In this case, we can give an alternative definition for $\Ibcl(A)$ which is easily seen to be a subgroup of $A$.
Namely, for $A \leq \Gc_K^{a,1}$ one has 
\[ \Ibcl(A) = \{ \sigma \in A \ : \ \forall\tau \in A , \ [\sigma,\tau] \in A^\beta\}. \]
This alternative definition also shows that our context extends the situation of \cite{Efrat2011a}.
See Remark \ref{remark:ibcl} for the proof of this equivalence.
\end{remark}

Suppose $v$ is a valuation of $K$.
We will denote by $\Gamma_v = v(K^\times)$ the value group, $\Oc_v$ the valuation ring with valuation ideal $\mf_v$, and $k(v) = \Oc_v/\mf_v$ the residue field of $v$.

We denote by $K^{a,n} = K(\sqrt[\ell^n]{K})$ the Galois extension of $K$ such that $\Gal(K^{a,n}|K) = \Gc_K^{a,n}$.
For a subgroup $A \leq \Gc_K^{a,n}$, we denote by $K_A$ the field $(K^{a,n})^A$.

Choose a prolongation $v'$ of $v$ to $K^{a,n}$.
We denote by $T_v^n := T_{v'|v}$ resp. $Z_{v}^n = Z_{v'|v}$ the inertia resp. decomposition subgroups of $\Gc_K^{a,n}$ associated to $v'|v$; since $\Gc_K^{a,n}$ is abelian, these groups are independent of choice of $v'$.

We now introduce the {\bf minimized inertia/decomposition} subgroups associated to $v$: 
\[ I_v^n := \Gal(K^{a,n}|K(\sqrt[\ell^n]{\Oc_v^\times})) \ \text{ and } \  D_v^n := \Gal(K^{a,n}|K(\sqrt[\ell^n]{1+\mf_v})). \]
Observe that $I_v^n \leq D_v^n$.
More importantly, however, $I_v^n \leq T_v^n$ and $D_v^n \leq Z_v^n$ with equality whenever $\Char k(v) \neq \ell$ (see Proposition \ref{prop:decomp-thy-prop}).
It turns out that the minimized inertia and decomposition groups, $I_v^n \leq D_v^n$, have an abelian-by-central Galois theoretical structure which resembles that of the usual inertia and decomposition, even for valuations whose residue characteristic is $\ell$ -- see Remark \ref{rem:minimized-inertia-decomp} for the details.
In particular, for any valuation $v$ of $K$, one has $I_v^n \leq \Ibcl(D_v^n)$.
This is analogous to the fact that $T_v^n \leq \Ibcl(Z_v^n)$ in case $\Char k(v) \neq \ell$.

Consider the following three conditions concerning valuations of $K$:
\begin{enumerate}
\item[(V1)] The value group $\Gamma_v$ contains no non-trivial $\ell$-divisible convex subgroups.
\item[(V2)] The valuation $v$ is maximal among all valuations $w$ such that (a) $D_v^n = D_w^n$ and (b) $\Gamma_w$ contains no non-trivial $\ell$-divisible convex subgroups.
Namely, for all refinements $w$ of $v$ such that $D_w^n = D_v^n$, one has $I_w^n = I_v^n$.
\item[(V3)] The group $k(v)^\times/\ell^n$ (resp. $\widehat{k(v)^\times}$ if $n = \infty$) is non-cyclic. 
\end{enumerate}
We denote by $\Vc_{K,n}$ the collection of valuations $v$ of $K$ which satisfy (V1),(V2) and (V3).
We also denote by $\Wc_{K,n}$ the collection of valuations $v$ of $K$ which only satisfy (V1) and (V2).
It turns out that many valuations of interest are contained in $\Wc_{K,n}$.
For instance, if $K$ is a function field over an algebraically closed field $k$, then all Parshin chains of divisors are contained in $\Wc_{K,n}$ and those Parshin chains of non-maximal length are contained in $\Vc_{K,n}$.
This is also true in more general situations; see Example \ref{ex:prime-divs} for more details.

We also introduce the ``prime-to-$\ell$'' analogue of $\Vc_{K,n}$, which we denote by $\Vc_{K,n}'$.
Denote by $\Vc_{K,n}'$ the collection of valuations $v$ of $K$ which satisfy the following conditions:
\begin{enumerate}
\item[(V0')] One has $\Char k(v) \neq \ell$.
\item[(V1')] The value group $\Gamma_v$ contains no non-trivial $\ell$-divisible convex subgroups.
\item[(V2')] The valuation $v$ is maximal among all valuations $w$ such that (a) $\Char k(w) \neq \ell$, (b) $D_v^n = D_w^n$ and (c) $\Gamma_w$ contains no non-trivial $\ell$-divisible convex subgroups.
Namely, for all refinements $w$ of $v$ such that $\Char k(w) \neq \ell$ and $D_w^n = D_v^n$ as subgroups of $\Gc_K^{a,n}$, one has $I_w^n = I_v^n$.
\item[(V3')] The group $\Gc_{k(v)}^{a,n}$ is non-cyclic.
\end{enumerate}

The relationship between $\Vc_{K,n}$ and $\Vc_{K,n}'$ is as follows.
If $v \in \Vc_{K,n}$ has residue characteristic different from $\ell$, then $v$ lies in $\Vc_{K,n}'$.
Also, if $\Char K > 0$, we have $\Vc_{K,n} = \Vc_{K,n}'$.
In general, however, the two sets are quite different.

\begin{remark}
\label{remark:alt-defn-of-Vc}
Using some technical results of this paper, we can give an alternative concise definition for $\Vc_{K,n}$ in the case where $\Char K \neq \ell$ and $\mu_{2\ell^n} \subset K$. 
Namely, $\Vc_{K,n}$ is precisely the collection of valuations $v$ of $K$ such that:
\begin{enumerate}
\item $\Gamma_v$ contains no non-trivial $\ell$-divisible convex subgroups.
\item $I_v^1 = \Ibcl(D_v^1) \neq D_v^1$.
\end{enumerate}
From this it is clear that $\Vc_{K,m} = \Vc_{K,n}$ for all $m \leq n$.
See Lemma \ref{lem:alt-defn-of-Vc} and Theorem \ref{thm:cl-to-c-pairs} for the proof of this equivalence.
\end{remark}

Denote by $\Nb$ the collection of positive integers and $\Nbar = \Nb \cup\{ \infty\}$; we declare that $\infty > n$ for all $n \in \Nb$.
If $N \geq n$ and $\mu_{\ell^N} \subset K$ (and thus $\mu_{\ell^n} \subset K$ as well), we will denote the canonical map $\Gc_K^{a,N} \rightarrow \Gc_K^{a,n}$ by $f \mapsto f_n$.
Furthermore, for an extension $L|K$ of fields, we will denote by $f \mapsto f_K$ the canonical restriction map $\Gc_L^{a,n} \rightarrow \Gc_K^{a,n}$.

\subsection{Main Results of the Paper}
\label{sec:main-results-manuscr}

The main goal of this paper is to produce an explicit function $\mathbf{R} : \Nbar \rightarrow \Nbar$, satisfying the following conditions:
\begin{itemize}
\item If $n \in \Nb$ then $\mathbf{R}(n) \in \Nb$.
\item One has $\mathbf{R}(1) = 1$ and $\mathbf{R}(\infty) = \infty$.
\item One has $\mathbf{R}(n) \geq n$ for all $n \in \Nbar$.
\end{itemize}
such that Theorems \ref{thm:main-thm-intro-maximal} and \ref{thm:main-thm-char-intro} hold true.
We succeed in constructing such a function: in the notation introduced in Part \ref{part:underlying-theory}, $\mathbf{R}(n) := \Nfr(\Mfr_2(\Mfr_1(n)))$ suffices.
However, we do not expect that the function which we construct is optimal.
Nevertheless, our conditions on $\Rbf$ ensure that Theorems \ref{thm:main-thm-intro-maximal} and \ref{thm:main-thm-char-intro} yield a non-trivial generalization of \cite{Bogomolov2007}.
Along with Remark \ref{remark:ibcl-intro}, our theorems can also be seen as generalizations of \cite{Engler1994}, \cite{Efrat1995}, \cite{Engler1998} and \cite{Efrat2011a} for fields which contain $\mu_{2\ell}$.

\begin{thm}
\label{thm:main-thm-intro-maximal}
Let $n \in \Nbar$ be given and let $N \geq \mathbf{R}(n)$.
Let $K$ be a field such that $\Char K \neq \ell$ and $\mu_{2\ell^N} \subset K$.
Then the following hold:
\begin{enumerate}
 \item Let $D \leq \Gc_K^{a,n}$ be given. There exists a valuation $v$ of $K$ such that $D \leq D_v^n$ and $D/(D \cap I_v^n)$ is cyclic if and only if there exists a CL-group $D' \leq \Gc_K^{a,N}$ such that $D'_n = D$.
 \item Let $I \leq D \leq \Gc_K^{a,n}$ be given. There exists a valuation $v \in \Vc_{K,n}$ such that $I = I_v^n$ and $D = D_v^n$ if and only if the following hold:
   \begin{enumerate}
   \item There exist $D' \leq \Gc_K^{a,N}$ such that $(\Ibcl(D'))_n = I$ and $D'_n = D$.
   \item The subgroups $I \leq D \leq \Gc_K^{a,n}$ are maximal with property (a).
   Namely, if $D \leq E \leq \Gc_K^{a,n}$ and $E' \leq \Gc_K^{a,N}$ is given such that $E'_n = E$ and $I \leq (\Ibcl(E'))_n$, then $D = E$ and $I = (\Ibcl(E'))_n$.
   \item One has $\Ibcl(D) \neq D$.
   Namely, $D$ is not a CL-group.
   \end{enumerate}
 \end{enumerate}
\end{thm}

The theorem above provides a group-theoretical recipe to recover valuations using abelian-by-central Galois groups.
More precisely, let $N = \Rbf(n)$, and $K$ a field with $\Char K \neq \ell$ and $\mu_{2\ell^N} \subset K$.
Thus, Theorem \ref{thm:main-thm-intro-maximal}(2) provides a group theoretical recipe to detect the subgroups $I_v^n$ and $D_v^n$ for $v \in \Vc_{K,n}$, using only the group-theoretical structure of $\Gc_K^{c,N}$.

Furthermore, it turns out that the ordered structure of $\Vc_{K,n}$ is also encoded group theoretically, as follows.
Let $v,w$ be two valuations of $K$ such that $\Gamma_v$ and $\Gamma_w$ contain no non-trivial $\ell$-divisible convex subgroups (e.g. $v,w \in \Vc_{K,n}$).
It follows from the results of this paper (Lemma \ref{lem:v_H}, in particular), that $v \leq w$ (i.e. $v$ is coarser than $w$) if and only if $I_v^n \leq I_w^n$.
In particular, each element $v$ of $\Vc_{K,n}$ is uniquely determined by the subgroup $I_v^n$ of $\Gc_K^{a,n}$.
Theorem \ref{thm:main-thm-intro-maximal}(2), in particular, gives a group-theoretical recipe to recover the subgroups $I_v^n$ of $\Gc_K^{a,n}$ for $v \in \Vc_{K,n}$.
Therefore, this theorem shows that the structure of $\Vc_{K,n}$, as a partially ordered set, is encoded group-theoretically using $\Gc_K^{c,N}$.
See Remark \ref{remark:main-detect-thm-mu} for a more detailed discussion.

By enlarging the group $\Gc_K^{c,N}$ we can also detect which of those valuations $v$ in the theorem above have residue characteristic different from $\ell$.
This therefore gives a group-theoretical recipe to detect the usual decomposition and inertia subgroups of valuations $v \in \Vc_{K,n}$ whose residue characteristic is different from $\ell$.
This is essentially the content of our next main theorem.
Before we state the theorem, we recall the notation $K_A := (K^{a,n})^A$ for a subgroup $A \leq \Gc_K^{a,n}$.

\begin{thm}
\label{thm:main-thm-char-intro}
Let $n \in \Nbar$ be given and let $N \geq \mathbf{R}(n)$.
Let $K$ be a field such that $\Char K \neq \ell$ and $\mu_{2\ell^N} \subset K$.
Then the following hold:
\begin{enumerate}
 \item Let $D \leq \Gc_K^{a,n}$ be given and let $L := K_D$. 
There exists a valuation $v$ of $K$ such that $\Char k(v) \neq \ell$, $D \leq Z_v^n$ and $D/(D \cap T_v^n)$ is cyclic if and only if there exists a CL-group $D' \leq \Gc_L^{a,N}$ such that $(D'_n)_K = D$.
\item Assume that $\Ibcl(\Gc_K^{a,n}) \neq \Gc_K^{a,n}$ and consider $(\Ibcl(\Gc_K^{a,N}))_n =: T$. 
Then there exists a valuation $v \in \Vc_{K,n}$ such that $\Char k(v) \neq \ell$, $T = T_v^n$ and $\Gc_K^{a,n} = Z_v^n$.
\item Let $v \in \Vc_{K,n}$ be given. 
Consider $I := I_v^n \leq D_v^n =: D$ and let $L := K_D$.
One has $\Char k(v) \neq \ell$ if and only if there exist $I' \leq D' \leq \Gc_L^{a,N}$ such that:
\begin{enumerate}
\item One has $I' \leq \Ibcl(D')$.
\item One has $(I'_n)_K = I$ and $(D'_n)_K = D$.
\end{enumerate}
Moreover, if these equivalent conditions hold then $I = I_v^n = T_v^n$ and $D = D_v^n = Z_v^n$.
\item Let $I \leq D \leq \Gc_K^{a,n}$ be given and let $L := K_D$.
 There exists a valuation $v \in \Vc_{K,n}'$ such that $I = T_v^n$ and $D = Z_v^n$ if and only if the following hold:
   \begin{enumerate}
   \item There exist $D' \leq \Gc_L^{a,N}$ such that $((\Ibcl(D'))_n)_K = I$ and $(D'_n)_K = D$.
   \item The subgroups $I \leq D \leq \Gc_K^{a,n}$ are maximal with property (a).
   Namely, if $D \leq E \leq \Gc_K^{a,n}$ and $E' \leq \Gc_{K_E}^{a,N}$ is given such that $(E'_n)_K = E$ and $I \leq ((\Ibcl(E'))_n)_K$, then $D = E$ and $I = ((\Ibcl(E'))_n)_K$.
   \item One has $\Ibcl(D) \neq D$.
   Namely, $D$ is not a CL-group.
   \end{enumerate}
 \end{enumerate}
\end{thm}

Let $N = \mathbf{R}(n)$, and $K$ a field with $\Char K \neq \ell$ and $\mu_{2\ell^N} \subset K$.
Denote by $\Gc_K^{M,n}$ the smallest quotient of $\Gc_K$ for which $\Gc_L^{c,N}$ is a subquotient for all $K \subset L \subset K^{a,n}$.
We note that Theorem \ref{thm:main-thm-intro-maximal}(2) along with Theorem \ref{thm:main-thm-char-intro}(3) provide a group-theoretical recipe to detect $T_v^n \leq Z_v^n$ for valuations $v \in \Vc_{K,n}$ such that $\Char k(v) \neq \ell$, using only the group-theoretical structure of $\Gc_K^{M,n}$.

Furthermore, Theorem \ref{thm:main-thm-char-intro}(4) provides a group-theoretical recipe to detect $T_v^n \leq Z_v^n$ for valuations $v \in \Vc_{K,n}'$ using only the group-theoretical structure of $\Gc_K^{M,n}$.
Arguing similarly to the discussion following Theorem \ref{thm:main-thm-intro-maximal}, and using the fact that $T_v^n = I_v^n$ for valuations $v$ with $\Char k(v) \neq \ell$, this shows that the partially ordered structure of the set $\Vc_{K,n}'$ is encoded group-theoretically in $\Gc_K^{M,n}$.

\subsection{A Guide Through the Paper and Corollaries}
\label{sec:guide-thro-manuscr}

In Part \ref{part:underlying-theory}, we develop the underlying theory which proves the main results of the paper.
This theory works for an arbitrary field $K$, and is based on an abstract notion of ``C-pairs'' which is related to a condition in the Milnor K-theory of the field (see Proposition \ref{prop:k-thy-c-pair}).

In Part \ref{part:galois-theory-milnor} we prove our K-theoretic condition which determines C-pairs.
Although this K-theoretic condition is primarily needed for the Galois-theoretical characterization of C-pairs, this also puts the results of Part \ref{part:underlying-theory} in a similar context as the results of Efrat \cite{Efrat1999}, \cite{Efrat2006b}, \cite{Efrat2007}.
In particular, this shows how to detect valuations using the Milnor K-theory groups of a field without the presence of any Galois theory.

The main theorem of Part \ref{part:galois-theory-milnor}, which is Theorem \ref{thm:cl-to-c-pairs}, shows that the two notions -- that of C-pairs and that of CL-pairs -- are identical in the situation where $\Char K \neq \ell$ and $\mu_{2\ell^n} \subset K$.
The proof of this theorem relies on the Merkurjev-Suslin theorem \cite{Merkurjev1982}.

In part \ref{part:proofs}, we prove Theorems \ref{thm:main-thm-intro-maximal} and \ref{thm:main-thm-char-intro}.
We also prove the following main corollary, which provides a sufficient condition to detect whether or not $\Char K = 0$ using the Galois group $\Gc_K^{M,n}$.

\begin{cor*}[Corollary \ref{cor:application-char}]
\label{cor:intro-main}
Let $n \in \Nbar$ be given and let $N := \mathbf{R}(n)$.
Let $K$ be a field such that $\Char K=0$ and $\mu_{2\ell^N} \subset K$.
Assume that there exists a field $F$ such that $\Char F > 0$, $\mu_{2\ell^N} \subset F$ and $\Gc_K^{M,n} \cong \Gc_F^{M,n}$.
Then for all $v \in \Vc_{K,n}$ one has $\Char k(v) \neq \ell$.
\end{cor*}

As a consequence of the Corollary above, we find many examples of fields $K$ of characteristic $0$ whose maximal pro-$\ell$ Galois group $\Gc_K$ is not isomorphic to $\Gc_F$ for any field $F$ of positive characteristic.
Strongly $\ell$-closed fields, which are mentioned in the following corollary, are defined in \S \ref{sec:set-vc_k-n}, but we mention here that algebraically closed fields in particular are strongly $\ell$-closed.

\begin{cor*}[Corollary \ref{cor:function-fields-galois-groups}]
\label{cor:intro-ex}
Suppose that $K$ is one of the following:
\begin{itemize}
\item a function field over a number field $k$ such that $\mu_{2\ell} \subset k$, and $\dim(K|k) \geq 1$, or
\item a function field over a strongly $\ell$-closed field $k$ of characteristic $0$ such that $\dim(K|k) \geq 2$.
\end{itemize}
Then there does not exist a field $F$ such that $\mu_{2\ell} \subset F$, $\Char F > 0$ and $\Gc_K \cong \Gc_F$.
\end{cor*}

\subsection*{Acknowledgments}
\label{sec:acknowledgments}
The author would like to thank all who expressed interest in this work and in particular Florian Pop, Jakob Stix, Jochen Koenigsmann, Moshe Jarden, Dan Haran, Lior Bary-Soroker, J\'{a}n Min\'{a}\v{c} and Ido Efrat.
The author also thanks the referee for very thoroughly reading the paper, and for his excellent comments which were very helpful in improving the paper.

\part{C-groups and Valuations}
\label{part:underlying-theory}

In this first part of this paper, we develop the underlying theory using an abstract notion of ``C-pairs.''
It turns out, as we will see in Part \ref{part:galois-theory-milnor}, that this notion is equivalent to that of CL-pairs as defined in the introduction.
Throughout, we will tacitly use the following trivial observation and dub it ``the Cancellation Principle.''

\begin{lem}[The Cancellation Principle]
\label{lem:cancellation-principle}
For positive integers $n$ and $r$, define $\Mfr_r(n) := (r+1)\cdot n-r$.
Assume that $R \geq (r+1)\cdot n-r = \Mfr_r(n)$.
Let $a,b,c_1,\ldots,c_r \in \Z/\ell^R$ be given such that $c_i \neq 0 \mod \ell^n$ for $i = 1,\ldots,r$.
Assume that the following equality holds in $\Z/\ell^R$:
\[ a \cdot (c_1\cdots c_r) = b\cdot (c_1\cdots c_r). \]
Then $a = b \mod \ell^n$.
\end{lem}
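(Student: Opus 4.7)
The plan is to reduce the statement to a single estimate on $\ell$-adic valuations inside $\Z/\ell^R$. Writing each $c_i \in \Z/\ell^R$ in the normal form $c_i = \ell^{e_i} u_i$ with $0 \leq e_i \leq R$ and $u_i$ a unit in $\Z/\ell^R$ (or $u_i = 0$ when $e_i = R$), the hypothesis $c_i \not\equiv 0 \bmod \ell^n$ translates precisely into the bound $e_i \leq n-1$, and in particular $u_i$ is genuinely a unit. Multiplying, one obtains $c_1 \cdots c_r = \ell^{e_1 + \cdots + e_r} \cdot u$ for some unit $u \in \Z/\ell^R$.

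Next I would rewrite the given equation $a c_1 \cdots c_r = b c_1 \cdots c_r$ as $(a-b) \cdot u \cdot \ell^{e_1 + \cdots + e_r} = 0$ in $\Z/\ell^R$; since $u$ is a unit, this is equivalent to $(a-b) \cdot \ell^{e_1 + \cdots + e_r} \equiv 0 \bmod \ell^R$. Lifting this congruence to $\Z$ and dividing by the common factor, this simply says
\[
\ell^{\,R - (e_1 + \cdots + e_r)}\ \text{divides}\ (a-b).
\]

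Finally, the conclusion $a \equiv b \bmod \ell^n$ will follow from the estimate $R - (e_1 + \cdots + e_r) \geq n$. Using $e_i \leq n-1$ for each $i$, we get $e_1 + \cdots + e_r \leq r(n-1)$, so it suffices that $R \geq n + r(n-1) = (r+1)n - r = \Mfr_r(n)$, which is exactly our hypothesis. This explains the precise shape of the bound $\Mfr_r(n)$, and there is no real obstacle beyond this bookkeeping of $\ell$-adic valuations; the argument is entirely elementary.
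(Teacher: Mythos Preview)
Your proof is correct and is essentially the same argument as the paper's: both bound the $\ell$-adic valuation of $c_1\cdots c_r$ by $r(n-1)$ and use $R \geq (r+1)n - r$ to conclude that multiplication by $c_1\cdots c_r$ is injective on $\Z/\ell^n$. The paper phrases this via the injectivity of the map $x \mapsto x\cdot c_1\cdots c_r$ from $\Z/\ell^a$ into $\Z/\ell^R$, while you unpack it explicitly via the normal form $c_i = \ell^{e_i}u_i$, but the content is identical.
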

\begin{proof}
Let $s$ be the minimal positive integer such that $\ell^s \cdot (c_1 \cdots c_r) = 0$ as an element of $\Z/\ell^R$.
Then the map $\Z/\ell^s \rightarrow \Z/\ell^R$ defined by $x \mapsto x \cdot (c_1 \cdots c_r)$ is injective.
On the other hand, as $c_i \neq 0 \mod \ell^n$ for each $i$, we observe that $s \geq R - rn +r \geq n$ and this proves the claim.
\end{proof}

\section{Main Theorem of C-Pairs}
\label{sec:main-theorem-c}

Recall that $\Nbar = \{1,2,\ldots,\infty\}$.
For positive integers $n$ and $r$, we define the following three integers:
\begin{enumerate}
\item $\Mfr_r(n) := (r+1)\cdot n-r$, as used in Lemma \ref{lem:cancellation-principle}.
\item $\Nfr'(n) := (6\ell^{3n-2}-7)\cdot (n-1)+3n-2$.
\item $\Nfr(n) := \Mfr_1(\Nfr'(n))$.
\end{enumerate}
The precise formula for $\Nfr'$ and $\Nfr$ will not play an important role until we prove Theorem \ref{thm:main-c-pairs-thm} in \S\ref{sec:proof-theorem}.

To make the notation consistent, we define $\Mfr_r(\infty) = \Nfr(\infty) := \infty$.
In particular, one has $\Nfr(n) \geq \Mfr_1(n) \geq n$ for all $n \in \Nbar$, and $\Nfr(n),\Mfr_r(n) \in \Nb$ if and only if $n \in \Nb$.
Also, observe that $\Mfr_r(1) = \Nfr'(1) = \Nfr(1) = 1$, and $\Mfr_r(\infty) = \Nfr'(\infty) = \Nfr(\infty) = \infty$. 
We will work with a coefficient ring which depends on $n \in \Nbar$, which we define as follows:
\[R_n := \begin{cases} \Z/\ell^n, & n \neq \infty. \\ \Z_\ell, & n = \infty.
\end{cases} \]

Let $K$ be a field and $n \in \Nbar$ be given.
We define:
\[ \Gc_K^a(n) := \Hom(K^\times/\pm1,R_n). \]
Endowed with the point-wise convergence topology, we consider $\Gc_K^a(n)$ as a pro-$\ell$ Group.
We will always consider elements $f \in \Gc_K^a(n)$ as homomorphisms $f : K^\times \rightarrow R_n$ with $f(-1) = 0$.
Note furthermore, if $-1 \in K^{\times \ell^n}$ (e.g. if $\mu_{2\ell^n} \subset K$), then $\Gc_K^a(n) = \Hom(K^\times,R_n)$.

For $n,N \in \Nbar$ with $N \geq n$, we will denote the canonical projection $R_N \rightarrow R_n$ by $a \mapsto a_n$; we extend this notation to elements of $(R_N)^k$ in the obvious way: $(a_1,\ldots,a_k)_n := ((a_1)_n,\ldots,(a_k)_n)$.
Similarly, we denote by $f \mapsto f_n$ the canonical map $\Gc_K^a(N) \rightarrow \Gc_K^a(n)$ which is induced by the canonical projection $R_N \twoheadrightarrow R_n$; namely, for $f \in \Gc_K^a(N)$, one has $f_n(x) := f(x)_n$.

If $v$ is a valuation of $K$ we define:
\begin{enumerate}
\item $I_v(n) := \Hom(K^\times/\Oc_v^\times,R_n) \leq \Gc_K^a(n)$ and 
\item $D_v(n) := \Hom(K^\times/\pm(1+\mf_v),R_n) \leq \Gc_K^a(n)$.
\end{enumerate}
For a subgroup $A \leq \Gc_K^a(n)$, we denote by $A^\perp$ the subgroup of $K^\times$ annihilated by $A$:
\[ A^\perp = \bigcap_{f \in A} \ker f. \]
This is precisely the left kernel of the canonical pairing $K^\times \times A \rightarrow R_n$.
In particular, we note that $\Oc_v^\times \leq I_v(n)^\perp$ and $\pm(1+\mf_v) \leq D_v(n)^\perp$.

The following notion of C-pairs is motivated by Bogomolov and Tschinkel's notion under the same name \cite{Bogomolov2007}; we note, however, that our notion of C-pairs is \emph{a priori} much weaker than the one considered in loc.cit.

\begin{defn}
\label{defn:c-pair}
Let $f,g \in \Gc_K^a(n)$ be given.
We say that $f,g$ are a {\bf C-pair} provided that for all $x \in K \smallsetminus \{0,1\}$ one has:
\[ f(1-x)g(x) = f(x)g(1-x). \]
A subgroup $A \leq \Gc_K^a(n)$ will be called a {\bf C-group} provided that any pair of elements $f,g \in A$ form a C-pair.
If $A = \langle f_i \rangle_i$, we observe that $A$ is a C-group if and only if $f_i,f_j$ form a C-pair for all $i,j$.

For a subgroup $A \leq \Gc_K^a(n)$, we define $\Ibc(A)$ to be the subgroup:
\[ \Ibc(A) := \{f \in A \ : \ \forall g \in A, \ f,g \  \text{ form a C-pair} \}. \]
and call $\Ibc(A)$ the C-center of $A$. 
In particular, $A$ is a C-group if and only if $A = \Ibc(A)$.
Also, it is easy to see that $A$ is a C-group if and only if $A/\Ibc(A)$ is cyclic.
\end{defn}

The following lemma shows that valuations yield C-pairs.
The main technical theorem in the paper, The Main Theorem of C-pairs, is a weak converse to this lemma.

\begin{lem}
\label{lem:I-and-D-give-c-pairs}
Let $n \in \Nbar$ be given and let $(K,v)$ be a valued field.
Let $d \in D_v(n)$ and $i \in I_v(n)$ be given.
Denote the map $(i,d) : K^\times \rightarrow R_n \times R_n$ by $\Psi$.
Then for all $x \in K^\times \smallsetminus \{0,1\}$, the subgroup $\langle \Psi(1-x),\Psi(x) \rangle$ is cyclic.
In particular, $i,d$ form a C-pair.
\end{lem}
\begin{proof}
If $v(x) > 0$ then $\Psi(1-x) = 0$ since $1+\mf_v \leq \ker\Psi$ so we obtain the claim.
If $v(x) < 0$ then $1-x = x(1/x-1)$ with $v(1/x) > 0$ so that $\Psi(1-x) = \Psi(x)$, and this proves the claim.
By replacing $x$ with $1-x$ if needed, the last case to consider is where both $x,1-x \in \Oc_v^\times$.
But then $i(1-x) = i(x) = 0$, so the claim is trivial.
\end{proof}

\begin{thm}[The Main Theorem of C-pairs]
\label{thm:main-c-pairs-thm}
Let $n \in \Nbar$ be given and let $N := \Nfr(n)$.
Let $K$ be an arbitrary field and let $f,g \in \Gc_K^a(n)$ be given.
Assume that there exist $f'',g'' \in \Gc_K^a(N)$ such that 
\begin{itemize}
\item $f'',g''$ form a C-pair.
\item $f''_n = f$ and $g''_n = g$.
\end{itemize}
Then there exists a valuation $v$ of $K$ such that
\begin{itemize}
\item $f,g \in D_v(n)$
\item $\langle f,g \rangle/(\langle f,g \rangle \cap I_v(n))$ is cyclic (possibly trivial).
\end{itemize}
\end{thm}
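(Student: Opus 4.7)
The plan is to extract, from the hypothesized C-pair identity $f''(1-x)g''(x)=f''(x)g''(1-x)$, enough multiplicative rigidity on $K^\times$ to invoke a classical valuation-construction principle of the rigid-elements school (Ware, Arason-Elman-Jacob, Engler-Koenigsmann, Efrat). I would package the two characters into a single map $\Psi=(f'',g'')\colon K^\times\to R_N\oplus R_N$; the C-pair identity is exactly the vanishing of the determinant whose rows are $\Psi(x)$ and $\Psi(1-x)$. Over the chain ring $R_N$ this means that, after dividing out the common $\ell$-adic divisibility of the entries, the two rows are genuinely proportional, so $\Psi(x)$ and $\Psi(1-x)$ lie in a common cyclic $R_N$-submodule of $R_N\oplus R_N$ up to torsion.

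Next I would partition the non-kernel elements of $K^\times$ by their \emph{direction}, namely the primitive generator of the cyclic submodule $R_N\cdot\Psi(a)\subseteq R_N\oplus R_N$. For each direction $\lambda$, the set $U_\lambda=\{a\in K^\times:\Psi(a)\in R_N\cdot\lambda\}$ should be a multiplicative subgroup of $K^\times$ containing $\ker\Psi$, and the C-pair identity should force $1-a\in U_\lambda$ whenever $a\in U_\lambda$ and $\Psi(1-a)\neq 0$. This is precisely the ``$\ell$-rigidity'' of $U_\lambda$ in the rigid-elements sense, established a priori modulo $\ell^N$. Restricting to an $R_n$-primitive slice of the statement and applying the Cancellation Principle (Lemma \ref{lem:cancellation-principle}) to the divisibility factors that appear when reading off directions promotes this to genuine rigidity modulo $\ell^n$ for the reductions $f,g\in\Gc_K^a(n)$. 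Counting how many cancellations are needed is what forces $N\geq\Nfr'(n)$ at one stage, and a further descent from individual-element rigidity to subgroup rigidity costs one more round, yielding $N\geq\Mfr_1(\Nfr'(n))=\Nfr(n)$.

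With $\ell^n$-rigidity in hand, I would invoke the standard construction of the rigid-elements school: such a rigid system produces a valuation $v$ of $K$ whose units $\Oc_v^\times$ and principal units $1+\mf_v$ are adapted to the family $(U_\lambda)$ up to $\ell^n$-th powers and $\pm 1$. Concretely, both $f$ and $g$ kill $\pm(1+\mf_v)$, which places them in $D_v(n)=\Hom(K^\times/\pm(1+\mf_v),R_n)$, while the image of $\langle f,g\rangle$ in the further quotient identified with $\langle f,g\rangle/(\langle f,g\rangle\cap I_v(n))$ records a single direction and is therefore cyclic, exactly as the conclusion demands.

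The main obstacle is the second step. Over the chain ring $R_N$ the cyclic submodules of $R_N\oplus R_N$ are not linearly ordered by inclusion, and two directions that are distinct modulo $\ell^N$ can collapse modulo $\ell^n$, so the direction partition is a priori finer than is useful for producing a single valuation. The purpose of the constants $\Nfr'(n)$ and $\Nfr(n)$ is precisely to leave enough headroom that iterated applications of the Cancellation Principle survive the descent from $R_N$ to $R_n$; each refinement of the rigidity argument absorbs a bounded power of $\ell$ arising from the finite classification of directions, and these powers must compound across the iterations in the explicit manner encoded in $\Nfr'(n)=(6\ell^{3n-2}-7)(n-1)+3n-2$.
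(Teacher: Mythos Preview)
Your high-level framework is correct and matches the paper's: package $f'',g''$ into $\Psi$, use the C-pair determinant to force $\Psi(x)$ and $\Psi(1-x)$ into a common cyclic $R_N$-submodule (this is exactly the paper's Lemma~\ref{lem:c-pairs-vs-rigid}), and then invoke the Arason--Elman--Jacob machinery once enough rigidity is established. Where the proposal goes wrong is in the middle.

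The paper does \emph{not} proceed by showing that each direction set $U_\lambda$ is individually rigid. Instead it defines $H$ to be the subgroup of $K^\times$ generated by $T=\ker\Psi$ together with all ``bad'' $x$, i.e.\ those with $\Psi(1+x)\notin\{\Psi(1),\Psi(x)\}$. By construction, every $x\notin H$ already satisfies $1+x\in T\cup xT$, so the AEJ hypothesis for $H$ is automatic. The entire content of the theorem is the \textbf{Main Claim}: $H/T$ is cyclic, i.e.\ all bad elements share a single direction modulo $\ell^n$. Your direction partition gives the opposite information---that each $U_\lambda$ is closed under $x\mapsto 1+x$---which is true but does not by itself force the bad directions to coincide, nor does it produce a single subgroup to which AEJ applies.

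The paper's proof of the Main Claim is not a cancellation-counting argument in the sense you describe. Given two bad elements $x,y$ with $\Theta(1+x)=a\Theta(x)$, $\Theta(1+y)=b\Theta(y)$ (where $\Theta$ works at level $N'=\Nfr'(n)$), one changes basis so that $(p,q)(x)=(D,0)$, $(p,q)(y)=(0,D)$ with $D=f'(x)g'(y)-f'(y)g'(x)$, and the goal becomes $D\equiv 0\bmod \ell^{M}$. The mechanism is an explicit inductive computation of $\Phi((m-1)+mx)$ and $\Phi(m+mx+y)$ for $m=1,2,3,\ldots$, each step introducing a controlled factor of $D^2(a')(b')$. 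After roughly $\ell^{3n-2}$ steps one chooses $m\equiv -a'\bmod \ell^{3n-2}$; this forces a term $(a'+m)$ of $\ell$-adic order $\geq 3n-2$ into an equation whose other side has order $\leq 3n-3$, yielding a contradiction unless $D\equiv 0\bmod\ell^M$. The constant $\Nfr'(n)=(6\ell^{3n-2}-7)(n-1)+3n-2$ is exactly the headroom needed so that the accumulated factor $D^{2m}(a')^{m-1}(b')^m$ does not kill the equation before $m$ reaches $\ell^{3n-2}-1$. None of this structure---the iterated determinantal identities for $\Phi(k+lx+jy)$, the induction on $m$, or the terminal choice $m\equiv -a'$---appears in your proposal, and it is the genuine content of the proof.
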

\begin{proof}
The proof of this theorem is highly technical since it involves a lot of calculation and intermediate steps.
Thus, for the sake of exposition, we defer the proof to \S\ref{sec:proof-theorem} which is completely devoted to this task.
\end{proof}

\section{Valuative Subgroups and Comparable Valuations}
\label{sec:comp-valu}

In this section we prove the main theorems which allow us to detect valuations using C-pairs in a more precise way.
To begin, we introduce the notion of a ``valuative'' subgroup $I \leq \Gc_K^a(n)$ which generalizes the notion of a ``flag function'' from \cite{Bogomolov2007}.
To a valuative subgroup $I \leq \Gc_K^a(n)$ we associate a canonical valuation $v_I$ which is reminiscent of Pop's notion of a core of a valuation in a Galois extension, and was also considered in \cite{Arason1987}.
It turns out that the C-pair property is intimately related to the comparability of these canonical valuations $v_I$.
Namely, we will show that, in certain cases, these ``valuative'' subgroups can be ``glued'' together.

\subsection{Rigid Elements and Valuations}

To define a ``valuative'' subgroup, we will require a result from the theory of rigid elements. 
Rigid elements will also play an important role when we prove Theorem \ref{thm:main-c-pairs-thm} in \S\ref{sec:proof-theorem}.
While one can use many references in the subject to deduce these results (see e.g. the overview in the introduction), we will take \cite{Arason1987} to be our reference of choice.
The following theorem is a summary of the main results of loc.cit. which we will need in this paper.

\begin{thm}[Arason-Elman-Jacob \cite{Arason1987}]
\label{thm:rigid-summary}
Let $K$ be an arbitrary field and let $T \leq K^\times$ be a subgroup with $-1 \in T$.
Denote by $H$ the subgroup of $K^\times$ which is generated by $T$ and all $x \in K^\times \smallsetminus T$ such that $1+x \notin T \cup x \cdot T$.
Then the following hold:
\begin{enumerate}
\item Suppose that $H = T$. 
Then there exists a valuation $v$ of $K$ such that $1+\mf_v \leq T$ and $\#((\Oc_v^\times \cdot T )/ T) \leq 2$; in particular $(\Oc_v^\times \cdot T) / T$ is cyclic.
\item Suppose that $H = T$ and that for all $x,y \in K^\times \smallsetminus H$ such that $1+x,1+y \in H$, one has $1+x\cdot (1+y) \in H$. 
Then there exists a valuation $v$ of $K$ such that $\Oc_v^\times \leq H$.
\item Suppose that $H \neq T$. 
Then there exists a valuation $v$ of $K$ such that $1+\mf_v \leq T$ and $H = \Oc_v^\times \cdot T$.
\end{enumerate}
\end{thm}
\begin{proof}
We first make a simple observation which is needed to deduce claim (3).
If $v$ is a valuation of $K$ such that $1+\mf_v \leq T$, then for all $x \in K^\times \smallsetminus ( \Oc_v^\times \cdot T)$, one has $1+x \in (\Oc_v^\times \cdot T) \cup x \cdot (\Oc_v^\times \cdot T)$.
If such a valuation $v$ exists, the definition of $H$ ensures that $\Oc_v^\times \leq H$ if and only if $H = \Oc_v^\times \cdot T$.

Claims (1) and (3) follow from Theorem 2.16 of \cite{Arason1987}.
Claim (2) follows from Theorem 2.10 of loc.cit.; namely, the assumption of claim (2) is a reformulation of the ``preadditive'' condition from loc.cit.
\end{proof}

\subsection{Valuative Subgroups}
The non-trivial direction of the following lemma is a reformulation of Theorem \ref{thm:rigid-summary}(2).
We state this lemma explicitly so that we can conveniently cite it later.

\begin{lem}
\label{lem:valuative}
Let $K$ be a field and let $H \leq K^\times$ be given.
Then the following are equivalent:
\begin{enumerate}
\item There exists a valuation $v$ of $K$ such that $\Oc_v^\times \leq H$.
\item One has (a) $-1 \in H$, (b) for all $x \in K^\times \smallsetminus H$ one has $1+x \in H \cup x \cdot H$, and (c) for all $x,y \in K^\times \smallsetminus H$ such that $1+x,1+y \in H$, one has $1+x\cdot (1+y) \in H$.
\end{enumerate}
\end{lem}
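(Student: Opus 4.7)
The proof splits into two directions, with $(1) \Rightarrow (2)$ being a direct verification and $(2) \Rightarrow (1)$ resting on the theory of rigid elements.

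For $(1) \Rightarrow (2)$, suppose $v$ is a valuation of $K$ with $\Oc_v^\times \subseteq H$. Since $-1$ is a unit, $-1 \in \Oc_v^\times \subseteq H$. For any $x \in K^\times \smallsetminus H$ we have $v(x) \neq 0$: if $v(x) > 0$ then $1+x \in 1+\mf_v \subseteq \Oc_v^\times \subseteq H$, and if $v(x) < 0$ then $1+x = x(1+x^{-1}) \in xH$ since $1+x^{-1} \in 1+\mf_v \subseteq H$. In particular, if $x \notin H$ and $1+x \in H$, the case $v(x) < 0$ is ruled out (it would force $1+x \in xH$, while $xH \cap H = \emptyset$ as $x \notin H$), so $v(x) > 0$; similarly $v(y) > 0$ under the hypotheses, and hence $v(x(1+y)) = v(x) > 0$ yields $1 + x(1+y) \in 1+\mf_v \subseteq H$.

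For $(2) \Rightarrow (1)$, the plan is to construct a valuation ring $\Oc$ directly from $H$, in the spirit of the $H$-rigid elements theory developed in \cite{Arason1987}. Guided by the analysis above, a natural candidate for the maximal ideal is
\[ \mf := \{0\} \cup \{x \in K^\times : x \notin H \text{ and } 1+x \in H\}, \]
which in the presence of a valuation would recover precisely the non-$H$ part of $\mf_v$; an accompanying candidate for the ring is $\Oc := \{x \in K : x\mf \subseteq \mf\}$. The verification then reduces to checking in order: (a) $\mf$ is closed under addition; (b) $\Oc$ is a subring of $K$; (c) $\mf$ is an ideal of $\Oc$; (d) for every $x \in K^\times$, either $x \in \Oc$ or $x^{-1} \in \Oc$; and (e) $\Oc^\times \subseteq H$.

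The main obstacle will be step (a). Given $a, b \in \mf \smallsetminus \{0\}$, one factors $a+b = a(1+a^{-1}b)$ and analyses $a^{-1}b$ via the second clause of (2): if $a^{-1}b \in H$ then $1+a^{-1}b$ (and hence the product) can be controlled directly from the hypotheses on $a$; if $a^{-1}b \notin H$ then $1+a^{-1}b \in H \cup a^{-1}b\cdot H$, and the third clause of (2) applied to a suitable pair is what is needed to re-assemble expressions of the form $1 + a(1+a^{-1}b) = 1 + (a+b)$ inside $H$, thereby placing $a+b$ in $\mf$ or $\{0\}$. Once (a) is secured, the remaining axioms (b)--(e) follow from routine case analysis based on the dichotomy $1+x \in H \cup xH$ for $x \notin H$ together with $-1 \in H$; this is essentially the standard rigid-elements manipulation, and the precise statement we need is close to what is contained in \cite{Arason1987}.
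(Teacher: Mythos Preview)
Your argument for $(1)\Rightarrow(2)$ is correct and essentially identical to the paper's.

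For $(2)\Rightarrow(1)$, however, the construction you propose does not work as stated: your set $\mf$ is \emph{not} closed under addition in general. Take $K=\Q$, $p$ a prime, and $H=\{x\in\Q^\times: v_p(x)\text{ is even}\}$. One checks directly that $H$ satisfies all three clauses of~(2), and indeed $\Oc_{v_p}^\times\leq H$. With your definition, $\mf\smallsetminus\{0\}$ consists of the $x$ with $v_p(x)$ odd and positive. Now $a=p$ and $b=p^2-p$ both lie in $\mf$, yet $a+b=p^2\in H$, so $a+b\notin\mf$ and $a+b\neq 0$. Tracing your sketch of step~(a) in this case: $a^{-1}b=p-1\in H$, so you are in your first sub-case, but $1+a^{-1}b=p\notin H$ and the product $a(1+a^{-1}b)=p^2$ lands in $H$, a possibility your dichotomy ``$\mf$ or $\{0\}$'' does not allow. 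The underlying issue is that your $\mf$ is only the non-$H$ part of the eventual maximal ideal (as you yourself note), and that part is simply not additively closed when $H$ strictly contains $\Oc_v^\times$; so $\mf$ cannot serve as the maximal ideal, and the resulting $\Oc=\{x:x\mf\subseteq\mf\}$ is not a ring either (in the example above, $p^2$ and $p^3-p^2$ lie in $\Oc$ but their sum $p^3$ does not).

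The paper does not attempt a direct construction here; it simply invokes \cite{Arason1987}, Theorem~2.10, after observing that the rigidity condition $1+x\in H\cup xH$ for $x\notin H$ is equivalent to $H+xH\subset H\cup xH$, and that the third clause of~(2) is precisely the ``preadditive'' hypothesis required there. The construction in \cite{Arason1987} is more delicate than yours: rather than taking $\mf$ to be the visibly ``positive'' elements outside $H$, one builds $\Oc_v^\times$ as the set of $h\in H$ that act trivially on the rigidity pattern, and recovers the full $\mf_v$ (including its intersection with $H$) from that. If you want to carry out the construction by hand rather than cite the reference, that is the route to take.
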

\begin{proof}
The non-trivial direction, $(2) \Rightarrow (1)$, is Theorem \ref{thm:rigid-summary} claim (2) taking $T = H$.

We now prove $(1) \Rightarrow (2)$.
Assume (1), that there exists a valuation $v$ such that $\Oc_v^\times \leq H$; this clearly implies that $-1 \in H$.
Let $x \in K^\times \smallsetminus H$ be given.
Then, in particular, $v(x) \neq 0$.
One has $v(x) > 0$ if and only if $1+x \in \Oc_v^\times$.
Similarly, one has $v(x) < 0$ if and only if $1+x \in x \cdot \Oc_v^\times$.
Thus $1+x \in H \cup x \cdot H$ for all $x \in K^\times \smallsetminus H$.
Moreover, if $x,y \in K^\times \smallsetminus H$ with $1+x,1+y \in H$, then one has $v(x),v(y) > 0$.
Thus $v(x\cdot(1+y)) > 0$ and therefore $1+x\cdot (1+y) \in H$, as required.
\end{proof}

\begin{remark}
\label{rem:ell-odd-valuative-remark}
In the case where $K^{\times \ell^n} \leq H$ and $\ell$ is \emph{odd}, the condition of Lemma \ref{lem:valuative} can be made simpler.
Using the notation of Lemma \ref{lem:valuative}, the following are equivalent in this case:
\begin{enumerate}
\item There exists a valuation $v$ of $K$ such that $\Oc_v^\times \leq H$.
\item For all $x \in K^\times \smallsetminus H$ one has $1+x \in H \cup x\cdot H$.
\end{enumerate}
Again, the non-trivial direction of this claim follows from \cite{Arason1987}; see our Theorem \ref{thm:rigid-summary}(1).
\end{remark}

\begin{defn}
\label{defn:valuative}
A subgroup $H \leq K^\times$ will be called {\bf valuative} if it satisfies the equivalent conditions of Lemma \ref{lem:valuative}.
Similarly, a subgroup $I$ of $\Gc_K^a(n)$ will be called {\bf valuative} provided that $I^\perp$ is a valuative subgroup of $K^\times$.
Namely, $I$ is valuative if and only if there exists a valuation $v$ of $K$ such that $I \leq I_v(n)$.
We also say that $f \in \Gc_K^a(n)$ is valuative provided that $\ker(f)$ is valuative.
Namely, $f$ is valuative if and only if there exists a valuation $v$ of $K$ such that $f \in I_v(n)$.
\end{defn}

\begin{lem}
\label{lem:v_H}
Let $K$ be a field and let $H$ be a valuative subgroup of $K^\times$.
Then there exists a unique coarsest valuation $v_H$ such that $\Oc_{v_H}^\times \leq H$.
More precisely, if $w$ is a valuation of $K$ such that $\Oc_w^\times \leq H$, then $v_H$ is a coarsening of $w$; moreover $w = v_H$ if and only if $w(H)$ contains no non-trivial convex subgroups.

In particular, similar statements hold concerning valuative subgroups of $\Gc_K^a(n)$.
Namely, let $I$ be a valuative subgroup of $\Gc_K^a(n)$.
Then there exists a unique coarsest valuation $v_I$, depending only on $I$, such that $I \leq I_{v_I}(n)$.
If $w$ is a valuation of $K$ such that $I \leq I_w(n)$, then $v_I$ is a coarsening of $w$; moreover, $v_I = w$ if and only if $w(I^\perp)$ contains no non-trivial convex subgroups.
\end{lem}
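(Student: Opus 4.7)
The second paragraph reduces to the first by taking $H = I^\perp$: the condition $I \leq I_w(n) = \Hom(K^\times/\Oc_w^\times, R_n)$ is exactly $\Oc_w^\times \leq I^\perp$, so applying the first paragraph to $I^\perp$ yields the valuation $v_I := v_{I^\perp}$, and $w(I^\perp)$ containing no nontrivial convex subgroups is the corresponding equality condition. So I concentrate on the first paragraph. Let $\mathcal{F} := \{v : v \text{ a valuation of } K,\ \Oc_v^\times \leq H\}$, which is nonempty by hypothesis.

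The heart of the argument is to show that $\mathcal{F}$ is totally ordered by the coarsening relation. This is the classical compatibility phenomenon for valuations associated to a rigid (here, valuative) subgroup of $K^\times$, and it is precisely where the rigid calculus of \cite{Arason1987} enters. Concretely, assuming $v_1, v_2 \in \mathcal{F}$ are incomparable, one picks $x \in \Oc_{v_1} \setminus \Oc_{v_2}$ and $y \in \Oc_{v_2} \setminus \Oc_{v_1}$; then $1+x \in \Oc_{v_1}^\times \leq H$ and $1+y^{-1} \in \Oc_{v_2}^\times \leq H$, and the valuative condition of Lemma \ref{lem:valuative} applied to suitable combinations of $x,y$ yields a contradiction. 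Alternatively, \cite{Arason1987} Theorem~2.10 produces a canonical valuation in $\mathcal{F}$ which is a refinement of every element of $\mathcal{F}$, and coarsenings of a single fixed valuation automatically form a chain.

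Given totality, set $\Oc_{v_H} := \bigcup_{v \in \mathcal{F}} \Oc_v$. A directed union of valuation subrings of $K$ is itself a valuation subring of $K$, so this defines a valuation $v_H$, which is by construction coarser than every $v \in \mathcal{F}$. For $u \in \Oc_{v_H}^\times$, one has $u \in \Oc_{v_1}$ and $u^{-1} \in \Oc_{v_2}$ for some $v_1, v_2 \in \mathcal{F}$; using the chain property, the larger of $\Oc_{v_1}, \Oc_{v_2}$ contains both $u$ and $u^{-1}$, so $u$ is a unit in some $\Oc_v$ with $v \in \mathcal{F}$, whence $u \in H$. Therefore $v_H \in \mathcal{F}$ is the unique coarsest element, and the assertion that $v_H$ coarsens any $w \in \mathcal{F}$ is immediate.

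For the characterization of equality $w = v_H$: coarsenings $w'$ of a fixed $w$ correspond bijectively to convex subgroups $\Delta \leq \Gamma_w$, with $\Oc_{w'}^\times = w^{-1}(\Delta)$. For $w \in \mathcal{F}$ and $\delta \in \Delta$, the fibre $w^{-1}(\delta)$ is a coset of $\Oc_w^\times \leq H$, so $w^{-1}(\delta) \subseteq H$ iff $\delta \in w(H)$. Hence $\Oc_{w'}^\times \leq H$ iff $\Delta \subseteq w(H)$, and $w = v_H$ (i.e.\ $w$ admits no proper coarsening in $\mathcal{F}$) iff no nontrivial convex subgroup of $\Gamma_w$ lies in $w(H)$. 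The main obstacle throughout is the totality of $\mathcal{F}$; the remaining steps are formal.
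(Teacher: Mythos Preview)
Your central claim—that $\mathcal{F} = \{v : \Oc_v^\times \leq H\}$ is totally ordered under coarsening—is false in general, and this is where the argument breaks. Take $K = k(s,t,u)$, let $v$ be the $u$-adic valuation, and set $H = \Oc_v^\times$. Then $k(v) = k(s,t)$; let $w_1$ (resp.\ $w_2$) be the composite of $v$ with the $s$-adic (resp.\ $t$-adic) valuation on $k(s,t)$. Both $w_i$ refine $v$, so $\Oc_{w_i}^\times \leq \Oc_v^\times = H$ and $w_1,w_2 \in \mathcal{F}$; but $w_1/v$ and $w_2/v$ are independent rank-one valuations on $k(s,t)$, hence $w_1,w_2$ are incomparable. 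Your sketched contradiction cannot go through here: any elements witnessing the incomparability of $w_1,w_2$ (e.g.\ $x=1/t$, $y=1/s$) have $v$-value zero and thus already lie in $H$, so the rigidity conditions of Lemma~\ref{lem:valuative}—which constrain only elements \emph{outside} $H$—say nothing about them. Your alternative via \cite{Arason1987} fares no better: there is no single valuation refining every element of $\mathcal{F}$, since $w_1,w_2$ above have no common refinement. (What is true is that $\mathcal{F}$ is \emph{directed} under coarsening, but establishing that is essentially the content of the lemma.)

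The paper's route avoids this. It fixes one $w \in \mathcal{F}$, passes to the coarsening $v$ of $w$ determined by the maximal convex subgroup of $\Gamma_w$ contained in $w(H)$, and then shows that $\Oc_v^\times$ admits an intrinsic description in terms of $H$ alone: it is exactly the set of $h \in H$ such that $h+x \equiv 1+x \pmod H$ for every $x \in K^\times \smallsetminus H$. This forces $v$ to be independent of the initial choice of $w$, so $v$ is a coarsening of every element of $\mathcal{F}$, and the lemma follows. Your treatment of the equality criterion $w = v_H$ via convex subgroups is correct and matches the paper; only the existence step needs to be replaced.
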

\begin{proof}
Let $w$ be any valuation such that $\Oc_w^\times \leq H$ and consider the coarsening $v$ of $w$ which corresponds to the quotient of $\Gamma_w$ by the maximal convex subgroup of $w(H)$.
Namely, $v$ is the coarsest coarsening of $w$ such that $\Oc_v^\times \leq H$.
Furthermore, we note that $v$ induces a canonical isomorphism $K^\times/H \cong \Gamma_v/v(H)$.

By construction, $v(H)$ contains no non-trivial convex subgroups.
Thus, we deduce the following: If $x,y \in K^\times$ are given such that $x/y \in H$ and $v(x) < v(y)$, then there exists a $z \in K^\times$ such that $x \cdot H, y \cdot H \neq z \cdot H$ and $v(x) < v(z) < v(y)$.

Suppose that $h \in H$ and $x \in K^\times \smallsetminus H$.
Then $v(h) \neq v(x)$.
Moreover $v(h) < v(x)$ if and only if $h+x \in H$.
Similarly, $v(h) > v(x)$ if and only if $h+x \in x \cdot H$.
The discussion above shows that an element $h \in H$ such that $(1+x)/(h+x) \in H$ for all $x \in K^\times \smallsetminus H$, must be contained in $\Oc_v^\times$.
We deduce that $\Oc_v^\times$ depends only on $H$ and $K$, but not at all on the original choice of $w$.
More precisely, $\Oc_v^\times$ is exactly the set of all $h \in H$ such that, for all $x \in K^\times \smallsetminus H$, one has $(1+x)/(h+x) \in H$.
\end{proof}

\begin{defn}
\label{defn:v_H}
Suppose that $H$ is a valuative subgroup of $K^\times$.
We denote by $v_H$ the {\bf canonical valuation associated to $H$}, as described in Lemma \ref{lem:v_H}.
I.e. $v_H$ is the \emph{unique} coarsest valuation such that $\Oc_{v_H}^\times \leq H$.

Similarly, suppose $I$ is a valuative subgroup of $\Gc_K^a(n)$.
We denote by $v_I$ the canonical valuation $v_H$ associated to $H = I^\perp$.
Namely, $v_I$ is the unique coarsest valuation such that $I \leq I_{v_I}(n)$.
If $f \in \Gc_K^a(n)$ is a given valuative element, we will also denote by $v_f$ the valuation $v_{\langle f \rangle} = v_{\ker f}$.
\end{defn}

\subsection{C-pairs and Comparability of Valuations}

\begin{prop}
\label{prop:comparability-of-valuatives-in-rigid}
Let $f,g \in \Gc_K^a(n)$ be given valuative elements.
Denote the map $(f,g) : K^\times \rightarrow R_n\times R_n$ by $\Psi$.
Then the following are equivalent:
\begin{enumerate}
\item The valuations $v_f$ and $v_g$ are comparable.
\item The subgroup $\langle f,g \rangle$ is valuative.
\item For all $x \in K^\times \smallsetminus\{0,1\}$, the subgroup $\langle \Psi(1-x),\Psi(x) \rangle$ is cyclic.
\end{enumerate}
\end{prop}
\begin{proof}
First we prove that (1) and (2) are equivalent.
Assume (1) and without loss assume that $v_f$ is coarser than $v_g$.
Then $I_{v_f}(n) \leq I_{v_g}(n)$; thus $f,g \in I_{v_g}(n)$ and $\langle f,g \rangle$ is valuative.

Conversely, assume (2) and let $I := \langle f,g \rangle$.
Since $f \in I_{v_I}(n)$, we deduce from Lemma \ref{lem:v_H} that $v_f$ is a coarsening of $v_I$.
Similarly, $v_g$ is a coarsening of $v_I$.
Thus, $v_f$ and $v_g$ are comparable as they are both coarsenings of the valuation $v_I$.
Therefore (1) and (2) are equivalent.

The implication $(2) \Rightarrow (3)$ follows from the definition of ``valuative;'' see e.g. Lemma \ref{lem:I-and-D-give-c-pairs}.
It remains to show that $(3) \Rightarrow (2)$.

Assume that condition (3) holds true. 
Namely, for all $x  \in K^\times \smallsetminus\{0,1\}$, the subgroup $\langle\Psi(1-x), \Psi(x) \rangle$ is cyclic.
Since $\Psi(-1) = 0$, one equivalently has: for all $x \in K^\times \smallsetminus\{0,-1\}$, the subgroup  $\langle \Psi(1+x),\Psi(x) \rangle$ is cyclic.
Since $R_n$ is a quotient of a discrete valuation ring, $\langle \Psi(1+x),\Psi(x) \rangle$ is cyclic if and only if there exists some $a \in R_n$ such that $\Psi(1+x) = a \cdot \Psi(x)$ or $\Psi(x) = a \cdot \Psi(1+x)$.

Consider $T := \ker\Psi = \ker f \cap \ker g$.
We will use Lemma \ref{lem:valuative} to prove that $T$ is a valuative subgroup of $K^\times$, and this will yield (2).
We will first prove that for all $x \in K^\times \smallsetminus T$, one has $1+x \in T \cup x \cdot T$; i.e. we will show that for such an $x$, one has $\Psi(1+x) \in \{ \Psi(1), \Psi(x)\}$.
Let $x \in K^\times \smallsetminus T$ be given. 
We have two cases to consider: either $\Psi(1+x) = a \cdot \Psi(x)$, or $\Psi(x) = a \cdot \Psi(1+x)$.
As $f,g$ are valuative, we recall that $f(1+x) \in \{ f(1), f(x)\}$ and similarly $g(1+x) \in \{g(1),g(x)\}$.

\vskip 5pt
\noindent{\bf Case:} $\Psi(1+x) = a \cdot \Psi(x)$.

Namely, $f(1+x) = a\cdot f(x)$ and $g(1+x) = a\cdot g(x)$.
If $g(x) = 0$ or $f(x) = 0$, we trivially have $\Psi(1+x) = \Psi(1)$ or $ \Psi(1+x) = \Psi(x)$.
Thus, we may assume without loss that $f(x),g(x) \neq 0$.

If $f(1+x) = f(x)$ and $g(1+x) = g(x)$, then we are done as $\Psi(1+x) = \Psi(x)$.
Thus, we may assume, for example, that $f(1+x) = f(x)$ and $g(1+x) = g(1) = 0$.
Therefore $f(x) = a\cdot f(x)$ and $a\cdot g(x) = 0$.
Since $f(x) \neq 0$ and $f(x) = a \cdot f(x)$, we see that $a$ must be a unit in $R_n$ (in fact $a \in 1 + \ell \cdot R_n$) and this implies that $g(x) = 0$ -- contradiction!

To summarize:  $f(1+x) = f(x)$ if and only if $g(1+x) = g(x)$, and $f(1+x) = 0$ if and only if $g(1+x) = 0$.
In particular, $\Psi(1+x) = \Psi(x)$ or $\Psi(1+x) = \Psi(1)$, as required.

\vskip 5pt
\noindent{\bf Case:} $\Psi(x) = a \cdot \Psi(1+x)$.

Since $\Psi(x) \neq 0$, we may assume, for example that $f(x) \neq 0$ (otherwise $g(x) \neq 0$).
As $\Psi(x) = a \cdot \Psi(1+x)$, we see that $f(x) = a \cdot f(1+x)$.
Therefore, $f(1+x) \neq 0$ and, since $f$ is valuative, we see that $f(x) = f(1+x) = a \cdot f(x)$.
Therefore, $a \in 1 + \ell \cdot R_n$ and so $a$ is a unit.
Thus, one has $\Psi(1+x) = a^{-1} \cdot \Psi(x)$, and we have reduced to the previous case.

We have just proved that for all $x \in K^\times \smallsetminus T$, one has $1+x \in T \cup x \cdot T$.
To complete the proof that $T$ is valuative using Lemma \ref{lem:valuative}, we must show that, for all $x,y \in K^\times \smallsetminus T$ such that $1+x,1+y \in T$, one has $1+x(1+y) \in T$.
Let $x,y \in K^\times \smallsetminus T$ be given such that $\Psi(1+x) = \Psi(1+y) = 0$.
We must show that $\Psi(1+x\cdot (1+y)) = 0$.

Observe that $\Psi(1+x\cdot (1+y)) = a \cdot \Psi(x)$ for some $a \in \{0,1\}$; this is because $\Psi(1+y) = 0$ and $\Psi(x) = \Psi(x\cdot (1+y)) \neq 0$.
Furthermore, $\Psi(1+x\cdot (1+y)) = b \cdot \Psi(xy)$ for some $b \in \{0,1\}$; this is because $1+x\cdot (1+y) = 1+x+xy = t + xy$ for some $t \in T$, as $\Psi(1+x) = 0$.
We now have two cases to consider: either $\Psi(y) = -\Psi(x)$, or $\Psi(y) \neq -\Psi(x)$.

\vskip 5pt
\noindent{\bf Case:} $\Psi(y) = -\Psi(x)$.

In this case, $f(x) = 0$ if and only if $f(y) = 0$, and $g(x) = 0$ if and only if $g(y) = 0$.
If $f(x) = 0$ then $f(1+x\cdot (1+y)) = 0$ as well, since $f(1+x\cdot (1+y)) = a \cdot f(x)$; similarly, if $g(x) = 0$ then $g(1+x\cdot (1+y)) = 0$.
If $f(x) \neq 0$ then $f(y) \neq 0$ and therefore $f(1+x\cdot (1+y)) = 0$ since $f$ is valuative; similarly, if $g(x) \neq 0$ then $g(1+x\cdot (1+y)) = 0$.
In any case, we have $\Psi(1+x\cdot (1+y)) = 0$.

\vskip 5pt
\noindent{\bf Case:} $\Psi(y) \neq -\Psi(x)$.

Recall that $\Psi(1+x\cdot (1+y)) = a \cdot \Psi(x) = b \cdot \Psi(xy)$ for some $a,b \in \{0,1\}$.
Furthermore, we recall that $\Psi(x),\Psi(y) \neq 0$ and by assumption $\Psi(xy) \neq 0$.
Therefore, the only possibility is that $a,b = 0$.
In other words, $\Psi(1+x\cdot (1+y)) = 0$.
This implies (2) using Lemma \ref{lem:valuative}.
\end{proof}

\begin{remark}
\label{rem:c-pair-vs-rigid}
In this remark we will compare the condition of Proposition \ref{prop:comparability-of-valuatives-in-rigid} with the C-pair property.
Let $f,g \in \Gc_K^a(n)$ be given and denote the map $(f,g) : K^\times \rightarrow R_n \times R_n$ by $\Psi$.
Assume that for all $x \in K^\times \smallsetminus \ker\Psi$, the subgroup $\langle \Psi(x),\Psi(1-x) \rangle$ is cyclic.
Clearly, in this case, $f,g$ form a C-pair.
The converse holds true if $n = 1$ or $n = \infty$ since $R_n$ is a domain in these cases.
For general $n \in \Nbar$, however, the converse is completely false.
Lemma \ref{lem:c-pairs-vs-rigid} describes a weak converse of the statement for a general $n$, which follows from our cancellation principle.
\end{remark}

\begin{lem}
\label{lem:c-pairs-vs-rigid}
Let $n \in \Nbar$ be given and let $M := \Mfr_1(n)$.
Suppose that $a,b,c,d \in R_M$ are given such that $ad = bc$.
Then $\langle (a,b)_n,(c,d)_n \rangle$ is a cyclic subgroup of $R_n \times R_n$.

In particular, the following holds.
Let $f,g \in \Gc_K^a(M)$ be a given and denote the map $(f_n,g_n) : K^\times \rightarrow R_n \times R_n$ by $\Psi$.
Assume furthermore that $f,g$ form a C-pair.
Then, for all $x \in K^\times \smallsetminus \ker\Psi$, the subgroup $\langle \Psi(1-x),\Psi(x) \rangle$ is cyclic.
\end{lem}
\begin{proof}
The $n = \infty$ case is trivial, as noted in Remark \ref{rem:c-pair-vs-rigid}.
Thus, we may assume that $n \in \Nb$.
Assume without loss that $a = ec$ for some $e \in R_M$ (otherwise $c = ea$ for some $e \in R_M$).
Thus $ad = bc = edc$.
If $c_n \neq 0$, then one has $(de)_n = b_n$ by the cancellation principle. 
Thus $(a,b)_n = e_n \cdot (c,d)_n$ and so $\langle (a,b)_n,(c,d)_n \rangle$ is cyclic.

On the other hand, if $c_n = 0$, then $a_n = 0$ as well.
Thus $\langle (a,b)_n,(c,d)_n \rangle = \langle (0,b_n),(0,d_n) \rangle$ is cyclic.
\end{proof}

Using the fact that, for any valuation $v$ of $K$, the canonical map $I_v(\Mfr_1(n)) \rightarrow I_v(n)$ is surjective (as $\Gamma_v = K^\times/\Oc_v^\times$ is torsion-free), along with Proposition \ref{prop:comparability-of-valuatives-in-rigid}, the discussion of Remark \ref{rem:c-pair-vs-rigid} and Lemma \ref{lem:c-pairs-vs-rigid}, we deduce the following lemma which summarizes the discussion:
\begin{lem}
\label{lem:comp-of-valuations-in-C-pairs}
Let $f_i \in \Gc_K^a(n)$ be a collection of valuative elements.
Then the following are equivalent:
\begin{enumerate}
\item The valuations $v_{f_i}$ are all comparable.
\item The subgroup $I := \langle f_i \rangle_i$ is valuative.
\item For all pairs of indices $i,j$, there exists a C-pair $g,h \in \Gc_K^a(\Mfr_1(n))$ such that $g_n = f_i$ and $h_n = f_j$.
\end{enumerate}
Moreover, if these equivalent conditions hold true, then $v_I$ is the \emph{valuation-theoretic supremum} of the (comparable) valuations $v_{f_i}$.
\end{lem}
\begin{proof}
Assume (1).
Since $v_{f_i}$ are comparable, their valuation-theoretic supremum is well-defined and we call it $w$.
Then for all $i$ one has $f_i \in I_w(n)$; therefore $\langle f_i \rangle_i$ is valuative and we obtain (2).

Assume (2) and let $v := v_I$.
Observe that the canonical map $I_v(\Mfr(n)) \rightarrow I_v(n)$ is surjective, since $\Gamma_v = K^\times / \Oc_v^\times$ is torsion-free.
By Lemma \ref{lem:I-and-D-give-c-pairs}, $I_v(\Mfr(n))$ is a C-group and, since $I \leq I_v(n)$, we obtain (3).

Assume (3).
Then (1) follows from Lemma \ref{prop:comparability-of-valuatives-in-rigid}.
This completes the proof that (1), (2) and (3) are equivalent.

Now assume that the three equivalent conditions hold.
Consider $v := v_I$ and $w$ the valuation-theoretic supremum of $v_{f_i}$.
Since $f_i \in I_v(n)$, we see that $v_{f_i}$ is a coarsening of $v$ for all $i$.
This implies that $w$ is a coarsening of $v$ by the definition of $w$.

On the other hand, $v_{f_i}$ is a coarsening of $w$, and thus $f_i \in I_w(n)$ for all $i$.
Therefore, $v$ is a coarsening of $w$ by Lemma \ref{lem:v_H}.
We deduce that $v = w$, and this completes the proof of the lemma.
\end{proof}

\begin{lem}
\label{lem:c-pair-with-val-gives-decomp}
Let $n \in \Nbar$ be given and let $M := \Mfr_1(n)$.
Let $K$ be a field and let $f \in \Gc_K^a(M)$ be a valuative element.
Suppose that $g \in \Gc_K^a(M)$, and that $f,g$ form a C-pair.
Then $g_n \in D_{v}(n)$, where $v = v_{f_n}$ is the canonical valuation associated to $f_n$.
\end{lem}
\begin{proof}
Let $v = v_{f_n}$ denote the valuation associated to $f_n$; we must show that $g_n(1+\mf_v) = 0$.
Let $x \in K^\times$ be given such that $v(x) > 0$.
We will show that $g_n(1-x) = 0$ and this will complete the proof.

\vskip 5pt
\noindent{\bf Case:} $f_n(x) \neq 0$.

One has $f_n(1-x) = 0$ since $f_n \in I_v(n) \leq D_v(n)$.
Since $f$ is valuative, one has $f(1-x) \in \{f(1),f(x)\}$.
Since $f_n(x) \neq 0$ and thus $f(x) \neq 0$, we see that $f(1-x) = 0$.
Because $f,g$ form a C-pair, we deduce that $f(x)g(1-x) = 0$.
Finally, since $f_n(x) \neq 0$, we deduce from the cancellation principle that $g_n(1-x) = 0$.

\vskip 5pt
\noindent{\bf Case:} $f_n(x) = 0$.

By Lemma \ref{lem:v_H}, there exists a $y$ such that $0 < v(y) < v(x)$ and $f_n(y) \neq 0$.
Now, by the first case, we deduce that $g_n(1-y) = 0$.
Moreover, $v(y+x\cdot(1-y)) = v(y)$ and so $f_n(y+x\cdot(1-y)) = f_n(y) \neq 0$.
By the first case again, we have $g_n((1-y)\cdot(1-x)) = g_n(1-(y+x\cdot (1-y))) = 0$.
But this implies that $g_n(1-x) = 0$ as well since $g_n(1-y) = 0$.
This concludes the proof of the lemma.
\end{proof}

\subsection{Detecting subgroups of $I_v(n)$ and $D_v(n)$}

We are now ready to state and prove the main theorem of the paper which deals with C-groups.
This theorem, along with Theorem \ref{thm:cl-to-c-pairs}, gives a direct generalization of the main theorem of \cite{Bogomolov2007}.

\begin{thm}
\label{thm:c-groups-to-valuative}
Let $n \in \Nbar$ be given and let $N := \Nfr(\Mfr_1(n))$.
Let $D'' \leq \Gc_K^a(N)$ be given and assume that $D''$ is a C-group.
Then $D := D''_n$ contains a valuative subgroup $I \leq D$ such that:
\begin{itemize}
\item The quotient $D/I$ is cyclic.
\item One has $D \leq D_{v_I}(n)$.
\end{itemize}
\end{thm}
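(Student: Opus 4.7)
The plan is to work at level $m := \Mfr_1(n)$, so that $N = \Nfr(m)$, and set $D' := D''_m \leq \Gc_K^a(m)$; since the C-pair condition is preserved under the projection $R_N \twoheadrightarrow R_m$, the group $D'$ is itself a C-group. I will define $\tilde I := \{f \in D' : f \text{ is valuative}\}$ and show that $I := \tilde I_n \leq D$ is the subgroup required by the theorem.

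First I would verify $\tilde I$ is a subgroup. For $f, g \in \tilde I$ they form a C-pair in $\Gc_K^a(m)$, which lifts to a C-pair in $D'' \subset \Gc_K^a(N)$ with $N \geq \Mfr_1(m)$, so Lemma \ref{lem:comp-of-valuations-in-C-pairs} gives that $v_f, v_g$ are comparable and $\langle f, g \rangle$ is valuative; in particular $f + g \in \tilde I$. To obtain cyclicity of $D'/\tilde I$, I would apply Theorem \ref{thm:main-c-pairs-thm} at level $m$ (using the C-pair lift in $D''$ at level $\Nfr(m)$) to an arbitrary pair $\sigma, \tau \in D'$, yielding a valuation $v$ with $\langle \sigma, \tau \rangle / (\langle \sigma, \tau \rangle \cap I_v(m))$ cyclic. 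Since elements of $I_v(m)$ are manifestly valuative, the intersection lies in $\tilde I$, so $\sigma, \tau$ are linearly dependent modulo $\tilde I$; picking an element of maximal order then topologically generates $D'/\tilde I$, and $D/I$ is cyclic as a further quotient.

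Next I would show $I$ is valuative with $D \subset D_{v_I}(n)$. The key claim is that the family $\{v_{f_n} : f \in \tilde I,\ f_n \neq 0\}$ is totally ordered under refinement: for $f, g \in \tilde I$, the valuative subgroup $\langle f, g \rangle$ has a witnessing valuation $w$, and both $v_{f_n}, v_{g_n}$ are coarsenings of $w$, which are totally ordered via the poset of convex subgroups of $\Gamma_w$. I take $v$ to be the supremum, realized as $\Oc_v := \bigcap_f \Oc_{v_{f_n}}$ (a valuation ring as the intersection of a chain of valuation rings). The fact that $v$ refines every $v_{f_n}$ gives $I \subset I_v(n)$, proving $I$ is valuative and $v_I \leq v$; conversely $v_{f_n} \leq v_I$ for each $f$, so $v = \sup_f v_{f_n} \leq v_I$, whence $v_I = v$.

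The main technical obstacle is the final containment $D \subset D_{v_I}(n)$, because Lemma \ref{lem:c-pair-with-val-gives-decomp} only delivers $D \subset D_{v_{f_n}}(n)$ for each individual valuative $f$; the task reduces to showing $\bigcap_f D_{v_{f_n}}(n) = D_v(n)$. Realize each $v_{f_n}$ as the coarsening of $v$ by a convex subgroup $H_f \subset \Gamma_v$; the supremum condition forces $\bigcap_f H_f = 0$. Then any $x \in 1 + \mf_v$ has $v(x-1) > 0$, so $v(x-1) \notin H_f$ for some $f$, which places $x \in 1 + \mf_{v_{f_n}}$; this yields the needed equality of decomposition groups and hence $D \subset D_v(n) = D_{v_I}(n)$. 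The degenerate case $I = 0$, when every valuative $f \in D'$ has $f_n = 0$, is handled separately: $D$ is then itself cyclic as a quotient of $D'/\tilde I$, and the trivial valuation serves as $v_I$.
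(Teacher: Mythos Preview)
Your argument is correct and follows essentially the same route as the paper's proof: pass to level $M=\Mfr_1(n)$, collect the valuative elements of $D'=D''_M$ into a subgroup $I'$ (the paper's $I'$ is your $\tilde I$), use Theorem~\ref{thm:main-c-pairs-thm} for cyclicity of $D'/I'$, use Lemma~\ref{lem:comp-of-valuations-in-C-pairs} to see $I'$ is valuative, and then Lemma~\ref{lem:c-pair-with-val-gives-decomp} for the decomposition containment. The one place you go beyond the paper is the passage from $D\subset D_{v_{f_n}}(n)$ for each $f$ to $D\subset D_{v_I}(n)$: the paper records (just before Lemma~\ref{lem:c-pair-with-val-gives-decomp}) that $v_I=\sup_f v_{f_n}$ and then asserts the conclusion, whereas you spell out the needed fact $\mf_{v_I}=\bigcup_f \mf_{v_{f_n}}$ via $\bigcap_f H_f=0$ --- a detail the paper leaves implicit but which your argument handles correctly.
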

\begin{proof}
Let $M := \Mfr_1(n)$ and $D' := D''_M$.
Let $I'$ denote the subgroup of $D'$ which is generated by all $h \in D'$ such that $h$ is a \emph{valuative element}.
For all $f,g \in D'$, the quotient $\langle f,g \rangle/(\langle f,g \rangle \cap I')$ is cyclic by Theorem \ref{thm:main-c-pairs-thm}.
Thus, $D'/I'$ must be cyclic.

Moreover, by Lemma \ref{lem:comp-of-valuations-in-C-pairs}, $I'$ is valuative; thus $I := I'_n$ is valuative as well.
We must now prove that $D \leq D_{v_I}(n)$.

By Lemma \ref{lem:c-pair-with-val-gives-decomp}, for all $d \in D := D'_n$ and $h \in I$, one has $d \in D_{v_h}(n)$.
Since $v_I$ is the \emph{valuation-theoretic supremum} of the valuations $v_h$ for $h \in I$ (Lemma \ref{lem:comp-of-valuations-in-C-pairs}), one has $D \leq D_{v_I}(n)$, as required.
\end{proof}

Next we will prove a theorem which detects $I_v(n)$ within subgroups of $D_v(n)$ in a more precise way.
We will first need a technical lemma which is a reformulation of the approximation theorem for independent valuations.

\begin{lem}
\label{lem:non-valuative-comparable}
Let $v_1,v_2$ be two valuations of a field $K$ and assume that $f$ is a \emph{non-valuative} element of $\Gc_K^a(n)$ such that $f \in D_{v_1}(n) \cap D_{v_2}(n)$.
Then $v_1,v_2$ are comparable.
\end{lem}
\begin{proof}
Denote by $w$ the valuation associated to the finest common coarsening of $v_1,v_2$; i.e. $\Oc_w = \Oc_{v_1} \cdot \Oc_{v_2}$.
Also, let $H := \ker f$.
Further, denote by $H_w$ the kernel of the canonical surjection $k(w)^\times \rightarrow (\Oc_w^\times \cdot H) / H$.
For $i = 1,2$, consider $w_i = v_i/w$ the valuations of $k(w)$ induced by $v_i$.

By construction of $w_1,w_2$, if $w_1,w_2$ are both non-trivial, then they must be independent; we claim that this doesn't happen.
If $w_1,w_2$ are indeed independent, then we would have $(1+\mf_{w_1}) \cdot (1+\mf_{w_2}) = k(w)^\times$ by the approximation theorem for independent valuations.
However, we note that $H_w \neq k(w)^\times$, since $(\Oc_w^\times \cdot H) / H \cong k(w)^\times/H_w$ and $\Oc_w^\times$ is not contained in $H$ by our assumption on $f$.

On the other hand, $f \in D_{v_1}(n) \cap D_{v_2}(n)$; thus $1+\mf_{v_1} \leq H$ and $1+\mf_{v_2} \leq H$.
This implies that $1+\mf_{w_1} \leq H_w$ and $1+\mf_{w_2} \leq H_w$.
Therefore, $(1+\mf_{w_1}) \cdot (1+\mf_{w_2}) \leq H_w$, while $H_w$ is properly contained in $k(w)^\times$.
In particular, we see that $w_1,w_2$ cannot be independent.
Therefore, at least one of $w_1$ or $w_2$ must be trivial and thus $v_1,v_2$ are comparable.
\end{proof}

\begin{thm}
\label{thm:main-c-groups}
Let $n \in \Nbar$ be given and let $N := \Nfr(\Mfr_2(\Mfr_1(n)))$.
Let $I'' \leq D'' \leq \Gc_K^a(N)$ be given and consider $I := I''_n$ and $D := D''_n$.
Assume that $I'' \leq \Ibc(D'')$, and that $D$ is not a C-group.
Then $I$ is valuative and $D \leq D_{v_I}(n)$.
\end{thm}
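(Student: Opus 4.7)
The plan is to apply Theorem~\ref{thm:c-groups-to-valuative} twice -- first to $I''$ to extract a valuative subgroup of $I$ of cyclic index, and then to $\langle I'',d''\rangle$ for each $d''\in D''$ to extend the decomposition conclusion to all of $D$; the hypothesis that $D$ is not a C-group enters only at the end to collapse the remaining cyclic ambiguity.

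First, $I''\leq \Ibc(D'')$ implies in particular that every pair inside $I''$ forms a C-pair, so $I''$ is itself a C-group at level $N$. As $N=\Nfr(\Mfr_2(\Mfr_1(n)))\geq \Nfr(\Mfr_1(n))$, Theorem~\ref{thm:c-groups-to-valuative} applies and yields a valuative subgroup $I_0\leq I$ with $I/I_0$ cyclic and $I\leq D_{v_{I_0}}(n)$; set $v:=v_{I_0}$. Next, fix any $d''\in D''$. The subgroup $\langle I'',d''\rangle$ is again a C-group at level $N$: pairs inside $I''$ form C-pairs, the hypothesis supplies the pairs $(i'',d'')$ for $i''\in I''$, and $(d'',d'')$ is trivial. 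Theorem~\ref{thm:c-groups-to-valuative} applied to $\langle I'', d''\rangle$ produces a valuative subgroup $J_d\leq \langle I,d_n\rangle$ with cyclic quotient and $\langle I,d_n\rangle\leq D_{v_{J_d}}(n)$. Inspecting the construction in the proof of Theorem~\ref{thm:c-groups-to-valuative} -- the valuative subgroup is generated by the level-$\Mfr_1(n)$ valuative elements of the ambient C-group -- one sees $I_0\leq J_d$. Hence $v_{J_d}$ refines $v$, so $D_{v_{J_d}}(n)\leq D_v(n)$ and thus $d_n\in D_v(n)$. As $d$ was arbitrary, $D\leq D_v(n)$.

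It remains to show $I_0=I$. Suppose for contradiction that there exists $i\in I\smallsetminus I_0$. Since $D$ is not a C-group, fix $d_1,d_2\in D$ not forming a C-pair. The plan is to exploit the extra $\Mfr_2$ layer in $N$ -- precisely the difference between the hypothesis here and that of Theorem~\ref{thm:c-groups-to-valuative} -- by applying Theorem~\ref{thm:main-c-pairs-thm} to the C-pairs $(i,d_1)$ and $(i,d_2)$ simultaneously. This produces valuations $w_1,w_2$ refining $v$ that witness the inertial behavior of $i$ against each $d_j$; the failure of $(d_1,d_2)$ to form a C-pair, combined with Lemma~\ref{lem:comp-of-valuations-in-C-pairs} and the cancellation principle, should then force $w_1$ and $w_2$ to fit into a single refinement $w$ for which $i\in I_w(n)$. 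Re-tracing the construction of $I_0$ via the proof of Theorem~\ref{thm:c-groups-to-valuative} then places $i$ in $I_0$, contradicting $i\notin I_0$.

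The main obstacle is this final step: converting the combinatorial failure of $(d_1,d_2)$ being a C-pair into concrete valuation-theoretic comparability for the valuations produced from $(i,d_1)$ and $(i,d_2)$. This cancellation-principle bookkeeping is precisely what the additional $\Mfr_2$ layer in the hypothesis on $N$ is designed to accommodate.
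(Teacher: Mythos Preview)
Your first two paragraphs are correct and establish $D\leq D_v(n)$ for $v=v_{I_0}$, but this detour is not needed in the paper's argument, and more importantly your third paragraph has a genuine gap.

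The issue is twofold. First, you invoke Lemma~\ref{lem:comp-of-valuations-in-C-pairs}, which compares valuations attached to \emph{valuative} elements; but the element $i$ you have chosen has a level-$\Mfr_1(n)$ lift that is \emph{non}-valuative (that is exactly what $i\notin I_0$ means, once one unwinds the construction in Theorem~\ref{thm:c-groups-to-valuative}). The lemma you actually need is Lemma~\ref{lem:non-valuative-comparable}: a non-valuative element lying in two decomposition groups forces those valuations to be comparable. Second, your contradiction target is wrong. You are trying to push $i$ back into $I_0$; the paper instead shows that the existence of \emph{any} non-valuative $f\in I':=I''_{\Mfr_1(n)}$ forces $D$ to be a C-group.

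Concretely, the paper's argument runs as follows. Set $M=\Mfr_1(n)$, $M'=\Mfr_2(M)$, $I'=I''_M$, $D'=D''_M$. Suppose $f\in I'$ is non-valuative, and take any $g_1,g_2\in D'$ with $\langle f,g_j\rangle$ non-cyclic. Lift to $f',g_1',g_2'$ at level $M'$; since $N=\Nfr(M')$, Theorem~\ref{thm:main-c-pairs-thm} produces valuations $v_j$ with $f',g_j'\in D_{v_j}(M')$ and $\langle f',g_j'\rangle/\langle f',g_j'\rangle\cap I_{v_j}(M')$ cyclic. Because $f'$ is non-valuative and lies in both $D_{v_1}(M')$ and $D_{v_2}(M')$, Lemma~\ref{lem:non-valuative-comparable} makes $v_1,v_2$ comparable. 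One then extracts combinations $a_jf'+b_jg_j'\in I_{v_j}(M')$ with $b_jg_j'\not\equiv 0\bmod\ell^M$, and Lemma~\ref{lem:I-and-D-give-c-pairs} plus the cancellation principle (this is where $M'=\Mfr_2(M)$ is used) yield that $g_1,g_2$ form a C-pair at level $M$. Varying $g_1,g_2$ shows $D'$---hence $D$---is a C-group, contradicting the hypothesis. Thus every $f\in I'$ is valuative; then the argument of Theorem~\ref{thm:c-groups-to-valuative} together with Lemma~\ref{lem:c-pair-with-val-gives-decomp} finishes. Note also that there is no reason your $w_1,w_2$ should refine $v$; comparability comes only from the non-valuative element via Lemma~\ref{lem:non-valuative-comparable}.
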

\begin{proof}
Let $M := \Mfr_1(n)$, and consider $I' := I''_M$ and $D' := D''_M$.
Since $D$ is not a C-group and $D = D'_n$, we deduce that $D'$ is not a C-group.
Arguing similarly to Theorem \ref{thm:c-groups-to-valuative}, it suffices to prove that every element $f \in I'$ is valuative.
Assume for a contradiction that $f \in I'$ is a \emph{non-valuative} element.

Let $g_1,g_2 \in D'$ be given such that, for $i = 1,2$, the group $\langle f,g_i \rangle$ is non-cyclic.
We will show that $\langle f,g_1,g_2 \rangle$ is a C-group.
Then, as we vary over all such $g_1,g_2$, we would deduce that $D'$ (and thus $D$) is a C-group as well; this will provide the required contradiction and complete the proof of the theorem.

For the rest of the proof, let $M' := \Mfr_2(M) = \Mfr_2(\Mfr_1(n))$.
Choose lifts $f' \in I''_{M'}$ resp. $g_i' \in D''_{M'}$ for $f$ resp. $g_i$; note that $f'$ is non-valuative.
Then by Theorem \ref{thm:main-c-pairs-thm}, there exist valuations $v_1,v_2$ of $K$ such that:
\begin{itemize}
\item One has $\langle f',g_i' \rangle \leq D_{v_i}(M')$ for $i = 1,2$.
\item The quotient $\langle f',g_i' \rangle / (\langle f',g'_i \rangle \cap I_{v_i}(M'))$ is cyclic for $i = 1,2$.
\end{itemize}
For $i = 1,2$, we deduce that there exist $a_i,b_i \in R_{M'}$ such that (1) $a_i f' + b_i g'_i \in I_{v_i}(M')$ and (2) at least one of $a_i,b_i$ is a unit.
Indeed, otherwise $\langle f',g'_i \rangle \cap I_{v_i}(M')$ would be contained in $\langle \ell \cdot f',\ell \cdot g'_i \rangle = \ell \cdot \langle f',g'_i \rangle$ but $\langle f',g'_i \rangle/\ell$ is non-cyclic since $\langle f,g_i \rangle$ is non-cyclic.

We will now show that $(b_i g'_i)_M \neq 0$, and, in particular, $(b_1)_M,(b_2)_M \neq 0$.
If $a_i$ is a unit and $(b_i g'_i)_M = 0$, this would imply that $f$ is valuative, contradicting our original assumption; thus $a_i \in R_{M'}^\times$ implies $(b_i g'_i)_M \neq 0$.
On the other hand, if $b_i$ is a unit, then $(b_i g'_i)_M \neq 0$ since $g_i \neq 0$.
In particular, we deduce that $(b_1)_M,(b_2)_M \neq 0$.

Since $f'$ is non-valuative and $f' \in D_{v_1}(M') \cap D_{v_2}(M')$, the valuations $v_1,v_2$ must be comparable by Lemma \ref{lem:non-valuative-comparable}.
In particular, $\langle f',a_1f' + b_1g'_1, a_2 f' + b_2g'_2 \rangle = \langle f',b_1g'_1,b_2g'_2 \rangle$ forms a C-group by Lemma \ref{lem:I-and-D-give-c-pairs} and Proposition \ref{prop:comparability-of-valuatives-in-rigid}.

By the cancellation principle, $\langle f,g_1,g_2 \rangle$ form a C-group as well, as follows.
The pairs $f,g_i$, for $i = 1,2$ are C-pairs by assumption.
As for $g_1,g_2$, for all $x \in K^\times \smallsetminus\{0,1\}$ one has:
\[ b_1  b_2  \cdot g'_1(1-x) g'_2(x) = b_1 b_2 \cdot g'_1(x) g'_2(1-x). \]
Since $(b_1)_M,(b_2)_M \neq 0$ and $M' = \Mfr_2(M)$, the cancellation principle implies that 
\[g_1(1-x)g_2(x) = g_1(x)g_2(1-x). \]
This shows that $g_1,g_2$ form a C-pair, as required.
\end{proof}

\section{Detecting $D_v(n)$ and $I_v(n)$}
\label{sec:detecting-d_vn-i_vn}

In this section, we show how to detect the subgroups $D_v(n)$ and $I_v(n)$ precisely for certain ``maximal'' valuations $v$.
We also show that, in the case of function fields, these ``maximal'' valuations include the Parshin chains of divisors.

Let $(K,v)$ be a valued field and let $f \in D_v(n)$ be given.
Then the restriction $f|_{\Oc_v^\times}$ defines a homomorphism $f_v : k(v)^\times \rightarrow R_n$ such that $f_v(-1) = 0$.
In particular this provides a canonical map $D_v(n) \rightarrow \Gc^a_{k(v)}(n)$, which we denote by $f \mapsto f_v$.
This map, in some sense, forces the C-pair property as we see in the following lemma.

\begin{lem}
\label{lem:C-pair-compat-in-res-fields}
Let $(K,v)$ be a valued field and let $n \in \Nbar$ be given.
Let $w$ be a refinement of $v$ and consider $w/v$ the valuation of $k(v)$ induced by $w$.
Then the following hold:
\begin{enumerate}
\item The map $D_v(n) \rightarrow \Gc_{k(v)}^a(n)$ given by $f \mapsto f_v$ induces the following compatible isomorphisms:
\begin{enumerate}
\item $D_v(n)/I_v(n) \cong \Gc_{k(v)}^a(n)$.
\item $D_w(n)/I_v(n) \cong D_{w/v}(n)$.
\item $I_w(n)/I_v(n) \cong I_{w/v}(n)$.
\end{enumerate}
\item Let $f,g \in D_v(n)$ be given. Then $f,g$ form a C-pair if and only if their images $f_v,g_v$ in $\Gc^a_{k(v)}(n)$ form a C-pair.
\end{enumerate}
\end{lem}
\begin{proof}
\noindent{\bf Proof of (1):}

Assume with no loss that $n \in \Nb$ as the $n=\infty$ case follows in the limit.
Consider the short exact sequence:
\[ 1 \rightarrow k(v)^\times/\pm1 \rightarrow K^\times/\pm (1+\mf_v) \rightarrow \Gamma_v \rightarrow 1. \]
Tensoring this with $\Z/\ell^n$ and noting that $\Gamma_v$ is torsion-free, we obtain the following short exact sequence:
\[ 1 \rightarrow (k(v)^\times /\ell^n)/\pm 1 \rightarrow (K^\times/\ell^n)/\pm (1+\mf_v) \rightarrow \Gamma_v/\ell^n \rightarrow 1.\]
Applying the functor $\Hom(\bullet,\Z/\ell^n)$, we deduce that the following short sequence is exact by Pontryagin Duality:
\[ 1 \rightarrow I_v(n) \rightarrow D_v(n) \rightarrow \Gc_{k(v)}^a(n) \rightarrow 1. \]
This shows isomorphism (a).
Isomorphism (b) is obtained similarly by starting with the following exact sequence:
\[ 1 \rightarrow (k(v)^\times)/\pm(1+\mf_{w/v}) \rightarrow K^\times/\pm(1+\mf_w) \rightarrow \Gamma_v \rightarrow 1.\]
Isomorphism (c) is obtained similarly by starting with the following exact sequence:
\[ 1 \rightarrow \Gamma_{w/v} \rightarrow \Gamma_w \rightarrow \Gamma_v \rightarrow 1.\]

\vskip 5pt
\noindent{\bf Proof of (2):}

If $f,g$ form a C-pair then clearly $f_v,g_v$ form a C-pair as well.
Conversely, assume that $f_v,g_v$ are a C-pair.
Let $x \in K\smallsetminus\{0,1\}$ be given.

\vskip 5pt
\noindent{\bf Case:} $v(x) > 0$.

In this case, $1-x \in 1+\mf_v \leq \ker f \cap \ker g$.
Thus, $f(1-x)g(x) = 0  = f(x)g(1-x)$.

\vskip 5pt
\noindent{\bf Case:} $v(x) < 0$.

In this case, $x^{-1}(1-x) = x^{-1}-1 \in -(1+\mf_v)$ so that $(1-x) \in -x\cdot(1+\mf_v)$.
Thus, $f(1-x)g(x) = f(-x)g(x) = f(x)g(x) = f(x)g(-x) = f(x)g(1-x)$.

\vskip 5pt
\noindent{\bf Case:} $v(x) = 0$ and $v(1-x) > 0$.

In this case, we apply one of the previous cases replacing $x$ with $1-x$.

\vskip 5pt
\noindent{\bf Case:} $v(x) = v(1-x) = 0$.

We note that for all $z \in \Oc_v^\times$, one has $f(z) = f_v(\bar z)$ and $g(z) = g_v(\bar z)$, where $\bar z = z + \mf_v$ denotes the image of $z$ in $k(v)^\times$.
Since $f_v,g_v$ form a C-pair and $x,1-x \in \Oc_v^\times$, we deduce that $f(x)g(1-x) = f(1-x)g(x)$.
\end{proof}

\subsection{The set $\Vc_{K,n}$}
\label{sec:set-vc_k-n}

We will now show how to detect $I_v(n)$ and $D_v(n)$ precisely for certain natural collection of valuations $v$; we will call this collection $\Vc_{K,n}$.

In order to motivate our definition of $\Vc_{K,n}$, we begin with few remarks.
First, if $\Gamma_v$ contains a non-trivial $\ell$-divisible convex subgroup and $v'$ is the coarsening associated to this convex subgroup, then $I_v(n) = I_{v'}(n)$ by Lemma \ref{lem:v_H}.
Because of this, we must assume condition (V1) that $\Gamma_v$ contains no such subgroup.

Second, we will require a ``maximality'' condition on $v$ which ensures, in particular, that $k(v)$ has no $\ell$-Henselian valuations with non-$\ell$-divisible value group.
If $k(v)$ has such a valuation $w$, the compatibility in taking compositions of valuations essentially forces us to replace $v$ by $w \circ v$.
This will become condition (V2).

Lastly, we note that when $\Gc_K(n)$ is cyclic, one cannot expect to detect anything.
For example consider $K = \C((t))$.
Then $\Gc_K^a(n)$ is cyclic by Kummer theory, the whole $\Gc_K^a(n)$ is valuative and its corresponding valuation is the $t$-adic one.
On the other hand, we can consider $K = \F_p(\mu_{2\ell^n})$ (with $p \neq \ell$).
By Kummer theory, $\Gc_K^a(n)$ is again cyclic, but $K$ has no non-trivial valuations.
Because of this observation and the compatibility in taking residue fields (see Lemma \ref{lem:C-pair-compat-in-res-fields}), one cannot expect to detect $I_v(n)$ within $D_v(n)$ when $\Gc_{k(v)}^a(n)$ is cyclic.
Thus, we will need to assume condition (V3), that $\Gc_{k(v)}^a(n)$ is non-cyclic

\begin{defn}
\label{defn:Dc-Vc}
Let $n \in \Nbar$ be given.
Let $K$ be a field.

We will denote by $\Vc_{K,n}$ the collection of valuations $v$ of $K$ which satisfy the following three conditions:
\begin{enumerate}
\item[(V1)] The value group $\Gamma_v$ contains no non-trivial $\ell$-divisible convex subgroups. Equivalently by Lemma \ref{lem:v_H}, one has $v = v_I$ for $I = I_v(n)$.
\item[(V2)] The valuation $v$ is maximal among all valuations $w$ such that $D_v^n = D_w^n$ and $\Gamma_w$ contains no non-trivial $\ell$-divisible convex subgroups.
I.e. for all refinements $w$ of $v$ such that $D_w^n = D_v^n$ as subgroups of $\Gc_K^{a,n}$, one has $I_w^n = I_v^n$.
\item[(V3)] The group $\Gc_{k(v)}^a(n)$ is non-cyclic.
\end{enumerate}
For the sake of Example \ref{ex:prime-divs}, we further denote by $\Wc_{K,n}$ the collection of valuations $v$ of $K$ which only satisfy (V1) and (V2), although we will not use $\Wc_{K,n}$ in the statement of any theorem.

We also introduce the group-theoretical analogue of $\Vc_{K,n}$, which will make the statements in Remarks \ref{remark:main-detect-thm-mu} and \ref{remark:main-detect-thm-infty} much more elegant and intuitive.
We will need to use $N := \Nfr(\Mfr_2(\Mfr_1(n)))$ in this definition, although we omit it from the notation.
We denote by $\Dc_{K,n}$ the collection of subgroups $D \leq \Gc_K^a(n)$ endowed with $I \leq D$ which satisfy the following three conditions:
\begin{enumerate}
\item[(D1)] There exists $D' \leq \Gc_K^a(N)$ such that $(\Ibc(D'))_n = I$ and $D'_n = D$.
\item[(D2)] The subgroups $I \leq D \leq \Gc_K^a(n)$ are maximal with property (D1). 
Namely, if $D \leq E \leq \Gc_K^{a}(n)$ and $E' \leq \Gc_K^{a}(N)$ is given such that $E'_n = E$ and $I \leq (\Ibc(E'))_n$, then $D = E$ and $I = (\Ibc(E'))_n$.
\item[(D3)] One has $\Ibc(D) \neq D$; i.e. $D$ is not a C-group.
\end{enumerate}

To further make the notation easier in Remarks \ref{remark:main-detect-thm-mu} and \ref{remark:main-detect-thm-infty}, we will introduce notation for certain natural subsets of $\Vc_{K,n}$ and $\Dc_{K,n}$, relative to a fixed valuation $v_0$ of $K$.
\begin{enumerate}
\item We denote by $\Dc_{v_0,n}$ the subset of all $(I \leq D) \in \Dc_{K,n}$ such that $I_{v_0}(n) \leq I \leq D \leq D_{v_0}(n)$.
\item We denote by $\Vc_{v_0,n}$ the subset of all $v \in \Vc_{K,n}$ such that $v_0$ is a coarsening of $v$.
\end{enumerate}
\end{defn}

The set of valuation $\Vc_{K,n}$ contains many valuations of arithmetic/geometric interest.
The main motivating example of such valuations arise from prime divisors, as will be shown in the following example.

To keep the discussion as general as possible, we introduce some terminology.
We will say that a field $k$ is {\bf strongly $\ell$-closed} provided that any finite extension $k'|k$ satisfies $(k')^\times = (k')^{\times \ell}$.
For example, algebraically closed fields of any characteristic, and perfect fields of characteristic $\ell$ are strongly $\ell$-closed.
Observe that, if $v_0$ is a valuation of a strongly $\ell$-closed field $k$, then $k(v_0)$ is also strongly $\ell$-closed.

In the following example, we will show that geometric Parshin chains (i.e. compositions of valuations associated to Weil prime divisors) are elements of $\Wc_{K,n}$, if $K$ is a function field over a strongly $\ell$-closed field $k$.
In particular, the non-degenerate Parshin chains of \emph{non-maximal} length will lie in $\Vc_{K,n}$ while the non-degenerate Parshin chains of maximal length will lie in $\Wc_{K,n} \smallsetminus \Vc_{K,n}$.
Although the argument in Example \ref{ex:prime-divs} uses some results from sections \ref{sec:suff-many-roots} and \ref{sec:milnor-k-theory}, the proofs of these results do not depend on the argument given in this example.
We present this example here for the sake of continuity in exposition.

\begin{example}
\label{ex:prime-divs}
Our first claim will, in particular, imply that valuations associated to prime divisors (and more generally quasi-prime divisors) are elements of $\Wc_{K,n}$, and in most cases they are elements of $\Vc_{K,n}$.
The second claim concerns the valuation-theoretic composition of valuations in $\Wc_{\bullet,n}$.
Together, these two claims imply that Parshin-chains of (quasi-)prime divisors or non-maximal length are elements of $\Vc_{K,n}$ while the chains of maximal length are elements of $\Wc_{K,n}$.

\vskip 5pt
\noindent{\bf Prime Divisors:}

Suppose $K$ is an arbitrary field in which the polynomial $X^{2\ell^n}-1$ splits completely.
Let $v$ be a valuation of $K$ such that $\Gamma_v$ contains no non-trivial $\ell$-divisible convex subgroups.
Assume further that $k(v)$ is a function field over a strongly $\ell$-closed field $k$.
We claim that $v \in \Wc_{K,n}$.
Namely, we must prove that $v$ satisfies condition (V2).

If the transcendence degree of $k(v)|k$ is $0$, we observe that $k(v)^\times$ is $\ell$-divisible since $k$ is strongly $\ell$-closed.
Thus, it follows from the definitions that $v \in \Wc_{K,n} \smallsetminus \Vc_{K,n}$ in this case.

Now let us assume that $k(v)|k$ has transcendence degree $\geq 1$.
Assume that $w$ is a refinement of $v$ and that $D_w(n) = D_v(n)$.
Thus, we have the following inclusion of subgroups: $I_v(n) \leq I_w(n) \leq D_w(n) = D_v(n)$.
We must show that $I_v(n) = I_w(n)$.

Let $F := k(v)$ and consider the valuation $w/v$ of $F$ induced by $w$. 
Observe that $I_v(n) = I_w(n)$ if and only if $I_{w/v}(n) = 1$ as a subgroup of $\Gc_F^a(n)$, since we have a canonical isomorphism $I_w(n)/I_v(n) \cong I_{w/v}(n)$.
Thus, we can assume without loss of generality that $n = 1$ as $I_{w/v}(n)$ has the same rank as $I_{w/v}(1)$ (see Lemma \ref{lem:image-under-mod-N-to-mod-n} and/or the proof of Lemma \ref{lem:alt-defn-of-Vc}).

We must therefore prove that $I_{w/v}(1) = 0$.
Assume, for a contradiction, that $0 \neq f \in I_{w/v}(1)$ and let $T := \ker f$; note that $F^{\times \ell} \leq T$.
Clearly, $F^\times / T$ is cyclic; say, e.g. that $F^\times/T$ is generated by the image of $x \in F^\times$.
Namely, $F^\times/T = \langle x \cdot T \rangle \cong \Z/\ell$.

For all $g \in \Gc_F^a(1)$, the pair $f,g$ is a C-pair by Lemma \ref{lem:I-and-D-give-c-pairs}.
In particular, for all $H \leq F^\times$, such that $F^{\times \ell} \leq H$ and $F^\times/H \cong \Z/\ell$, the group $\Hom(F^\times/(H\cap T),\Z/\ell)$ is a C-group (considered as a subgroup of $\Gc_F^a(1)$).

Now assume that $y \in F^\times$ is \emph{any} element such that the images of $x,y$ are $\Z/\ell$ independent in $F^\times/\ell$.
In this case, we can choose $T_0$ such that $F^{\times \ell} \leq T_0 \leq T \leq F^\times$ and 
\[F^\times/T_0 = \langle x \cdot T_0,y \cdot T_0 \rangle \cong \Z/\ell \times \Z/\ell. \]
By the discussion above, for such a $T_0$, the subgroup $\Hom(F^\times/T_0,\Z/\ell)$ is a C-group.
By the K-theoretic criterion for C-pairs (Proposition \ref{prop:k-thy-c-pair}) we deduce, in particular, that $\{x,y\}_{T_0} \neq 0$ as an element of $K_2^M(F)/T_0$; the mod-$T_0$ Milnor K-theory groups are defined in \S\ref{sec:milnor-k-theory}.
In particular, $\{x,y\} \neq 0$ as an element of $K_2^M(F)/\ell$. 
We will show that this provides a contradiction by producing an element $y \in F^\times$, such that (1) $x,y$ have independent images in $F^\times/\ell$ and (2) $\{x,y\} = 0$ in $K_2(F)/\ell$.

First, since $x \in F^\times \smallsetminus F^{\times \ell}$ and $k$ is strongly $\ell$-closed, we deduce that $x$ is transcendental over $k$.
Consider the subfield $L := \overline{k(x)} \cap F$, the relative algebraic closure of $k(x)$ inside $F$.
Our aim will be to find $y \in k(x)^\times$ so that the images of $x,y$ in $L^\times/\ell$ are independent.
We have two cases to consider: $\Char k \neq \ell$ and $\Char k = \ell$.

\vskip 5pt
\noindent {\bf Case:} $\Char k \neq \ell$.

In this case, the existence of such a $y \in k(x)^\times$ is trivial since the image of the canonical map $k(x)^\times/\ell \rightarrow L^\times/\ell$ is infinite.
In fact, the image has finite index in $L^\times/\ell$ by Kummer theory since $L|k(x)$ is a finite extension and $\mu_\ell \subset k$.

\vskip 5pt
\noindent {\bf Case:} $\Char k \neq \ell$.

In this case, we see that $k$ is perfect and, since $x \notin L^{\times \ell}$, the extension $L|k(x)$ must be separable.
Consider the unique complete normal model $C$ for $L|k$ together with the (possibly branched) cover of curves $C \rightarrow \Pbb^1_k$ induced by $k(x) \rightarrow L$.
By the approximation theorem, there exists a prime divisor $P$ of $\Pbb^1_k$ and a function $y \in k(x)^\times$ such that $P$ is unramified in the cover $C \rightarrow \Pbb^1_k$, $P \neq 0,\infty$, and $v_P(y) = 1$; as usual, $v_P$ denotes the valuation of $k(x)$ associated to the prime divisor $P$.
Since $P$ is unramified in $C$, for any prolongation $P'$ of $P$ to $C$, one also has $v_{P'}(y) = 1$.
Moreover, as $P \neq 0,\infty$ and the divisor associated to $x$ on $\Pbb^1_k$ is precisely $0-\infty$, we deduce that the images of $x,y$ in $L^\times/\ell$ must be $\Z/\ell$-independent.

To summarize, in either case we have produced an element $y \in k(x)^\times$ such that the images of $x,y$ in $L^\times/\ell$ are independent.
Now we recall a theorem of Milnor stating that the following sequence is exact:
\[ 0 \rightarrow K_2^M(k) \rightarrow K_2^M(k(x)) \rightarrow \bigoplus_{P \in \Abb^1_k} K_1^M(k(P)) \rightarrow 0 \]
where the last map is the sum of the tame symbols associated to $v_P$, as $P$ ranges over the prime divisors of $\Pbb^1_k$ with support in $\Abb^1_k = \Spec k[x]$.
However, the extension $k(P)|k$ is finite and thus $k(P)^{\times \ell} = k(P)^\times$ since $k$ is strongly $\ell$-closed; in particular $K_1^M(k(P))/\ell = 0$.
Also, since $k^\times = k^{\times \ell}$, we see that $K_2^M(k)/\ell = 0$.
By tensoring the above exact sequence with $\Z/\ell$, we deduce that $K_2^M(k(x))/\ell = 0$.
In particular, $\{x,y\} = 0$ in $K_2^M(k(x))/\ell$.
By the functoriality of Milnor K-theory, we see that $\{x,y\} = 0$ in $K_2^M(F)/\ell$.

Since $L$ is relatively algebraically closed in $F$, the map $L^\times/\ell \rightarrow F^\times/\ell$ must be injective.
As the images of $x,y$ are independent in $L^\times/\ell$, their images must also be independent in $F^\times/\ell$.
This provides the desired contradiction, as we've produced an element $y \in F^\times$ such that the images of $x,y$ are independent in $F^\times/\ell$ while $\{x,y\} = 0$ in $K_2(F)/\ell$.

\vskip 5pt
\noindent{\bf Compositions of Valuations:}

We now show that compositions of valuations from $\Wc_{\bullet,n}$ lie in $\Wc_{\bullet,n}$.
Suppose that $v \in \Wc_{K,n}$ is given and $w' \in \Wc_{k(v),n}$.
Consider $w := w' \circ v$, the valuation theoretic composition of $w'$ and $v$.
By considering the canonical short exact sequence of value groups
\[ 1 \rightarrow \Gamma_{w'} \rightarrow \Gamma_{w} \rightarrow \Gamma_v \rightarrow 1 \]
we see immediately that $\Gamma_{w}$ contains no non-trivial $\ell$-divisible convex subgroups; thus condition (V1) holds true for $w$.

We must now show that (V2) holds true for $w = w' \circ v$. 
Suppose that $w_1$ is a refinement of $w$ such that $D_{w}(n) = D_{w_1}(n)$.
Since $v$ is a coarsening of $w$, it is also a coarsening of $w_1$.
This implies that $w'$ is a coarsening of $w_1/v$, as valuations of $k(v)$.
Because $D_{w}(n) = D_{w_1}(n)$, we see that $D_{w'}(n) = D_{w_1/v}(n)$ as subgroups of $\Gc_{k(v)}^a(n)$.
Since $w' \in \Wc_{k(v),n}$, we see that $I_{w'}(n) = I_{w_1/v}(n)$ by condition (V2).
Thus $I_{w}(n) = I_{w_1}(n)$ as subgroups of $\Gc_K^a(n)$, and condition (V2) holds true for $w = w' \circ v$.
\end{example}

In light of Theorem \ref{thm:main-c-groups}, in order to detect $I_v(n)$ and $D_v(n)$, we need a ``plethora'' of C-pairs in $\Gc_K^a(N)$ which arise from valuation theory.
We handle this by ensuring that the canonical maps $I_v(N) \rightarrow I_v(n)$ and $D_v(N) \rightarrow D_v(n)$ are surjective.
The map $I_v(N) \rightarrow I_v(n)$ is always surjective as $\Gamma_v$ is torsion-free; however, the map $D_v(N) \rightarrow D_v(n)$ may not be surjective in general.
However, this map is surjective in two important cases which we consider below.
The first case is when $K$ contains sufficiently many roots of unity (and thus the same is true for $k(v)$).
The second case is when $N = n$; denoting $N = \Nfr(\Mfr_2(\Mfr_1(n)))$ as in Theorem \ref{thm:main-c-groups}, we see that $N=n$ iff $n = 1$ or $n = \infty$.

\subsection{Sufficiently Many Roots of Unity}
\label{sec:suff-many-roots}

In this subsection, we will not restrict to fields $K$ whose characteristic is different from $\ell$.
Thus, instead of saying that $K$ contains roots of unity (for some readers this implicitly restricts the characteristic), we will say that the polynomial $X^{2\ell^N}-1$ splits completely in $K$.
If $N = \infty$, we take this to mean that $X^{2\ell^m}-1$ splits completely for all $m \in \Nb$.

Since it will be used in the proof of the following lemma, we take note of the following trivial fact: If $v$ is a valuation of $K$ and $X^{2\ell^N}-1$ splits completely in $K$, then the same polynomial splits completely in $k(v)$.

\begin{lem}
\label{lem:image-under-mod-N-to-mod-n}
Let $(K,v)$ be a valued field.
Let $N,n \in \Nbar$ be given with $N \geq n$, and assume that the polynomial $X^{2\ell^{N}}-1$ splits completely in $K$.
Then the following hold:
\begin{enumerate}
\item The following canonical maps are surjective:
\begin{itemize}
\item $\Gc_K^a(N) \rightarrow \Gc_K^a(n)$.
\item $I_v(N) \rightarrow I_v(n)$.
\item $D_v(N) \rightarrow D_v(n)$.
\end{itemize}
\item The two abelian pro-$\ell$ groups $\Gc_K^a(N)$ and $\Gc_K^a(n)$ have the same rank.
\item Let $w$ be a refinement of $v$ and consider the following canonical inclusion of subgroups of $\Gc_K^a(m)$ for $m = n,N$:
\[ I_v(m) \leq I_w(m) \leq D_w(m) \leq D_v(m).\]
Then $I_v(N) = I_w(N)$ if and only if $I_v(n) = I_w(n)$; and $D_w(N) = D_v(N)$ if and only if $D_w(n) = D_v(n)$.
\end{enumerate}
\end{lem}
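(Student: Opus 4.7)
The plan is to establish (1), (2), (3) in order, with part (1) being the most substantial. I observe first that (1a) is the special case of (1c) for the trivial valuation (where $\mathcal{O}_v = K$, $\mathfrak{m}_v = 0$, whence $D_v(k) = \mathcal{G}_K^a(k)$), so it suffices to treat (1b) and (1c). For (1b), the map $I_v(N) = \operatorname{Hom}(\Gamma_v, R_N) \to I_v(n) = \operatorname{Hom}(\Gamma_v, R_n)$ can be analyzed using that $\Gamma_v$ is torsion-free (being the value group of a valuation). I would lift a given $\phi \colon \Gamma_v \to R_n$ via Zorn's lemma, extending one element at a time. The critical verification is that for $x \notin H$ with $mx \in H$, the equation $m\tilde\phi(x) = \tilde\phi_H(mx)$ has a solution reducing to $\phi(x) \pmod{\ell^n}$: for $\gcd(m,\ell) = 1$ this is immediate, and for $m = \ell^a$ (by minimality, $a \leq n$) the solvability follows from a direct calculation exploiting that $\tilde\phi_H(mx) \equiv m\phi(x) \pmod{\ell^n}$ and that $\Gamma_v$ is torsion-free.

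For (1c), I use the short exact sequence $0 \to I_v(k) \to D_v(k) \to \mathcal{G}_{k(v)}^a(k) \to 0$ furnished by Lemma~3.4 at both levels $k = n, N$. A five-lemma argument on the resulting commutative diagram reduces surjectivity of $D_v(N) \to D_v(n)$ to surjectivity of $I_v(N) \to I_v(n)$ (from (1b)) and of $\mathcal{G}_{k(v)}^a(N) \to \mathcal{G}_{k(v)}^a(n)$, the latter being (1a) applied to the residue field $k(v)$. This application is justified as follows: when $\operatorname{char} k(v) \neq \ell$, Hensel's lemma lifts $\mu_{2\ell^N}(K) \subset \mathcal{O}_v^\times$ injectively to $\mu_{2\ell^N} \subset k(v)$; when $\operatorname{char} k(v) = \ell$, the group $k(v)^\times/\pm 1$ has no $\ell$-power torsion and so the surjectivity of $\mathcal{G}_{k(v)}^a(N) \to \mathcal{G}_{k(v)}^a(n)$ follows by the same torsion-free argument used in (1b).

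Part (2) is brief: the rank of a pro-$\ell$ abelian group $G$ equals $\dim_{\mathbb{F}_\ell} G[\ell]$. For $G = \mathcal{G}_K^a(k) = \operatorname{Hom}(K^\times/{\pm 1}, R_k)$, one has $G[\ell] = \operatorname{Hom}(K^\times/{\pm 1}, R_k[\ell]) \cong \operatorname{Hom}(K^\times/{\pm 1}, \mathbb{F}_\ell) = \mathcal{G}_K^a(1)$, a group independent of $k$. For part (3), the forward directions are immediate from (1): the surjections $I_v(N) \twoheadrightarrow I_v(n)$ and $I_w(N) \twoheadrightarrow I_w(n)$ force $I_v(N) = I_w(N) \Rightarrow I_v(n) = I_w(n)$, and likewise for $D$. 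The reverse direction for $I$ uses the pairing $K^\times \times \mathcal{G}_K^a(k) \to R_k$ to translate $I_v(n) = I_w(n)$ into $\mathcal{O}_v^\times \cdot K^{\times \ell^n} = \mathcal{O}_w^\times \cdot K^{\times \ell^n}$ (noting $\mathcal{O}_w^\times \subseteq \mathcal{O}_v^\times$ since $w$ refines $v$). Given $u \in \mathcal{O}_v^\times$ written as $u = u_0 x^{\ell^n}$ with $u_0 \in \mathcal{O}_w^\times$ and $x \in K^\times$, the condition $v(x^{\ell^n}) = 0$ combined with $\Gamma_v$ being torsion-free yields $v(x) = 0$, so $x \in \mathcal{O}_v^\times$; iterating produces $u \in \mathcal{O}_w^\times \cdot K^{\times \ell^{kn}}$ for any $k$, hence $u \in \mathcal{O}_w^\times \cdot K^{\times \ell^N}$ once $kn \geq N$.

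The reverse direction for $D$ is the most delicate part. The hypothesis $D_v(n) = D_w(n)$ translates (after absorbing $\pm 1$ into $K^{\times \ell}$ using $-1 \in K^{\times \ell}$ from $\mu_{2\ell} \subset K$) to $(1+\mathfrak{m}_w) \subseteq (1+\mathfrak{m}_v) K^{\times \ell^n}$. I promote this to $(1+\mathfrak{m}_w) \subseteq (1+\mathfrak{m}_v) K^{\times \ell^m}$ for $n \leq m \leq N$ by induction on $m$. Writing $u = (1+y) x^{\ell^m}$ with $y \in \mathfrak{m}_v$, one computes $x^{\ell^m} \in 1 + \mathfrak{m}_w$, whence $\bar{x} \in \mu_{\ell^m}(k(w))$. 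In the residue characteristic $\neq \ell$ case, Hensel lifts $\bar{x}$ uniquely to $\zeta \in \mu_{\ell^m}(K)$, and the inclusion $\mu_{\ell^{m+1}} \subset K$ (from $m < N$) gives $\zeta \in K^{\times \ell}$; in the residue characteristic $\ell$ case, $\mu_{\ell^m}(k(w)) = \{1\}$ forces $\bar{x} = 1$. Either way, $x \in (1+\mathfrak{m}_v) K^{\times \ell}$, and substituting back yields $u \in (1+\mathfrak{m}_v) K^{\times \ell^{m+1}}$, completing the inductive step. The main obstacle throughout is this iteration for $D$, with its careful bookkeeping of roots of unity and the case split on residue characteristic.
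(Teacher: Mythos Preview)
There are two genuine gaps. First, your argument for part~(1) is circular: you reduce (1a) to the special case of (1c) with trivial valuation, and then reduce (1c) for $(K,v)$ to (1a) for $k(v)$ via the short exact sequence. In the case $\operatorname{char} k(v) \neq \ell$ you verify that the hypothesis $\mu_{2\ell^N} \subset k(v)$ holds, but this does not \emph{prove} (1a) for $k(v)$---you have not established (1a) for any field. (The $\operatorname{char} k(v) = \ell$ case is fine, since there you argue directly from torsion-freeness of $k(v)^\times/\pm1$.) The paper avoids this by proving (1a) directly: the Pontryagin dual of $\Gc_K^a(N) \to \Gc_K^a(n)$ is the map $K^\times/\ell^n \xrightarrow{\,\cdot\ell^{N-n}\,} K^\times/\ell^N$, which is injective precisely because $\mu_{\ell^N} \subset K$ makes every $\ell^{N-n}$-th root of unity an $\ell^n$-th power.

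Second, your part~(2) fails when $N = \infty$. The identity ``rank of a pro-$\ell$ abelian group $G$ equals $\dim_{\mathbb{F}_\ell} G[\ell]$'' is false for torsion-free $G$: take $G = \Z_\ell$, which has rank $1$ but $G[\ell] = 0$. Since $\Gc_K^a(\infty) = \Hom(K^\times/\pm1, \Z_\ell)$ is torsion-free, your computation gives rank $0$. The correct invariant is $\dim G/\ell G$; the paper uses the roots-of-unity hypothesis (via the argument of part~(1)) to show $\Gc_K^a(N)/\ell^n \cong \Gc_K^a(n)$, whence the ranks agree. Your argument for (2), as written, never invokes the hypothesis at all---a warning sign. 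Your part~(3) is essentially correct and takes a slightly different route from the paper (you do not reduce to $v$ trivial), but two steps are imprecise: ``Hensel lifts $\bar x$'' is the wrong mechanism (we are not in a Henselian field; rather, the reduction $\mu_{\ell^m}(K) \to \mu_{\ell^m}(k(w))$ is bijective because $X^{\ell^m}-1$ is separable over $k(w)$ and already splits in $K$), and the conclusion ``$x \in (1+\mf_v)K^{\times\ell}$'' only follows after a second, implicit application of the induction hypothesis to $x/\zeta \in 1+\mf_w$.
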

\begin{proof}
\noindent{\bf Proof of (1):}

This is trivial if $n=\infty$ (since this forces $N = \infty = n$), and thus we can assume that $n \in \Nb$.
We will also assume that $N \in \Nb$ as the case where $N=\infty$ would follow from this in the limit.

Our assumption that $X^{2\ell^{N}}-1$ splits completely ensures that $-1 \in K^{\times \ell^N}$, and thus $K^\times/\ell^m$ is Pontryagin dual to $\Gc_K^a(m)$ for all $m \leq N$.
Therefore, the Pontryagin dual of the canonical map $\Gc_K^a(N) \rightarrow \Gc_K^a(n)$ is precisely the map:
\[ K^\times/\ell^n \xrightarrow{\ell^{N-n}} K^\times/\ell^N. \]
By Pontryagin duality, it suffices to prove that the map $K^\times/\ell^n \rightarrow K^\times/\ell^N$ is injective.

Suppose $x \in K^\times$ is given such that $x^{\ell^{N-n}} =  y^{\ell^N}$ for some $y \in K^\times$.
Then $x = y^{\ell^n} \cdot \zeta$ for some $\zeta$ with $\zeta^{\ell^{N-n}} = 1$.
But our assumptions ensure that $\zeta \in K^{\times \ell^n}$ and thus $x \in K^{\times \ell^n}$.
Thus the map $K^\times/\ell^n \rightarrow K^\times/\ell^N$ is injective and, dually, the map $\Gc_K^a(N) \rightarrow \Gc_K^a(n)$ is surjective.

The claim concerning the surjectivity of the map $I_v(N) \rightarrow I_v(n)$ is trivial as $\Gamma_v = K^\times/\Oc_v^\times$ is torsion-free.
Finally, the claim concerning surjectivity of $D_v(N) \rightarrow D_v(n)$ follows from the fact that $\Gc_{k(v)}^a(N) \rightarrow \Gc_{k(v)}^a(n)$ is surjective, along with the facts that $D_v(N)/I_v(N) = \Gc_{k(v)}^a(N)$ and $D_v(n)/I_v(n) = \Gc_{k(v)}^a(n)$ (Lemma \ref{lem:C-pair-compat-in-res-fields}). 

\vskip 5pt
\noindent{\bf Proof of (2):}

As above, we can assume with no loss that $N,n\in\Nb$.
Arguing as in the proof of claim (1), one has:
\[ \ell^{n} \cdot \Gc_K^a(N) = \Hom(K^\times/\pm 1, \ell^{n} \cdot R_N).\]
Thus the surjective map $\Gc_K^a(N) \rightarrow \Gc_K^a(n)$ corresponds precisely to the quotient $\Gc_K^a(N) \rightarrow \Gc_K^a(N)/\ell^{n} = \Gc_K^a(n)$ and this proves that $\Gc_K^a(N)$ and $\Gc_K^a(n)$ have the same rank as abelian pro-$\ell$ groups.

\vskip 5pt
\noindent{\bf Proof of (3):}

By claim (1), $I_v(N) = I_w(N)$ implies that $I_v(n) = I_w(n)$ and similarly $D_v(N) = D_w(N)$ implies that $D_v(n) = D_w(n)$.
To prove the converse it suffices to assume that $v$ is the trivial valuation by replacing $K$ with $k(v)$ and $w$ by $w/v$.
Indeed, by Lemma \ref{lem:C-pair-compat-in-res-fields}, one has $I_w(n)/I_v(n) = I_{w/v}(n)$ and $D_w(n)/I_v(n) = D_{w/v}(n)$.
Making these assumptions, we have $I_v(n) = 1$ and $D_v(n) = \Gc_K^a(n)$.

Assume that $I_w(n) = I_v(n) = 1$.
Then $\Gamma_w = \ell^n \cdot \Gamma_w$ and so $\Gamma_w = \ell^N \cdot \Gamma_w$ since $\Gamma_w$ is torsion-free; this implies that $I_w(N) = 1$.

Now assume that $D_w(n) = D_v(n) = \Gc_K^a(n)$.
Then $1+\mf_w \leq K^{\times \ell^n}$; we must show that $1+\mf_w \leq K^{\times \ell^N}$.
Let $x \in 1+\mf_w$ be given and let $y \in K^\times$ be such that $x = y^{\ell^n}$.
Applying $w$ to both sides we deduce that $y \in \Oc_w^\times$.

As usual, we denote by $t \mapsto \bar t$ the map $\Oc_w^\times \rightarrow k(w)^\times$.
The above implies that $\bar y ^{\ell^n} = \bar 1$.
Since the polynomial $X^{2\ell^N}-1$ splits in $k(w)$, there exists some $z \in \Oc_w^\times$ such that $\bar z^{\ell^{N-n}} = \bar y$.
Thus, $y = z^{\ell^{N-n}} \cdot a$ for some $a \in 1+\mf_w$.
Therefore, $x = z^{\ell^N} a^{\ell^n}$.
But, as $a \in K^{\times \ell^n}$, we see that $a^{\ell^n} \in K^{\times \ell^{2n}}$.
Continuing inductively in this way, we deduce that $x \in K^{\times \ell^N}$.
Thus, $1+\mf_w \leq K^{\times \ell^n}$, and this implies that indeed $D_w(N) = \Gc_K^a(N)$, as required.
\end{proof}

\begin{prop}
\label{prop:maximal-C-group-D}
Let $n \in \Nbar$ be given and let $N \geq \Nfr(\Mfr_1(n)))$.
Let $K$ be a field and assume that $X^{2\ell^{N}}-1$ splits completely in $K$.
Let $D \leq \Gc_K^a(n)$ be given.
Then the following are equivalent:
\begin{enumerate}
\item There exists a valuation $v$ of $K$ such that $D \leq D_v(n)$ and $D/(D \cap I_v(n))$ is cyclic.
\item There exists a subgroup $D' \leq \Gc_K^a(N)$ such that $D'$ is a C-group and $D'_n = D$.
\end{enumerate}
\end{prop}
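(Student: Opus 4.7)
The plan is to prove the two implications separately, using Theorem \ref{thm:c-groups-to-valuative} for the substantive direction $(2)\Rightarrow(1)$ and Lemma \ref{lem:image-under-mod-N-to-mod-n} to produce lifts in the direction $(1)\Rightarrow(2)$.

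For $(1)\Rightarrow(2)$: assume $v$ satisfies the stated properties. Since $D/(D\cap I_v(n))$ is cyclic, pick $d\in D$ whose class generates this quotient. The hypothesis that $X^{2\ell^N}-1$ splits completely in $K$ lets me invoke Lemma \ref{lem:image-under-mod-N-to-mod-n}(1) to get surjections $I_v(N)\twoheadrightarrow I_v(n)$ and $D_v(N)\twoheadrightarrow D_v(n)$. Choose a lift $d'\in D_v(N)$ of $d$, and let $I'$ be the preimage of $D\cap I_v(n)$ under $I_v(N)\twoheadrightarrow I_v(n)$. Define $D':=\langle d',I'\rangle\leq D_v(N)$. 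Projecting to $\Gc_K^a(n)$ kills nothing past the mod-$\ell^n$ reduction and sends $D'$ to the closed subgroup generated by $d$ and $D\cap I_v(n)$, which is exactly $D$. Finally, $D'$ is a C-group because, by Lemma \ref{lem:I-and-D-give-c-pairs}, any element of $I_v(N)$ paired with any element of $D_v(N)$ forms a C-pair, so the C-pair property holds on every pair of elements of $\{d'\}\cup I'\subseteq D_v(N)$ with at least one factor in $I_v(N)$, which is all pairs of generators.

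For $(2)\Rightarrow(1)$: assume $D'\leq\Gc_K^a(N)$ is a C-group with $D'_n=D$. Set $N_0:=\Nfr(\Mfr_1(n))\leq N$ and consider the image $D'':=D'_{N_0}$ in $\Gc_K^a(N_0)$. The C-pair identity $f(1-x)g(x)=f(x)g(1-x)$ is preserved under the ring reduction $R_N\twoheadrightarrow R_{N_0}$, so $D''$ remains a C-group, and obviously $(D'')_n=D$. Now apply Theorem \ref{thm:c-groups-to-valuative} to $D''$: it produces a valuative subgroup $I\leq D$ with $D/I$ cyclic and $D\leq D_{v_I}(n)$. Put $v:=v_I$; by the very definition of $v_I$ one has $I\leq I_v(n)$, hence $I\leq D\cap I_v(n)$, and therefore $D/(D\cap I_v(n))$ is a quotient of the cyclic group $D/I$ and thus cyclic.

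No step is a real obstacle in itself: the hard work has already been done in Theorem \ref{thm:c-groups-to-valuative} (which packages Theorem \ref{thm:main-c-pairs-thm} with the comparability machinery of Section \ref{sec:comp-valu}) and in Lemma \ref{lem:image-under-mod-N-to-mod-n}. The only place where one needs to be slightly careful is in $(1)\Rightarrow(2)$: we must simultaneously arrange that the lifted subgroup (a) surjects onto $D$ mod $\ell^n$ and (b) is a C-group. Both are achieved by keeping all lifts inside $D_v(N)$ with the non-cyclic part kept inside $I_v(N)$, which is made possible precisely by the surjectivity statements of Lemma \ref{lem:image-under-mod-N-to-mod-n}.
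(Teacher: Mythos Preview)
Your proof is correct and follows essentially the same approach as the paper's own proof: both directions use precisely the same ingredients (Theorem~\ref{thm:c-groups-to-valuative} for $(2)\Rightarrow(1)$, and Lemma~\ref{lem:image-under-mod-N-to-mod-n} together with Lemma~\ref{lem:I-and-D-give-c-pairs} to construct the lift in $(1)\Rightarrow(2)$). One tiny remark: your claim that ``all pairs of generators'' of $D'$ have at least one factor in $I_v(N)$ is literally false for the pair $(d',d')$, but any element trivially forms a C-pair with itself, so this does not affect the argument.
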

\begin{proof}
The implication $(2) \Rightarrow (1)$ is Theorem \ref{thm:c-groups-to-valuative}.

We now prove $(1) \Rightarrow (2)$.
Assume (1).
Let $I := D \cap I_v(n)$ and choose $f \in D$ such that $\langle I,f \rangle = D$.
Choose $f' \in D_v(N)$ a lifting of $f$, via Lemma \ref{lem:image-under-mod-N-to-mod-n}.
Furthermore, consider the pre-image $I' \leq I_v(N)$ of $I \leq I_v(n)$ under the surjective map $I_v(N) \rightarrow I_v(n)$.
Then $I'_n = I$ and $f'_n = f$.
Moreover, by Lemma \ref{lem:I-and-D-give-c-pairs}, we see that $\langle I',f' \rangle$ is a C-group.
Thus $D' = \langle I',f' \rangle$ satisfies the requirements of (2).
\end{proof}

\begin{prop}
\label{prop:center-mu}
Let $n \in \Nbar$ be given and let $N \geq \Nfr(\Mfr_2(\Mfr_1(n)))$.
Let $K$ be a field and assume that $X^{2\ell^{N}}-1$ splits completely in $K$.
Assume that $\Ibc(\Gc_K^a(n)) \neq \Gc_K^a(n)$.
Consider $I' := \Ibc(\Gc_K^a(N))$ and let $I := I'_n$.
Then the following hold:
\begin{enumerate}
\item The subgroup $I$ is valuative and $v := v_I \in \Vc_{K,n}$.
\item One has $I = I_v(n)$ and $D_v(n) = \Gc_K^a(n)$.
\end{enumerate}
\end{prop}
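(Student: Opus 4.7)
The plan is to invoke Theorem \ref{thm:main-c-groups} with $I'' := I'$ and $D'' := \Gc_K^a(N)$. The hypothesis $I'' \leq \Ibc(D'')$ is literally the definition of $I' = \Ibc(\Gc_K^a(N))$, while the requirement that $D := D''_n = \Gc_K^a(n)$ not be a C-group is exactly our assumption $\Ibc(\Gc_K^a(n)) \neq \Gc_K^a(n)$. The theorem then yields both that $I = I'_n$ is valuative and that $\Gc_K^a(n) \leq D_{v_I}(n)$, so $v := v_I$ is well defined and $D_v(n) = \Gc_K^a(n)$.

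To upgrade the inclusion $I \leq I_v(n)$ (which comes for free from the definition of $v_I$) to an equality, I would combine $D_v(n) = \Gc_K^a(n)$ with part (3) of Lemma \ref{lem:image-under-mod-N-to-mod-n}, applied to the pair consisting of the trivial valuation and $v$, to conclude $D_v(N) = \Gc_K^a(N)$. Lemma \ref{lem:I-and-D-give-c-pairs} then shows that every $f \in I_v(N)$ forms a C-pair with every element of $D_v(N) = \Gc_K^a(N)$, hence $I_v(N) \leq \Ibc(\Gc_K^a(N)) = I'$. Passing to the quotient mod $\ell^n$ and using the surjectivity $I_v(N) \twoheadrightarrow I_v(n)$ from part (1) of the same lemma yields $I_v(n) \leq I'_n = I$.

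It remains to verify the three conditions of Definition \ref{defn:Dc-Vc} for $v \in \Vc_{K,n}$. If $\Gamma_v$ contained a non-trivial $\ell$-divisible convex subgroup $\Delta$, then $\Hom(\Delta, R_n) = 0$ (since any $\ell$-divisible subgroup of $R_n$ is trivial), so left exactness of $\Hom(-, R_n)$ applied to $0 \to \Delta \to \Gamma_v \to \Gamma_w \to 0$ would force $I_w(n) = I_v(n) \supseteq I$ for the coarsening $w$ of $v$ associated to $\Delta$, contradicting the minimality of $v = v_I$ provided by Lemma \ref{lem:v_H}. For the maximality condition, any refinement $w$ of $v$ with $D_w(n) = D_v(n) = \Gc_K^a(n)$ gives $D_w(N) = \Gc_K^a(N)$ by Lemma \ref{lem:image-under-mod-N-to-mod-n}(3), and then the argument of the previous paragraph, repeated with $w$ in place of $v$, yields $I_w(n) \leq I = I_v(n)$, whence equality. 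Finally, were $\Gc_{k(v)}^a(n) \cong D_v(n)/I_v(n) = \Gc_K^a(n)/I$ cyclic, Lemma \ref{lem:C-pair-compat-in-res-fields} would make every pair of elements of $\Gc_K^a(n) = D_v(n)$ into a C-pair (since their images in the cyclic group $\Gc_{k(v)}^a(n)$ trivially form one), contradicting $\Ibc(\Gc_K^a(n)) \neq \Gc_K^a(n)$. The essential content is packaged into Theorem \ref{thm:main-c-groups} and Lemma \ref{lem:image-under-mod-N-to-mod-n}, so I expect no genuine obstacle beyond this translation.
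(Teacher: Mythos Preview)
Your proposal is correct and follows essentially the same approach as the paper's proof: both apply Theorem \ref{thm:main-c-groups} to get $I$ valuative and $D_v(n) = \Gc_K^a(n)$, then use Lemma \ref{lem:image-under-mod-N-to-mod-n} together with Lemma \ref{lem:I-and-D-give-c-pairs} to obtain $I_v(N) \leq I'$ and hence $I = I_v(n)$, and finally verify $v \in \Vc_{K,n}$ by the same refinement argument. Your treatment is slightly more explicit about condition (1) of $\Wc_{K,n}$ (which the paper leaves implicit here) and invokes Lemma \ref{lem:C-pair-compat-in-res-fields} rather than a direct bilinearity argument for the non-cyclicity of $\Gc_{k(v)}^a(n)$, but these are cosmetic differences.
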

\begin{proof}
We know that $I$ is valuative and, denoting $v := v_I$, one has $D_v(n) = \Gc_K^a(n)$ from Theorem \ref{thm:main-c-groups}.
Thus $D_v(N) = \Gc_K^a(N)$ by Lemma \ref{lem:image-under-mod-N-to-mod-n}.
In particular, $I_v(N) \leq I'$ by Lemma \ref{lem:I-and-D-give-c-pairs}.
Namely, $I_v(n) \leq I$ since $I'_n = I$ and $(I_v(N))_n = I_v(n)$.
Since $I \leq I_v(n)$, we deduce that $I = I_v(n)$.

We must prove that $v \in \Vc_{K,n}$.
Condition (V1) follows from the fact that $v = v_I$ is the canonical valuation associated to a valuative subgroup $I$ of $\Gc_K^a(n)$ (see Lemma \ref{lem:v_H}).

Concerning condition (V2), suppose that $w$ is a refinement of $v$ such that $D_v(n) = \Gc_K^a(n) = D_w(n)$.
Similarly to above, by Lemma \ref{lem:I-and-D-give-c-pairs} we have $I_w(N) \leq I'$; therefore $I_w(n) \leq I_v(n)$.
Since $I_v(n) \leq I_w(n)$ as well, we see that $I_w(n) = I_v(n)$.
Lastly, $\Gc_K^a(n)/I$ is non-cyclic since $\Gc_K^a(n)$ is not a C-group; thus condition (V3) holds and we see that $v \in \Vc_{K,n}$.
\end{proof}

\begin{thm}
\label{thm:maximal-I-D-mu}
Let $n \in \Nbar$ be given and let $N \geq \Nfr(\Mfr_2(\Mfr_1(n)))$.
Let $K$ be a field and assume that $X^{2\ell^{N}}-1$ splits completely in $K$.
Let $I \leq D \leq \Gc_K^a(n)$ be given.
Then there exists a valuation $v \in \Vc_{K,n}$ such that $I = I_v(n)$ and $D = D_v(n)$ if and only if the following conditions hold:
\begin{enumerate}
\item There exist $D' \leq \Gc_K^a(N)$ such that $(\Ibc(D'))_n = I$ and $D'_n = D$.
\item The subgroups $I \leq D \leq \Gc_K^a(n)$ are maximal with property (1). Namely, if $D \leq E \leq \Gc_K^{a}(n)$ and $E' \leq \Gc_K^{a}(N)$ is given such that $E'_n = E$ and $I \leq (\Ibc(E'))_n$, then $D = E$ and $I = (\Ibc(E'))_n$.
\item One has $\Ibc(D) \neq D$; i.e. $D$ is not a C-group.
\end{enumerate}
\end{thm}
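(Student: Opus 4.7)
The plan is to prove both directions of the equivalence using Theorem \ref{thm:main-c-groups} (to extract a valuation from near-C-group data), Lemma \ref{lem:I-and-D-give-c-pairs} (inertia and decomposition produce C-pairs), Lemma \ref{lem:image-under-mod-N-to-mod-n} (the level-$N$ to level-$n$ surjectivity under the roots-of-unity hypothesis), Lemma \ref{lem:v_H} (the canonical valuation associated to a valuative subgroup), and Lemma \ref{lem:C-pair-compat-in-res-fields} (residue-field compatibility of C-pairs).

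For the ``$\Leftarrow$'' direction, assume conditions $(1)$--$(3)$ hold with witness $D' \leq \Gc_K^a(N)$. First I apply Theorem \ref{thm:main-c-groups} to $\Ibc(D') \leq D'$: since $D$ is not a C-group by $(3)$, $I$ is valuative and $D \leq D_{v_I}(n)$; set $v := v_I$. The coarsest-valuation description in Lemma \ref{lem:v_H} forces $\Gamma_v$ to have no non-trivial $\ell$-divisible convex subgroup, since any such $\Delta$ would satisfy $\Hom(\Delta, R_n) = 0$ and allow a strict coarsening of $v$ still containing $I$ in its inertia. Next I apply the maximality condition $(2)$ to $E := D_v(n)$ and $E' := D_v(N)$: by Lemma \ref{lem:image-under-mod-N-to-mod-n}, $E'_n = E \supseteq D$, and by Lemma \ref{lem:I-and-D-give-c-pairs}, $I_v(N) \leq \Ibc(E')$, giving $I \leq I_v(n) \leq (\Ibc(E'))_n$. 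Hence $D = D_v(n)$ and $I = (\Ibc(E'))_n$, and sandwiching yields $I = I_v(n)$. Non-cyclicity of $D/I = \Gc_{k(v)}^a(n)$ now follows from $(3)$: were $D/I$ cyclic with generator $\bar d$, then by bilinearity of the C-pair relation and Lemma \ref{lem:I-and-D-give-c-pairs} (applied to pairs $(i, i')$ and $(i, d)$ with $i, i' \in I$), every pair in $D = \langle I, d \rangle$ would be a C-pair, contradicting $\Ibc(D) \neq D$. Finally, the $\Wc_{K,n}$-maximality of $v$ reduces to maximality $(2)$ applied to refinements $w$ of $v$ with $D_w(n) = D$ via $E := D$ and $E' := D_w(N)$, forcing $I_w(n) = I = I_v(n)$.

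For the ``$\Rightarrow$'' direction, assume $v \in \Vc_{K,n}$ with $I = I_v(n)$ and $D = D_v(n)$, and set $D' := D_v(N)$. For condition $(3)$: by the alternative characterization of $\Vc_{K,n}$ recorded in Remark \ref{remark:alt-defn-of-Vc}, $v \in \Vc_{K,n}$ implies that $D_v(1)$ is not a C-group at level $1$, and since the C-pair relation descends under $R_n \twoheadrightarrow R_1$, $D = D_v(n)$ is not a C-group either. For $(1)$: $D'_n = D$ by Lemma \ref{lem:image-under-mod-N-to-mod-n}; the inclusion $I \leq (\Ibc(D'))_n$ follows from Lemma \ref{lem:I-and-D-give-c-pairs}; for the reverse, I apply Theorem \ref{thm:main-c-groups} to $\Ibc(D') \leq D'$ (valid by $(3)$) to obtain $J := (\Ibc(D'))_n$ valuative with $D \leq D_{v_J}(n)$. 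Since $I \leq J$, the coarsest-valuation property makes $v_J$ a refinement of $v = v_{I_v(n)}$ (using $v \in \Wc_{K,n}$ and Lemma \ref{lem:v_H}); then $D \leq D_{v_J}(n) \leq D_v(n) = D$ forces $D_{v_J}(n) = D$, and $\Gamma_{v_J}$ has no non-trivial $\ell$-divisible convex subgroup by the same minimality argument, so $\Wc_{K,n}$-maximality of $v$ gives $I_{v_J}(n) = I_v(n) = I$, whence $J \leq I_{v_J}(n) = I$. Condition $(2)$ is verified analogously: for any $E \supseteq D$ with $E' \leq \Gc_K^a(N)$, $E'_n = E$ and $I \leq (\Ibc(E'))_n$, the group $E$ is not a C-group (as $D \leq E$ is not), so Theorem \ref{thm:main-c-groups} applied to $\Ibc(E') \leq E'$ combined with the $\Wc_{K,n}$-maximality of $v$ yields $E = D$ and $(\Ibc(E'))_n = I$.

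The principal obstacle will be ensuring that the valuation $v_J$ produced by Theorem \ref{thm:main-c-groups} in the ``$\Rightarrow$'' direction is a refinement of $v$ rather than some unrelated coarsening, which rests on identifying $v = v_{I_v(n)}$ via Lemma \ref{lem:v_H} (using $v \in \Wc_{K,n}$), together with verifying that $\Gamma_{v_J}$ admits no non-trivial $\ell$-divisible convex subgroup so that the $\Wc_{K,n}$-maximality of $v$ may be invoked.
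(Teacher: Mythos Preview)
Your argument is correct and follows essentially the same route as the paper's proof: both directions hinge on Theorem~\ref{thm:main-c-groups} to extract a valuation from $\Ibc(D') \leq D'$, on Lemma~\ref{lem:I-and-D-give-c-pairs} for the inclusion $I_v(N) \leq \Ibc(D_v(N))$, and on Lemma~\ref{lem:image-under-mod-N-to-mod-n} for the level-$N$ to level-$n$ surjectivity, with the maximality condition $(2)$ doing the work of pinning down $I = I_v(n)$ and $D = D_v(n)$.

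Two small remarks. First, your appeal to Remark~\ref{remark:alt-defn-of-Vc} for condition~$(3)$ in the ``$\Rightarrow$'' direction is a forward reference: that remark points to Lemma~\ref{lem:alt-defn-of-Vc}, which appears after the present theorem. There is no circularity, since Lemma~\ref{lem:alt-defn-of-Vc} relies only on Theorem~\ref{thm:main-c-groups} and the auxiliary lemmas, not on Theorem~\ref{thm:maximal-I-D-mu} itself; but the paper instead gives a direct contradiction argument at this point (passing to $\Gc_{k(v)}^a(1)$, applying Theorem~\ref{thm:c-groups-to-valuative} with $n=1$, and composing with $v$ to violate the $\Wc_{K,n}$-maximality). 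Your ordering---proving $(3)$ first so that Theorem~\ref{thm:main-c-groups} may be invoked in the proof of $(1)$---is in fact cleaner than the paper's presentation. Second, your explicit verification that $v = v_{I_v(n)}$ (via the absence of $\ell$-divisible convex subgroups in $\Gamma_v$) fills in a step the paper passes over with the phrase ``$v_I$ is a refinement of $v$''; this is a genuine detail worth spelling out.
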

\begin{proof}
Let $I \leq D$ be given which satisfy conditions (1),(2),(3) above.
By Theorem \ref{thm:main-c-groups} and conditions (1),(3), we see that $I$ is valuative and, denoting $v := v_I$, one has $D \leq D_v(n)$.

We first show that $I_v(n) = I$ and $D_v(n) = D$.
Consider $I'' := I_v(N) \leq D_v(N) =: D''$.
By Lemma \ref{lem:image-under-mod-N-to-mod-n}, one has $I''_n = I_v(n)$ and $D''_n = D_v(n)$.
Furthermore, by Lemma \ref{lem:I-and-D-give-c-pairs}, one has $I'' \leq \Ibc(D'')$.
Thus, $I \leq I_v(n) = I''_n \leq (\Ibc(D''))_n =: J$ and $D \leq D_v(n) = D''_n$.
By condition (2) on $I \leq D$ we deduce that $I = J$ and $D = D_v(n)$.
Also, $I \leq I_v(n) \leq J$ and $I = J$ implies that $I = I_v(n)$, as required.

We now show that $v = v_I$ is an element of $\Vc_{K,n}$.
Since $v = v_I$, condition (V1) holds true for $v$ by Lemma \ref{lem:v_H}.
Concerning condition (V2), assume that $w$ is a refinement of $v$ such that $D_v(n) = D_w(n)$.
Then $I_v(n) \leq I_w(n) \leq D_w(n) = D_v(n)$.
By Lemma \ref{lem:I-and-D-give-c-pairs}, we see that:
\[ I_v(n) \leq I_w(n) \leq (\Ibc(D_w(N)))_n \leq (D_w(N))_n = D_w(n) = D_v(n). \]
This implies that $I_v(n) = I_w(n)$ by condition (2) on $I \leq D$, and thus condition (V2) holds true for $v$.
Lastly, $\Gc^a_{k(v)}(n) = D_v(n)/I_v(n)$ is non-cyclic as $D_v(n)$ is not a C-group by condition (3) (Lemma \ref{lem:I-and-D-give-c-pairs}); therefore condition (V3) holds true for $v$.

Conversely, we assume that $v \in \Vc_{K,n}$ is given and consider $I := I_v(n) \leq D_v(n) =: D$.
We must show that $I \leq D$ satisfy conditions (1),(2),(3) of the theorem.

We first show condition (2).
Suppose that $D \leq E \leq \Gc_K^a(n)$ and $E' \leq \Gc_K^a(N)$ are given with $E = E'_n$ and $I \leq (\Ibcl(E'))_n =: J$.
By Theorem \ref{thm:main-c-groups}, the subgroup $J$ is valuative and $E \leq D_{v_J}(n)$.
Since $v = v_I$ by condition (V1) and $I \leq J$, we deduce that $v_J$ is a refinement of $v$.
Thus, $D_{v_J}(n) \leq D$; since $D \leq D_{v_J}(n)$ as well, we deduce that $D = D_{v_J}(n)$.
By condition (V2), this implies that $I = I_{v_J}(n)$.
Since $I \leq J \leq I_{v_J}(n)$, we deduce that $I = J$.
Thus condition (2) holds.

Now for condition (1).
By Lemma \ref{lem:I-and-D-give-c-pairs}, we have $I_v(N) \leq \Ibc(D_v(N)) \leq D_v(N)$ and by Lemma \ref{lem:image-under-mod-N-to-mod-n} we obtain:
\[ I =  I_v(n) \leq (\Ibc(D_v(N)))_n \leq D_v(n) = D. \]
We obtain condition (1) by using condition (2) with $E' = D_v(N)$ and $E = D$.

Lastly, we must show condition (3), that $D$ is not a C-group.
Assume for a contradiction that $D$ is a C-group; equivalently, $\Gc_{k(v)}^a(n)$ is a C-group by Lemma \ref{lem:C-pair-compat-in-res-fields}.
However, $\Gc_{k(v)}^a(n)$ is non-cyclic by (V3) and thus $\Gc_{k(v)}^a(1)$ is non-cyclic by Lemma \ref{lem:image-under-mod-N-to-mod-n}.
But, $\Gc_{k(v)}^a(n)$ being a C-group implies that $\Gc_{k(v)}^a(1)$ is a C-group as well.
Thus, applying Theorem \ref{thm:c-groups-to-valuative} with $n = 1$, there exists a valuative subgroup $J \leq \Gc_{k(v)}^a(1)$ such that, denoting $w' = v_J$, one has $\Gc_{k(v)}^a(1) = D_{w'}(1)$ and $D_{w'}(1)/I_{w'}(1)$ is cyclic.
By Lemma \ref{lem:image-under-mod-N-to-mod-n}, this implies that $D_{w'}(n) = \Gc_{k(v)}^a(n)$ and $D_{w'}(n)/I_{w'}(n)$ is cyclic as well.

Consider $w := v \circ w'$.
One has $I_v(n) \leq I_w(n) \leq D_w(n) = D_v(n)$, and $D_w(n)/I_w(n)$ is cyclic.
Since $D_v(n)/I_v(n) = \Gc_{k(v)}^a(n)$ is non-cyclic by condition (V3), we see that $I_v(n) \neq I_w(n)$; this contradicts condition (V2).
Having obtained our contradiction, we deduce that $D$ is not a C-group.
\end{proof}

\begin{remark}
\label{remark:main-detect-thm-mu}
Let $n \in \Nbar$ be given and let $N = \Nfr(\Mfr_2(\Mfr_1(n)))$.
Let $K$ be a field in which the polynomial $X^{2\ell^{N}}-1$ splits completely.
It follows from Lemma \ref{lem:v_H} that the comparability of two valuations $v,w$ in $\Vc_{K,n}$ is captured by the comparability of $I_v(n)$ and $I_w(n)$.
More precisely, if $v,w$ are two arbitrary valuations of $K$ which satisfy condition (V1), then Lemma \ref{lem:v_H} implies the following: $v \leq w$ if and only if $I_v(n) \leq I_w(n)$.
In particular, Theorem \ref{thm:maximal-I-D-mu} implies that the map $v \mapsto (I_v(n) \leq D_v(n))$ defines a bijection $\Vc_{K,n} \rightarrow \Dc_{K,n}$; the inverse, $\Dc_{K,n} \rightarrow \Vc_{K,n}$ is given by $(I \leq D) \mapsto v_I$ (note Theorem \ref{thm:maximal-I-D-mu} implies that this $I$ is valuative and thus $v_I$ makes sense).
Thus, the partially ordered structure of $\Vc_{K,n}$ can be recovered using the following data: (1) the canonical map of pro-$\ell$ groups $\Gc_K^a(N) \rightarrow \Gc_K^a(n)$, and (2) the collections of C-pairs in $\Gc_K^a(N)$ and $\Gc_K^a(n)$.

The bijection $\Vc_{K,n} \rightarrow \Dc_{K,n}$ is also compatible with passing to residue fields, as follows.
Let $v \in \Vc_{K,n}$ be given.
It follows from Lemma \ref{lem:C-pair-compat-in-res-fields} that the following four bijections are compatible in the obvious sense:
\begin{enumerate}
\item $\Vc_{k(v),n} \rightarrow \Dc_{k(v),n}$, arising from Theorem \ref{thm:maximal-I-D-mu} applied to $k(v)$.
\item $\Vc_{k(v),n} \rightarrow \Vc_{v,n}$, defined by $w \mapsto  w\circ v$.
\item $\Vc_{v,n} \rightarrow \Dc_{v,n}$, defined by restricting $\Vc_{K,n} \rightarrow \Dc_{K,n}$ to the subset $\Vc_{v,n} \subset \Vc_{K,n}$.
\item $\Dc_{v,n} \rightarrow \Dc_{k(v),n}$, defined by $(I \leq D) \mapsto (I/I_v(n) \leq D/I_v(n))$.
\end{enumerate}
\end{remark}

We conclude this subsection by providing an alternative definition of $\Vc_{K,n}$ which is much more concise than Definition \ref{defn:Dc-Vc}, although perhaps less intuitive.
\begin{lem}
\label{lem:alt-defn-of-Vc}
Let $n \in \Nbar$ be given and let $K$ be a field in which $X^{2\ell^n}-1$ splits completely.
Then $\Vc_{K,n}$ is precisely the collection of valuations $v$ of $K$ such that:
\begin{enumerate}
\item The value group $\Gamma_v$ contains no non-trivial $\ell$-divisible convex subgroups.
\item One has $I_v(1) = \Ibc(D_v(1)) \neq D_v(1)$.
\end{enumerate}
In particular, $\Vc_{K,n} = \Vc_{K,m}$ for all $m \leq n$.
\end{lem}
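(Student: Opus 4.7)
The plan is to use Lemma \ref{lem:image-under-mod-N-to-mod-n} to reduce everything to level $1$ and then to match the definition of $\Vc_{K,n}$ with conditions (1)--(2). Because $X^{2\ell^n}-1$ splits in $K$, parts (2) and (3) of that lemma tell me that $\Gc_{k(v)}^a(n)$ is non-cyclic iff $\Gc_{k(v)}^a(1)$ is non-cyclic, and that for valuations $v,w$ of $K$ the equality $D_w(n)=D_v(n)$ (resp.\ $I_w(n)=I_v(n)$) holds iff it holds at level $1$. The final clause $\Vc_{K,m}=\Vc_{K,n}$ is then immediate, as the characterization (1)--(2) does not mention $n$.

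For the forward direction, assume $v \in \Vc_{K,n}$. Condition (1) is the first clause of $\Wc_{K,n}$, and the inclusion $I_v(1) \leq \Ibc(D_v(1))$ follows from Lemma \ref{lem:I-and-D-give-c-pairs}. The first nontrivial step is to rule out that $D_v(1)$ is a C-group: if it were, then Lemma \ref{lem:C-pair-compat-in-res-fields} would make $\Gc_{k(v)}^a(1)$ a C-group (non-cyclic by the reduction), and Theorem \ref{thm:c-groups-to-valuative} at $n=1$ (where $\Nfr(\Mfr_1(1))=1$) would furnish a valuation $w'$ of $k(v)$ with $D_{w'}(1)=\Gc_{k(v)}^a(1)$ and cyclic quotient $D_{w'}(1)/I_{w'}(1)$; the composition $w:=w'\circ v$ would then be a refinement of $v$ with $D_w(1)=D_v(1)$ but $I_w(1)\neq I_v(1)$, contradicting the level-$1$ reformulation of $v\in\Wc_{K,n}$. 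Having ruled this out, Theorem \ref{thm:main-c-groups} at $n=1$ (again $N=1$) applied to $\Ibc(D_v(1))\leq D_v(1)$ shows $\Ibc(D_v(1))$ is valuative; set $w:=v_{\Ibc(D_v(1))}$. Since $I_v(1)\leq\Ibc(D_v(1))$ and $v=v_{I_v(1)}$ by condition (1) of $\Wc_{K,n}$ via Lemma \ref{lem:v_H}, the coarsest property of $v_H$ in that lemma makes $w$ a refinement of $v$, so $D_w(1)\leq D_v(1)$; combined with $D_v(1)\leq D_w(1)$ from Theorem \ref{thm:main-c-groups}, one gets $D_w(1)=D_v(1)$, whence the $\Wc_{K,n}$ maximality forces $I_w(1)=I_v(1)$, yielding $\Ibc(D_v(1))\leq I_w(1)=I_v(1)$ and completing (2).

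For the backward direction, assume $v$ satisfies (1)--(2). Clause (1) of $\Wc_{K,n}$ is (1). For the maximality clause, any refinement $w$ of $v$ with $D_w(n)=D_v(n)$ satisfies $D_w(1)=D_v(1)$ by the reduction, and then $I_v(1)\leq I_w(1)\leq\Ibc(D_w(1))=\Ibc(D_v(1))=I_v(1)$---the first inclusion because $w$ refines $v$, the second by Lemma \ref{lem:I-and-D-give-c-pairs}, and the rest by (2)---so $I_w(1)=I_v(1)$ and thus $I_w(n)=I_v(n)$ by the reduction. It remains to verify $\Gc_{k(v)}^a(1)=D_v(1)/I_v(1)$ is non-cyclic. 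Assume, toward contradiction, that it is cyclic; since $I_v(1)\neq D_v(1)$ by (2), every element of $D_v(1)$ can be written as $h+af$ for some fixed $f\in D_v(1)$ and $h\in I_v(1)=\Ibc(D_v(1))$. The C-pair relation $f(1-x)g(x)=f(x)g(1-x)$ is bilinear and symmetric in $(f,g)$, and each of the four summands $(h_1,h_2)$, $(h_1,a_2f)$, $(a_1f,h_2)$, $(a_1f,a_2f)$ is a C-pair---the first three because one argument lies in $\Ibc(D_v(1))$, the last trivially as $a_1f$ and $a_2f$ are parallel---so $(h_1+a_1f,h_2+a_2f)$ is a C-pair. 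This makes $D_v(1)$ a C-group, contradicting (2).

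I expect the main obstacle to be the first step of the forward direction: producing the contradiction that $D_v(1)$ is not a C-group requires constructing the composite refinement $w=w'\circ v$ whose cyclic residual quotient transports through the short exact sequence $1\to I_v(1)\to D_v(1)\to \Gc_{k(v)}^a(1)\to 1$, then descending from level $1$ back to level $n$ via Lemma \ref{lem:image-under-mod-N-to-mod-n}(3) to exhibit a genuine violation of $v\in\Wc_{K,n}$. This mirrors the analogous move near the end of the proof of Theorem \ref{thm:maximal-I-D-mu}, which serves as our template.
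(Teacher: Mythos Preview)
Your proof is correct and follows essentially the same strategy as the paper: reduce to level $1$ via Lemma~\ref{lem:image-under-mod-N-to-mod-n}, use Lemma~\ref{lem:I-and-D-give-c-pairs} for the inclusion $I_v(1)\leq\Ibc(D_v(1))$, apply Theorem~\ref{thm:main-c-groups} at $n=1$ for the reverse inclusion, and use the bilinearity of the C-pair relation to handle non-cyclicity. In fact your argument is slightly more careful than the paper's: the paper invokes Theorem~\ref{thm:main-c-groups} on $\Ibc(D_v(1))\leq D_v(1)$ without explicitly checking the hypothesis that $D_v(1)$ is not a C-group, whereas you supply this verification by the residue-field-and-composition argument borrowed from the end of the proof of Theorem~\ref{thm:maximal-I-D-mu}.
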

\begin{proof}
The argument of this lemma is similar to that of Theorem \ref{thm:maximal-I-D-mu}.
Denote by $\Vc$ the collection of valuations satisfying the two conditions (1),(2) above.

First, let us show that $\Vc \subset \Vc_{K,n}$.
Let $v \in \Vc$ be given; we need show that $v$ satisfies conditions (V1), (V2), and (V3).
Condition (1) for $v \in \Vc$ is precisely condition (V1).
As $\Ibc(D_v(1)) \neq D_v(1)$, we see that $\Gc_{k(v)}^a(n) = D_v(n)/I_v(n)$ is non-cyclic, since $I_v(n) \leq \Ibc(D_v(n))$ by Lemma \ref{lem:I-and-D-give-c-pairs}; thus condition (V3) holds true.

Suppose that $w$ is a refinement of $v$ such that $D_w(n) = D_v(n)$.
Consider the inclusion of subgroups $I_v(1) \leq I_w(1) \leq D_w(1) \leq D_v(1)$.
By Lemma \ref{lem:I-and-D-give-c-pairs} and condition (2), we see that:
\[ \Ibc(D_v(1)) = I_v(1) \leq I_w(1) \leq \Ibc(D_v(1)) \leq D_w(1) = D_v(1). \]
Thus, $I_w(1) = I_v(1)$.
By Lemma \ref{lem:image-under-mod-N-to-mod-n}, we see that $I_w(n) = I_v(n)$ as well; thus condition (V2) holds true.

Conversely we show that $\Vc_{K,n} \subset \Vc$.
Let $v \in \Vc_{K,n}$ be given.
Then condition (1) of the lemma holds trivially for $v$ by (V1).

We must show that $I_v(1) = \Ibc(D_v(1)) \neq D_v(1)$.
Clearly, $I_v(1) \leq \Ibc(D_v(1))$ by Lemma \ref{lem:I-and-D-give-c-pairs}.
Let $I := \Ibc(D_v(1))$.
By Theorem \ref{thm:main-c-groups}, $I$ is valuative and, denoting $w := v_I$, one has $D_v(1) \leq D_w(1)$.
Condition (V1) and Lemma \ref{lem:v_H} show that $w$ is a refinement of $v$.
Therefore, $D_w(1) \leq D_v(1)$.
Since $D_v(1) \leq D_w(1)$ also, we deduce that $D_w(1) = D_v(1)$ and thus $D_w(n) = D_v(n)$ by Lemma \ref{lem:image-under-mod-N-to-mod-n}.
By condition (V2), we have $I_w(n) = I_v(n)$ and thus $I_w(1) = I_v(1)$.
Since $I \leq I_w(1)$ and $I_v(1) \leq I$, we deduce that $I=I_v(1)$.

Lastly, (V3) says that $\Gc_{k(v)}^a(n)$ is non-cyclic and thus $\Gc_{k(v)}^a(1)$ is non-cyclic by Lemma \ref{lem:image-under-mod-N-to-mod-n}.
In particular, $D_v(1)/I$ cannot be cyclic.
This proves that $v$ satisfies condition (2) of the lemma, as required.
\end{proof}

\subsection{The $n = 1$ or $n = \infty$ Case}
\label{sec:n-=-infty}

Throughout this subsection, $n$ will denote either $1$ or $\infty$.
The key property to notice in these cases is that $R_n$ is a domain and that $\Nfr(n) = \Mfr_r(n) = n$.
In fact, $1$ and $\infty$ are the only fixed points of $\Nfr$ and of $\Mfr_r$.
The proofs of the results below are virtually identical (and in fact much easier since $n = N$) to those in \S\ref{sec:suff-many-roots}, using this observation.
Indeed, in \S\ref{sec:suff-many-roots}, the added assumption that $X^{2\ell^N}-1$ splits in $K$ was only used to ensure that the maps $D_v(N) \rightarrow D_v(n)$ and $\Gc_K^a(N) \rightarrow \Gc_K^a(n)$ are surjective.
In this case, $N=n$ so that these are trivially satisfied.
We therefore omit the proofs in this subsection.

\begin{prop}
\label{prop:maximal-C-group-D-infty}
Let $n = 1$ or $n =\infty$ and let $K$ be an arbitrary field.
Let $D \leq \Gc_K^a(n)$ be given.
Then the following are equivalent:
\begin{enumerate}
\item There exists a valuation $v$ of $K$ such that $D \leq D_v(n)$ and $D/(D \cap I_v(n))$ is cyclic.
\item The subgroup $D$ is a C-group.
\end{enumerate}
\end{prop}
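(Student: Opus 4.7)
The plan is to prove the two implications separately, noting that the crucial simplification in the case $n=1$ or $n=\infty$ is that $\Nfr(n) = \Mfr_r(n) = n$, so that the ``lifting'' steps present in the general case collapse to triviality, and moreover $R_n$ is a domain.

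For the implication (2) $\Rightarrow$ (1), I would simply invoke Theorem \ref{thm:c-groups-to-valuative} with $D'' := D$. The theorem requires the C-group to live in $\Gc_K^a(\Nfr(\Mfr_1(n)))$, but in our situation $\Nfr(\Mfr_1(n)) = n$, so the hypothesis is satisfied by $D$ itself. Its output is a valuative subgroup $I \leq D$ with $D/I$ cyclic and $D \leq D_{v_I}(n)$, which is exactly the conclusion (1) with $v = v_I$.

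For the implication (1) $\Rightarrow$ (2), I would exploit the bilinearity of the C-pair condition together with Lemma \ref{lem:I-and-D-give-c-pairs}. Set $J := D \cap I_v(n)$ and pick $f \in D$ whose class generates the cyclic quotient $D/J$, so that $D = \langle J, f\rangle$. Any two elements of $D$ may be written as $R_n$-linear combinations of elements of $J$ together with $f$. Since the condition
\[
f_1(1-x) f_2(x) = f_1(x) f_2(1-x), \qquad x \in K \smallsetminus \{0,1\},
\]
is $R_n$-bilinear in $(f_1,f_2)$, it suffices to verify the C-pair property on the generating pairs $(j_1, j_2)$, $(j, f)$, and $(f, f)$ for $j, j_1, j_2 \in J$. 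The first two types of pairs are C-pairs by Lemma \ref{lem:I-and-D-give-c-pairs}, since $J \leq I_v(n)$ and $f \in D \leq D_v(n)$; the pair $(f,f)$ is trivially a C-pair. Hence every pair in $D$ is a C-pair, i.e.\ $D$ is a C-group.

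There is no serious obstacle here; the only mild subtlety is justifying the bilinearity claim, which is immediate from the definition once one observes that evaluation of $f \in \Gc_K^a(n) = \Hom(K^\times/\pm 1, R_n)$ at a fixed $x$ is $R_n$-linear in $f$, and that $R_n$ (being $\Z/\ell$ or $\Z_\ell$) is commutative so that the product $f_1(1-x)f_2(x) - f_1(x)f_2(1-x)$ depends $R_n$-linearly on each of $f_1$ and $f_2$ separately.
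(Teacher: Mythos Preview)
Your proof is correct and follows essentially the same approach the paper intends: for $(2)\Rightarrow(1)$ you specialize Theorem~\ref{thm:c-groups-to-valuative} using $\Nfr(\Mfr_1(n))=n$, and for $(1)\Rightarrow(2)$ you apply Lemma~\ref{lem:I-and-D-give-c-pairs} to the generating set $J\cup\{f\}$ and extend by bilinearity, which is exactly the specialization of the proof of Proposition~\ref{prop:maximal-C-group-D} to $N=n$. Your explicit treatment of bilinearity is simply a spelled-out version of the observation already recorded in Definition~\ref{defn:c-pair} that a group is a C-group as soon as its generators are pairwise C-pairs.
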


\begin{prop}
\label{prop:center-infty}
Let $n = 1$ or $n =\infty$ and let $K$ be an arbitrary field.
Assume that $\Ibc(\Gc_K^a(n)) \neq \Gc_K^a(n)$ and consider $I := \Ibc(\Gc_K^a(n))$.
Then the following hold:
\begin{enumerate}
\item The subgroup $I$ is valuative and $v := v_I \in \Vc_{K,n}$.
\item One has $I = I_v(n)$ and $D_v(n) = \Gc_K^a(n)$.
\end{enumerate}
\end{prop}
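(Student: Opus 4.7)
The approach is to mirror the proof of Proposition \ref{prop:center-mu}, exploiting the key simplification that when $n \in \{1,\infty\}$ one has $\Nfr(\Mfr_2(\Mfr_1(n))) = n$, so there is no gap between the ``C-pair input level'' and the ``output level.'' In particular, the surjectivity of $D_v(N) \to D_v(n)$ that was needed in Proposition \ref{prop:center-mu} via Lemma \ref{lem:image-under-mod-N-to-mod-n} becomes trivial here, and the roots-of-unity hypothesis can be dropped.

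First, I would apply Theorem \ref{thm:main-c-groups} directly with $I'' := I = \Ibc(\Gc_K^a(n))$ and $D'' := \Gc_K^a(n)$. The hypothesis $I'' \leq \Ibc(D'')$ is built into the definition of the C-center, and the hypothesis ``$D = D''_n$ is not a C-group'' is precisely the standing assumption $\Ibc(\Gc_K^a(n)) \neq \Gc_K^a(n)$. The theorem then immediately yields that $I$ is valuative and that $\Gc_K^a(n) \leq D_{v_I}(n)$, so $D_v(n) = \Gc_K^a(n)$ for $v := v_I$. The identity $I = I_v(n)$ follows by inclusion in both directions: $I \leq I_{v_I}(n) = I_v(n)$ is immediate from the construction of $v_I$ in Lemma \ref{lem:v_H}, while any $i \in I_v(n)$ forms a C-pair with every element of $D_v(n) = \Gc_K^a(n)$ by Lemma \ref{lem:I-and-D-give-c-pairs}, hence $I_v(n) \leq \Ibc(\Gc_K^a(n)) = I$.

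Next I would verify the conditions defining $\Wc_{K,n}$. Condition (1), that $\Gamma_v$ has no non-trivial $\ell$-divisible convex subgroup, is automatic from Lemma \ref{lem:v_H} applied to $v = v_I$. For condition (2), suppose $w$ refines $v$ and $D_w(n) = D_v(n) = \Gc_K^a(n)$. Then Lemma \ref{lem:I-and-D-give-c-pairs} again gives $I_w(n) \leq \Ibc(\Gc_K^a(n)) = I = I_v(n)$, and the reverse inclusion $I_v(n) \leq I_w(n)$ is automatic because $w$ refines $v$. So $v \in \Wc_{K,n}$.

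The single substantive step—and the main obstacle—is the upgrade from $\Wc_{K,n}$ to $\Vc_{K,n}$, namely that $\Gc_{k(v)}^a(n) = D_v(n)/I_v(n) = \Gc_K^a(n)/I$ is non-cyclic. I would argue by contradiction: assume the quotient is cyclic with generator the image of some $f \in \Gc_K^a(n)$, and write arbitrary $g,h \in \Gc_K^a(n)$ as $g = af + i_1$, $h = bf + i_2$ with $i_1,i_2 \in I$. Using the bilinearity in each slot of the defining relation $g(1-x)h(x) = g(x)h(1-x)$ in Definition \ref{defn:c-pair}, it suffices to check that $f$ forms a C-pair with itself (trivial), that $f$ forms a C-pair with each $i_j \in I \leq I_v(n)$ (immediate from $f \in D_v(n)$ and Lemma \ref{lem:I-and-D-give-c-pairs}), and that $i_1, i_2 \in I_v(n) \leq D_v(n)$ form a C-pair (same lemma). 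Combining these, $g$ and $h$ would form a C-pair, making $\Gc_K^a(n)$ itself a C-group and contradicting the hypothesis $\Ibc(\Gc_K^a(n)) \neq \Gc_K^a(n)$. This completes the verification that $v \in \Vc_{K,n}$.
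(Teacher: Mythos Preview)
Your proof is correct and follows essentially the same approach as the paper, which explicitly states that the proofs in this subsection mirror those in \S\ref{sec:suff-many-roots} (in particular Proposition~\ref{prop:center-mu}) with the simplification $N=n$. Your treatment of the non-cyclicity of $\Gc_K^a(n)/I$ via bilinearity of the C-pair relation is a bit more explicit than the paper's one-line ``since $\Gc_K^a(n)$ is not a C-group,'' but the underlying argument is the same (and could equally be phrased via Lemma~\ref{lem:C-pair-compat-in-res-fields}).
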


\begin{thm}
\label{thm:maximal-I-D-infty}
Let $n =1$ or $n = \infty$.
Let $K$ be an arbitrary field and let $I \leq D \leq \Gc_K^a(n)$ be given.
Then there exists a valuation $v \in \Vc_{K,n}$ such that $I = I_v(n)$ and $D = D_v(n)$ if and only if the following hold:
\begin{enumerate}
\item One has $I = \Ibc(D)$.
\item The subgroups $I \leq D \leq \Gc_K^a(n)$ are maximal with property (1). Namely, if $D \leq E \leq \Gc_K^a(n)$ and $I \leq \Ibc(E)$, then $D = E$ and $I = \Ibc(E)$.
\item One has $\Ibc(D) \neq D$; i.e. $D$ is not a C-group.
\end{enumerate}
\end{thm}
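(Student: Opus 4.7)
The plan is to adapt the argument of Theorem \ref{thm:maximal-I-D-mu} to the present setting, exploiting the twin simplifications $N=n$ and ``$R_n$ is an integral domain'', which together render Lemma \ref{lem:image-under-mod-N-to-mod-n} and the Cancellation Principle unnecessary. The four structural ingredients that drive the proof are: Theorem \ref{thm:main-c-groups} (with $D''=D$, it promotes any $I \leq \Ibc(D) \neq D$ to a valuative subgroup and shows $D \leq D_{v_I}(n)$), Lemma \ref{lem:I-and-D-give-c-pairs} (giving $I_v(n) \leq \Ibc(D_v(n))$), Theorem \ref{thm:c-groups-to-valuative} (describing the valuation-theoretic shape of C-groups), and Lemma \ref{lem:C-pair-compat-in-res-fields} (transporting C-pairs to residue fields).

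For the forward direction, suppose $I \leq D$ satisfies conditions (1)--(3). Since $\Ibc(D) = I$ and $D$ is not a C-group, Theorem \ref{thm:main-c-groups} produces a valuation $v := v_I$ such that $I$ is valuative, $I \leq I_v(n)$, and $D \leq D_v(n)$. Lemma \ref{lem:I-and-D-give-c-pairs} then gives $I_v(n) \leq \Ibc(D_v(n))$; applying the maximality condition (2) with $E = D_v(n)$ and $E' = D_v(n)$ forces $D = D_v(n)$ and $I = \Ibc(D_v(n))$, hence also $I = I_v(n)$. To place $v$ in $\Vc_{K,n}$: the value group $\Gamma_v$ contains no non-trivial $\ell$-divisible convex subgroups by construction of $v_I$; if $w$ is any refinement of $v$ with $D_w(n) = D_v(n)$, then $I \leq I_w(n) \leq \Ibc(D_w(n))$, and condition (2) again collapses this chain to $I_w(n) = I$; finally $D/I = \Gc^a_{k(v)}(n)$ is non-cyclic by condition (3).

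For the reverse direction, assume $v \in \Vc_{K,n}$ and set $I := I_v(n)$, $D := D_v(n)$. Lemma \ref{lem:I-and-D-give-c-pairs} yields $I \leq \Ibc(D)$. Conversely, put $J := \Ibc(D)$; Theorem \ref{thm:main-c-groups} makes $J$ valuative with $D \leq D_{v_J}(n)$, and since $I \leq J$ the valuation $v_J$ refines $v$, so $D_{v_J}(n) \leq D_v(n) = D \leq D_{v_J}(n)$ forces equality, and the defining property of $\Vc_{K,n}$ gives $I_{v_J}(n) = I$; hence $J \leq I_{v_J}(n) = I$, establishing (1). The maximality (2) follows by the same manoeuvre: any enlargement $D \leq E$ with $I \leq \Ibc(E)$ converts, via Theorem \ref{thm:main-c-groups}, into a refinement $v_{\Ibc(E)}$ of $v$ with the same decomposition group, which by the $\Vc_{K,n}$-property of $v$ collapses to $D = E$ and $I = \Ibc(E)$. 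For (3): were $D$ a C-group, Lemma \ref{lem:C-pair-compat-in-res-fields} would exhibit $\Gc^a_{k(v)}(n) = D/I$ as a non-cyclic C-group; Theorem \ref{thm:c-groups-to-valuative} (again using $N=n$) would then produce a non-trivial valuation $v'$ of $k(v)$ with $D_{v'}(n)/I_{v'}(n)$ cyclic, and the composition $v \circ v'$ would be a proper refinement of $v$ with the same decomposition group but strictly larger inertia, contradicting $v \in \Vc_{K,n}$.

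The main obstacle I anticipate is bookkeeping the two nested maximalities -- one internal to $\Vc_{K,n}$, the other built into condition (2) of the theorem -- without circularity. The invariant $v = v_I$, which by construction is the coarsest valuation with $I \leq I_v(n)$, is the pivot that lets each maximality feed the other cleanly; once this is in place, every remaining step is a routine application of the four tools listed above, and no analogue of the cancellation argument used in Theorem \ref{thm:maximal-I-D-mu} is required.
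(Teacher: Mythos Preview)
Your proposal is correct and follows essentially the same approach as the paper, which omits the proof entirely with the remark that it is ``virtually identical (and in fact much easier)'' to that of Theorem~\ref{thm:maximal-I-D-mu} once one uses $N=n$. Your adaptation is faithful to that template; the only cosmetic slip is the stray ``$E' = D_v(n)$'' (condition~(2) here involves no $E'$), and you should note that the applications of Theorem~\ref{thm:main-c-groups} in verifying (1) and (2) tacitly use that the ambient group is not a C-group---which is exactly condition~(3), proven independently, so no circularity arises.
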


\begin{remark}
\label{remark:main-detect-thm-infty}
Suppose that $K$ is an arbitrary field and $n = 1$ or $n = \infty$.
Similarly to Remark \ref{remark:main-detect-thm-mu}, the map $v \mapsto (I_v(n) \leq D_v(n))$ defines a bijection $\Vc_{K,n} \rightarrow \Dc_{K,n}$.
This bijection respects the ordered structure of $\Vc_{K,n}$ via the fact: $v \leq w$ if and only if $I_v(n) \leq I_w(n)$.
Furthermore, this bijection is compatible with passing to residue fields of a valuation, as discussed in Remark \ref{remark:main-detect-thm-mu}.
\end{remark}

\section{Restricting the Characteristic}
\label{sec:restr-char}

In this section we use C-pairs to force certain valuations to have residue characteristic $\neq \ell$.
We then prove three theorems, which are analogous to Theorems \ref{thm:main-c-pairs-thm}, \ref{thm:c-groups-to-valuative} and \ref{thm:main-c-groups}, which restrict the residue characteristics of valuations to be $\neq \ell$.

Throughout this section we work with a fixed $n \in \Nbar$.
Let $L|K$ be an extension of fields.
We recall that the canonical restriction map $\Gc^a_L(n) \rightarrow \Gc_K^a(n)$ is denoted by $f \mapsto f_K$.

For a subgroup $H \leq K^\times$, we write $K_H := K(\sqrt[\ell^n]{H})$.
As usual, if $n = \infty$ this notation stands for $K(\sqrt[\ell^\infty]{H}):= \bigcup_{m \in \Nb} K(\sqrt[\ell^m]{H})$.
Similarly, for a subgroup $A \leq \Gc_K^a(n)$ we write $K_A := K_{A^\perp}$.

\begin{lem}
\label{lem:pops-lemma}
Let $n \in \Nbar$ be given.
Let $(K,v)$ be a valued field such that $\Char K \neq \ell$.
Let $L$ be an extension of $K$ such that $1+\mf_v \subset L^{\times\ell^n}$, and let $w$ be a chosen prolongation of $v$ to $L$.
Let $\Delta$ denote the (possibly trivial) convex subgroup of $\Gamma_v$ which is generated by $v(\ell)$.
Then $\Delta \leq \ell^n \cdot \Gamma_w$.
\end{lem}
\begin{proof}
We can assume with no loss that $n \in \Nb$ as the $n=\infty$ case follows from this immediately.
If $\Char k(v) \neq \ell$ then $v(\ell) = 0$ and $\Delta$ is trivial so the lemma is trivially true.

Therefore, we may further assume that $\Char k(v) = \ell$.
Let $x \in K^\times$ be such that $0 < v(x) \leq v(\ell)$.
It suffices to prove that $w(x) \in \ell^n\cdot \Gamma_w$.

Since $v(x) > 0$, we see that $1+x \in L^{\times \ell^n}$.
Thus, there exists $y \in L$ such that $1+x = (1+y)^{\ell^n}$.
This forces $y \in \Oc_w$ and, since $1+x = (1+y)^{\ell^n} \in (1+y^{\ell^n})+ \mf_w$, we deduce that $y \in \mf_w$.

Expanding the equation $1+x = (1+y)^{\ell^n}$ using the binomial theorem, we see that $x = \ell \cdot y \cdot \epsilon + y^{\ell^n}$ for some $\epsilon \in \Oc_w$.
But $w(x) \leq w(\ell) < w(\ell \cdot y \cdot \epsilon)$ since $w(y) > 0$ and $w(\epsilon) \geq 0$.
Thus, $w(x) = w(y^{\ell^n}) = \ell^n \cdot w(y)$ by the ultrametric inequality.
\end{proof}

\begin{prop}
\label{prop:main-char-prop}
Let $n \in \Nbar$ be given.
Let $K$ be a field such that $\Char K \neq \ell$.
Suppose that $I \leq \Gc_K^a(n)$ and $D \leq \Gc_K^a(n)$ are given. 
Consider $L := K_D$, and assume that there exists a subgroup $I' \leq \Gc^a_L(n)$ such that $I'$ is valuative and $I'_K = I$.
Let $w' := v_{I'}$ and $w := w'|_K$, and assume that $D \leq D_{w}(n)$.
Then $I$ is valuative, $D \leq D_{v_I}(n)$ and $\Char k(v_I) \neq \ell$.
\end{prop}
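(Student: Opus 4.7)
The plan is to establish the three claims sequentially, with the residue‑characteristic assertion being the substantive one and the only step that genuinely uses both $\Char K \neq \ell$ and the ambient field $L$; the other two will be essentially formal consequences of $I = I'_K$ and $D \leq D_w(n)$.

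First I would note that since $I'$ is valuative with $w' = v_{I'}$, every $f' \in I'$ kills $\Oc_{w'}^\times$, so its restriction $f'_K \in I$ kills $\Oc_w^\times \subseteq \Oc_{w'}^\times$. Hence $I \leq I_w(n)$, so $I$ is valuative and Lemma \ref{lem:v_H} makes $v_I$ a coarsening of $w$. The inclusion $D \leq D_{v_I}(n)$ then follows immediately: the assumption $D \leq D_w(n)$ gives $1+\mf_w \subseteq D^\perp$, and $v_I$ being coarser than $w$ yields $\mf_{v_I} \subseteq \mf_w$, so $1+\mf_{v_I} \subseteq D^\perp$.

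For the residue characteristic, let $\Delta$ denote the convex subgroup of $\Gamma_w$ generated by $w(\ell)$ and $\bar\Delta$ the maximal convex subgroup of $w(I^\perp) \leq \Gamma_w$. By Lemma \ref{lem:v_H}, $v_I$ is obtained from $w$ by collapsing $\bar\Delta$, so $\Char k(v_I) \neq \ell$ is equivalent to $w(\ell) \in \bar\Delta$, and since $\Delta$ is itself convex in $\Gamma_w$ this reduces to the inclusion $\Delta \subseteq w(I^\perp)$. The key input will be Pop's Lemma \ref{lem:pops-lemma}: the hypothesis $D \leq D_w(n)$ forces $\pm(1+\mf_w) \subseteq D^\perp$ and hence $K(\sqrt[\ell^n]{1+\mf_w}) \subseteq L$, so Pop's Lemma applied to $(K,w)$ via the prolongation $w'|_{K(\sqrt[\ell^n]{1+\mf_w})}$ yields $\Delta \leq \ell^n \Gamma_{w'}$ (interpreted as $\Delta \leq \bigcap_m \ell^m \Gamma_{w'}$ when $n = \infty$).

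The final step is to convert this $\ell^n$‑divisibility in $\Gamma_{w'}$ into membership in $w(I^\perp)$. Note that $I^\perp = I'^\perp \cap K^\times$, since $I = I'_K$. Given $\gamma \in \Delta$ with $\gamma = w(x)$, $x \in K^\times$, for each $m \leq n$ one can write $x = y_m^{\ell^m} u_m$ in $L^\times$ with $u_m \in \Oc_{w'}^\times \subseteq I'^\perp$, so that for every $f' \in I'$, $f'(x) = \ell^m f'(y_m)$. Taking $m = n$ when $n$ is finite (using that $\ell^n$ annihilates $R_n$), or letting $m$ range over all of $\Nb$ when $n = \infty$ (using that $\Z_\ell$ is $\ell$‑adically separated), forces $f'(x) = 0$ for every $f' \in I'$, whence $x \in I'^\perp \cap K^\times = I^\perp$ and $\gamma = w(x) \in w(I^\perp)$. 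The main obstacle is the careful bookkeeping needed to see that $L$ already contains the radical extension produced by Pop's Lemma and to run the divisibility argument uniformly for finite $n$ and for $n = \infty$; once the inclusion $K(\sqrt[\ell^n]{1+\mf_w}) \subseteq L$ is noticed, the rest is formal manipulation of perps and value groups.
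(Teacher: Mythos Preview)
Your proof is correct and follows essentially the same route as the paper's: both reduce the residue-characteristic claim to the inclusion $\Delta \subseteq w(I^\perp)$, invoke Pop's Lemma \ref{lem:pops-lemma} (after observing $K(\sqrt[\ell^n]{1+\mf_w}) \subseteq L$) to obtain $\Delta \leq \ell^n \Gamma_{w'}$, and then push this down to $w(I^\perp)$. The only cosmetic difference is that the paper packages the last step via the injective dual map $\Gamma_w/w(I^\perp) \hookrightarrow \Gamma_{w'}/w'((I')^\perp)$ (and its $\ell$-adic completion for $n=\infty$), whereas you unpack it as an explicit element-wise computation using $I^\perp = (I')^\perp \cap K^\times$; these are the same argument.
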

\begin{proof}
First, as $I'$ is valuative and $I = I'_K$, we see that $I \leq I_w(n)$.
Thus $I$ is indeed valuative.
Since it will be used multiple times in this proof, we recall that $v := v_I$ is the coarsening of $w$ associated to the maximal convex subgroup of $w(I^\perp)$ (Lemma \ref{lem:v_H}).

As $D \leq D_w(n)$ and $v$ is a coarsening of $w$, we see that $D \leq D_v(n)$ as well.
Thus it remains to show that $\Char k(v_I) \neq \ell$.

Since $D \leq D_w(n)$ we note that $1+\mf_w \subset L^{\times\ell^n}$.
With Lemma \ref{lem:pops-lemma} in mind, consider $\Delta$ the convex subgroup of $\Gamma_w$ generated by $w(\ell)$.

Assume first that $n \in \Nb$.
We consider the following canonical \emph{injective} map induced by taking the $R_n$-dual of the surjective map $I' \twoheadrightarrow I$:
\[ \Gamma_w/w(I^\perp) \hookrightarrow \Gamma_{w'}/w'((I')^\perp). \]
By Lemma \ref{lem:pops-lemma}, we deduce that $\Delta \leq \ell^n \cdot \Gamma_{w'} \leq w'((I')^\perp)$.
The injectivity of the map above implies that $\Delta \leq w(I^\perp)$.
Therefore, $\Delta$ is contained in the kernel of the canonical projection $\Gamma_w \rightarrow \Gamma_v$.
In particular, $v(\ell) = 0$, so that $\Char k(v) \neq \ell$.

Assume now that $n = \infty$.
In this case, the $\Z_\ell$-dual of the surjective map $I' \twoheadrightarrow I$ is the \emph{injective} map of $\Z_\ell$-modules:
\[ \widehat \Gamma_w / \widehat w(I^\perp) \hookrightarrow \widehat \Gamma_{w'}/\widehat w'((I')^\perp). \]
By Lemma \ref{lem:pops-lemma}, the image of $\Delta$ in $\widehat \Gamma_w / \widehat w(I^\perp)$ is contained in the kernel of this map; this image is therefore trivial.
Namely, the image of $\Delta$, under the $\ell$-adic completion map $\Gamma_w \rightarrow \widehat \Gamma_w$, is contained in $\widehat w(I^\perp)$.
Since the kernel of $\Gamma_w \rightarrow \widehat \Gamma_w$ is $\ell^\infty \cdot \Gamma_w$, and $\ell^\infty \cdot \Gamma_w \leq w(I_w(\infty)^\perp)$, we see that $\Delta$ is contained in the kernel of $\Gamma_w \twoheadrightarrow \Gamma_v$.
Thus $v(\ell) = 0$ and $\Char k(v) \neq \ell$.
\end{proof}

\subsection{Detecting valuations with residue characteristic $\neq \ell$}
\label{sec:detect-val-with-res-char-not-ell}

We now prove three theorems which are analogous to the main results of \S\ref{sec:comp-valu}, while ensuring that all valuations in sight have residue characteristic $\neq \ell$.

\begin{thm}
\label{thm:main-c-pairs-thm-char}
Let $n \in \Nbar$ be given and let $N := \Nfr(n)$.
Let $K$ be a field such that $\Char K \neq \ell$.
Let $f,g \in \Gc_K^a(n)$ be given, let $H := \ker f \cap \ker g$, and consider $L := K_H$.
Assume that there exist $f'',g'' \in \Gc_L^a(N)$ such that $f'',g''$ form a C-pair, $(f''_n)_K = f$, and $(g''_n)_K = g$.
Then there exists a valuation $v$ of $K$ such that
\begin{enumerate}
\item One has $f,g \in D_v(n)$.
\item The quotient $\langle f,g \rangle/(\langle f,g \rangle \cap I_v(n))$ is cyclic (possibly trivial).
\item One has $\Char k(v) \neq \ell$.
\end{enumerate}
\end{thm}
\begin{proof}
Consider $f' := f''_n$ and $g' := g''_n$, both are elements of $\Gc_L^a(n)$.
By Theorem \ref{thm:main-c-pairs-thm}, there exists a valuation $w'$ of $L$ such that $f',g' \in D_{w'}(n)$, and $\langle f',g' \rangle/(\langle f',g' \rangle \cap I_{w'}(n))$ is cyclic.
Consider $w := w'|_K$ the restriction of $w'$ to $K$, $I := (\langle f',g' \rangle \cap I_{w'}(n))_K$ and $D := \langle f,g \rangle$; observe that $D \leq D_w(n)$.
With this set-up, the theorem follows from Proposition \ref{prop:main-char-prop}.
\end{proof}

\begin{thm}
\label{thm:c-groups-to-valuative-char}
Let $n \in \Nbar$ be given and let $N := \Nfr(\Mfr_1(n))$.
Let $K$ be a field such that $\Char K \neq \ell$.
Let $D \leq \Gc_K^a(n)$ be given, and assume that there exists a subgroup $D'' \leq \Gc^a_{K_D}(N)$ such that $D''$ is a C-group and $D = (D''_n)_K$.
Then there exists a valuative subgroup $I \leq D$ such that:
\begin{enumerate}
\item The quotient $D/I$ is cyclic.
\item One has $D \leq D_{v_I}(n)$.
\item One has $\Char k(v_I) \neq \ell$.
\end{enumerate}
\end{thm}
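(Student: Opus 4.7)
The plan is to reduce to Theorem \ref{thm:c-groups-to-valuative} applied over $L$, and then to use Proposition \ref{prop:main-char-prop} to transport the resulting valuation data back down to $K$ while picking up the residue-characteristic condition. Since $N = \Nfr(\Mfr_1(n))$ is precisely the hypothesis of Theorem \ref{thm:c-groups-to-valuative}, and $D''$ is a C-group in $\Gc_L^a(N)$, I can apply that theorem with $L$ playing the role of $K$. This produces a valuative subgroup $I' \leq D''_n$ inside $\Gc_L^a(n)$ such that $D''_n/I'$ is cyclic and $D''_n \leq D_{v_{I'}}(n)$, where $v_{I'}$ is the canonical valuation of $L$ associated to $I'$.

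Next I would set $I := I'_K$ and $w := v_{I'}|_K$. Restriction to $K^\times$ sends $D_{v_{I'}}(n)$ into $D_w(n)$, since $1 + \mf_w \subseteq K^\times \cap (1 + \mf_{v_{I'}})$; in particular $D = (D''_n)_K \leq D_w(n)$. This verifies all the hypotheses of Proposition \ref{prop:main-char-prop} applied to the data $I \leq D \leq \Gc_K^a(n)$ together with $I' \leq \Gc_L^a(n)$, noting that $L = L_D$ is built into the statement and $I'_K = I$ by construction. The proposition then directly yields that $I$ is valuative, $D \leq D_{v_I}(n)$, and $\Char k(v_I) \neq \ell$.

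Finally, the cyclicity of $D/I$ comes for free: the restriction map $\Gc_L^a(n) \to \Gc_K^a(n)$ sends $D''_n$ surjectively onto $D$ and $I'$ surjectively onto $I$, and hence induces a surjection $D''_n/I' \twoheadrightarrow D/I$. Since $D''_n/I'$ is cyclic by the output of Theorem \ref{thm:c-groups-to-valuative}, so is $D/I$.

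The proof is essentially a straightforward synthesis of two already-established results, so I do not expect a substantial obstacle. The only delicate point is bookkeeping: one must ensure that the intermediate valuative group $I'$ lives at level $n$ (not $N$) inside $\Gc_L^a(n)$, which is exactly what Theorem \ref{thm:c-groups-to-valuative} arranges via the internal reduction $N \rightsquigarrow M = \Mfr_1(n) \rightsquigarrow n$, and that the auxiliary field $L$ in Proposition \ref{prop:main-char-prop} is identified with $L_D$, which is precisely how $L$ was introduced in the statement of the theorem.
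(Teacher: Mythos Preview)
Your proposal is correct and follows essentially the same argument as the paper: apply Theorem \ref{thm:c-groups-to-valuative} over $L = L_D$ to obtain the valuative subgroup $I' \leq D''_n$ with cyclic quotient, set $I = I'_K$, note $D \leq D_w(n)$ for $w = v_{I'}|_K$, and then invoke Proposition \ref{prop:main-char-prop} to conclude. The paper's proof is slightly terser but the logical structure is identical.
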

\begin{proof}
Let $L := K_D$ and consider $D' := D''_n \leq \Gc^a_L(n)$.
By Theorem \ref{thm:c-groups-to-valuative}, there exists a valuative subgroup $I' \leq D'$ such that, denoting $w' := v_{I'}$, one has $D' \leq D_{w'}(n)$ and $D'/I'$ is cyclic.
Let $I := I'_K$.
Since $D'/I'$ is cyclic and $D'_K = D$, we see that $D/I$ is cyclic as well.
Moreover, we observe that $D \leq D_w(n)$ where $w = w'|_K$ is the restriction of $w'$ to $K$.
With this set-up, the theorem follows from Proposition \ref{prop:main-char-prop}.
\end{proof}

\begin{thm}
\label{thm:main-c-groups-char}
Let $n \in \Nbar$ be given and let $N := \Nfr(\Mfr_2(\Mfr_1(n)))$.
Let $K$ be a field such that $\Char K \neq \ell$.
Let $I \leq D \leq \Gc_K^a(n)$ be given and consider $L := K_D$.
Assume that there exists $I'' \leq D'' \leq \Gc^a_L(N)$ such that $I'' \leq \Ibc(D'')$, $(I''_n)_K = I$, and $(D''_n)_K = D$.
Assume also that $D \neq \Ibc(D)$.
Then $I$ is valuative, $D \leq D_{v_I}(n)$ and $\Char k(v_I) \neq \ell$.
\end{thm}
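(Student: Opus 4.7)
The plan is to reduce to Theorem \ref{thm:main-c-groups} applied over the auxiliary field $L = L_D$, and then apply Proposition \ref{prop:main-char-prop} to descend the valuative information back down to $K$ while simultaneously extracting the residue characteristic constraint. Set $I' := I''_n$ and $D' := D''_n$, both considered as subgroups of $\Gc^a_L(n)$; by assumption $(I')_K = I$ and $(D')_K = D$.

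The first step is to verify that the hypotheses of Theorem \ref{thm:main-c-groups} are satisfied by $I'' \leq D'' \leq \Gc^a_L(N)$. The relation $I'' \leq \Ibc(D'')$ is given, so it only remains to check that $D'$ is not a C-group as a subgroup of $\Gc^a_L(n)$. If it were, then for every $f',g' \in D'$ and every $x \in K \smallsetminus \{0,1\}$ one would have
\[ f'(1-x)\cdot g'(x) = f'(x)\cdot g'(1-x), \]
which on restriction to $K^\times$ yields $f(1-x)g(x) = f(x)g(1-x)$ for every $f,g \in D = (D')_K$. Thus $D$ would be a C-group, contradicting the hypothesis $\Ibc(D) \neq D$.

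Having verified the hypotheses, Theorem \ref{thm:main-c-groups} produces that $I'$ is valuative in $\Gc^a_L(n)$ and that $D' \leq D_{w'}(n)$, where $w' := v_{I'}$. Let $w := w'|_K$. Since $\mf_w = \mf_{w'} \cap K$, we have $1+\mf_w = (1+\mf_{w'}) \cap K^\times$, so the restriction of any element of $D_{w'}(n)$ to $K^\times$ lies in $D_w(n)$; consequently $D = (D')_K \leq D_w(n)$.

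Finally, Proposition \ref{prop:main-char-prop} applies verbatim with the present $I$, $D$, and with $I' = I''_n \leq \Gc^a_L(n)$ as the valuative witness (the assumption $D \leq L_D = L$ is built into the definition of $L$, and one needs only that $\sqrt[\ell^n]{1+\mf_w} \subset L$, which follows from $D \leq D_w(n)$ since then $1+\mf_w \subset D^\perp$). This yields the three conclusions: $I$ is valuative, $D \leq D_{v_I}(n)$, and $\Char k(v_I) \neq \ell$. The only subtle point in the argument is the verification that $D'$ is not a C-group over $L$; this is a straightforward restriction argument but is the step where the hypothesis $\Ibc(D) \neq D$ is actually used, and everything else is a direct bookkeeping reduction to results already established.
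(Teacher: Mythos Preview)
Your proof is correct and follows precisely the approach the paper sketches: apply Theorem \ref{thm:main-c-groups} over $L$ (after verifying $D' = D''_n$ is not a C-group via the restriction argument), then invoke Proposition \ref{prop:main-char-prop} to descend to $K$ and obtain the residue characteristic bound. The paper merely says the argument is ``similar to the proof of Theorem \ref{thm:c-groups-to-valuative-char} using Theorem \ref{thm:main-c-groups} instead of Theorem \ref{thm:c-groups-to-valuative} along with Proposition \ref{prop:main-char-prop}''; you have correctly unpacked this, including the one nontrivial step (that $D'$ not a C-group over $L$ follows from $D$ not a C-group over $K$ by restriction).
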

\begin{proof}
This theorem follows from Proposition \ref{prop:main-char-prop} and Theorem \ref{thm:main-c-groups}, similarly to the way in which Theorem \ref{thm:c-groups-to-valuative-char} follows from Proposition \ref{prop:main-char-prop} and Theorem \ref{thm:c-groups-to-valuative}.
\end{proof}

\begin{remark}
\label{rem:char-neq-ell-remark-for-theorems}
Using Theorem \ref{thm:main-c-pairs-thm-char} resp. \ref{thm:c-groups-to-valuative-char} resp. \ref{thm:main-c-groups-char} instead of Theorem \ref{thm:main-c-pairs-thm} resp. \ref{thm:c-groups-to-valuative} resp. \ref{thm:main-c-groups}, one can prove results analogous to those in \S \ref{sec:detecting-d_vn-i_vn} while considering only valuations whose residue characteristic is different from $\ell$.
We will not state these results explicitly, as their Galois-theoretical analogues comprise Theorem \ref{thm:main-thm-char-intro}.
\end{remark}

\part{Milnor K-theory and Galois Theory}
\label{part:galois-theory-milnor}

\section{Milnor K-Theory and C-Pairs}
\label{sec:milnor-k-theory}

Let $M$ be an $R_n$-module.
A collection of non-zero elements $(f_i)_i$, $f_i \in M$ will be called {\bf quasi-independent} provided the following condition holds: if $a_i \in R_n$ are given with all but finitely many $a_i = 0$ such that $\sum_i a_i f_i = 0$, then $a_i f_i = 0$ for all $i$.
A generating set which is quasi-independent will be called a {\bf quasi-basis}.
Observe that any finitely generated $R_n$ module $M$ has a quasi-basis of unique finite size equal to $\dim_{\Z/\ell}(M/\ell)$.
Namely, any finitely generated $R_n$-module $M$ can be written as a direct product of cyclic submodules $M = \langle \sigma_1 \rangle \times \cdots\times \langle \sigma_k \rangle$; in this case $(\sigma_i)_{i=1}^k$ forms a quasi-basis for $M$.

Let $K$ be any field.
The usual construction of the Milnor K-ring goes as follows:
\[ K_n^M(K) := \frac{(K^\times)^{\otimes n}}{\langle a_1 \otimes \cdots \otimes a_n \ : \ \exists  \ 1 \leq i < j \leq n, \ a_i + a_j = 1\rangle}. \]
The tensor product makes $K_*^M(K) := \bigoplus_n K_n^M(K)$ into a graded-commutative ring and we denote by $\{\bullet,\bullet\}$ the product $K_1^M(K) \times K_1^M(K) \rightarrow K_2^M(K)$.

More generally, let $T \leq K^\times$ be given.
We abuse the notation and write $K_*^M(K)/T$ for the quotient of $K_*^M(K)$ by the graded ideal generated by $T \leq K^\times = K_1^M(K)$.
This is again a graded ring whose graded terms can be defined individually as follows:
\[ K_n^M(K)/T := \frac{(K^\times/T)^{\otimes n}}{\langle a_1 \cdot T \otimes \cdots \otimes a_n \cdot T \ : \ \exists \  1 \leq i < j \leq n, \  1 \in a_i \cdot T + a_j \cdot T \rangle}. \]
Again, the tensor product makes $K_*^M(K)/T = \bigoplus_n K_n^M(K)/T$ into a graded-commutative ring and we denote by $\{\bullet,\bullet\}_T$ the product in this ring.

Clearly, one has a surjective map of graded-commutative rings $K_*^M(K) \twoheadrightarrow K_*^M(K)/T$, and this restricts to a surjective homomorphism $K_n^M(K) \twoheadrightarrow K_n^M(K)/T$ for all $n$.
We recall that, for all $x \in K^\times$, one has $\{x,-1\} = \{x,x\} \in  K_2^M(K)$.
Thus the same is true in $K_2^M(K)/T$: for all $x \in K^\times$, one has $\{x,-1\}_T = \{x,x\}_T$.
For more on the arithmetical properties of these canonical quotients of the Milnor K-ring, refer to Efrat \cite{Efrat2006}, \cite{Efrat2007} where they are systematically studied.

Suppose that $T \leq K^\times$ and $-1 \in T$.
Then the canonical map $(K^\times/T) \otimes (K^\times/T) \twoheadrightarrow K_2^M(K)/T$ factors through
\[ \wedge^2(K/T) := \frac{(K^\times/T) \otimes (K^\times/T)}{\langle x \otimes x \ : \ x \in K^\times/T \rangle}. \]
Moreover, the kernel of the canonical surjective map $\wedge^2(K^\times/T) \rightarrow K_2^M(K)/T$ is generated by $z \wedge (1-z)$ as $z$ varies over the elements of $K^\times \smallsetminus \{0,1\}$.

Suppose that $n \in \Nb$, and $T$ satisfies $\pm \Kln \leq T \leq K^\times$.
Assume furthermore that $K^\times/T$ has rank $2$ as an $R_n$-module.
Choose $x,y \in K^\times$ which induce a quasi-basis of $K^\times/T$.
Namely, for some $0 \leq a,b < n$, one has
\[K^\times/T = x^{\Z/\ell^{n-a}} \times y^{\Z/\ell^{n-b}} \cong \Z/\ell^{n-a} \times \Z/\ell^{n-b}. \]
Clearly, $\wedge^2(K^\times/T)$ is cyclic generated by $x \wedge y$, with order $\ell^{n-\max(a,b)}$.
Thus $K_2^M(K)/T = \langle \{x,y\}_T \rangle$ is cyclic of order $\ell^{n-c}$ for some $c$ with $\max(a,b) \leq c \leq n$.
This observation will help prove the following K-theoretic characterization of C-pairs.

\begin{prop}[K-theoretic characterization of C-pairs]
\label{prop:k-thy-c-pair}
Let $n \in \Nb$ be given.
Let $f,g \in \Gc_K^a(n)$ be given quasi-independent elements of order $\ell^{n-a}$ and $\ell^{n-b}$ respectively.
In particular, 
\[ \langle f,g \rangle = \langle f \rangle \oplus \langle g \rangle \cong (\Z/\ell^{n-a}) \cdot f \oplus (\Z/\ell^{n-b}) \cdot g. \]
Let $T := \ker f \cap \ker g$, and let $0 \leq c \leq n$ be such that $K_2^M(K)/T$ has order $\ell^{n-c}$.
Then $f,g$ form a C-pair if and only if $c \leq a+b$.
\end{prop}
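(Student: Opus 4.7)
The plan is to convert both the C-pair condition and the inequality $c \leq a + b$ into the same comparison of subgroups of the cyclic group $\wedge^2(K^\times/T)$. To set up coordinates I would first exploit quasi-independence: it gives $\langle f,g\rangle = \langle f\rangle \oplus \langle g\rangle$ of order $\ell^{2n-a-b}$, and $(f,g)$ embeds $K^\times/T$ as a finite subgroup of $R_n^2$; Pontryagin duality applied to the non-degenerate pairing $(K^\times/T) \times \langle f,g\rangle \to R_n$ then forces $|K^\times/T| = \ell^{2n-a-b}$. This lets me choose $x,y \in K^\times$ whose classes $\bar x,\bar y$ realize a dual basis: $K^\times/T = \langle \bar x\rangle \oplus \langle \bar y\rangle$ of orders $\ell^{n-a}, \ell^{n-b}$, with $g(x) = f(y) = 0$ while $\alpha := f(x)$ and $\beta := g(y)$ generate $\ell^a R_n$ and $\ell^b R_n$ respectively.

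Next I would introduce the $R_n$-linear map $\Phi : \wedge^2(K^\times/T) \to R_n$ induced by $(f,g)$ followed by the canonical identification $\wedge^2_{R_n}(R_n^2) \cong R_n$. For any $u \in K^\times\setminus\{0,1\}$, write $u \equiv x^p y^q$ and $1-u \equiv x^r y^s$ modulo $T$; a direct bilinear expansion gives $\Phi(\bar x \wedge \bar y) = \alpha\beta$ and $u \wedge (1-u) = (ps - qr)(\bar x \wedge \bar y)$, so
\[ \Phi\bigl(u \wedge (1-u)\bigr) = (ps - qr)\alpha\beta = f(u)g(1-u) - f(1-u)g(u). \]
Consequently $f,g$ form a C-pair if and only if $\Phi$ annihilates every $u \wedge (1-u)$, equivalently the whole subgroup they generate -- which by the observation recalled in the preamble is exactly $\ker\bigl(\wedge^2(K^\times/T) \twoheadrightarrow K_2^M(K)/T\bigr)$. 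Both this kernel and $\ker\Phi$ are subgroups of the cyclic group $\wedge^2(K^\times/T) \cong \Z/\ell^{n-\max(a,b)}$; since $\alpha\beta$ has $\ell$-adic valuation $a+b$ in $R_n$, one computes $\ker\Phi = \ell^{\max(0,n-a-b)}\Z/\ell^{n-\max(a,b)}$, while the given $|K_2^M(K)/T| = \ell^{n-c}$ gives $\ker\bigl(\wedge^2(K^\times/T) \twoheadrightarrow K_2^M(K)/T\bigr) = \ell^{n-c}\Z/\ell^{n-\max(a,b)}$. The containment of the second inside the first is visibly equivalent to $n - c \geq \max(0,n-a-b)$, i.e.\ $c \leq a+b$ (with the bound $c \leq n$ handling the case $a+b \geq n$ automatically).

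Finally, the $n = \infty$ assertion follows at once: $R_\infty = \Z_\ell$ is the inverse limit of the $R_n$, and the C-pair equality is pointwise in $K$, so $f,g$ form a C-pair if and only if $f_n,g_n$ do for every $n \in \Nb$. The main difficulty is really the very first step -- exhibiting the dual basis $\bar x,\bar y$ of $K^\times/T$ from nothing more than the quasi-independence of $f,g$; afterwards one is just tracking $\ell$-adic valuations in a single cyclic group, and the $\max(0,\cdot)$ convention uniformly absorbs the degenerate cases $a+b \geq n$ and $c = \max(a,b)$.
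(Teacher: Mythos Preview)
Your proof is correct and follows essentially the same approach as the paper: both choose a dual basis $x,y$ for $K^\times/T$, expand $u\wedge(1-u)$ in terms of $x\wedge y$, and compare the $\ell$-adic valuations governing the C-pair identity against those governing the order of $K_2^M(K)/T$. Your packaging via the map $\Phi:\wedge^2(K^\times/T)\to R_n$ and the containment $\ker(\wedge^2\twoheadrightarrow K_2^M/T)\subseteq\ker\Phi$ is a mild streamlining of the paper's direct exponent bookkeeping, but the content is the same.
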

\begin{proof}
The following elegant proof was graciously suggested by the referee.
Consider the map $f \wedge g : \wedge^2(K^\times/T) \rightarrow R_n$ defined, in the usual way, as:
\[ (f \wedge g)(z\wedge w) = f(z)\cdot g(w)-f(w)\cdot g(z). \]
Merely by the definitions of ``C-pair'' and $K_2^M(K)/T$, it follows that $f,g$ form a C-pair if and only if $f \wedge g$ factors through $K_2^M(K)/T$.

Since $f,g$ are quasi-independent, we see that $K^\times/T$ has quasi-independent generators, say  $x,y$, which are dual to $f,g$.
Namely, $(f,g)(x) = (\ell^a,0)$ and $(f,g)(y) = (0,\ell^b)$.
This implies that $\wedge^2(K^\times/T)$ is generated by $x \wedge y$ and thus $K_2^M(K)/T$ is generated by $\{x,y\}_T$.

Since $K_2^M(K)/T$ has order $\ell^{n-c}$, we deduce that $f \wedge g$ factors through $K_2^M(K)/T$ if and only if 
\[ 0 = (f\wedge g)(\ell^{n-c} \cdot x \wedge y) = \ell^{n-c} \cdot \ell^a \cdot \ell^b = \ell^{n+a+b-c} \]
as an element of $\Z/\ell^n$.
We deduce that $f,g$ form a C-pair if and only if $0 = \ell^{n+a+b-c}$ as an element of $\Z/\ell^n$.
Thus, $f,g$ form a C-pair if and only if $c \leq a+b$.
\end{proof}

\begin{remark}
\label{remark:rigidity-k-thy}
Let $n \in \Nb$ be given.
Let $A \leq \Gc_K^a(n)$ be given and let $T := A^\perp$.
Proposition \ref{prop:k-thy-c-pair} gives a precise recipe to decide whether or not $A$ is a C-group using the structure of $K_*^M(K)/T$.
More precisely, Proposition \ref{prop:k-thy-c-pair} immediately implies that the following conditions are equivalent:
\begin{enumerate}
\item $A$ is a C-group.
\item For all subgroups $A_0 \leq A$ of rank 2, $A_0$ is a C-group.
\item For all subgroups $T_0 \leq K^\times$ such that $T \leq T_0 \leq K^\times$ and $K^\times/T_0$ has rank 2, $K_*^M(K)/T_0$ satisfies the equivalent conditions of Proposition \ref{prop:k-thy-c-pair}.
\end{enumerate}

In the case where $n=1$, we can provide a \emph{direct} characterization of C-groups $A \leq \Gc_K^a(1)$ using $K_*^M(K)/T$ where $T = A^\perp$. 
Namely, we want a criterion which doesn't require the auxiliary subgroups $T_0$ as above.
In the notation above ($n = 1$ and $T = A^\perp$), the following are equivalent:
\begin{enumerate}
\item $A$ is a C-group.
\item For all subgroups $T_0$ with $T \leq T_0 \leq K^\times$ such that $K^\times/T_0$ has rank 2, one has $K_2^M(K)/T_0 \neq 1$.
\item For all $x,y \in K^\times$ such that $x \cdot T, \ y \cdot T$ are $\Z/\ell$ independent in $K^\times/T$, one has $\{x,y\}_T \neq 0$ as an element of $K_2^M(K)/T$.
\item For all $x \in K^\times \smallsetminus T$ the group $\langle (1-x) \cdot T, \ x \cdot T \rangle$ is a cyclic subgroup of $K^\times/T$.
\item For all subgroups $H$ with  $T \leq H \leq K^\times$, and $x \in K^\times \smallsetminus H$, the group $\langle (1-x) \cdot H, \ x \cdot H \rangle$ is a cyclic subgroup of $K^\times/H$.
\item The canonical map $\wedge^2(K^\times/T) \rightarrow K_2^M(K)/T$ is an isomorphism.
\item For all subgroups $H$ with $T \leq H \leq K^\times$, the canonical map $\wedge^2(K^\times/H) \rightarrow K_2^M(K)/H$ is an isomorphism.
\end{enumerate}
Indeed, $(1) \Leftrightarrow (2)$ is Proposition \ref{prop:k-thy-c-pair}, while $(2) \Rightarrow (3) \Rightarrow (4) \Rightarrow (5) \Rightarrow (6) \Rightarrow (3)$ and $(5) \Rightarrow (7) \Rightarrow (2)$ follow immediately from the definitions.
In particular, the equivalence of conditions (1) and (6) above yield the desired direct characterization of C-groups $A \leq \Gc_K^a(1)$ based on the structure of $K_*^M(K)/T$ for $T = A^\perp$.
\end{remark}

\begin{remark}
\label{rem:k-thy-to-c-pairs}
We can obtain a K-theoretic criterion which detects C-pairs in $\Gc_K^a(\infty)$ which is similar to Proposition \ref{prop:k-thy-c-pair}, by passing to the limit over all $n \in \Nb$.
First, we note that a pair of elements $f,g \in \Gc_K^a(\infty)$ form a C-pair if and only if, for all $n \in \Nb$, the induced pair $f_n,g_n \in \Gc_K^a(n)$ is a C-pair.

Denote by $\widehat K_i^M(K)$ the $\ell$-adic completion of $K_i^M(K)$.
By the universal property of $\ell$-adic completions one has:
\[ \Gc_K^a(\infty) = \Hom(\widehat{K^\times} / \pm 1, \Z_\ell) = \Hom(\widehat{K^\times}/{\rm torsion},\Z_\ell). \]
Moreover, $\widehat{K^\times}/ {\rm torsion}$ is in perfect $\Z_\ell$-duality with $\Gc_K^a(\infty)$.

Let $f,g \in \Gc_K^a(\infty)$ be given.
As above, we may consider $f,g$ as homomorphisms $\widehat{K^\times} \rightarrow \Z_\ell$.
We assume that $\langle f,g \rangle$ is non-cyclic, for otherwise $f,g$ trivially form a C-pair.
Furthermore, if $f = \ell^a \cdot f'$ and $g = \ell^b \cdot g'$, then $f,g$ form a C-pair if and only if $f',g'$ form a C-pair.
Thus, we may assume that (1) $\Gc_K^a(\infty)/\langle f,g \rangle$ is torsion-free and (2) that $f,g$ are $\Z_\ell$-independent.
These assumptions imply that $\langle f_1, g_1\rangle$ is a non-cyclic subgroup of $\Gc_K^a(1)$.
Therefore, for all $n \in \Nb$, $f_n,g_n$ are quasi-independent elements of $\Gc_K^a(n)$, both of order $\Z/\ell^n$.

As $f,g$ are continuous homomorphisms $\widehat{K^\times} \rightarrow \Z_\ell$, we consider $T = \ker f \cap \ker g$ as a closed $\Z_\ell$-submodule of $\widehat{K^\times}$.
Thus $\widehat{K^\times} / T \cong \Z_\ell \times \Z_\ell$ has independent generators $x,y$ which are dual to $f,g$; namely, $(f,g)(x) = (1,0)$ and $(f,g)(y) = (0,1)$.

Consider $T_n := \ker f_n \cap \ker g_n$ as a subgroup of $K^\times$ which contains $\pm K^{\times \ell^n}$; observe that $\widehat{K^\times} / T = \varprojlim_n K^\times/T_n$.
From this we see that $\widehat K_2^M(K)/T := \varprojlim_n K_2^M(K)/T_n$ is a cyclic $\Z_\ell$-module which is generated by $\{x,y\}_T$.

By Proposition \ref{prop:k-thy-c-pair}, we see that $f,g$ form a C-pair if and only if for all $n \in \Nb$, the group $K_2^M(K)/T_n$ has order $\ell^n$.
Since $K_2^M(K)/T_n$ is generated by $\{x,y\}_{T_n}$, we see that $f,g$ form a C-pair if and only if $\{x,y\}_T$ (which is a generator of $\widehat K_2^M(K)/T$) has infinite order.
In other words, $f,g$ form a C-pair if and only if the canonical map $\widehat \wedge^2(\widehat{K^\times}/T) \rightarrow \widehat K_2^M(K)/T$ is an isomorphism.
Compare this with condition (6) of Remark \ref{remark:rigidity-k-thy}.
\end{remark}

We conclude this section about Milnor K-theory with the observation that Proposition \ref{prop:k-thy-c-pair} and the main results of Part \ref{part:underlying-theory} show how to detect valuations of a field using its first and second Milnor K-theory groups, along with the product map.
Below we restrict our attention to $n \in \Nb$ but similar statements can be made for $n = \infty$ using Remark \ref{rem:k-thy-to-c-pairs}.

Let $K$ be an arbitrary field. 
If $\ell \neq 2$, one has $\Gc_K^a(n) = \Hom(K_1^M(K)/\ell^n,R_n)$; if $\ell = 2$, $\Gc_K^a(n)$ is the image of the canonical map 
\[ \Hom(K_1^M(K)/\ell^{n+1},R_{n+1}) \rightarrow \Hom(K_1^M(K)/\ell^{n+1},R_n) = \Hom(K_1^M(K)/\ell^n,R_n). \]
Moreover, Proposition \ref{prop:k-thy-c-pair} shows that the product $K_1^M(K)/\ell^n \times K_1^M(K)/\ell^n \rightarrow K_2^M(K)/\ell^n$ can be used to determine the collection of C-pairs in $\Gc_K^a(n)$.

For example, Theorem \ref{thm:maximal-I-D-mu} and the observation above show that, if $K$ has sufficiently many roots of unity and $N$ is sufficiently large (e.g. $N = \Nfr(\Mfr_2(\Mfr_1(n)))+1$ is sufficient), then one can construct $\Gc_K^a(n)$ along with the subgroups $I_v(n) \leq D_v(n) \leq \Gc_K^a(n)$ for all $v \in \Vc_{K,n}$, using the following data:
\begin{enumerate}
\item The groups $K_*^M(K)/\ell^N$ for $* = 1,2$.
\item The product map $K_1^M(K)/\ell^N \times K_1^M(K)/\ell^N \rightarrow K_2^M(K)/\ell^N$.
\end{enumerate}
Lastly, we note that the inclusion $I_v(n) \hookrightarrow \Gc_K^a(n)$ is dual to the map $K^\times/\ell^n \rightarrow \Gamma_v/\ell^n$ induced by $v$.
To summarize, using the K-theoretic data mentioned above, one can reconstruct the maps $K^\times/\ell^n \rightarrow \Gamma_v/\ell^n$ for all the valuations $v \in \Vc_{K,n}$.

\section{CL Subgroups of Galois Groups}
\label{sec:cl-subgroups-galois}

Let $n \in \Nbar$ be given and suppose that $K$ is a field whose characteristic is different from $\ell$.
In this context, we denote by $R_n(i) := R_n \otimes_{\Z_\ell} \Z_\ell(i)$ the $i$-th cyclotomic twist of $R_n$.
Let us further assume that $\mu_{2\ell^n} \subset K$, and thus there is a (non-canonical) isomorphism of $G_K$-modules $R_n \cong R_n(1)$; we fix such an isomorphism for the rest of the paper.

Our assumption that $\mu_{2\ell^n} \subset K$ ensures that $-1 \in K^{\times \ell^n}$.
Thus, Kummer theory and our choice of $R_n \cong R_n(1)$, yield an isomorphism of pro-$\ell$ groups $\Gc_K^a(n) \cong \Gc_K^{a,n}$.
In this section we will use the K-theoretic criterion for C-pairs (Proposition \ref{prop:k-thy-c-pair}), along with the Merkurjev-Suslin theorem \cite{Merkurjev1982} to prove that C-pairs correspond to CL-pairs via this isomorphism $\Gc_K^a(n) \cong \Gc_K^{a,n}$.

We first prove some cohomological results concerning more general pro-$\ell$ Galois groups.
Only then will we restrict to pro-$\ell$ Galois groups which will allow us to prove the equivalence of C-pairs and CL-pairs.

\subsection{Pro-$\ell$ Groups}
\label{sec:pro-ell-groups}

Throughout this subsection, we will work with a fixed $n \in \Nbar$.
Let $\Gc$ be an arbitrary pro-$\ell$ group.
We recall that the $\ell^n$-central descending series of $\Gc$ is defined inductively as follows:
\[ \Gc^{(1,n)} = \Gc, \ \ \Gc^{(m+1,n)} = [\Gc,\Gc^{(m,n)}] \cdot (\Gc^{(m,n)})^{\ell^n}. \]
For simplicity we define $\Gc^{a,n} := \Gc/\Gc^{(2,n)}$ and $\Gc^{c,n} := \Gc/\Gc^{(3,n)}$.
We will usually use additive notation for the abelian pro-$\ell$ groups $\Gc^{a,n}$ and $\Gc^{(2,n)}/\Gc^{(3,n)}$.

Throughout, we will denote by $H^*(\Gc) := H^*_{\rm cont}(\Gc,R_n)$ the continuous-cochain cohomology of $\Gc$ with values in $R_n$.
If $n$ is finite, we recall that the short exact sequence:
\[ 1 \rightarrow \Z/\ell^n \xrightarrow{\ell^n} \Z/\ell^{2n} \rightarrow \Z/\ell^{n} \rightarrow 1 \]
yields the Bockstein homomorphism:
\[ \beta : H^1(\Gc) \rightarrow H^2(\Gc) \]
which is the connecting homomorphism in the associated long exact sequence in cohomology.
If $n = \infty$, we define $\beta : H^1(\Gc) \rightarrow H^2(\Gc)$ to be the trivial homomorphism.

The following discussion uses some well-known results concerning commutators and $\ell^n$-th powers in central descending series; see \cite{Neukirch2008} Proposition 3.8.3 for a reference.
For $\sigma,\tau \in \Gc^{a,n}$, we define $[\sigma,\tau] := \tilde\sigma^{-1}\tilde\tau^{-1}\tilde\sigma\tilde\tau$ where $\tilde\sigma,\tilde\tau \in \Gc^{c,n}$ are lifts of $\sigma,\tau$.
Since $\Gc^{c,n} \rightarrow \Gc^{a,n}$ is a central extension, the element $[\sigma,\tau]$ doesn't depend on the choice of lifts of $\sigma,\tau$.
Thus, we obtain a well-defined map:
\[ [\bullet,\bullet] : \Gc^{a,n} \times \Gc^{a,n} \rightarrow \Gc^{(2,n)}/\Gc^{(3,n)} \]
which is known to be $R_n$-bilinear.

Similarly, for $\sigma \in \Gc^{a,n}$, we define $\sigma^\pi := \tilde\sigma^{\ell^n}$ where, again, $\tilde\sigma \in \Gc_K^{c,n}$ is a lift of $\sigma$.
Since $\Gc^{c,n} \rightarrow \Gc^{a,n}$ is a central extension with kernel killed by $\ell^n$, the element $\sigma^\pi$ doesn't depend on the choice of lift.
Thus we obtain a well-defined map:
\[ (\bullet)^{\pi} : \Gc^{a,n} \rightarrow \Gc^{(2,n)}/\Gc^{(3,n)}\]
which is known to be $R_n$-linear if $\ell \neq 2$; if $\ell = 2$, this map is generally not linear.
We will write $\sigma^\beta := 2 \cdot \sigma^\pi$ and note that, unlike $(\bullet)^\pi$, the map 
\[ (\bullet)^\beta : \Gc^{a,n} \rightarrow \Gc^{(2,n)}/\Gc^{(3,n)} \]
is \emph{always} $R_n$-linear, regardless of $\ell$.

\begin{lem}
\label{lem:cup-product}
Let $\Gc$ be a pro-$\ell$ group.
Then the following holds:
\[ \ker(H^2(\Gc^{a,n}) \rightarrow H^2(\Gc)) = \ker(H^2(\Gc^{a,n}) \rightarrow H^2(\Gc^{c,n})). \]
In particular, suppose that $f,g \in \Hom(\Gc,R_n) = H^1(\Gc^{a,n}) = H^1(\Gc^{c,n}) = H^1(\Gc)$ are arbitrary.
Then the following are equivalent:
\begin{enumerate}
\item $f \cup g = 0$ as an element in $H^2(\Gc)$.
\item $f \cup g = 0$ as an element in $H^2(\Gc^{c,n})$.
\end{enumerate}
\end{lem}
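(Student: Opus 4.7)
The plan is to derive the equality of the two kernels from the five-term exact sequences of Lyndon--Hochschild--Serre associated with the two central extensions
\[ 1 \to \Gc^{(2,n)} \to \Gc \to \Gc^{a,n} \to 1, \qquad 1 \to \Gc^{(2,n)}/\Gc^{(3,n)} \to \Gc^{c,n} \to \Gc^{a,n} \to 1, \]
all with coefficients in $R_n$. The quotient $\Gc \twoheadrightarrow \Gc^{c,n}$, together with the identity on $\Gc^{a,n}$, defines a morphism between these extensions. By functoriality of LHS we obtain a commutative diagram of 5-term sequences sharing the same middle term $H^2(\Gc^{a,n})$; exactness identifies the two kernels in question with the images of the corresponding transgression maps $d_2$ out of the $\Gc^{a,n}$-invariants in $H^1$ of the kernel. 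So the equality reduces to showing that these two images coincide.

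The crux is to show that the vertical map
\[ H^1(\Gc^{(2,n)}/\Gc^{(3,n)}, R_n)^{\Gc^{a,n}} \longrightarrow H^1(\Gc^{(2,n)}, R_n)^{\Gc^{a,n}} \]
induced by the quotient $\Gc^{(2,n)} \twoheadrightarrow \Gc^{(2,n)}/\Gc^{(3,n)}$ is an isomorphism, which is where the specific form of $\Gc^{(3,n)}$ enters. Injectivity is automatic from surjectivity of the underlying map of groups. For surjectivity on invariants, I would start with a $\Gc^{a,n}$-invariant homomorphism $\varphi : \Gc^{(2,n)} \to R_n$, observe that invariance under conjugation forces $\varphi$ to vanish on $[\Gc, \Gc^{(2,n)}]$, and then invoke the $\ell^n$-torsion of $R_n$ to conclude that $\varphi$ also vanishes on $(\Gc^{(2,n)})^{\ell^n}$. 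Together this covers exactly $\Gc^{(3,n)} = [\Gc, \Gc^{(2,n)}] \cdot (\Gc^{(2,n)})^{\ell^n}$, so $\varphi$ factors through the quotient and is in the image.

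Once this vertical isomorphism is in hand, the two images of $d_2$ inside $H^2(\Gc^{a,n})$ coincide, which gives the desired equality of kernels. The ``in particular'' statement is then immediate: for $f,g \in H^1(\Gc) = H^1(\Gc^{a,n})$, the cup product $f \cup g$ lives naturally in $H^2(\Gc^{a,n})$, and its images under the inflation maps to $H^2(\Gc^{c,n})$ and $H^2(\Gc)$ are the respective cup products there. I do not anticipate any serious obstacle --- the only subtle point is the identification of invariants above, which is really just the definition of $\Gc^{(3,n)}$ combined with the torsion of the coefficient module.
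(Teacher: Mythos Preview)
Your proposal is correct and follows essentially the same route as the paper: both arguments use the Lyndon--Hochschild--Serre (equivalently, five-term) sequences for the two extensions, functoriality between them, and the identification of the inflation map $H^1(\Gc^{(2,n)}/\Gc^{(3,n)})^{\Gc^{a,n}} \to H^1(\Gc^{(2,n)})^{\Gc^{a,n}}$ as an isomorphism. The paper simply asserts this last isomorphism, whereas you spell out why it holds (invariance kills commutators, $\ell^n$-torsion of $R_n$ kills $\ell^n$-th powers); this added detail is fine and indeed clarifies why the definition of $\Gc^{(3,n)}$ is exactly what is needed.
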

\begin{proof}
Observe that the following canonical maps are isomorphisms: $H^1(\Gc^{a,n}) \rightarrow H^1(\Gc^{c,n}) \rightarrow H^1(\Gc)$.
Now, using the cohomological spectral sequence associated to the group extensions $\Gc^{c,n} \rightarrow \Gc^{a,n}$ and $\Gc \rightarrow \Gc^{a,n}$, we obtain the following commutative diagram with exact rows,
\[
\xymatrix{
0 \ar[r] & H^1(\Gc^{(2,n)}/\Gc^{(3,n)})^{\Gc^{c,n}} \ar[d]\ar[r]^-{d_2} &  H^2(\Gc^{a,n}) \ar@{=}[d] \ar[r] & H^2(\Gc^{c,n}) \ar[d]\\
0 \ar[r] & H^1(\Gc^{(2,n)})^{\Gc} \ar[r]_{d_2} &  H^2(\Gc^{a,n}) \ar[r] & H^2(\Gc)
}
\]
in which the two maps under consideration, $H^2(\Gc^{a,n}) \rightarrow H^2(\Gc)$ and $H^2(\Gc^{a,n}) \rightarrow H^2(\Gc^{c,n})$, appear.
It therefore suffices to prove that the map $H^1(\Gc^{(2,n)}/\Gc^{(3,n)})^{\Gc^{c,n}}\rightarrow H^1(\Gc^{(2,n)})^{\Gc}$ is an isomorphism.

Recall that $H^1(\Gc^{(2,n)})^{\Gc} = \Hom_{\Gc}(\Gc^{(2,n)},R_n)$ is the set of $\Gc$-equivariant continuous homomorphisms $\Gc^{(2,n)} \rightarrow R_n$, where $\Gc$ acts on $\Gc^{(2,n)}$ by conjugation and trivially on $R_n$.
From this, we see that the left kernel of the canonical pairing:
\[ \Gc^{(2,n)} \times H^1(\Gc^{(2,n)})^{\Gc} \rightarrow R_n \]
is \emph{by definition} $\Gc^{(3,n)}$.
In other words, the map $H^1(\Gc^{(2,n)}/\Gc^{(3,n)})^{\Gc^{c,n}}\rightarrow H^1(\Gc^{(2,n)})^{\Gc}$ is an isomorphism, and this completes the proof of the lemma.
\end{proof}

\begin{defn}
\label{defn:CL-pairs}
Let $\Gc$ be a pro-$\ell$ group and let $\sigma,\tau \in \Gc^{a,n}$ be given.
We say that $\sigma,\tau$ form a {\bf CL-pair} provided that:
\[ [\sigma,\tau] \in \langle \sigma^\beta,\tau^\beta \rangle. \]
If $\ell \neq 2$ we note that $\sigma,\tau$ form a CL-pair if and only if $[\sigma,\tau] \in \langle \sigma^\pi,\tau^\pi \rangle$, as $2$ is invertible in $R_n$.
Furthermore, as $(\bullet)^\beta$ is linear and $[\bullet,\bullet]$ is bilinear, if $\langle \sigma',\tau' \rangle = \langle \sigma,\tau \rangle$ and $\sigma,\tau$ form a CL-pair, then $\sigma',\tau'$ form a CL-pair as well.

A subgroup $A \leq \Gc^{a,n}$ will be called a {\bf CL-group} provided that any pair of elements $\sigma,\tau \in A$ form a CL-pair.
For a subgroup $A \leq \Gc^{a,n}$, we denote by $\Ibcl(A)$ the subset:
\[ \Ibcl(A) := \{\sigma \in A \ : \ \forall \tau \in A, \ \sigma,\tau \  \text{ form a CL-pair}. \} \]
and call $\Ibcl(A)$ the CL-center of $A$. 
In particular, $A$ is a CL-group if and only if $A = \Ibcl(A)$.
\end{defn}

\begin{remark}
\label{rem:caution-CL-pairs}
Let $\Gc$ be a pro-$\ell$ group and let $A \leq \Gc^{a,n}$ be given.
Suppose $A = \langle \sigma_i \rangle_i$ is generated by $(\sigma_i)_i$.
In a general pro-$\ell$ group $\Gc$, the fact that $(\sigma_i)_i$ are pairwise CL \emph{does not} necessarily imply that $A$ is a CL-group.
Similarly, if $A$ is an arbitrary subgroup of $\Gc^{a,n}$, then $\Ibcl(A)$ is not a subgroup of $A$ but merely a \emph{subset} in general.

For example, let $n=1$, $\ell \neq 2$ and let $S$ be the free pro-$\ell$ group on three generators $\sigma_1,\sigma_2,\sigma_3$.
Consider $\Gc = S^{c,n}/R$ where $R \leq S^{(2,n)}/S^{(3,n)}$ is the subgroup generated by the following three relations:
\begin{enumerate}
\item $[\sigma_1,\sigma_2] = \sigma_1^\pi$.
\item $[\sigma_1,\sigma_3] = \sigma_3^\pi$.
\item $[\sigma_2,\sigma_3] = \sigma_2^\pi$.
\end{enumerate}
We can consider $S^{(2,n)}/S^{(3,n)}$ as a $\Z/\ell$-vector space in the obvious way.
The following list is a basis for this vector space:
\[ [\sigma_1,\sigma_2], \ [\sigma_1,\sigma_3], \ [\sigma_2,\sigma_3], \ \sigma_1^\pi,\ \sigma_2^\pi, \ \sigma_3^\pi. \]
From this it is easy to see that $R$ has $\Z/\ell$-dimension 3.

Clearly, $\Gc^{a,n}$ is generated by pairwise CL elements (namely $\sigma_1,\sigma_2,\sigma_3$).
However, simple dimension considerations (see above) show that $(\sigma_1+\sigma_2),\sigma_3$ do not form a CL-pair.
Thus, $\Gc^{a,n}$ is generated by pairwise CL elements, but it is not a CL-group.

Fortunately, in the case where $\Gc = \Gc_K$ for a field $K$ with $\Char K \neq \ell$ and $\mu_{2\ell^n} \subset K$, it will be a consequence of Theorem \ref{thm:cl-to-c-pairs} that a subgroup $A$ of $\Gc_K^{a,n}$ is CL if and only if it is generated by pairwise CL elements, and that $\Ibcl(A)$ is indeed a subgroup.
\end{remark}

\subsection{Minimal Free Pro-$\ell$ presentations}
\label{sec:minimal-free-pro}

In this subsection we recall some basic facts about minimal free presentations of pro-$\ell$ groups and the relationship between cup-products resp. Bockstein and commutators resp. $\ell^n$-th powers.
For a reference, see \cite{Neukirch2008} Chapter 3.9.

Let $\Gc$ be a pro-$\ell$ group and assume that $\Gc^{a,n}$ is isomorphic to a direct power of $R_n$.
Choose a (convergent) minimal generating set $(\sigma_i)_{i \in \Lambda}$ for $\Gc^{a,n}$.
Furthermore, continuously choose lifts $\tilde\sigma_i \in \Gc$ of $\sigma_i$; then $(\tilde\sigma_i)_{i \in \Lambda}$ is a minimal generating set for $\Gc$.
Let $S$ be the free pro-$\ell$ group on $(\tilde\gamma_i)_{i \in \Lambda}$ and consider the surjective homomorphism $S \rightarrow \Gc$ defined by $\tilde\gamma_i \mapsto \tilde\sigma_i$.
We will furthermore denote by $\gamma_i \in S^{a,n}$ the image of $\tilde\gamma_i$ under the map $S \rightarrow S^{a,n}$; thus $(\gamma_i)_{i \in \Lambda}$ is a minimal generating set for $S^{a,n}$.
It is easy to see that the induced map $S^{a,n} \rightarrow \Gc^{a,n}$ is an isomorphism.
We will call such a homomorphism $S \rightarrow \Gc$ a \emph{minimal} free presentation.

Denote by $(x_i)_{i \in \Lambda}$ the $R_n$-basis for $H^1(S^{a,n}) = H^1(S)$ which is dual to $(\gamma_i)_{i\in\Lambda}$, and choose a total ordering for the index set $\Lambda$.
With these choices made, every element $\rho$ of $S^{(2,n)}/S^{(3,n)}$ has a \emph{unique} representation as:
\[ \rho = \sum_{i < j} a_{ij}(\rho) \cdot [\gamma_i,\gamma_j] + \sum_r b_r(\rho) \cdot \gamma_r^\pi. \]
The coefficients $a_{ij}$ and $b_r$ can therefore be considered as homomorphisms $S^{(2,n)}/S^{(3,n)} \rightarrow R_n$.
We may therefore consider $a_{ij}$ and $b_r$ as elements of $H^1(S^{(2,n)})^{S^{a,n}} = \Hom(S^{(2,n)}/S^{(3,n)},R_n)$.

Let $T$ denote the kernel of our minimal free presentation $S \rightarrow \Gc$.
Since $S^{a,n} \rightarrow \Gc^{a,n}$ is an isomorphism, we note that $T \leq S^{(2,n)}$.
Thus, we may restrict $a_{ij}$ and $b_r$ to elements of $H^1(T)^{\Gc} = \Hom(T/([S,T] \cdot T^{\ell^n}), R_n)$.

The spectral sequence associated to the extension $S \rightarrow \Gc$ induces an isomorphism:
\[ d_2 : H^1(T)^\Gc \rightarrow H^2(\Gc) \]
since $S$ and $T$ have $\ell$-cohomological dimension $\leq 1$ and the inflation $H^1(\Gc) \rightarrow H^1(S)$ is an isomorphism. 
Thus, we obtain a canonical perfect pairing:
\[(\bullet,\bullet) : H^2(\Gc) \times \left(\frac{T}{[S,T] \cdot T^{\ell^n}}\right) \rightarrow R_n \]
defined by $(\xi,\rho) = (d_2^{-1}\xi)(\rho)$.
This pairing can be described explicitly using the cup product and Bockstein morphism (see \cite{Neukirch2008} Propositions 3.9.13 and 3.9.14):
\begin{itemize}
\item $(x_i \cup x_j,\bullet) = -a_{ij}(\bullet)$, $i < j$.
\item $(\beta x_r,\bullet) = -b_r(\bullet)$.
\end{itemize}

Since it will be mentioned later on, we conclude this subsection with the following observation.
If we take $\Gc = S^{a,n}$, the discussion above yields a canonical perfect pairing:
\begin{align}
\label{Free-h2-pairing}
(\bullet,\bullet) : H^2(S^{a,n}) \times S^{(2,n)}/S^{(3,n)} \rightarrow R_n
\end{align}
which satisfies $(x_i \cup x_j,\rho) = -a_{ij}(\rho)$ for $i<j$ and $(\beta x_r,\rho) = -b_r(\rho)$.

\subsection{Pro-$\ell$ Galois Groups}
\label{sec:pro-ell-galois}

In this subsection, we will deal with pro-$\ell$ Galois groups of a field $K$ with $\Char K \neq \ell$ and $\mu_{2\ell^n} \subset K$.
In such a situation, we will choose, once and for all, an isomorphism of $G_K$-modules $R_n(1) \cong R_n$.
This isomorphism $R_n(1) \cong R_n$ induces isomorphisms $R_n(i) \cong R_n$ for all $i$, and we use these isomorphisms tacitly throughout.
Recall that Kummer theory yields a canonical perfect pairing:
\begin{align*}
\text{If $n \neq \infty$: } \ &  \Gc_K^{a,n} \times K^\times/\ell^n \rightarrow \Z/\ell^n(1). \\
\text{If $n = \infty$: } \ & \Gc_K^{a,n} \times \widehat{K^\times} \rightarrow \Z_\ell(1).
\end{align*}
Since $-1 \in K^{\times \ell^n}$, our fixed isomorphism $R_n \cong R_n(1)$ yields an isomorphism $\Gc_K^a(n) \cong \Gc_K^{a,n}$.

The Merkurjev-Suslin theorem \cite{Merkurjev1982} states that the Galois symbol is an isomorphism:
\begin{align*}
\text{If $n \neq \infty$: } \ & K_2^M(K)/\ell^n \cong H^2(K,\Z/\ell^n(2)). \\
\text{If $n = \infty$: } \  & \widehat K_2^M(K) \cong H^2(K,\Z_\ell(2)).
\end{align*}
In particular, the cup-product map $\cup : H^1(K,R_n(1)) \otimes H^1(K,R_n(1)) \rightarrow H^2(K,R_n(2))$ is surjective.
In turn, this implies that the inflation map $H^2(\Gc_K^{a,n}) \rightarrow H^2(\Gc_K)$ is surjective; this observation will be used below.

\begin{prop}
\label{prop:h1-h2-pairings}
Let $K$ be a field such that $\Char K \neq \ell$ and $\mu_{\ell^n} \subset K$.
Choose a minimal free pro-$\ell$ presentation $S \twoheadrightarrow \Gc_K$; namely, $S$ is a free pro-$\ell$ group, $S\rightarrow \Gc_K$ is surjective and the induced map $S^{a,n} \rightarrow \Gc_K^{a,n}$ is an isomorphism.
Denote by $R$ the kernel of the induced surjective map $S^{c,n} \rightarrow \Gc_K^{c,n}$.
Then one has a canonical perfect pairing:
\[ H^2(\Gc_K) \times R \rightarrow R_n \]
induced by the free presentation.
Furthermore, this pairing is compatible with the perfect pairing $H^2(S^{a,n}) \times S^{(2,n)}/S^{(3,n)} \rightarrow R_n$ described in (\ref{Free-h2-pairing}), via the inflation map $H^2(S^{(a,n)}) = H^2(\Gc_K^{a,n}) \rightarrow H^2(\Gc_K)$ and the inclusion $R \hookrightarrow S^{(2,n)}/S^{(3,n)}$.
\end{prop}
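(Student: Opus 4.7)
My plan is as follows. The key observation is that $T := \ker(S \twoheadrightarrow \Gc_K)$ lies in $S^{(2,n)}$, since $S^{a,n} \to \Gc_K^{a,n}$ is an isomorphism. Consequently $[S,T] \cdot T^{\ell^n} \leq [S,S^{(2,n)}] \cdot (S^{(2,n)})^{\ell^n} = S^{(3,n)}$, and the inclusion $T \hookrightarrow S^{(2,n)}$ descends to a natural map
\[
 \iota \ : \ T/([S,T] \cdot T^{\ell^n}) \ \longrightarrow \ S^{(2,n)}/S^{(3,n)},
\]
whose image is by inspection equal to $T \cdot S^{(3,n)}/S^{(3,n)} = R$. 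My strategy is to prove that $\iota$ is an \emph{isomorphism} onto $R$, then transport the canonical pairing $H^2(\Gc_K) \times T/([S,T] \cdot T^{\ell^n}) \to R_n$ (constructed via the transgression $d_2$ immediately before the proposition) along $\iota$ to obtain the claimed pairing on $R$. Once $\iota$ is shown to be an iso onto $R$, the compatibility with the pairing on $H^2(S^{a,n}) \times S^{(2,n)}/S^{(3,n)}$ will be built into the construction.

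The only nontrivial point is the injectivity of $\iota$, and the plan is to deduce it from the Merkurjev-Suslin theorem by Pontryagin duality. Indeed, both canonical pairings are perfect, so they give identifications
\[
T/([S,T] \cdot T^{\ell^n}) \ \cong \ \Hom(H^2(\Gc_K),R_n), \qquad S^{(2,n)}/S^{(3,n)} \ \cong \ \Hom(H^2(S^{a,n}),R_n).
\]
Now consider the morphism of group extensions
\[
\begin{array}{ccccccccc}
1 & \to & T & \to & S & \to & \Gc_K & \to & 1 \\
& & \downarrow & & \Vert & & \downarrow & & \\
1 & \to & S^{(2,n)} & \to & S & \to & S^{a,n} & \to & 1,
\end{array}
\]
where the right vertical arrow is the canonical surjection $\Gc_K \twoheadrightarrow \Gc_K^{a,n} = S^{a,n}$. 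The plan is to verify, via the naturality of the Lyndon-Hochschild-Serre spectral sequence associated to these extensions, that under the two identifications above, $\iota$ corresponds precisely to the Pontryagin dual of the inflation map $H^2(S^{a,n}) = H^2(\Gc_K^{a,n}) \to H^2(\Gc_K)$. Since this inflation is surjective by Merkurjev-Suslin (as recorded in the paragraph before the proposition), the dual is injective, and hence $\iota$ is injective.

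Putting these together, $\iota$ becomes an isomorphism $T/([S,T] \cdot T^{\ell^n}) \xrightarrow{\sim} R$, so the existing pairing transfers to a pairing $H^2(\Gc_K) \times R \to R_n$. Compatibility with the $S^{a,n}$-side pairing via inflation and the inclusion $R \hookrightarrow S^{(2,n)}/S^{(3,n)}$ is then simply a restatement of the duality argument used to obtain injectivity of $\iota$. The main obstacle I anticipate is the careful bookkeeping needed to verify that the transgression $d_2$, applied to the two extensions above, intertwines the inflation $H^2(S^{a,n}) \to H^2(\Gc_K)$ with the inclusion $T/([S,T] \cdot T^{\ell^n}) \to S^{(2,n)}/S^{(3,n)}$ in a sign-coherent way; this amounts to the functoriality of the $d_2$-pairing already used earlier in the section, but needs to be pinned down explicitly in order to conclude.
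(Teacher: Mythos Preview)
Your proposal is correct and follows essentially the same approach as the paper: both arguments reduce to showing that the natural map $T/([S,T]\cdot T^{\ell^n}) \to S^{(2,n)}/S^{(3,n)}$ is injective (its image being $R$ by construction), and both deduce this by Pontryagin duality from the surjectivity of the inflation $H^2(\Gc_K^{a,n}) \to H^2(\Gc_K)$, which in turn follows from Merkurjev--Suslin. The paper's treatment of compatibility is the one-line ``functoriality of the situation,'' which is exactly the naturality of the spectral-sequence $d_2$ that you propose to pin down explicitly.
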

\begin{proof}
Let $T$ denote the kernel of $S \rightarrow \Gc_K$.
As discussed above, the spectral sequence associated to this extension induces an isomorphism:
\[ d_2 : H^1(T)^S \rightarrow H^2(\Gc_K). \]
Furthermore, $H^1(T)^S$ is in perfect duality with $T/([S,T] \cdot T^{\ell^n})$.
Thus, it suffices to show that the canonical map:
\[ \frac{T}{[S,T]T^{\ell^n}} \rightarrow \frac{T \cdot S^{(3,n)}}{S^{(3,n)}} = R \]
is an isomorphism.
Clearly this is a surjective map.

Taking $R_n$-duals of the composition
\[ \frac{T}{[S,T]T^{\ell^n}} \rightarrow \frac{T \cdot S^{(3,n)}}{S^{(3,n)}} \hookrightarrow \frac{S^{(2,n)}}{S^{(3,n)}},\]
we obtain the inflation map $H^2(S^{a,n}) \cong H^2(\Gc_K^{a,n}) \rightarrow H^2(\Gc_K)$ which is surjective by the Merkurjev-Suslin theorem \cite{Merkurjev1982}, as discussed above.
Thus $T/([S,T]T^{\ell^n}) \rightarrow S^{(2,n)}/S^{(3,n)}$ is injective by Pontryagin duality.
In particular, $T/([S,T]T^{\ell^n}) \rightarrow (T \cdot S^{(3,n)})/S^{(3,n)} = R$ is injective as well.

The compatibility with the pairing described in (\ref{Free-h2-pairing}) is immediate by the functoriality of the situation, along with our requirement that $S^{a,n} \rightarrow \Gc_K^{a,n}$ is an isomorphism.
\end{proof}

For a field $K$ and $n \in \Nbar$ as above, our fixed isomorphism $R_n(1) \cong R_n$ allows us to explicitly express the Bockstein morphism $\beta : H^1(\Gc_K,R_n) \rightarrow H^2(\Gc_K,R_n)$ using Milnor K-theory, as follows.

If $n = \infty$ this map is trivial, so there is nothing to say.
Let us therefore temporarily assume that $n \in \Nb$.
The following fact seems to be well-known; see \cite{Efrat2011b} Proposition 2.6 for a precise reference.
Denote by $\delta$ the canonical Kummer map $K^\times \rightarrow H^1(K,\mu_{\ell^n})$.
Then the cup product ${\bf 1} \cup \delta : H^1(\Gc_K,\Z/\ell^n) \otimes \mu_{\ell^n} \rightarrow H^2(\Gc_K,\mu_{\ell^n})$ is precisely the same as the map $\beta \cup {\bf 1}$.

Denote by $\omega$ the fixed generator of $\mu_{\ell^n}$ which corresponds to $1 \in \Z/\ell^n$ under our isomorphism $R_n(1) \cong R_n$.
This induces isomorphisms $H^1(\Gc_K,\Z/\ell^n) \cong H^1(G_K,\mu_{\ell^n}) \cong K_1^M(K)/\ell^n$ (by Kummer theory) and $H^2(\Gc_K,\Z/\ell^n) \cong H^2(G_K,\mu_{\ell^n}^{\otimes 2}) \cong K_2^M(K)/\ell^n$ (by Merkurjev-Suslin \cite{Merkurjev1982}).
Under these induced isomorphisms, we deduce that the Bockstein morphism $H^1(\Gc_K,\Z/\ell^n) \rightarrow H^2(\Gc_K,\Z/\ell^n)$ corresponds to the map $K_1^M(K)/\ell^n \rightarrow K_2^M(K)/\ell^n$ defined by $x \mapsto \{x,\omega\}$.
Namely, the following diagram commutes:
\[
\xymatrix
{
K_1^M(K)/\ell^n \ar[r]^{\cong} \ar[d]_{x \mapsto \{x,\omega\}} & H^1(K,\mu_{\ell^n}) \ar[r]^{\cong} \ar[d]^{\text{induced}} & H^1(\Gc_K,\Z/\ell^n) \ar[d]^{\beta \cup \mu_{\ell^n}} \ar@{=}[r] & H^1(\Gc_K,\Z/\ell^n) \ar[d]^{\beta} \\ 
K_2^M(K)/\ell^n \ar[r]_{\cong}  & H^1(K,\mu_{\ell^n}^{\otimes 2}) \ar[r]_{\cong} & H^2(\Gc_K,\mu_{\ell^n}) \ar[r]_{\cong} & H^2(\Gc_K,\Z/\ell^n)
}
\]
where the isomorphisms on the left are canonical given by the Galois symbol, while the isomorphisms on the right are induced by our fixed isomorphism $\mu_{\ell^n} = \langle \omega \rangle \cong \Z/\ell^n$.
We will tacitly use this fact for the rest of this section.

\begin{thm}
\label{thm:cl-to-c-pairs}
Let $K$ be a field such that $\Char K \neq \ell$ and $\mu_{2\ell^n} \subset K$.
Let $\sigma,\tau \in \Gc_K^{a,n}$ be given.
Consider $\sigma,\tau$ as homomorphisms $\sigma,\tau : K^\times \rightarrow R_n$ via our chosen isomorphism of $G_K$-modules $R_n(1) \cong R_n$ and Kummer theory.
Then $\sigma,\tau$ form a CL-pair if and only if $\sigma,\tau$ form a C-pair.
\end{thm}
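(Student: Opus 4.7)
The plan is to convert both conditions into statements about a single $R_n$-valued pairing and to verify the equivalence on an explicit generating set of a certain kernel. Fix a free pro-$\ell$ presentation $S \twoheadrightarrow \Gc_K$ inducing an isomorphism $S^{a,n} \cong \Gc_K^{a,n}$, and let $R := \ker(S^{c,n} \twoheadrightarrow \Gc_K^{c,n}) \subseteq S^{(2,n)}/S^{(3,n)}$. By Proposition \ref{prop:h1-h2-pairings} the canonical perfect pairing $H^2(S^{a,n}) \times S^{(2,n)}/S^{(3,n)} \to R_n$ restricts to a perfect pairing $H^2(\Gc_K) \times R \to R_n$ via the inflation $\inf : H^2(S^{a,n}) \twoheadrightarrow H^2(\Gc_K)$, which is surjective by Merkurjev--Suslin; consequently, $\rho \in S^{(2,n)}/S^{(3,n)}$ lies in $R$ if and only if it pairs trivially with every class in $\ker(\inf)$. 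Lifting $\sigma,\tau$ to $S^{a,n}$, the CL-pair condition is thus equivalent to the existence of $a,b \in R_n$ for which $[\sigma,\tau] - a\sigma^\beta - b\tau^\beta$ pairs to zero against all of $\ker(\inf)$.

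By Merkurjev--Suslin $H^2(\Gc_K, R_n) \cong K_2^M(K)/\ell^n$, and via the commutative diagram preceding the theorem the map $\inf$ sends $f \cup g \mapsto \{u_f, u_g\}$ and (for finite $n$) $\beta f \mapsto \{u_f, \omega\}$, where $f \leftrightarrow u_f$ is Kummer duality $H^1(\Gc_K, R_n) \cong K^\times/\ell^n$. For $\ell$ odd, $\ker(\inf)$ is the $R_n$-submodule of $H^2(S^{a,n})$ generated by the two families
\[ \delta u \cup \delta(1-u), \quad u \in K^\times \smallsetminus \{0,1\}, \qquad \beta f - f \cup \delta\omega, \quad f \in H^1, \]
the first coming from the Steinberg relation $\{u,1-u\} = 0$ in $K_2^M$, the second from the coincidence $\inf(\beta f) = \{u_f, \omega\} = \inf(f \cup \delta\omega)$. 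Using Neukirch's dual-basis formulas $(x_i \cup x_j, \bullet) = -a_{ij}(\bullet)$ and $(\beta x_r, \bullet) = -b_r(\bullet)$ together with Kummer duality $x(\sigma) = \sigma(u_x)$, the four pairings one needs work out to
\begin{align*}
(f \cup g, [\sigma,\tau]) &= -\bigl(\sigma(u_f)\tau(u_g) - \sigma(u_g)\tau(u_f)\bigr), & (\beta f, [\sigma,\tau]) &= 0, \\
(\beta f, \sigma^\beta) &= -2\sigma(u_f), & (f \cup g, \sigma^\beta) &= 0,
\end{align*}
and analogously for $\tau^\beta$.

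The direction CL $\Rightarrow$ C is then immediate: pairing $[\sigma,\tau] - a\sigma^\beta - b\tau^\beta$ with a Steinberg class $\delta u \cup \delta(1-u)$ leaves only $-(\sigma(u)\tau(1-u) - \sigma(1-u)\tau(u))$, which must vanish. For C $\Rightarrow$ CL I will take $a := -\tau(\omega)/2$ and $b := \sigma(\omega)/2$, legitimate for $\ell$ odd because $2 \in R_n^\times$; for $\ell = 2$ the hypothesis $\mu_{2\ell^n} \subset K$ supplies $\xi \in K$ with $\xi^2 = \omega$, whence $\tau(\omega) = 2\tau(\xi)$ and $\sigma(\omega) = 2\sigma(\xi)$ already lie in $2R_n$ and the choices $a := -\tau(\xi)$, $b := \sigma(\xi)$ reproduce the required identities without dividing. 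On Steinberg classes the pairing vanishes by the C-pair hypothesis; on a class $\beta f - f \cup \delta\omega$ it evaluates to $\sigma(u_f)(\tau(\omega) + 2a) - \tau(u_f)(\sigma(\omega) - 2b)$, which vanishes by construction. Since the two families generate $\ker(\inf)$, the CL condition follows.

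The main anticipated obstacle is the $\ell = 2$ case: here $(\bullet)^\pi$ is not $R_n$-linear and $\ker(\inf)$ can in principle acquire additional Steenrod-type generators (reflecting $x \cup x \neq \beta x$ in $H^2$ mod $2^n$ for $n \geq 2$), and one must verify that the chosen $(a,b)$ continues to annihilate these contributions; this is precisely the role of the ``$2$'' in the hypothesis $\mu_{2\ell^n} \subset K$. The case $n = \infty$ is strictly easier, since $\sigma^\beta = 0$ and the Bockstein family disappears from $\ker(\inf)$, so the equivalence reduces on both sides to the pairing computation against the Steinberg family alone.
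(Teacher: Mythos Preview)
Your approach is correct and organized rather differently from the paper's. Both rest on the perfect pairing of Proposition~\ref{prop:h1-h2-pairings} and the identification of $R$ with the annihilator of $\ker(\inf)$, but the paper treats the two implications asymmetrically: for CL $\Rightarrow$ C it passes to the rank-two subgroup $A=\langle\sigma_1,\sigma_2\rangle$ (with $\sigma_1^{\ell^a}=\sigma$, $\sigma_2^{\ell^b}=\tau$), takes a free presentation of $A^c$, extracts from the pairing a bound on the order of $K_2^M(K)/H_0$, and then invokes the K-theoretic criterion (Proposition~\ref{prop:k-thy-c-pair}); for C $\Rightarrow$ CL it works in the full presentation $S\twoheadrightarrow\Gc_K$, isolates the cyclic subgroup of $R$ dual to $K_2^M(K)/H$, and reads off the shape of its generator $\rho$ using the explicit expression of $\omega$ in the chosen basis. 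Your argument is more uniform --- one presentation, both directions reduced to pairing computations against an explicit generating set for $\ker(\inf)$ --- and it bypasses Proposition~\ref{prop:k-thy-c-pair} entirely. The price is that you must actually know a generating set for $\ker(\inf)$, which the paper never needs.

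The one point you leave unresolved --- whether the Steinberg and Bockstein-compatibility families really generate $\ker(\inf)$ when $\ell=2$ --- is not a genuine obstruction, and your worry about extra ``Steenrod-type'' generators dissolves under the hypothesis $\mu_{2\ell^n}\subset K$. Since the pairing $H^2(S^{a,n})\times S^{(2,n)}/S^{(3,n)}\to R_n$ is perfect and the latter is free on $\{[\gamma_i,\gamma_j]\}_{i<j}\cup\{\gamma_r^\pi\}$, the former is free on the dual basis $\{x_i\cup x_j\}_{i<j}\cup\{\beta x_r\}$ for \emph{every} $\ell$. The diagonal cups satisfy $x_r\cup x_r=2^{n-1}\beta x_r$ (and $=\beta x_r$ when $n=1$), so they already lie in the span of the $\beta x_r$; in particular your pairing identities $(f\cup g,\sigma^\beta)=0$ and $(f\cup g,[\sigma,\tau])=-(\sigma(u_f)\tau(u_g)-\sigma(u_g)\tau(u_f))$ hold without correction terms. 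Because $\omega=\xi^2$ forces $\delta\omega\in 2H^1$, each $x_r\cup\delta\omega$ has vanishing diagonal part, so modding $H^2(S^{a,n})$ by the family $\{\beta x_r-x_r\cup\delta\omega\}$ yields exactly the free module on $\{x_i\cup x_j\}_{i<j}$, i.e.\ $\wedge^2(K^\times/\ell^n)$; its kernel under the surjection to $K_2^M(K)/\ell^n$ is generated by Steinberg classes (using also that $-1\in K^{\times\ell^n}$). Hence your two families do generate $\ker(\inf)$ for $\ell=2$ as well, and your choice $(a,b)=(-\tau(\xi),\sigma(\xi))$ completes the proof.
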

\begin{proof}
We first assume that $n \in \Nb$ is finite.
The $n=\infty$ case follows in the limit using Remark \ref{rem:k-thy-to-c-pairs}; see the comment at the end of this proof.
Also, we may assume that $\langle \sigma,\tau \rangle$ is non-cyclic for otherwise the claim is trivial.
Furthermore, we may assume without loss of generality that $\sigma,\tau$ are quasi-independent.
In particular, one has $\langle \sigma,\tau \rangle = \langle \sigma \rangle \times \langle \tau \rangle$.

Therefore, we may choose a minimal generating set $(\sigma_i)_{i \in \Lambda}$ for $\Gc_K^{a,n}$ which \emph{extends} $\sigma,\tau$ in the following sense: $1,2 \in \Lambda$, and are some $a,b$ with $0 \leq a,b < n$ such that $\ell^a \cdot \sigma_1 = \sigma$ and $\ell^b \cdot \sigma_2 = \tau$.
We also choose a total ordering on the indexing set $\Lambda$.

Next, we (continuously) choose lifts $\tilde\sigma_i \in \Gc_K$ of $\sigma_i \in \Gc_K^{a,n}$; we denote the image of $\tilde\sigma_i$ in $\Gc_K^{c,n}$ by $\sigma_i^c$.
Therefore, $(\tilde\sigma_i)_{i \in \Lambda}$ is a minimal generating set for $\Gc_K$ and $(\sigma_i^c)_{i \in \Lambda}$ is a minimal generating set for $\Gc_K^{c,n}$.
Finally, we will consider $(x_i)_{i \in \Lambda}$ the basis for $H^1(\Gc_K)$ which is dual to $(\sigma_i)_{i\in \Lambda}$.
Using our isomorphism $R_n(1) \cong R_n$, we will consider $(x_i)_{i \in \Lambda}$ also as a basis for $K^\times/\ell^n$.

Using the isomorphism $R_n(1) \cong R_n$, we consider $\sigma_i$ as homomorphisms $K^\times \rightarrow R_n$.
Consider $H_0 := \ker \sigma_1 \cap \ker \sigma_2$ and $H := \ker \sigma \cap \ker \tau$; clearly, one has $H_0 \leq H$.
Furthermore, $K^\times/H_0$ is a free $\Z/\ell^n$-module of rank 2 which is generated by $x_1,x_2$.
Since $\ell^a \cdot \sigma_1 = \sigma$ and $\ell^b \cdot \sigma_2 = \tau$, one has $H = \langle H_0, x_1^{\ell^{n-a}},x_2^{\ell^{n-b}} \rangle$.

\vskip 5pt
\noindent{\bf CL-pair implies C-pair:}

We first assume that $\sigma,\tau$ form a CL-pair.
Let $A := \langle \sigma_1,\sigma_2 \rangle$ and $A^c := \langle \sigma_1^c, \sigma_2^c \rangle$.
Note that the Kummer-dual of the inclusion $A \hookrightarrow \Gc_K^{a,n}$ is precisely the projection $K^\times/\ell^n \rightarrow K^\times/H_0$.
Thus, we obtain the following commutative diagram:
\begin{align}
\label{diagram:inf-cup}
\xymatrix{
K^\times/\ell^n \times K^\times/\ell^n \ar[r]^-{\cong} \ar[d]_{\rm canonical} & H^1(\Gc_K^{a,n})\times H^1(\Gc_K^{a,n})  \ar[d]_{\res} \ar[r]^-{{\rm inf} \circ \cup} &H^2(\Gc_K^{c,n})  \ar[d]^{\res}\\
K^\times/H_0 \times K^\times/H_0 \ar[r]_{\cong} & H^1(A) \times H^1(A) \ar[r]_-{{\rm inf} \circ \cup}&  H^2(A^c)
}
\end{align}
By Lemma \ref{lem:cup-product}, the top map of (\ref{diagram:inf-cup}) factors through $K_2^M(K)/\ell^n$ and therefore the bottom map of (\ref{diagram:inf-cup}) factors through $K_2^M(K)/H_0$.

Let $F$ be the free pro-$\ell$ group on generators $\tilde \gamma_1,\tilde \gamma_2$, and consider the surjective map $F \rightarrow A^c$ defined by $\tilde \gamma_i \mapsto \sigma_i^c$.
Denote by $T$ the kernel of this (minimal) presentation $F \rightarrow A^c$ and let $\gamma_i$ denote the image of $\tilde \gamma_i$ in $F^{a,n}$.
Since $\ell^a \cdot \sigma_1,\ell^b \cdot \sigma_2$ form a CL-pair, we see that $(T \cdot F^{(3,c)})/F^{(3,c)} = T / F^{(3,c)}$ contains an element $\rho$ of the form:
\[\rho = \ell^{a+b} \cdot [\gamma_1,\gamma_2] + (c_1 \cdot \ell^a) \cdot \gamma_1^\beta +  (c_2 \cdot \ell^b) \cdot \gamma_2^\beta \]
for some $c_1,c_2 \in R_n$.

We recall the pairing of \S\ref{sec:minimal-free-pro} associated to the presentation $F \rightarrow A^c$, 
\[ (\bullet,\bullet) : H^2(A^c) \times \left(\frac{T}{[F,T]\cdot T^{\ell^n}}\right) \rightarrow \Z/\ell^n, \]
satisfies $(x_1 \cup x_2,\rho) = -\ell^{a+b}$.
Thus, $K_2^M(K)/H_0 = \langle \{x_1,x_2\}_{H_0} \rangle$ has order $\ell^{n-c_0}$ for some $0 \leq c_0 \leq a+b$.
Since $H = \langle H_0, x_1^{\ell^{n-a}},x_2^{\ell^{n-b}} \rangle$, we see that 
\[K_2^M(K)/H = \langle \{x_1,x_2\}_{H_0} \rangle / \langle \{x_1^{\ell^{n-a}},x_2\}_{H_0},\{x_1,x_2^{\ell^{n-b}}\}_{H_0} \rangle. \]
Thus, $K_2^M(K)/H$ has order $\ell^{n-c}$ where $\max(a,b,c_0) = c \leq a+b$.
Therefore $\sigma,\tau$ form a C-pair by the K-theoretic criterion (Proposition \ref{prop:k-thy-c-pair}).

\vskip 5pt
\noindent{\bf C-pair implies CL-pair:}

Now assume that $\sigma,\tau$ form a C-pair.
Let $S \rightarrow \Gc_K$ be a minimal free presentation associated to the minimal generating set $(\tilde \sigma_i)_i$.
Namely, $S$ is the free pro-$\ell$ group on $(\tilde \gamma_i)_i$ and the map $S \rightarrow \Gc_K$ is defined by $\tilde \gamma_i \mapsto \tilde \sigma_i$.
Recall that $S^{a,n} \rightarrow \Gc_K^{a,n}$ is an isomorphism, and denote by $\gamma_i \in S^{a,n}$ the image of $\tilde \gamma_i$.
We denote by $R$ the kernel of the induced surjective map $S^{c,n} \rightarrow \Gc_K^{c,n}$, as in Proposition \ref{prop:h1-h2-pairings}.

Observe that $K_2^M(K)/H$ is a rank-1 quotient of $K_2^M(K)/\ell^n$.
Using the isomorphism $K_2^M(K)/\ell^n \cong H^2(\Gc_K)$ induced by $R_n(1) \cong R_n$, and the pairing of Proposition \ref{prop:h1-h2-pairings}, we obtain a perfect pairing 
\[K_2^M(K)/\ell^n \times R \rightarrow R_n. \]
Under this pairing, $K_2^M(K)/H$ is dual to a rank-1 subgroup of $R$ which is generated by an element $\rho \in R$.
In other words, we obtain a restricted perfect pairing:
\[ (\bullet,\bullet)_H : K_2^M(K)/H \times \langle \rho \rangle \rightarrow R_n. \]
Furthermore, we can find $a_{ij},b_r \in R_n$ uniquely so that $\rho$ takes the form:
\[ \rho = \sum_{i < j}a_{ij} \cdot [\gamma_i,\gamma_j] +  \sum_r  b_r \cdot \gamma_r^\pi. \]

Recall that $\omega$ denotes the generator of $\mu_{\ell^n}$ which corresponds to $1 \in \Z/\ell^n$.
With this $\omega$ fixed, we recall that the pairing above satisfies $(\{x_i,x_j\}_H,\rho)_H = -a_{ij}$ (if $i < j$) and $(\{x_r,\omega\}_H,\rho)_H = - b_r$.

For all $f \in \Lambda \smallsetminus\{1,2\}$, one has $x_f \in H$ and thus $(\{x_f,x_j\}_H,\rho)_H = 0 = \pm a_{fj}$ and $(\{x_f,\omega\}_H,\rho)_H = 0 = b_f$.
Therefore, $\rho$ has the following simple form
\[ \rho = a_{12} \cdot [\gamma_1,\gamma_2] + b_1 \cdot \gamma_1^\pi + b_2 \cdot \gamma_2^\pi. \]

Since $\mu_{2\ell^n} \subset K$, we further see that $\omega$ is a square in $K^\times$.
Thus, we can find $j,k \in R_n$ such that $\omega \in (x_1^{-2j}x_2^{2k}) \cdot H$.
This observation induces the following relationship between $a_{12}$ and $b_1,b_2$:
\begin{enumerate}
\item $(\{x_1,x_2\}_H, \rho)_H = -a_{12}$
\item $(\{x_1,\omega\}_H, \rho)_H = 2k(\{x_1,x_2\}_H, \rho) = -2ka_{12} = -b_1$.
\item $(\{x_2,\omega\}_H, \rho)_H = 2j(\{x_1,x_2\}_H, \rho) = -2ja_{12} = -b_2$.
\end{enumerate}
Therefore, we see that
\[ \rho = a_{12} \cdot ([\gamma_1,\gamma_2] + k \cdot \gamma_1^\beta + j \cdot \gamma_2^\beta). \]

By Proposition \ref{prop:k-thy-c-pair}, $K_2^M(K)/H$ has order $\ell^{n-c}$ for some $0 \leq c \leq a+b$.
Since $\langle \rho \rangle$ is in a perfect $R_n$-pairing with $K_2^M(K)/H$, we deduce that $a_{12} \in \Z/\ell^n$ has (additive) order $\ell^{n-c'}$ for some $0 \leq c' \leq a+b$.
In particular, $a_{12}$ divides $\ell^{a+b}$ in $R_n$; say $t \in R_n$ satisfies $\ell^{a+b} = a_{12} \cdot t$.
Now we consider $\rho^t$:
\begin{align*}
 t \cdot \rho &= a_{12} \cdot t \cdot ([\gamma_1,\gamma_2] + k \cdot \gamma_1^\beta + j \cdot \gamma_2^\beta) \\
&=  \ell^{a+b} \cdot [\gamma_1,\gamma_2] + (k \cdot \ell^{a+b}) \cdot \gamma_1^\beta + (j \cdot \ell^{a+b}) \cdot \gamma_2^\beta \\
 &= [\ell^a \cdot \gamma_1,\ell^b \cdot \gamma_2]  + (k \cdot \ell^b) \cdot (\ell^a \cdot \gamma_1)^\beta + (j \cdot \ell^a) \cdot (\ell^b \cdot \gamma_2)^\beta.
\end{align*}
Since $t \cdot \rho \in R$, we deduce that $[\sigma,\tau] \in \langle \sigma^\beta,\tau^\beta \rangle$; i.e. $\sigma,\tau$ form a CL-pair, as required.

\vskip 5pt
\noindent{\bf The $n = \infty$ case:}

In this case we are assuming that $\mu_{\ell^\infty} \subset K$.
First recall that $f,g \in \Gc_K^a(\infty)$ form a C-pair if and only if $f_n,g_n$ form a C-pair for all $n \in \Nb$.
The proof that ``CL-pair'' implies ``C-pair'' now follows immediately from the $n \in \Nb$ case.

To conclude the proof that ``C-pair'' implies ``CL-pair,'' we note that the $j,k$ above would have been zero since $\mu_{\ell^\infty} \subset K$.
Thus, if $\sigma,\tau \in \Gc_K^{a,\infty}$ correspond to a C-pair in $\Gc_K^a(\infty)$, then, arguing as in the $n \in \Nb$ case, for all $n \in \Nb$ one has $[\sigma_n,\tau_n] = 0$.
Therefore $[\sigma,\tau] = 0$ as well and thus $\sigma,\tau$ are a CL-pair.
\end{proof}

\begin{remark}
\label{rem:CL-pairs-to-CL-groups}
As an immediate corollary of Theorem \ref{thm:cl-to-c-pairs} we deduce the following.
Given $(\sigma_i)_i \in \Gc_K^{a,n}$ which are pairwise CL, then any pair $\sigma,\tau \in \langle \sigma_i \rangle_i$ form a CL-pair.
We recall Remark \ref{rem:caution-CL-pairs} which notes that this doesn't hold for an arbitrary pro-$\ell$ group.

We also deduce that, for a subgroup $A \leq \Gc_K^{a,n}$, the subset $\Ibcl(A) \subset A$ is indeed a \emph{subgroup} which corresponds to $\Ibc(A)$ as defined in Part \ref{part:underlying-theory} via the Kummer isomorphism $\Gc_K^{a,n} \cong \Gc_K^a(n)$.
\end{remark}

\begin{remark}
\label{remark:ibcl}
Let $K$ be a field such that $\Char K \neq \ell$ and $\mu_{2\ell} \subset K$ and let $A \leq \Gc_K^{a,1}$ be given.
Using Remark \ref{remark:rigidity-k-thy}, we can now give an alternative definition for $\Ibcl(A)$ which extends the situation of \cite{Efrat2011a}.
Namely, in this remark we will show that:
\[ \Ibcl(A) = \{ \sigma \in A \ : \ \forall \tau \in A, \ [\sigma,\tau] \in A^\beta  \} =: I. \]
Observe that $\Ibcl(A) \leq I$ by definition and so it suffices to prove that $I \leq \Ibcl(A)$.

We will identify $\Gc_K^a(1)$ and $\Gc_K^{a,1}$ via Kummer theory and $R_1(1) \cong R_1$.
By Theorem \ref{thm:cl-to-c-pairs}, this isomorphism identifies C-pairs with CL-pairs.
Furthermore, we will identify subgroups $\Gc_K^{a,1}$ with their images in $\Gc_K^a(1)$.

Consider $T := A^\perp$ and $H := I^\perp$, both are subgroups of $K^\times$.
Suppose that $G$ is a subgroup of $K^\times$ such that $T \leq G \leq H$ and $H/G$ is cyclic.
We claim that $\Hom(K^\times/G,\Z/\ell)$ is a C-group, therefore proving that $\langle I,f \rangle$ is a C-group for all $f \in A$.
By Theorem \ref{thm:cl-to-c-pairs}, this would immediately imply that $I \leq \Ibcl(A)$, thus proving our claim.

Let $x_1 \in K^\times \smallsetminus H$ and $x_2 \in K^\times \smallsetminus T$ be given such that $x_1,x_2$ have $\Z/\ell$-independent images in $K^\times/T$.
Thus, we may complete $x_1,x_2$ to a $\Z/\ell$-basis $(x_i)_i$ for $K^\times/T$, with dual basis $(\sigma_i)_i$ for $A$, in such a way so that $\sigma_1 \in I$.

We immediately see that $[\sigma_1,\sigma_2] \in \langle \sigma_i^\beta \rangle_i$ by the definition of $I$.
Choose lifts (continuously) $\sigma_i^c \in \Gc_K^{c,1}$ for $\sigma_i$ and let  $A^c := \langle \sigma_i^c \rangle_i$.
Let $F$ the free pro-$\ell$-group on $(\tilde\gamma_i)_i$, and $F \twoheadrightarrow A^c$ the free presentation sending $\tilde\gamma_i$ to $\sigma_i^c$.
Let $\gamma_i$ denote the image of $\tilde\gamma_i$ in $F^{a,1}$.
Denote by $R$ the kernel of $F \rightarrow A^c$.
Since $[\sigma_1,\sigma_2] \in \langle \sigma_i^\beta \rangle$, we see that $R / F^{(3,1)}$ contains an element of the form:
\[ \rho = [\gamma_1,\gamma_2] + \sum_r b_r \cdot \gamma_r^{\beta}. \]
Arguing as in the proof of Theorem \ref{thm:cl-to-c-pairs}, we see that $(\{x_1,x_2\}_T,\rho) = 1 \neq 0$, where $(\bullet,\bullet)$ is the pairing arising from Proposition \ref{prop:h1-h2-pairings}.
In particular, $\{x_1,x_2\}_T \neq 0$.

Now suppose that $x \in K^\times \smallsetminus G$ is given and consider $1-x \in K^\times$.
If $x \notin H$ or $1-x \notin H$, we deduce from the argument above that $\langle x \cdot T,(1-x) \cdot T \rangle$ is a cyclic subgroup of $K^\times/T$; indeed, $\{x,1-x\}_T = 0$ and thus $x \cdot T, (1-x) \cdot T$ cannot be $\Z/\ell$-independent in $K^\times/T$.
Therefore $\langle x \cdot G,(1-x) \cdot G \rangle$ is a cyclic subgroup of $K^\times/G$.
On the other hand, if both $x,(1-x)$ are element of $H$, then $\langle x \cdot G,(1-x) \cdot G \rangle$ is a cyclic subgroup of $K^\times/G$ since $H/G$ is cyclic.

In any case, we see that $\langle x \cdot G, (1-x) \cdot G \rangle$ is cyclic for all $x \in K^\times \smallsetminus G$. 
Thus, $G$ satisfies condition (4) of Remark \ref{remark:rigidity-k-thy} which proves that $\Hom(K^\times/G,\Z/\ell)$ is a C-group, as required.
\end{remark}

\begin{remark}
\label{rem:minimized-inertia-decomp}
Let $(K,v)$ be a valued field such that $\Char K \neq \ell$ and $\mu_{2\ell^n} \subset K$.
Recall that $K^{a,n} := K(\sqrt[\ell^n]{K})$ is the Galois extension of $K$ with $\Gal(K^{a,n}|K) = \Gc_K^{a,n}$.
Also recall that the minimized decomposition/inertia subgroups of $v$ are defined to be:
\[ D_v^n := \Gal(K^{a,n}|K(\sqrt[\ell^n]{1+\mf_v})), \ \text{ and } \ I_v^n := \Gal(K^{a,n}|K(\sqrt[\ell^n]{\Oc_v^\times})). \]

It immediately follows from Kummer theory, Theorem \ref{thm:cl-to-c-pairs} and Lemma \ref{lem:I-and-D-give-c-pairs} that 
\begin{align}
\label{eqn:ID-Ibcl-incl}
I_v^n \leq \Ibcl(D_v^n)
\end{align}
In this remark, we give a refinement of this fact using an argument similar to that of Theorem \ref{thm:cl-to-c-pairs}.
Namely, the following proposition shows that the structure of $I_v^n \leq D_v^n$ resembles inertia/decomposition of a valuation of residue characteristic different from $\ell$, regardless of $\Char k(v)$.

We first set some notation.
Let $\sigma \in I_v^n$ and $\tau \in D_v^n$ be given.
If $n = \infty$, then (\ref{eqn:ID-Ibcl-incl}) implies that $[\sigma,\tau] = 0$.
We therefore assume for the rest of this remark that $n \in \Nb$.

Let $\omega \in \mu_{\ell^n}$ be a primitive $\ell^n$-th root of unity which corresponds to $1 \in R_n$ via our isomorphism $R_n(1) \cong R_n$.
Using this isomorphism $R_n(1) \cong R_n$ and Kummer theory, we may consider $\sigma,\tau$ as homomorphisms $K^\times \rightarrow R_n$; in particular, $\tau(\omega)$ is an element of $R_n$.
Furthermore, since $\mu_{2\ell^n} \subset K$, $\omega$ is a square in $K^\times$.
Thus, there exists a coefficient $a \in R_n$ such that $\tau(\omega) = 2 \cdot a$.

\begin{prop}
\label{prop:decthy-analogue}
In the notation above, one has $[\sigma,\tau] = - a \cdot \sigma^\beta = (-\tau(\omega)) \cdot \sigma^\pi$.
\end{prop}
\begin{proof}
It suffices to assume that $\sigma,\tau$ are actually $R_n$-independent.
Choose a minimal generating set $(\sigma_i)_i$ for $\Gc_K^{a,n}$ such that $\sigma_1 = \sigma$ and $\sigma_2 = \tau$.
Let $(x_i)_i$ be the basis for $H^1(K,R_n(1)) \cong H^1(\Gc_K^{a,n},R_n)$ which is dual to $(\sigma_i)_i$.
Furthermore, choose a minimal free presentation $S \rightarrow \Gc_K$ and use the same notation as in the second part of the proof of Theorem \ref{thm:cl-to-c-pairs} -- in particular, $R$ denotes the kernel of $S^{c,n} \rightarrow \Gc_K^{c,n}$.

Let $H = \ker\sigma_1 \cap \ker \sigma_2$.
Since $\sigma_1(\omega) = 0$ and $\sigma_2(\omega) = 2a$, we deduce
\[\omega \cdot H = x_1^{\sigma_1(\omega)} \cdot x_2^{\sigma_2(\omega)} \cdot H = x_2^{2a} \cdot H. \]
By Lemma \ref{lem:I-and-D-give-c-pairs} and Theorem \ref{thm:cl-to-c-pairs}, we see that $R$ contains an element of the form 
\[ \rho = [\gamma_1,\gamma_2] + c_1 \cdot \gamma_1^\beta + c_2 \cdot \gamma_2^\beta. \]
Arguing as in the proof of Theorem \ref{thm:cl-to-c-pairs} we see that $\langle \rho \rangle$ is in perfect duality with $K_2^M(K)/H$ via the pairing induced by Proposition \ref{prop:h1-h2-pairings}.
Namely, we have a perfect pairing:
\[(\bullet,\bullet)_H : K_2^M(K)/H \times \langle \rho \rangle \rightarrow R_n \]
which satisfies: $(\{x_1,x_2\}_H, \rho)_H = -1$ and $(\{x_i,\omega \}_H, \rho)_H = -2\cdot c_i$.
Therefore, we see that 
\[ -2 c_1 = (\{x_1,\omega\}_H, \rho)_H = 2 a\cdot (\{x_1,x_2\}_H, \rho)_H = -2 a\]
and 
\[ -2c_2 = (\{x_2,\omega\}_H, \rho)_H = 2 a\cdot (\{x_2,x_2\}_H, \rho)_H = 0. \]
In particular, $R$ contains an element $\rho$ of the form $[\gamma_1,\gamma_2] + a \cdot \gamma_1^\beta$.
We deduce that $[\sigma_1,\sigma_2] = -a \cdot \sigma_1^\beta$, as required.
\end{proof}
\end{remark}

\part{Proofs of Theorems and Corollaries}
\label{part:proofs}

Having developed the necessary machinery in Parts \ref{part:underlying-theory} and \ref{part:galois-theory-milnor}, we are now ready to prove the main results of the paper.
The main goal of this part is to prove Theorems \ref{thm:main-thm-intro-maximal} and \ref{thm:main-thm-char-intro}.
We will also prove the Corollaries from \S\ref{sec:guide-thro-manuscr}.

In order to stay in line with the notation of \S\ref{sec:main-results-manuscr} where our main theorems are stated, for $n \in \Nbar$, we define:
\[ \Rbf(n) = \Nfr(\Mfr_2(\Mfr_1(n))). \]
We note that the definition of $\Nfr$ and $\Mfr_i$ ensures that the following three conditions, which were mentioned in \S\ref{sec:main-results-manuscr}, hold true:
\begin{enumerate}
\item If $n \in \Nb$ then $\mathbf{R}(n) \in \Nb$.
\item One has $\mathbf{R}(1) = 1$ and $\mathbf{R}(\infty) = \infty$.
\item One has $\mathbf{R}(n) \geq n$ for all $n \in \Nbar$.
\end{enumerate}

\section{Proof of Theorem \ref{thm:main-thm-intro-maximal}}
\label{sec:main-results}

We use the notation of Theorem \ref{thm:main-thm-intro-maximal}.
Namely, $n \in \Nbar$ and $N \geq \Rbf(n)$ are given.
Also, $K$ is a field such that $\Char K \neq \ell$ and $\mu_{2\ell^N} \subset K$.

Since $-1 \in K^{\times \ell^N}$, our choice of isomorphism $R_N(1) \cong R_N$ and Kummer theory yield isomorphisms
\[ \phi_m : \Gc_K^{a,m} \xrightarrow{\cong} \Gc_K^a(m) \]
for all $m \leq N$.
If $m \leq m' \leq N$, then the canonical projections $\Gc_K^{a,m'} \rightarrow \Gc_K^{a,m}$ resp. $\Gc_K^a(m') \rightarrow \Gc_K^a(m)$ are compatible via the isomorphisms $\phi_m$ and $\phi_{m'}$.

Compatibility with subgroups goes as follows.
For a subgroup $H \leq K^\times$, the subgroup $\Gal(K^{a,m}|K(\sqrt[\ell^m]{H}))$ of $\Gc_K^{a,m}$ is mapped isomorphically onto the subgroup $\Hom(K^\times/H,R_m)$ of $\Gc_K^a(m)$.
Similarly, a subgroup $A$ of $\Gc_K^{a,m}$ maps isomorphically onto $\Hom(K^\times/\phi_m(A)^\perp, R_m)$.
In particular, for any valuation $v$ of $K$ and $m \leq N$, we see that $I_v^m$ is mapped isomorphically onto $I_v(m)$ and $D_v^m$ is mapped isomorphically onto $D_v(m)$.

Finally, by Theorem \ref{thm:cl-to-c-pairs}, the isomorphisms $\phi_m$ identify CL-pairs and C-pairs.
In particular, for a subgroup $A \leq \Gc_K^{a,m}$, $\phi_m$ sends $\Ibcl(A)$ isomorphically onto $\Ibc(\phi_mA)$.
In light of all of these compatible identifications, we immediately see that claim (1) of Theorem \ref{thm:main-thm-intro-maximal} follows from Proposition \ref{prop:maximal-C-group-D}.
Similarly, claim (2) of Theorem \ref{thm:main-thm-intro-maximal} follows from Theorem \ref{thm:maximal-I-D-mu}.
This completes the proof of Theorem \ref{thm:main-thm-intro-maximal}.

\section{Decomposition Theory and the Proof of Theorem \ref{thm:main-thm-char-intro}}
\label{sec:hilb-decomp-theory}

In this section we will prove Theorem \ref{thm:main-thm-char-intro}.
We must first recall some well-known facts concerning decomposition theory of valuations in pro-$\ell$ Galois extensions.
We must also prove Proposition \ref{prop:decomp-thy-prop} which compares minimized inertia/decomposition groups with the usual inertia/decomposition groups.

We first recall the collection $\Vc_{K,n}'$ which was defined in the introduction.
We define $\Vc_{K,n}'$ to be the collection of valuations $v$ of $K$ which satisfy the following four conditions:
\begin{enumerate}
\item[(V0')] One has $\Char k(v) \neq \ell$.
\item[(V1')] The value group $\Gamma_v$ contains no non-trivial $\ell$-divisible convex subgroups.
\item[(V2')] The valuation $v$ is maximal among all valuations $w$ such that $\Char k(w) \neq \ell$, $D_v^n = D_w^n$ and $\Gamma_w$ contains no non-trivial $\ell$-divisible convex subgroups.
Namely, for all refinements $w$ of $v$ such that $\Char k(w) \neq \ell$ and $D_w^n = D_v^n$ as subgroups of $\Gc_K^{a,n}$, one has $I_w^n = I_v^n$.
\item[(V3')] The group $\Gc_{k(v)}^{a,n}$ is non-cyclic.
\end{enumerate}

We observe that $\Vc_{K,n} = \Vc_{K,n}'$ whenever $\ell \neq \Char K > 0$.
For an arbitrary field $K$, one has:
\[ \{v \in \Vc_{K,n} \ : \ \Char k(v) \neq \ell \} \subset \Vc_{K,n}'. \]

\subsection{Hilbert's Decomposition Theory}
\label{sec:hilb-decomp-theory-general}

Let $(K,v)$ be a valued field with $\Char K \neq \ell$ and $\mu_{\ell^n} \subset K$.
Assume furthermore that $\Char k(v) \neq \ell$.
Choose $v'$ a prolongation of $v$ to $K^{a,n} = K(\sqrt[\ell^n]{K})$.
Recall that $T_v^n  \leq Z_v^n$ denote the inertia/decomposition subgroups of $\Gc_K^{a,n}$ associated to $v'|v$; since $\Gc_K^{a,n}$ is abelian, these subgroups are independent of the choice of prolongation and so we omit $v'$ from the notation.

Choose $w$ a prolongation of $v'|v$ to $K(\ell)$ and recall that $T_{w|v} \leq Z_{w|v}$ denote the inertia/decomposition subgroups of $\Gc_K = \Gal(K(\ell)|K)$ associated to $w|v$.
Furthermore, recall that $k(w) = k(v)(\ell)$ is the maximal pro-$\ell$ Galois extension of $k(v)$, and that the inertia/decomposition subgroups of $w|v$ fit into the following short exact sequence:
\[ 1 \rightarrow T_{w|v} \rightarrow Z_{w|v} \rightarrow \Gc_{k(v)} \rightarrow 1. \]

Kummer theory can be used to describe the action of $\Gc_{k(v)}$ on $T_{w|v}$, as follows.
One has a perfect pairing which is compatible with the action of $\Gc_{k(v)}$ on $T_{w|v}$:
\[ T_{w|v} \times (\Gamma_w/\Gamma_v) \rightarrow \mu_{\ell^\infty} = \mu_{\ell^\infty}(k(v)) \]
which is defined by $(\sigma,w(x)) = \overline{\sigma x / x}$; as usual, here $\bar y$ denotes the image of $y \in \Oc_w^\times$ in $k(w)^\times$.

In particular, we see that $T_{w|v}$ is isomorphic to $\Z_\ell^B$ for some indexing set $B$, and that the action of $\Gc_{k(v)}$ on $T_{w|v}$ factors through the $\ell$-adic cyclotomic character $\chi_\ell : \Gc_{k(v)} \rightarrow \Z_\ell^\times$.
Since $\mu_{\ell^n} \subset K$, and thus $\mu_{\ell^n} \subset k(v)$, the image of $\chi_\ell$ is contained in $1+\ell^n \cdot \Z_\ell$.
To summarize, if $\sigma \in T_{w|v}$ and $\tau \in Z_{w|v}$, then the following holds (compare with Proposition \ref{prop:decthy-analogue}):
\begin{align}
\label{eqn:inert-dec-cl-pair}
\sigma^{-1} \tau^{-1} \sigma  \tau = \sigma^{(-1-\chi_\ell(\tau))} \in \langle \sigma^{\ell^n} \rangle.
\end{align}
In particular, if $\sigma \in T_v^n$ and $\tau \in Z_v^n$, then $\sigma,\tau$ form a CL-pair.
Thus, $T_v^n \leq \Ibcl(Z_v^n)$.

What's more important, however, is that this property is preserved in extensions.
More precisely, suppose that $L|K$ is a Galois sub-extension of $K(\ell)|K$, and that $w_0$ denotes the restriction of $w$ to $L$; observe that $\Gc_L = \Gal(K(\ell)|L)$.
Then the following hold:
\begin{enumerate}
\item $T_{w|v}|_L = T_{w_0|v}$ and $Z_{w|v}|_L = Z_{w_0|v}$.
\item $T_{w|v} \cap \Gc_L = T_{w|w_0}$ and $Z_{w|v} \cap \Gc_L = Z_{w|w_0}$.
\end{enumerate}

We summarize this discussion with the following proposition which will be used in the proof of Theorem \ref{thm:main-thm-char-intro}.
\begin{prop}
\label{prop:summary-decomp-thy}
Let $n, N\in \Nbar$ be given such that $N \geq n$.
Let $(K,v)$ be a valued field such that $\Char K \neq \ell$, $\mu_{\ell^N} \subset K$, and $\Char k(v) \neq \ell$.
Suppose that $Z \leq Z_v^n$ is a subgroup and $K_Z := (K^{a,n})^Z$ is the associated Galois extension of $K$.
Also, let $w$ denote a prolongation of $v$ to $K_Z$.
Then the following hold:
\begin{enumerate}
\item One has $Z = ((Z_{w}^N)_n)_K$.
\item One has $T_v^n \cap Z = ((T_{w}^N)_n)_K$. 
\item One has $T_w^N \leq \Ibcl(Z_w^N)$. In particular, $T_v^n \cap Z$ is contained in $(\Ibcl(Z_{w}^N)_n)_K$.
\end{enumerate}
\end{prop}
\begin{proof}
Claims (1) and (2) follows from the compatibility of decomposition groups in towers of field extensions, while claim (3) was discussed above (see equation (\ref{eqn:inert-dec-cl-pair})).
\end{proof}

\subsection{Minimized Inertia/Decomposition}

Let $(K,v)$ be a valued field such that $\Char K \neq \ell$ and $\mu_{\ell^n} \subset K$.
Recall that the {\bf minimized inertia and decomposition} groups of $v$, which are subgroups of $\Gc_K^{a,n}$, are defined as:
\[ I_v^n = \Gal(K^{a,n}|K(\sqrt[\ell^n]{\Oc_v^\times})), \ \text{ and } \ D_v^n = \Gal(K^{a,n}|K(\sqrt[\ell^n]{1+\mf_v})). \]

The proof of the following proposition can be found in \cite{Pop2010} Fact 2.1 in the $n=\infty$ case and in \cite{Pop2011a} in the $n=1$ case, but is explicitly stated for valuations $v$ such that $\Char k(v) \neq \ell$.
It turns out that one direction of the proof still works, even if $\Char k(v) = \ell$.
We summarize this in the proposition below.

\begin{prop}
\label{prop:decomp-thy-prop}
Let $(K,v)$ be a valued field such that $\Char K \neq \ell$ and $\mu_{\ell^n} \subset K$.
Then $D_v^n \leq Z_v^n$ and $I_v^n \leq T_v^n$.
If furthermore $\Char k(v) \neq \ell$, then $D_v^n = Z_v^n$ and $I_v^n = T_v^n$.
\end{prop}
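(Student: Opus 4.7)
The plan is to translate all four subgroups into subgroups of $K^\times$ via Kummer duality, and then verify the desired inclusions directly from the definitions of henselization and maximal unramified extension. Fix a prolongation $v'$ of $v$ to the separable closure $K^{\rm sep}$ of $K$, and denote by $K_v^{\rm h} \subset K_v^{\rm nr} \subset K^{\rm sep}$ the corresponding henselization and maximal unramified extension. Under the Kummer pairing $\Gc_K^{a,n} \times K^\times/K^{\times\ell^n} \to \mu_{\ell^n}$ (and analogously $\Gc_K^{a,\infty} \times \widehat K \to \mu_{\ell^\infty}$ when $n=\infty$), each of the four subgroups corresponds by annihilator to a subgroup of $K^\times$ containing $K^{\times\ell^n}$. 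Directly from the definitions one reads off $D_v^n \leftrightarrow (1+\mf_v)\cdot K^{\times\ell^n}$ and $I_v^n \leftrightarrow \Oc_v^\times \cdot K^{\times\ell^n}$, while the fixed fields of $Z_v^n$ and $T_v^n$ inside $K^{a,n}$ are $K^{a,n}\cap K_v^{\rm h}$ and $K^{a,n}\cap K_v^{\rm nr}$ respectively, so $Z_v^n \leftrightarrow K^\times \cap (K_v^{\rm h})^{\times\ell^n}$ and $T_v^n \leftrightarrow K^\times \cap (K_v^{\rm nr})^{\times\ell^n}$. Since Kummer duality reverses inclusions, proving $D_v^n \leq Z_v^n$ amounts to showing $K^\times \cap (K_v^{\rm h})^{\times\ell^n} \subset (1+\mf_v)\cdot K^{\times\ell^n}$, with the analogous statement for $I_v^n \leq T_v^n$.

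For the inclusion $I_v^n \leq T_v^n$: given $x = y^{\ell^n}$ with $y \in K_v^{\rm nr}$, the equality of value groups $\Gamma_{K_v^{\rm nr}} = \Gamma_v$ forces $v(x) \in \ell^n \Gamma_v$; multiplying by a suitable element of $K^{\times\ell^n}$ reduces to $v(x)=0$, giving $x \in \Oc_v^\times \cdot K^{\times\ell^n}$. For $D_v^n \leq Z_v^n$: run the same value-group argument with $K_v^{\rm h}$ in place of $K_v^{\rm nr}$ to arrange $v(x)=0$; then the residue equality $k(K_v^{\rm h}) = k(v)$ implies $\bar x = \bar y^{\ell^n} \in (k(v)^\times)^{\ell^n}$, so lifting an $\ell^n$-th root to some $d \in \Oc_v^\times$ and dividing gives $x/d^{\ell^n} \in 1+\mf_v$, whence $x \in (1+\mf_v)\cdot K^{\times\ell^n}$.

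For the equalities when $\Char k(v) \neq \ell$, one must show the reverse containments $1+\mf_v \subset (K_v^{\rm h})^{\times\ell^n}$ and $\Oc_v^\times \subset (K_v^{\rm nr})^{\times\ell^n}$. Both are direct consequences of Hensel's lemma: for $1+a \in 1+\mf_v$, apply Hensel to $f(X) = X^{\ell^n} - (1+a)$ at the approximate root $X=1$, noting that $f'(1) = \ell^n$ is a unit in $\Oc_v$ exactly when $\Char k(v) \neq \ell$; for $x \in \Oc_v^\times$, additionally use that the residue field $k(K_v^{\rm nr}) = k(v)^{\rm sep}$ is separably closed to first lift an $\ell^n$-th root of $\bar x$ into $\Oc_{K_v^{\rm nr}}^\times$ and then reduce to the $1+\mf$ case in $K_v^{\rm nr}$.

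The main obstacle is really just bookkeeping: in the $n=\infty$ case the Kummer pairing must be phrased in terms of the $\ell$-adic completion $\widehat K$ rather than $K^\times/\ell^n$, and one must verify that the annihilator dictionary still works. A small but essential sanity check is that any $x \in K^\times \cap (K_v^{\rm h})^{\times\ell^n}$ automatically admits an $\ell^n$-th root lying in $K^{a,n} \cap K_v^{\rm h}$, so that the Kummer dictionary for $Z_v^n$ is internally consistent; this uses $\mu_{\ell^n} \subset K$ to see that $K(\sqrt[\ell^n]{x})$ is $(\Z/\ell^n)$-abelian over $K$ and hence sits inside $K^{a,n}$. Once these dictionary items are established, everything reduces to standard henselization and Hensel-lemma arguments.
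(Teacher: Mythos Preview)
Your proof is correct and follows essentially the same route as the paper's. Both arguments show the inclusions by using that the decomposition (resp.\ inertia) fixed field has the same value group (resp.\ same value group and separably closed residue field) as $K$, reducing elements first to units and then to principal units; and both obtain the equalities when $\Char k(v)\neq\ell$ via Hensel's lemma applied to $X^{\ell^n}-a$ for $a\in 1+\mf_v$ (resp.\ $a\in\Oc_v^\times$). The only cosmetic difference is that you phrase everything through the Kummer-duality dictionary $Z_v^n\leftrightarrow K^\times\cap (K_v^{\rm h})^{\times\ell^n}$, $T_v^n\leftrightarrow K^\times\cap (K_v^{\rm nr})^{\times\ell^n}$ using the henselization and maximal unramified extension, whereas the paper argues directly with elements $\sqrt[\ell^n]{a}$ lying in the fixed field $(K^{a,n})^{Z_v^n}$ (resp.\ $(K^{a,n})^{T_v^n}$); the computations are the same once translated.
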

\begin{proof}
The $n=\infty$ case follows from the $n \in \Nb$ case using a limit argument.
Thus, it suffices to prove the claim for $n \in \Nb$.

Suppose $a \in K^\times$ is such that $\sqrt[\ell^n]{a} \in (K^{a,n})^{Z_v^n}$.
Let $w$ be a prolongation of $v$ to $(K^{a,n})^{Z_v^n}$.
Since $\Gamma_w = \Gamma_v$, there exists $y \in K^\times$ such that $v(a) = \ell^n \cdot v(y)$.
Moreover, as $k(v) = k(w)$, there exists $z \in \Oc_v^\times$ such that $\sqrt[\ell^n]{a}/y \in z \cdot (1+\mf_w)$.
Namely, $a/(yz)^{\ell^n} \in (1+\mf_v)$ so that $\sqrt[\ell^n]{a} \in K(\sqrt[\ell^n]{1+\mf_v})$.
This implies that $(K^{a,n})^{Z_v^n} \subset K(\sqrt[\ell^n]{1+\mf_v})$; thus $D_v^n \leq Z_v^n$.
The proof that $(K^{a,n})^{T_v^n} \subset K(\sqrt[\ell^n]{\Oc_v^\times})$ is similar, and this implies that $I_v^n \leq T_v^n$.

Assume furthermore that $\Char k(v) \neq \ell$.
Let $(K^Z,v)$ be some Henselization of $(K,v)$; recall that $K^Z \cap K^{a,n} = (K^{a,n})^{Z_v^n}$.
Let $a \in 1+\mf_v$ be given.
The polynomial $X^{\ell^n} - a$ reduces mod $\mf_v$ to the polynomial $X^{\ell^n} - 1$.
Since $\Char k(v) \neq \ell$, one has $\mu_{\ell^n} \subset k(v)$ and the polynomial $X^{\ell^n}-1$ has $\ell^n$ unique roots in $k(v)$.
By Hensel's lemma, the polynomial $X^{\ell^n}-a$ has a root in $K^Z \cap K^{a,n} = (K^{a,n})^{Z_v^n}$.
This therefore implies that $K(\sqrt[\ell^n]{1+\mf_v}) \subset (K^{a,n})^{Z_v^n}$; thus $Z_v^n \leq D_v^n$.
The proof that $K(\sqrt[\ell^n]{\Oc_v^\times}) \subset (K^{a,n})^{T_v^n}$ is similar, and this implies that $T_v^n \leq I_v^n$.
\end{proof}

\begin{remark}
\label{rem:charkv-is-ell}
If $\Char k(v) = \ell$, the minimized inertia/decomposition are quite different from the usual inertia/decomposition.
We illustrate this phenomenon with the following fact: if $(K,v)$ is a valued field such that $\Char K \neq \ell$, $\mu_\ell \subset K$ and $\Char k(v) = \ell$, then $D_v^1 \leq T_v^1$.
The proof of this fact is very similar to \cite{Pop2010b} Lemma 2.3(2); we sketch the argument below.

Let $\omega \in \mu_\ell$ be a primitive $\ell$-th root of unity.
Let $\lambda := \omega - 1 \in K$, and recall that $v(\lambda) > 0$ since $\Char k(v) = \ell$.
Let $u \in \Oc_v^\times$ be given and set $u' := \lambda^\ell \cdot u + 1 \in 1+\mf_v$.
Then the extension of $K$ corresponding to the equation $X^\ell - u'$ is precisely the same as the extension of $K$ corresponding to the equation $Y^\ell - Y + \lambda \cdot f(Y) = u$ for some (explicit) polynomial $f(Y)$; the polynomial $f$ can be computed by making the change of variables $X = \lambda Y + 1$.
Clearly, this polynomial reduces mod $\mf_v$ to the polynomial $Y^\ell - Y = \bar u$, where $\bar u$ denotes the image of $u$ in $k(v)$.

On the other hand, Artin-Schreier theory says that the maximal $(\Z/\ell)$-elementary abelian Galois extension of $k(v)$ is the extension of $k(v)$ generated by roots of polynomials of the form $Y^\ell - Y = \bar u$ for $\bar u \in k(v)$.
Thus, the maximal $(\Z/\ell)$-elementary abelian Galois extension of $k(v)$ is a sub-extension of the residue extension corresponding to $K(\sqrt[\ell]{1+\mf_v})|K$.
This immediately implies that $D_v^1 \leq T_v^1$, as required.
\end{remark}

\subsection{Proof of Theorem \ref{thm:main-thm-char-intro}}
\label{sec:detect-decomp-inert}

We use the notation of  Theorem \ref{thm:main-thm-char-intro}.
I.e. $n \in \Nbar$ and $N \geq \Rbf(n)$ are given.
Also, $K$ is a field such that $\Char K \neq \ell$ and $\mu_{2\ell^N} \subset K$.

Using our chosen isomorphism $R_N \cong R_N(1)$, we obtain the same compatible isomorphisms $\Gc_K^{a,m} \cong \Gc_K^a(m)$ for all $m \leq N$, as in the proof of Theorem \ref{thm:main-thm-intro-maximal}.
We furthermore obtain similar isomorphisms $\Gc_F^{a,m} \cong \Gc_F^a(m)$ for all field extensions $F|K$, in a compatible way with the isomorphisms $\Gc_K^{a,m} \cong \Gc_K^a(m)$.
To simplify the exposition, we will abuse the notation and language by making the following compatible identifications for all field extensions $F|K$ and $m \leq N$:
\begin{enumerate}
\item Identify $\Gc_F^{a,m}$ with $\Gc_F^a(m)$ using Kummer theory.
\item For a subgroup $H$ of $F^\times$, identify $\Gal(F^{a,m}|F(\sqrt[\ell^n]{H})) \leq \Gc_F^{a,m}$ with $\Hom(F^\times/H, R_m)$. 
Similarly, for a subgroup $A$ of $\Gc_F^{a,m}$, identify $A$ with $\Hom(F^\times/A^\perp,R_m)$.
\item Identify ``CL-pairs'' with ``C-pairs'' using Theorem \ref{thm:cl-to-c-pairs}.
\item For $A \leq \Gc_F^{a,n}$, the fields $(F^{a,n})^A$ and $F_A$ (defined in \S\ref{sec:restr-char}) are identical.
\end{enumerate}
Finally, we note the following consequence of Proposition \ref{prop:decomp-thy-prop}: If $m \leq N$,  $F$ is an extension of $K$, and $w$ is a valuation of $F$ such that $\Char k(w) \neq \ell$, then $I_w^n = T_w^n$ and $D_w^n = Z_w^n$.

\vskip 5pt
\noindent{\bf Proof of (1):} 

Let $D \leq \Gc_K^{a,n}$ be given and let $L := (K^{a,n})^D$.
Assume first that there exists a CL-group $D' \leq \Gc_L^{a,N}$ such that $(D'_n)_K = D$.
By Theorem \ref{thm:c-groups-to-valuative-char}, there exists a valuative subgroup $I$ of $D$ such that $\Char k(v_I) \neq \ell$, $D \leq D_{v_I}^n$, and $D/I$ is cyclic.

Conversely, assume that there exists a valuation $v$ such that $\Char k(v) \neq \ell$, $D \leq Z_v^n$ and $D / (D \cap T_v^n)$ is cyclic.
Consider $I := D \cap T_v^n$ and choose $f \in D$ such that $D = \langle I,f \rangle$.
Choose a prolongation $w$ of $v$ to $L$.
By Proposition \ref{prop:summary-decomp-thy}, there exits $f' \in Z_{w}^N$ such that $(f'_n)_K = f$.
Moreover, $I$ is contained in the image of the canonical map $T_{w}^N \rightarrow T_v^n$.
Consider $I'$ the pre-image of $I$ in $T_{v'}^N$.
Also by Proposition \ref{prop:summary-decomp-thy}, we see that $D' := \langle I',f' \rangle$ is a CL-group and $(D'_n)_K = D$, as required.

\vskip 5pt
\noindent{\bf Proof of (2):} 

Let $I := (\Ibcl(\Gc_K^{a,N}))_n$.
By Proposition \ref{prop:center-mu}, one has $I = I_{v_I}^n$ and $\Gc_K^{a,n} = D_{v_I}^n$.
Moreover, by Theorem \ref{thm:main-c-groups-char}, one has $\Char k(v_I) \neq \ell$.
Thus, $I = T_v^n$ and $\Gc_K^{a,n} = Z_v^n$, as required.

\vskip 5pt
\noindent{\bf Proof of (3):} 

Let $v \in \Vc_{K,n}$ be given.
Consider $I := I_v^n$ and $D := D_v^n$, and let $L := (K^{a,n})^D$.
Assume first that there exists $D' \leq \Gc_L^{a,N}$ and $I' \leq \Ibcl(D') \leq \Gc_L^{a,N}$ such that $(I'_n)_K = I$ and $(D'_n)_K = D$.
By Theorem \ref{thm:main-c-groups-char}, we deduce that $I$ is valuative, $D \leq D_{v_I}^n$ and $\Char k(v_I) \neq \ell$.
On the other hand, $v = v_I$ by condition (V1) of $v$ and the fact that $I = I_v^n$.
Therefore, we see that $\Char k(v) \neq \ell$.

Conversely, assume that $\Char k(v) \neq \ell$.
Then $I = T_v^n$ and $D = Z_v^n$.
Choose a prolongation $w$ of $v$ to $L$ and consider
\[ I' := T_w^N = I_w^N \leq D_w^N = Z_w^N =: D'. \]
By Proposition \ref{prop:summary-decomp-thy}, one has $(I'_n)_K = I$, $(D'_n)_K = D$ and $I' \leq \Ibcl(D')$, as required.

\vskip 5pt
\noindent{\bf Proof of (4):} 

The proof of this statement is analogous to the proof of Theorem \ref{thm:maximal-I-D-mu}, using the results of \S\ref{sec:restr-char} and Proposition \ref{prop:summary-decomp-thy}, instead of the results of \S\ref{sec:comp-valu}.
We give the detailed argument below.
First, we recall conditions (a),(b),(c) for subgroups $I \leq D$ of $\Gc_K^{a,n}$ as in the statement of the theorem:
\begin{enumerate}
   \item[(a)] There exist $D' \leq \Gc_{K_D}^{a,N}$ such that $((\Ibcl(D'))_n)_K = I$ and $(D'_n)_K = D$.
   \item[(b)] The subgroups $I \leq D \leq \Gc_K^{a,n}$ are maximal with property (a); i.e. if $D \leq E \leq \Gc_K^{a,n}$ and, $E' \leq \Gc_{K_E}^{a,N}$ is given such that $(E'_n)_K = E$ and $I \leq ((\Ibcl(E'))_n)_K$, then $D = E$ and $I = ((\Ibcl(E'))_n)_K$.
   \item[(c)] One has $\Ibcl(D) \neq D$ (i.e. $D$ is not a CL-group).
\end{enumerate}
The claim is that there exists $v \in \Vc_{K,n}'$ such that $I = T_v^n$ and $D = Z_v^n$ if and only if (a),(b),(c) hold true.

Suppose first that $I \leq D$ satisfy conditions (a),(b),(c) above.
By conditions (a),(c) and Theorem \ref{thm:main-c-groups-char}, we see that $I$ is valuative, and, denoting $v := v_I$, one has  $\Char k(v_I) \neq \ell$, $I \leq T_v^n =: T$ and $D \leq Z_v^n =: Z$.

Let $L = (K^{a,n})^Z$ and choose $w$ a prolongation of $v$ to $L$.
By Proposition \ref{prop:summary-decomp-thy}, we see that $T_w^N \leq \Ibcl(Z_w^N)$.
Thus, $I \leq T \leq (\Ibcl(Z_w^N)_n)_K$ and $D \leq ((Z_w^N)_n)_K = Z$.
By condition (b), we see that $I = T$ and $D = Z$.

Now we show that $v$ is an element of $\Vc_{K,n}'$.
Condition (V0') was noted above, while condition (V1') follows from Lemma \ref{lem:v_H}.
Condition (V3') holds since $D = Z$ is not a CL-group by (c), $T = I \leq \Ibcl(D)$ by Proposition \ref{prop:summary-decomp-thy}, and $\Gc_{k(v)}^{a,n} = D/I$.

We must show condition (V2').
As such, suppose that $v_1$ is a refinement of $v$ such that $D = Z_{v_1}^n$.
Choose a prolongation $w_1$ of $v_1$ to $L$ in such a way so that $w_1$ is a refinement of $w$.
By Lemma \ref{sec:suff-many-roots} and Proposition \ref{prop:decomp-thy-prop}, one has $Z_w^N = Z_{w_1}^N$.
By Proposition \ref{prop:summary-decomp-thy}, we have 
\[ I \leq T_{v_1}^n \leq  (\Ibcl(T_{w_1}^N))_n \leq Z_{v_1}^n = Z. \]
By condition (b), we see that $I = T_{v_1}^n$.
Thus condition (V2') holds for $v$ and we see that $v$ is an element of $\Vc_{K,n}'$.

Conversely, let $v \in \Vc_{K,n}'$ be given and consider $I := T_v^n \leq Z_v^n =: D$.
We must show that $I \leq D$ satisfy conditions (a),(b),(c) of the theorem.

First we prove condition (b).
Suppose that $D \leq E \leq \Gc_K^{a,n}$, and that there exists $E' \leq \Gc_{K_E}^{a,N}$ with $I \leq (\Ibcl(E')_n)_K =: J$.
By Theorem \ref{thm:main-c-groups-char}, we see that $J$ is valuative, $\Char k(v_J) \neq \ell$ and $E \leq Z_{v_J}^n$.
Condition (V1') ensures that $v = v_I$, and thus $v_J$ is a refinement of $v_I = v$.
Namely, $Z_{v_J}^n \leq Z_v^n$; since $Z_v^n \leq Z_{v_J}^n$ as well, we deduce $Z_{v_J}^n = Z_v^n$.
Therefore, $T_{v_J}^n = I$ by condition (V2').
Since $I \leq J \leq T_{v_J}^n$, we deduce that $I = J$; thus condition (b) holds true.

Now for condition (a).
We let $L = (K^{a,n})^D$ and choose $w$ a prolongation of $v$ to $L$.
By Proposition \ref{prop:summary-decomp-thy}, one has $T_w^N \leq \Ibcl(Z_w^N) \leq Z_w^N$.
Thus $I \leq (\Ibcl(Z_w^N)_n)_K$ and $((Z_w^N)_n)_K = D$.
We deduce condition (a) by applying condition (b) with $E' = Z_w^N$ and $E = D$.

Lastly, we prove condition (c).
Suppose for a contradiction that $D$ is a CL-group.
Then $\Gc_{k(v)}^{a,n} = D/I$ is a CL-group as well (e.g. by Lemma \ref{lem:C-pair-compat-in-res-fields}).
By Claim (1) of this theorem applied to $\Gc_{k(v)}^{a,n}$, we see that there is a valuation $w'$ of $k(v)$ such that $\Char k(w') \neq \ell$, $\Gc_{k(v)}^{a,n} = Z_{w'}^n$, and $\Gc_{k(v)}^{a,n}/T_{w'}^n$ is cyclic.
Letting $w'' = w' \circ v$ denote the refinement of $v$ associated to $w'$, we deduce that $\Char k(w'') \neq \ell$, $Z_{w''}^n = Z_v^n$ and thus $T_v^n = T_{w''}^n$ by condition (V2').
However, we also know that $Z_{w''}^n / T_{w''}^n = Z_v^n/T_v^n = \Gc_{k(v)}^{a,n}$ is cyclic, which contradicts condition (V3').
Thus condition (c) holds true.

This completes the proof of Theorem \ref{thm:main-thm-char-intro}.

\section{Corollaries}
\label{sec:struct-maxim-pro-ell}

Let $n \in \Nbar$ be given and let $N := \Rbf(n)$.
As in the introduction, we denote by $\Gc_K^{M,n}$ the smallest quotient of $\Gc_K$ for which $\Gc_L^{c,N}$ is a subquotient for all $K \subset L \subset K^{a,n}$.

The group $\Gc_K^{M,n}$ can be described directly as a Galois group, as follows.
Denote by $L^{c,N}$ the extension of $L$ such that $\Gal(L^{c,N}|L) = \Gc_L^{c,N}$.
Take $K^{M,n}$ to be the compositum of the fields $L^{c,N}$ as $L$ varies over all fields such that $K \subset L \subset K^{a,n}$; then $\Gc_K^{M,n} = \Gal(K^{M,n}|K)$. 
It is easy to see that $\Gc_K^{M,n}$ is a characteristic quotient of $\Gc_K$ and the assignment $\Gc_K \mapsto \Gc_K^{M,n}$ is functorial in $\Gc_K$.

\begin{cor}
\label{cor:application-char}
Let $n \in \Nbar$ be given and let $N := \Rbf(n)$.
Let $K$ be a field such that $\Char K=0$ and $\mu_{2\ell^N} \subset K$.
Assume that there exists a field $F$ such that $\ell \neq \Char F > 0$, $\mu_{2\ell^N} \subset F$ and $\Gc_K^{M,n} \cong \Gc_F^{M,n}$.
Then for all $v \in \Vc_{K,n}$, one has $\Char k(v) \neq \ell$.
\end{cor}
\begin{proof}
Observe that for any valuation $v$ of $F$, one has $\Char F = \Char k(v)$.
The corollary therefore follows from Theorem \ref{thm:main-thm-char-intro} claim (3).
\end{proof}

We recall that $k$ is strongly $\ell$-closed provided that all finite extensions $k'|k$ satisfy $(k')^{\times} = (k')^{\times \ell}$.
For instance, any perfect field of characteristic $\ell$ is strongly $\ell$-closed, and all algebraically closed fields are strongly $\ell$-closed.

\begin{cor}
\label{cor:function-fields-galois-groups}
Suppose that $K$ is one of the following:
\begin{itemize}
\item a function field over a number field $k$ such that $\mu_{2\ell} \subset k$, and $\dim(K|k) \geq 1$, or
\item a function field over a strongly $\ell$-closed field $k$ of characteristic $0$ such that $\dim(K|k) \geq 2$.
\end{itemize}
Then there does not exist a field $F$ such that $\mu_{2\ell} \subset F$, $\Char F > 0$ and $\Gc_K \cong \Gc_F$.
\end{cor}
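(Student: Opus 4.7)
My approach is a reductio using Corollary \ref{cor:application-char}. Assume such an $F$ exists. Since $\mu_\ell$ is trivial in characteristic $\ell$, the hypotheses $\mu_{2\ell}\subset F$ and $\Char F>0$ force $\Char F\neq\ell$. Taking $n=1$, one has $N=\Nfr(\Mfr_2(\Mfr_1(1)))=1$. The isomorphism $\Gc_K\cong\Gc_F$ induces an isomorphism on every characteristic quotient, so in particular $\Gc_K^{M,1}\cong\Gc_F^{M,1}$. We have $\mu_{2\ell}\subset K$: directly from $\mu_{2\ell}\subset k\subset K$ in case 1, and in case 2 after replacing $k$ by $k(\zeta_{2\ell})$, a degree-prime-to-$\ell$ extension which preserves all pro-$\ell$ Galois-theoretic data. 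Corollary \ref{cor:application-char} now delivers: every $v\in\Vc_{K,1}$ satisfies $\Char k(v)\neq\ell$. The remaining task is to exhibit one $v\in\Vc_{K,1}$ with $\Char k(v)=\ell$, yielding the required contradiction.

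\textbf{Case 1.} Fix a prime $\mathfrak{p}$ of $\Oc_k$ above $\ell$ and a normal integral proper model $X\to\Spec\Oc_k$ of $K$. An irreducible component $Y\subset X_\mathfrak{p}$ has codimension one in $X$ and dimension $\dim(K|k)\geq 1$; let $v$ be the discrete rank-one valuation of $K$ attached to its generic point. Then $\Gamma_v=\Z$ (so condition (1) of $\Wc_{K,1}$ holds trivially), $\Char k(v)=\ell$, and $k(v)$ is a function field of positive transcendence degree over the perfect characteristic-$\ell$ residue field $\kappa(\mathfrak{p})$, which is strongly $\ell$-closed. The strengthened form of Example \ref{ex:prime-divs} yields $v\in\Wc_{K,1}$, and non-cyclicity of $\Gc_{k(v)}^{a,1}$ follows from the abundance of Artin-Schreier $\Z/\ell$-extensions of a positive-transcendence-degree function field over a perfect characteristic-$\ell$ field, giving $v\in\Vc_{K,1}$ with $\Char k(v)=\ell$. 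This contradicts Corollary \ref{cor:application-char}.

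\textbf{Case 2 and the main obstacle.} The naive analogue of case 1 collapses here: since $k$ is strongly $\ell$-closed the group $k^\times$ is $\ell$-divisible, so any valuation of $K$ restricting non-trivially to $k$ acquires $\Gamma_{v|_k}$ as a non-trivial $\ell$-divisible convex subgroup of $\Gamma_v$ and thus fails condition (1) of $\Wc_{K,1}$, while any valuation trivial on $k$ has residue characteristic $0$. Hence no direct construction of $v\in\Vc_{K,1}$ with $\Char k(v)=\ell$ is possible, and the contradiction must be extracted by an indirect route. A natural strategy is to descend: write $K=K_0\cdot k$ for a sub-function-field $K_0\subset K$ over a number field $k_0\subset k$ containing $\mu_{2\ell}$ (possible by finite generation of $K|k$, choosing $k_0$ large enough to contain the structure constants of a chosen set of generators), apply case 1 to the function field $K_0|k_0$ of transcendence degree $\dim(K|k)\geq 2\geq 1$ to obtain $v_0\in\Vc_{K_0,1}$ with $\Char k(v_0)=\ell$, and then analyze a suitable prolongation $v$ of $v_0$ to $K$. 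Controlling the value group of the prolongation---ensuring it remains free of non-trivial $\ell$-divisible convex subgroups despite the algebraic scalar extension from $k_0$ to $k$---is the principal technical obstacle, and I expect it to be the step in which the stronger hypothesis $\dim(K|k)\geq 2$ of case 2 (as opposed to $\geq 1$ in case 1) is substantively used, namely to provide a purely geometric direction in $\Gamma_v$ transverse to the $\ell$-divisible contribution from $k$.
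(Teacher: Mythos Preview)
Your reduction via Corollary \ref{cor:application-char} with $n=1$ and your treatment of Case 1 are essentially the paper's argument (the paper phrases the $\dim\geq 2$ subcase differently, but your model-over-$\Oc_k$ construction works for all $\dim\geq 1$ in Case 1).

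Case 2, however, is left incomplete, and the ``obstacle'' you diagnose rests on a false premise. You assert that $\Gamma_{v|_k}$ is a convex subgroup of $\Gamma_v$; it is not, in general. The image $v(k^\times)$ is certainly $\ell$-divisible (since $k^\times$ is), but its convex hull in $\Gamma_v$ can be strictly larger and need not be $\ell$-divisible at all. For example, an archimedean value group of the form $\Gamma_v = \Z\cdot\gamma + v(k^\times)\subset\mathbb{R}$ with $\gamma$ rationally independent from $v(k^\times)$ has only $0$ and $\Gamma_v$ as convex subgroups, and $\Gamma_v/\ell\cong\Z/\ell$ is nontrivial. So a direct construction of $v\in\Vc_{K,1}$ with $\Char k(v)=\ell$ is not ruled out, and this is exactly what the paper does: it invokes quasi-prime divisors of $K$ prolonging the $\ell$-adic valuation of $\Q$, citing the appendix of \cite{Pop2006}, to produce such a $v$ directly whenever $\dim(K|k)\geq 2$. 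No descent to a number-field base is needed.

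Your proposed descent strategy is in fact problematic. If $v_0$ on $K_0$ has $\Gamma_{v_0}=\Z$ (as in your Case 1 construction) and you prolong through the algebraic extension $K|K_0$, then $\Gamma_v\subset\Q$; but since $v(\ell)\in\ell^m\Gamma_v$ for every $m$ (because $\ell$ is an $\ell^m$-th power in $k$), one gets $\Z[1/\ell]\subset\Gamma_v$, and the only nontrivial convex subgroup --- $\Gamma_v$ itself --- is then $\ell$-divisible. So the prolonged valuation fails condition (1) of $\Wc_{K,1}$. The fix is not to control the prolongation more carefully but to abandon the rank-one starting point: the role of $\dim(K|k)\geq 2$ is to supply a genuinely independent \emph{geometric} direction in $\Gamma_v$ that keeps $\Gamma_v$ from being $\ell$-divisible while still leaving residual transcendence degree $\geq 1$ for non-cyclicity of $\Gc_{k(v)}^{a,1}$.
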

\begin{proof}
Using Corollary \ref{cor:application-char}, it suffices to find a valuation $v \in \Vc_{K,1}$ such that $\Char k(v) = \ell$.
Furthermore, using the argument of Example \ref{ex:prime-divs}, it suffices to find a valuation $v$ of $K$ such that (1) $\Gamma_v$ contains no non-trivial $\ell$-divisible convex subgroups, and (2) $k(v)$ is a function field over a perfect field of characteristic $\ell$.
In both cases, if $\dim(K|k) \geq 2$, there exists such a valuation, taking, for example, $v$ a quasi-prime divisor prolonging the $\ell$-adic valuation of $\Q \subset k$; see e.g. the Appendix of \cite{Pop2006} and in particular Facts 5.4-5.6 and Remark 5.7 of loc.cit. for more on quasi-prime divisors.

On the other hand, if $\dim(K|k) = 1$ in the first case, we can take $v$ to be a Gauss valuation prolonging the $\ell$-adic valuation.
For a geometric construction, choose a model for $K$, say $\mathcal{X} \rightarrow \Spec \Oc_\ell$, where $\Oc_\ell$ denotes some prolongation of the $\ell$-adic valuation to $k$; then take $v$ the valuation associated to some prime divisor in the special fiber of $\mathcal{X} \rightarrow \Spec \Oc_\ell$. 
\end{proof}

\part{The Main Theorem of C-Pairs}

\section{Proof of Theorem \ref{thm:main-c-pairs-thm}}
\label{sec:proof-theorem}

We will now prove Theorem \ref{thm:main-c-pairs-thm}.
The proof will proceed in two main steps:
First, we will prove the theorem for $n \in \Nb$ and then prove it for $n = \infty$ with a limit argument using the first case.
%Alternatively for the $n=\infty$ case, see \cite{Topaz2012} Theorem 3 in the ``pro-$\ell$ case'' which proves this case directly.

\subsection{Case $n \neq \infty$}
Let $n \in \Nb$ be given.
To simplify the notation, we denote $N := \Nfr(n) = \Mfr_1(\Nfr'(n))$, $N' := \Nfr'(n)$ and $M := \Mfr_1(n)$ as defined in \S\ref{sec:main-theorem-c}.
We let $f,g \in \Gc_K^a(n)$ be given and assume that there exist $f'',g'' \in \Gc_K^a(N)$ lifts of $f,g$ which form a C-pair.
We denote the map $(f,g) : K^\times \rightarrow R_n \times R_n$ by $\Psi$.
We denote $\ker(f) \cap \ker(g)$ by $T$ and note that $T = \ker\Psi$.

The goal of the theorem is to show that there exists a valuation $v$ of $K$ such that $f,g \in D_v(n)$ and $\langle f,g \rangle / (\langle f,g \rangle \cap I_v(n))$ is cyclic.
To achieve this goal, we will prove the that there exists a valuation $v$ of $K$ such that $1+\mf_v \leq T$ and $(\Oc_v^\times \cdot T) / T$ is cyclic.
If such a valuation $v$ exists, we would obtain:
\[ \langle f,g \rangle = \Hom(K^\times/T,R_n) \leq D_v(n) \]
since $1+\mf_v \leq T$.
Also, we would have
\[ \langle f,g \rangle \cap I_v(n) = \Hom(K^\times/(T \cdot \Oc_v^\times),R_n). \]
Since $(\Oc_v^\times \cdot T) / T$ is cyclic, Pontryagin duality would imply that $\langle f,g \rangle / (\langle f,g \rangle \cap I_v(n))$ is cyclic as well, thus proving Theorem \ref{thm:main-c-pairs-thm} for $n \in \Nb$.
To summarize, it suffices to prove the following claim.
\begin{claim}
\label{claim:exists-a-valuation}
In the notation above, there exists a valuation $v$ of $K$ such that $1+\mf_v \leq T$ and $(\Oc_v^\times \cdot T) / T$ is cyclic.
\end{claim}

We will use the theory of rigid elements, our summary Theorem \ref{thm:rigid-summary} in particular, in order to produce such a valuation $v$.
We denote by $H$ the subgroup of $K^\times$ which is generated by $T$ and all $x \in K^\times \smallsetminus T$ such that $1+x \notin T \cup x \cdot T$.
Equivalently, $H$ is generated by $T$ and all $x \in K^\times$ such that $\Psi(x) \neq 0$ and $\Psi(1+x) \neq \Psi(1),\Psi(x)$.
Claims (1) and (3) of Theorem \ref{thm:rigid-summary} immediately reduce the proof of Claim \ref{claim:exists-a-valuation} to proving the following {\bf Key Claim}.

\begin{claim}[Key Claim]
\label{claim:key-claim-H-mod-T-cyclic}
In the notation above, $H/T$ is cyclic.
\end{claim}

In the case where $n=1$, Claim \ref{claim:key-claim-H-mod-T-cyclic} can be deduced from Koenigsmann's ``$\ell$-rigid calculus;" see \cite{Koenigsmann1998} Lemma 3.3, a form of which also appears in \cite{Koenigsmann1995}, and/or \cite{Efrat1999} Proposition 3.2.
%; this lemma is the key technical tool used in order to prove the main Theorem of \cite{Engler1998}.
The proof of this claim for a general $n \in \Nb$ is much more technical and this is the main content in the rest of this subsection.

Before we dive in to the proof of Claim \ref{claim:key-claim-H-mod-T-cyclic} for an arbitrary $n \in \Nb$, we give a simplified proof, which works only for $n = 1$, in order to illustrate the geometric origins of our general proof.
This will also provide an overall summary of the general argument for Claim \ref{claim:key-claim-H-mod-T-cyclic}.

\begin{proof}[Proof of Claim \ref{claim:key-claim-H-mod-T-cyclic} for $n = 1$]
The main benefit of working with $n = 1$ is that $R_n = \Z/\ell$ is a field, and $K^\times/T$ can be considered as a vector space over $\Z/\ell$.
Let $x,y \in K^\times \smallsetminus T$ be given such that $\Psi(1+x) \neq \Psi(1),\Psi(x)$ and $\Psi(1+y) \neq \Psi(1),\Psi(y)$.
It suffices to show that $\langle \Psi(x),\Psi(y) \rangle$ has $\Z/\ell$-dimension $1$.
This will imply that $\dim_{\Z/\ell}(H/T) = 1$, and thus prove Claim \ref{claim:key-claim-H-mod-T-cyclic}.

We consider $\Psi$ as a map into $\Pbb^2(\Z/\ell)$ via the composition:
\[ \Psi : K^\times \rightarrow (\Z/\ell)^2 = \Abb^2(\Z/\ell) \subset \Pbb^2(\Z/\ell), \]
by identifying $(i,j) \in (\Z/\ell)^2 = \Abb^2(\Z/\ell)$ with $(1:i:j) \in \Pbb^2(\Z/\ell)$.
Further abusing the notation, we will write $(i,j)$ for $(1:i:j)$ and $(i:j)$ for $(0:i:j)$, considered as elements of $\Pbb^2(\Z/\ell)$.

We extend $\Psi$ to a function on all of $K$ by formally setting $\Psi(0) = (0,0) = (1:0:0)$. 
It follows from the fact that $f,g$ form a C-pair that, for all $z,w \in K$, $\Psi(z+w)$ lies on a projective line in $\Pbb^2(\Z/\ell)$ which contains both $\Psi(z)$ and $\Psi(w)$.
If $\Psi(z) \neq \Psi(w)$, then this line is unique and we denote it by $\mathfrak{L}(\Psi(z),\Psi(w))$.

Assume, for a contradiction, that $\langle \Psi(x),\Psi(y) \rangle$ has $\Z/\ell$-dimension $2$, and thus $\Psi(x),\Psi(y)$ are $\Z/\ell$-linearly independent.
We recall that:
\begin{enumerate}
\item $\Psi(1+x) \neq \Psi(1),\Psi(x)$, and $\Psi(1+x) \in \mathfrak{L}(\Psi(1),\Psi(x))$.%; in particular $\Psi(1+x) \in \langle \Psi(x) \rangle$.
\item $\Psi(1+y) \neq \Psi(1),\Psi(y)$, and $\Psi(1+y) \in \mathfrak{L}(\Psi(1),\Psi(y))$.%; in particular $\Psi(1+y) \in \langle \Psi(y) \rangle$.
\end{enumerate}
Because of these facts, and our ``for-a-contradiction'' assumption that $\Psi(x),\Psi(y)$ are linearly independent, we can compose $\Psi$ with a (unique) projective-linear automorphism $\delta$ of $\Pbb^2(\Z/\ell)$ to obtain $\Psi' := \delta \circ \Psi$ which satisfies the following conditions:
\begin{enumerate}
\item $\Psi'(1) = (0,0)$.
\item $\Psi'(x) = (1,0)$ and $\Psi'(y) = (0,1)$.
\item $\Psi'(1+x) = (1:0)$ and $\Psi'(1+y) = (0:1)$.
\end{enumerate}
Since $\Psi(z+w) \in \mathfrak{L}(\Psi(z),\Psi(w))$ (if $\Psi(z) \neq \Psi(w)$), and $\delta$ is a projective-linear automorphism of $\Pbb^2(\Z/\ell)$, we also have $\Psi'(z+w) \in \mathfrak{L}(\Psi'(z),\Psi'(w))$.

Our goal will be to show that $\Psi'((m-1)+mx) = (m,0)$ for all integers $m \geq 1$.
This will provide us with a contradiction, since this will mean that the whole projective line $\mathfrak{L}(\Psi'(1),\Psi'(x))$ is contained in the image of $\Psi'$, which implies that the whole projective line $\mathfrak{L}(\Psi(1),\Psi(x))$ lies in the image of $\Psi$.
But this is absurd: the image of $\Psi$ is contained in $\Abb^2(\Z/\ell)$ while $\mathfrak{L}(\Psi(1),\Psi(x))$ is not contained in $\Abb^2(\Z/\ell)$.

The idea is to write an element of the form $i+jx+ky$, where $i,j,k$ are certain specific integers, as a sum (or difference) in two different ways; then $\Psi'(i+jx+ky)$ must lie on the intersection of the corresponding projective lines.

For example, consider the fact that $1+x+y = (1+x)+y = (1+y)+x$.
Since $1+x+y = (1+x)+y$, the point $\Psi'(1+x+y)$ lies on the line $\mathfrak{L}(\Psi'(y),\Psi'(1+x)) = \mathfrak{L}((0,1),(1:0))$; since $1+x+y = (1+y)+x$, this point also lies on the line $\mathfrak{L}(\Psi'(x),\Psi'(1+y)) = \mathfrak{L}((1,0),(0:1))$.
This implies that $\Psi'(1+x+y) = (1,1)$, and this is precisely step (1) below.
In the steps below, we omit the discussion concerning intersections of lines, and only give the appropriate sum decompositions and the associated calculation of $\Psi'$.
\begin{enumerate}
\item $\Psi'(1+x+y) = (1,1)$ since 
\begin{align*}
1+x+y &= (1+x)+y \\ 
&=(1+y)+x.
\end{align*}
\item $\Psi'(2+x+y) = (1:1)$ since 
\begin{align*}
2+x+y &= (1+x)+(1+y) \\
&= 1+(1+x+y).
\end{align*}
\item By induction on $m$, we prove that $\Psi'((m-1)+mx) = (m,0)$ and $\Psi'(m+mx+y) = (m,1)$. Note that $\Psi'(x) = (1,0)$ and $\Psi'(1+x+y) = (1,1)$ and so the base case $m = 1$ is done.
\item By induction, if $\Psi'(m-1+mx) = (m,0)$ and $\Psi'(m+mx+y) = (m,1)$, then $\Psi'((m+1)+(m+1)x+y) = (m+1,1)$ since 
\begin{align*}
(m+1)+(m+1)x+y &= (1+x)+(m+mx+y) \\
&= ((m-1)+mx)+(2+x+y).
\end{align*}
\item If $\Psi'(m-1+mx) = (m,0)$ and $\Psi'((m+1)+(m+1)x+y) = (m+1,1)$, then $\Psi'(m+(m+1)x) = (m+1,0)$ since
\begin{align*}
m+(m+1)x &= (1+x)+(m-1+mx) \\
&= ((m+1)+(m+1)x+y)-(1+y).
\end{align*}
\end{enumerate}
Thus, we've shown that $\Psi'((m-1)+mx) = (m,0)$ for all integers $m \geq 0$ and this completes the proof by the discussion above.

To shed some more light on the general proof for $n \in \Nb$ which will follow, we observe that the contradiction occurs \emph{precisely} at $m = (1-a)$ where $a \in \Z/\ell$ is the (unique) coefficient with $\Psi(1+x) = a \cdot \Psi(x)$.
Thinking geometrically, this is because our projective automorphism $\delta$ of $\Pbb^2(\Z/\ell)$, which was used to define $\Psi' := \delta \circ \Psi$, satisfies: 
\[ \delta^{-1}(1-a,0) \in \mathfrak{L}((1:0),(0:1)) = \Pbb^2(\Z/\ell) \smallsetminus \Abb^2(\Z/\ell). \qedhere \]
\end{proof}

The idea for the proof of Claim \ref{claim:key-claim-H-mod-T-cyclic} for a general $n \in \Nb$ is to calculate explicit restrictions on $\Psi(i+jx+ky)$, for $i+jx+ky$ which appear in the proof for $n = 1$.
The main difference is that $\Psi(i+jx+ky)$ is no longer uniquely determined by looking at intersection of ``lines'' in $\Z/\ell^n \times \Z/\ell^n$, since these ``lines'' are defined by the vanishing of a certain $2 \times 2$ determinant.
Thus we are only able to prove weaker restrictions on $\Psi(i+jx+ky)$, whereas if $n = 1$, we were able to calculate this point precisely.
Because of this observation, we instead work in $R_{N'}$, where $N'$ was appropriately chosen so that, when projecting down to $R_n$, we obtain the same contradiction as we did in the $n=1$ case.

Another difficulty in the proof for a general $n \in \Nb$ is that we can no longer ``arrange'' the points $\Psi(1)$, $\Psi(x)$, $\Psi(y)$, $\Psi(1+x)$ and $\Psi(1+y)$ by composing with the auxiliary projective isomorphism $\delta$ as we did in the proof for $n = 1$.
The best we can do is a weak rearrangement so that the image of $x$ and $1+x$ lie on the horizontal axis and so that $y$ and $1+y$ lie on the vertical axis; this is what we call $\Phi$ in the proof below.

Because of these two main difficulties, we will need to carry out some very explicit calculations in $R_{N'}$, which correspond to ``intersecting lines'' in the $n = 1$ case. 
Fortunately, the main steps of the proof are essentially the same as the $n=1$ case, and are summarized as follows:
\begin{enumerate}
\item Work with $1+x+y$ using the fact that 
\begin{align*}
1+x+y &= (1+x)+y \\ 
&= (1+y)+x
\end{align*}
\item Work with $2+x+y$ using the fact that 
\begin{align*}
2+x+y &= 2+(1+x+y) \\
&= (1+x)+(1+y)
\end{align*}
\item Work with $(m-1)+mx$ and $m+mx+y$, inductively for integers $m \geq 1$, using the following sum-decompositions:
\begin{align*}
(m+1)+(m+1)x+y &= (m+mx+y)+(1+x) \\
&= ((m-1)+mx)+(2+x+y)
\end{align*}
and 
\begin{align*}
m+(m+1)x &= (1+x)+((m-1)+mx) \\
&= ((m+1)+(m+1)x+y) - (1+y).
\end{align*}
\item Assume that $\langle \Psi(x),\Psi(y) \rangle$ is non-cyclic, and obtain a contradiction precisely at the point $m = (1-a)$ where $\Psi(1+x) = a \cdot \Psi(x)$.
\end{enumerate}

We now turn to the detailed proof of Key Claim \ref{claim:key-claim-H-mod-T-cyclic} for a general $n \in \Nb$, and the rest of the subsection will be devoted to this.
We define $f' := f''_{N'}$ and $g' := g''_{N'}$.
Since $f'',g''$ form a C-pair in $\Gc_K^a(N)$, the pair $f',g'$ is a C-pair in $\Gc_K^a(N')$, and the pair $f,g$ is a C-pair in $\Gc_K^a(n)$.
We recall that the map $(f,g) : K^\times \rightarrow R_n \times R_n$ is denoted by $\Psi$.
We will also denote the map $(f',g') : K^\times \rightarrow R_{N'} \times R_{N'}$ by $\Theta$.
Since $f'',g''$ are a C-pair, one has $\Theta(-x) = \Theta(x)$, and $N = \Mfr(N')$, it follows from Lemma \ref{lem:c-pairs-vs-rigid} that, for all $x \in K^\times$ with $x \neq -1$, the group $\langle \Theta(1+x),\Theta(x) \rangle$ is cyclic.
In particular, the same is true for $\Psi$: for all $x \in K^\times$ with $x \neq -1$, the group $\langle \Psi(1+x),\Psi(x) \rangle$ is cyclic.

\begin{claim}
\label{claim:key-claim-red-to-x-y}
Let $\Sc$ denote the collection of all $x \in K^\times \smallsetminus T$ such that:
\begin{enumerate}
\item One has $\Psi(1+x) \neq \Psi(1),\Psi(x)$.
\item There exists some $a \in R_{N'}^\times$ such that $\Theta(1+x) = a \cdot \Theta(x)$.
\end{enumerate}
Assume that for all $x,y \in \Sc$, the subgroup $\langle \Psi(x),\Psi(y) \rangle$ is cyclic.
Then Claim \ref{claim:key-claim-H-mod-T-cyclic} holds true.
\end{claim}
\begin{proof}
Denote by $H'$ the subgroup of $K^\times$ which is generated by $T$ and $\Sc$.
We first show that $H' = H$.
It is clear from the definition of $H$ that $H' \leq H$; we must therefore show that $H \leq H'$.

Recall that $H$ is generated by $T$ and all $z \notin T$ such that $\Psi(1+z) \neq \Psi(1),\Psi(z)$.
We will show that each such $z$ is contained in $H'$.
For any such $z$, we know that $\langle \Theta(1+z),\Theta(z) \rangle$ is cyclic.
Therefore, there exists a $c \in R_{N'}$ such that either $\Theta(1+z) = c \cdot \Theta(z)$ or $\Theta(z) = c \cdot \Theta(1+z)$.
We will consider these in two different cases.

\vskip 5pt
\noindent{\bf Case:} $\Theta(1+z) = c \cdot \Theta(z)$:

If $\Theta(1+z) = c \cdot \Theta(z)$ and $c \in R_{N'}^\times$, then $z \in \Sc$ and so $z \in H'$.
On the other hand, if $\Theta(1+z) = c \cdot \Theta(z)$ and $c \in R_{N'} \smallsetminus R_{N'}^\times = \ell \cdot R_{N'}$, then 
\[ \Theta(1+z^{-1}) = \Theta(1+z)-\Theta(z) = (c-1) \cdot \Theta(z) = (1-c) \cdot \Theta(z^{-1}) \]
and $(1-c) \in R_{N'}^\times$.
Namely, $z^{-1} \in \Sc$ and thus $z \in H'$.

\vskip 5pt
\noindent{\bf Case:} $\Theta(z) = c \cdot \Theta(1+z)$:

Consider $z' = -(1+z)$.
Since $\Psi(1+z) = \Psi(z')$, and $0 \neq \Theta(z) = c \cdot \Theta(z')$, we see that $z' \notin T$.
Also, $\Theta(1+z') = \Theta(z)$ and thus $\Theta(1+z') = c \cdot \Theta(z')$.
Since $\Theta(z) = c \cdot \Theta(z')$ and $\ker\Theta \leq T$, it suffices to prove that $z' \in H'$ because this will imply that $z \in H'$ as well.

If $c \in R_{N'}^\times$ then $z' \in \Sc$ so that $z' \in H'$ and we're done.
Otherwise, $c  \in R_{N'} \smallsetminus R_{N'}^\times$, and, arguing as above, $(z')^{-1} \in \Sc$ so that $z' \in H'$ in this case as well. 
This proves that $H = H'$.

The assumption of the claim ensures that, for all $x,y \in \Sc$, the group $\langle \Psi(x),\Psi(y) \rangle$ is cyclic.
Since $T = \ker\Psi$, the subgroup $\langle x \cdot T,y \cdot T \rangle$ of $H/T$ is cyclic for all $x,y \in \Sc$.
Because $K^\times/T$ is finite, and the elements of $\Sc$ generate $H'/T = H/T$, this immediately implies Claim \ref{claim:key-claim-H-mod-T-cyclic}, as contended.
\end{proof}

We now fix a pair of elements $x,y \in K^\times \smallsetminus T$ which satisfy the two conditions of Claim \ref{claim:key-claim-red-to-x-y}; i.e. $x,y \in \Sc$.
We also fix $a,b \in R_{N'}^\times$ such that $\Theta(1+x) = a \cdot \Theta(x)$ and $\Theta(1+y) = b \cdot \Theta(y)$.
We will be working with the determinant $D := f'(x) g'(y)-f'(y)g'(x)$ as an element of $R_{N'}$.

By Lemma \ref{lem:c-pairs-vs-rigid}, if $D = 0 \mod \ell^M$, then the group $\langle \Psi(x),\Psi(y) \rangle$ is cyclic.
Using Claim \ref{claim:key-claim-red-to-x-y}, we have therefore reduced the proof of Key Claim \ref{claim:key-claim-H-mod-T-cyclic} -- and thus the proof of Theorem \ref{thm:main-c-pairs-thm} in the case $n \in \Nb$ -- to proving the following simpler and explicit claim:
\begin{claim}
\label{claim:D-mod-ellM-is-zero}
In the notation above, one has $D = 0 \mod \ell^M$.
\end{claim}
We will now focus on the proof of Claim \ref{claim:D-mod-ellM-is-zero}, which will complete the proof of Claim \ref{claim:key-claim-H-mod-T-cyclic} and thus prove Theorem \ref{thm:main-c-pairs-thm} in the case $n \in \Nb$.

We consider the following $R_{N'}$-linear combinations of $f',g'$ in $\Hom(K^\times,R_{N'})$:
\begin{itemize}
\item $p := g'(y) \cdot f' - f'(y) \cdot g'$
\item $q := f'(x) \cdot g' - g'(x) \cdot f'$
\end{itemize}
and we will denote the map $(p,q) : K^\times \rightarrow R_{N'} \times R_{N'}$ by $\Phi$.
To simplify the exposition, we formally extend $\Phi$ to a function on all of $K$ by defining $\Phi(0) = (p(0),q(0))$ to be $(0,0)$.
Since $p,q$ form a C-pair (as they are linear combinations of $f',g'$), we immediately see that, for all $z,w \in K$, the following $2 \times 2$ determinant vanishes:
\[ \left|\begin{array}{cc}
    p(z+w)-p(w) & p(z)-p(w) \\
    q(z+w)-q(w) & q(z)-q(w)
  \end{array}
 \right|  = 0. \]
Indeed, if $z+w,w,z \neq 0$, the vanishing of this determinant follows from the definition of a C-pair.
On the other hand, if $0 \in \{w+z, w, z\}$, this follows from the fact that $\Phi(0) := (0,0) = \Phi(-1)$.

By the definition of $p$ and $q$, one has:
\begin{itemize}
\item $\Phi(x) =  (p,q)(x) = (D,0)$.
\item $\Phi(y) =  (p,q)(y) = (0,D)$.
\end{itemize}
Also, since $\Theta(1+x) = a \cdot \Theta(x)$ and $\Theta(1+y) = b \cdot \Theta(y)$, we deduce:
\begin{itemize}
\item $\Phi(1+x) = (p,q)(1+x) = (aD,0)$.
\item $\Phi(1+y) = (p,q)(1+y) = (0,bD)$.
\end{itemize}

We will use the following notation: $a' := a-1$ and $b' := b-1$, both are elements of $R_{N'}$.
While $a,b$ are units in $R_{N'}$, the elements $a',b'$ need not be units.
However, we recall that $\Psi(1+x) \neq \Psi(1),\Psi(x)$ and that $\Psi(1+y) \neq \Psi(1),\Psi(y)$.
Since $\Psi(1+x) = a_n \cdot \Psi(x)$ and $\Psi(1+y) = b_n \cdot \Psi(y)$, this implies that $a_n \neq 0,1$ and $b_n \neq 0,1$.
In particular $(a')_n,(b')_n \neq 0$.
To summarize:
\begin{itemize}
\item The elements $a$ and $b$ are units in $R_{N'}$, with $\Phi(1+x) = a \cdot \Phi(x)$ and $\Phi(1+y) = b \cdot \Phi(y)$.
\item One has $a' := a-1$, $b' := b-1$ and $(a')_n,(b')_n \neq 0$.
\end{itemize}

To make the notation a bit less cumbersome, we will use the following notational conventions which are motivated by homogeneous coordinates in projective space over a field.
Let $i,j,\gamma, \gamma_1,\gamma_2,\gamma_3 \in \Z/\ell^{N'}$ be given.
We will write:
\[ \gamma : \ \gamma_2 = \gamma_3 \]
to mean that $\gamma\gamma_2 = \gamma\gamma_3$.
Also, we will write $(i,j) = (\gamma_1:\gamma_2:\gamma_3)$ to mean that $i \cdot \gamma_1 = \gamma_2$ and $j \cdot \gamma_1 = \gamma_3$.
Furthermore, we will use the notation $(i,j) = \gamma \cdot (\gamma_1:\gamma_2:\gamma_3)$ to mean that $(i,j) = (\gamma\gamma_1:\gamma\gamma_2:\gamma\gamma_3)$.
Assume that $\gamma$ divides $\gamma'$  in $\Z/\ell^{N'}$; we make the following trivial observations because they will be used later:
\begin{enumerate}
\item $\gamma :  \ \gamma_2  = \gamma_3$ implies $\gamma' : \  \gamma_2 = \gamma_3$.
\item $(i,j) = \gamma \cdot (\gamma_1: \gamma_2 : \gamma_3)$ implies $(i,j) = \gamma' \cdot (\gamma_1 : \gamma_2 : \gamma_3)$.
\end{enumerate}

\begin{claim}
\label{claim:1+x+y}
In the notation above, one has
\[ \Phi(1+x+y) = D\cdot (a'b'+a'+b':Dab':Da'b). \]
\end{claim}
\begin{proof}
For notational simplicity, set $(P_1,Q_1) := \Phi(1+x+y)$.
To prove the claim, we will use the fact that $1+x+y$ can be written as a sum in two ways: $1+x+y = (1+x)+y = (1+y)+x$.
First let us consider $1+x+y = (1+x)+y$. 
Therefore:
\[\left|\begin{array}{cc}
    p(1+x+y)-p(y) & p(1+x)-p(y) \\
    q(1+x+y)-q(y) & q(1+x)-q(y)
  \end{array}
 \right|  = 0. \\
\]
Making the appropriate substitutions:
\[
0 = \left|\begin{array}{cc}
    P_1 & aD \\
    Q_1-D & -D
  \end{array}
 \right|  = D \cdot  \left|\begin{array}{cc}
    P_1 & a \\
    Q_1-D & -1
  \end{array}
 \right|. \]
In other words we deduce that the following equation holds true:
\begin{align}
\label{1.1}
D : \ P_1 + aQ_1 = aD.
\end{align}
By symmetry, using the fact that $1+x+y = (1+y)+x$, the following equation holds true as well:
\begin{align}
\label{1.2}
D : \ bP_1 + Q_1 = bD.
\end{align}
Using equation (\ref{1.2}) to substitute for $Q_1$ in equation (\ref{1.1}), we obtain the following steps:
\begin{enumerate}
\item $D : \ P_1 + a\cdot(bD-bP_1) = aD$
\item $D : \ P_1 + ab\cdot(D-P_1) = aD$
\item $D : \ P_1\cdot(1-ab) = Da\cdot(1-b)$
\item $D : \ P_1\cdot(ab-1) = Dab'$
\end{enumerate}
Since $ab-1 = a'b'+a'+b'$, the following equation holds true:
\begin{align}
\label{1.3}
D : \ P_1\cdot (a'b'+a'+b') = Dab'.
\end{align}
By symmetry, the following equation holds true as well:
\begin{align}
\label{1.4}
D : \ Q_1\cdot (a'b'+a'+b') = Da'b.
\end{align}
Equations (\ref{1.3}) and (\ref{1.4}) together imply that:
\[ \Phi(1+x+y) = D\cdot (a'b'+a'+b':Dab':Da'b),\]
and our claim is proven.
\end{proof}

\begin{claim}
\label{claim:2+x+y}
In the notation above, one has
\[ \Phi(2+x+y) = D^2\cdot(a'+b':ab'D:ba'D). \]
\end{claim}
\begin{proof}
For notational simplicity, we set $(P_2,Q_2) := \Phi(2+x+y)$.
To prove the claim, we will use the fact that $2+x+y$ can be written as a sum in two ways: $2+x+y = 1+(1+x+y) = (1+x)+(1+y)$.

Since $2+x+y = 1+(1+x+y)$, one has:
\[ \left|\begin{array}{cc}
    p(2+x+y) & p(1+x+y) \\
    q(2+x+y) & q(1+x+y)
  \end{array}
\right| = 0 \]
In particular, the following determinant vanishes as well:
\[ D \cdot \left|\begin{array}{cc}
    p(2+x+y) & (a'b'+a'+b') \cdot  p(1+x+y) \\
    q(2+x+y) & (a'b'+a'+b') \cdot  q(1+x+y)
  \end{array}
\right| = 0. \]
By Claim \ref{claim:1+x+y}, we see that:
\[0 = D  \cdot\left|\begin{array}{cc}
    P_2 & Dab' \\
    Q_2 & Dba'
  \end{array}
\right| = D^2\cdot \left|\begin{array}{cc}
    P_2 & ab' \\
    Q_2 & ba'
  \end{array}
\right|. \]
Therefore the following equation holds true:
\begin{align}
\label{2.1}
D^2 : \ b a' P_2 = b' a Q_2.
\end{align}

On the other hand, $2+x+y = (1+x)+(1+y)$ so that:
\[ \left|\begin{array}{cc}
    P_2-p(1+y) & p(1+x)-p(1+y) \\
    Q_2-q(1+y) & q(1+x)-q(1+y)
  \end{array}
\right| = 0. \]
Making the appropriate substitutions:
\[0 = \left|\begin{array}{cc}
    P_2 & aD \\
    Q_2-bD & -bD
  \end{array}
\right| = D \cdot \left|\begin{array}{cc}
    P_2 & a \\
    Q_2-bD & -b
  \end{array}
\right|. \]
Thus, the following equation  holds true:
\begin{align}
\label{2.2}
D : \ b P_2 + a Q_2 = a b D.
\end{align}
Using equation (\ref{2.2}) to substitute for $a\cdot Q_2$ in equation (\ref{2.1}), we deduce the following steps:
\begin{enumerate}
\item $D^2 : \ ba'P_2 = b'\cdot(abD-bP_2)$
\item $D^2 : \ ba'P_2 = bb'\cdot(aD-P_2)$
\item $D^2 : \ P_2\cdot(ba'+bb') = bb'aD$
\end{enumerate}
Since $b$ is a unit in $R_{N'}$, the following equation holds true:
\begin{align}
\label{2.3}
D^2: \ P_2\cdot(a'+b') = b'aD.
\end{align}
By symmetry, we see that the following equation holds true as well:
\begin{align}
\label{2.4}
D^2: \ Q_2\cdot(a'+b') = a'bD.
\end{align}
Equations (\ref{2.3}) and (\ref{2.4}) together imply that:
\[ \Phi(2+x+y) = D^2\cdot(a'+b':ab'D:ba'D), \]
and our claim is proven.
\end{proof}

We now introduce a bit of notation which will help us carry out the inductive portion of our proof.
For a positive integer $m$, and $\gamma \in R_{N'}$, we will consider the following two statements, whose validity depends on $m$ and $\gamma$:
\begin{enumerate}
\item $\mathbf{(P1)}(m,\gamma): \ \ \Phi((m-1)+mx) = \gamma \cdot (a'b'+mb':mDab':0)$.
\item $\mathbf{(P2)}(m,\gamma): \ \ \Phi(m+mx+y) = \gamma \cdot(a'b'+mb'+a':mDab':Da'b)$.
\end{enumerate}

\begin{claim}
\label{claim:inductivestep1}
Let $e,f,g,h,i,j$ be non-negative integers and consider the following elements of $R_{N'}$:
\begin{enumerate}
\item $A := D^{e}(a')^{f}(b')^{g}$.
\item $B := D^{h}(a')^{i}(b')^{j}$.
\end{enumerate}
Let $m$ be a positive integer, and assume that the two statements $\mathbf{(P1)}(m,A)$ and $\mathbf{(P2)}(m,B)$ hold true.
Then the statement $\mathbf{(P2)}(m+1,E')$ holds true with 
\[E' := D^{\max(2,e,h)+1}(a')^{\max(f,i)+1}(b')^{\max(g,j)+1}. \]
\end{claim}
\begin{proof}
To simplify the notation, we will define:
\begin{itemize}
\item $\Delta_0 := a'+b'$.
\item $\Delta_1 := a'b'+mb'$.
\item $\Delta_2 := a'b'+mb'+a'$.
\end{itemize}
Here is a summary of what we know and our assumptions, using this notation:
\begin{enumerate}
\item  Claim \ref{claim:2+x+y} says that $\Phi(2+x+y) = D^2 \cdot (\Delta_0:ab'D:ba'D)$.
\item  $\mathbf{(P1)}(m,A)$ says that $\Phi((m-1)+mx) = A \cdot (\Delta_1:m Dab':0)$.
\item  $\mathbf{(P2)}(m,B)$ says that $\Phi(m+mx+y) = B \cdot (\Delta_2 : mDab':Da'b)$.
\end{enumerate}

We now consider $(P_3,Q_3) := \Phi((m+1)+(m+1)x+y)$.
As before, we will write $(m+1)+(m+1)x+y$ as a sum in two different ways: 
\begin{align*}
(m+1)+(m+1)x+y &= ((m-1)+mx) + (2+x+y) \\ &= (m+mx+y)+(1+x).
\end{align*}
Since $(m+1)+(m+1)x+y = ((m-1)+mx)+(2+x+y)$, we see that
\[ \left|\begin{array}{cc}
    P_3-p((m-1)+mx) & p(2+x+y)-p((m-1)+mx) \\
    Q_3-q((m-1)+mx) & q(2+x+y)-q((m-1)+mx)
  \end{array}
\right| = 0. \]
By multiplying this determinant by $A \cdot \Delta_1$ and using statement $\mathbf{(P1)}(m,A)$, we deduce that:
\[ A \cdot \left|\begin{array}{cc}
    \Delta_1\cdot P_3-mDab' & \Delta_1\cdot p(2+x+y)-mDab' \\
    Q_3 & q(2+x+y)
  \end{array}
\right| = 0. \]
Consider $A' := D^{\max(2,e)}(a')^{f}(b')^{g}$; in particular, one has $D^2 | A'$ and $A|A'$ in $R_{N'}$.
Now we multiply the right-hand column of the matrix above by $\Delta_0 \cdot D^{\max(2,e)-e}$, and use Claim \ref{claim:2+x+y}, to see that:
\[ A' \cdot \left|\begin{array}{cc}
    \Delta_1\cdot P_3-mDab' & \Delta_1\cdot Dab'-\Delta_0\cdot mDab' \\
    Q_3 & Da'b
  \end{array}
\right| = 0. \]
Rearranging some terms, we have
\[ A' \cdot \left|\begin{array}{cc}
    \Delta_1\cdot P_3-mDab' & Dab'\cdot (\Delta_1-\Delta_0m) \\
    Q_3 & Da'b
  \end{array}
\right| = 0. \]
Now we substitute for $\Delta_1$ and $\Delta_0$ to deduce:
\[ A' \cdot \left|\begin{array}{cc}
    \Delta_1\cdot P_3-mDab' & Dab'\cdot (a'b'+mb'-ma'-mb') \\
    Q_3 & Da'b
  \end{array}
\right| = 0. \]
Therefore,
\[ A' \cdot \left|\begin{array}{cc}
    \Delta_1\cdot P_3-mDab' & Dab'\cdot (a'b'-ma') \\
    Q_3 & Da'b
  \end{array}
\right| = 0 \]
and finally
\[ A'Da' \cdot \left|\begin{array}{cc}
    \Delta_1\cdot P_3-mDab' & ab'\cdot (b'-m) \\
    Q_3 & b
  \end{array}
\right| = 0. \]
Thus we obtain the following steps: 
\begin{enumerate}
\item $A'Da': \ b\cdot (a'b'+mb')\cdot P_3 = Q_3\cdot ab'\cdot (b'-m)+mDabb'$.
\item $A'Da': \ bb'\cdot (a'+m)\cdot P_3 = Q_3\cdot ab'\cdot (b'-m)+mDabb'$.
\end{enumerate}
Thus the following equation holds true:
\begin{align}
\label{3.1}
A'Da'b': \ P_3\cdot b\cdot (a'+m) = Q_3\cdot a\cdot (b'-m)+mDab.
\end{align}

Now we will use the fact that $(m+1)+(m+1)x+y = (m+mx+y)+(1+x)$; this implies that:
\[ \left|\begin{array}{cc}
    P_3-p(1+x) & p(m+mx+y)-p(1+x) \\
    Q_3-q(1+x) & q(m+mx+y)-q(1+x)
  \end{array}
\right| = 0. \]
Making the appropriate substitutions, we have
\[ \left|\begin{array}{cc}
    P_3-aD & p(m+mx+y)-aD \\
    Q_3 & q(m+mx+y)
  \end{array}
\right| = 0. \]
Now we multiply the right-hand column of the matrix above by $B \cdot \Delta_2$, and use statement $\mathbf{(P2)}(m,B)$, to deduce that:
\[ B \cdot \left|\begin{array}{cc}
    P_3-aD & mDab'-\Delta_2\cdot aD \\
    Q_3 & Da'b
  \end{array}
\right| = 0. \]
Rearranging some terms, we obtain:
\[ BD \cdot \left|\begin{array}{cc}
    P_3-aD & a\cdot (mb'-\Delta_2) \\
    Q_3 & a'b
  \end{array}
\right| = 0. \]
Now we substitute into $\Delta_2$ to obtain: 
\[ BD \cdot \left|\begin{array}{cc}
    P_3-aD & a\cdot (mb'-a'b'-mb'-a') \\
    Q_3 & a'b
  \end{array}
\right| = 0 \]
and thus
\[ BD \cdot \left|\begin{array}{cc}
    P_3-aD & -aa'\cdot (b'+1) \\
    Q_3 & a'b
  \end{array}
\right| = 0. \]
Recall that $b' = b-1$ and $b$ is a unit; therefore
\[ BD \cdot \left|\begin{array}{cc}
    P_3-aD & -aa'b \\
    Q_3 & a'b
  \end{array} 
\right| = 0 = BDa' \cdot \left|\begin{array}{cc}
    P_3-aD & -a \\
    Q_3 & 1
  \end{array}
\right|. \]
Namely, the following equation holds true:
\begin{align}
\label{3.2}
BDa': \ P_3+aQ_3 = aD.
\end{align}

Recall the definition of $E'$ based on the expression defining $A$ and $B$:
\[ E' = D^{\max(2,e,h)+1}(a')^{\max(f,i)+1}(b')^{\max(g,j)+1}. \]
Also note that $A'Da'b'|E'$ and $BDa'|E'$ in $R_{N'}$.
Now we use equation (\ref{3.2}) in order to substitute for $aQ_3$ in equation (\ref{3.1}); we thus obtain the following steps: 
\begin{enumerate}
\item $E': \ P_3\cdot b\cdot(a'+m) = (aD-P_3)\cdot(b'-m)+mDab$.
\item $E': \ P_3\cdot(b\cdot(a'+m)+b'-m) = aD\cdot(b'-m)+mDab$.
\item $E': \ P_3\cdot(ba'+bm+b'-m) = a\cdot(D\cdot(b'-m)+mDb)$.
\item $E': \ P_3\cdot(ba'+bm+b'-m) = aD\cdot(b'-m+mb)$.
\item $E': \ P_3\cdot(ba'+bm+b'-m) = aD\cdot(b-1-m+mb)$. 
\item $E': \ P_3\cdot(ba'+bm+b'-m) = aD\cdot((m+1)b-(m+1))$.
\end{enumerate}
And since $b' = b-1$, the following equation holds true:
\begin{align}
\label{3.25}
E': \ P_3\cdot (ba'+bm+b'-m) = (m+1)aDb'.
\end{align}

We now expand out $ba'+bm+b'-m$ and $a'b'+(m+1)b'+a'$ in order to verify that the two expressions are identical.
Firstly, we have:
\begin{eqnarray*}
ba'+mb+b'-m & = & b(a-1)+mb+b-1-m \\
&=& ab-b+mb+b-1-m \\
&=& ab+mb-(m+1).
\end{eqnarray*}
Secondly, we have:
\begin{eqnarray*}
a'b'+(m+1)b'+a' & = & (a-1)(b-1)+(m+1)(b-1)+a-1 \\
&=& ab-a-b+1+(m+1)b-(m+1)+a-1 \\
&=& ab+mb-(m+1).
\end{eqnarray*}
Therefore, we indeed have the following equality of elements in $R_{N'}$:
\begin{align}
\label{mbcalc}
 ba'+mb+b'-m = a'b'+(m+1)b'+a'.
\end{align}
Using equation (\ref{mbcalc}) along with equation (\ref{3.25}), we see that the following equation holds true: 
\begin{align}
\label{3.3}
E': \ P_3\cdot (a'b'+(m+1)b'+a') = (m+1)Dab'.
\end{align}
To conclude the proof of the claim, we multiply equation (\ref{3.2}) by $(a'b'+(m+1)b'+a')$, and use equation (\ref{3.3}) (while recalling that $BDa'$ divides $E'$ in $R_{N'}$ and that $a$ is a unit) to obtain the following steps:
\begin{enumerate}
\item $E': \ (m+1)Dab'+aQ_3\cdot (a'b'+(m+1)b'+a') = aD\cdot (a'b'+(m+1)b'+a')$.
\item $E': \ aQ_3\cdot (a'b'+(m+1)b'+a') = aD\cdot (a'b'+a')$.
\item $E': \ Q_3\cdot (a'b'+(m+1)b'+a') = Da'\cdot (b'+1)$.
\end{enumerate}
Since $b'+1 = b$, the following equation holds true:
\begin{align}
\label{3.4}
E': \ Q_3\cdot (a'b'+(m+1)b'+a') = Da'b.
\end{align}
Finally, equations (\ref{3.3}) and (\ref{3.4}) together imply that statement $\mathbf{(P2)}(m+1,E')$ holds true, thereby concluding the proof of the claim.
\end{proof}

\begin{claim}
\label{claim:inductivestep2}
Let $e,f,g,h,i,j$ be non-negative integers and consider the following elements of $R_{N'}$ as in Claim \ref{claim:inductivestep1}:
\begin{enumerate}
\item $A := D^{e}(a')^{f}(b')^{g}$.
\item $B := D^{h}(a')^{i}(b')^{j}$.
\end{enumerate}
Let $m$ be a positive integer, and assume that the two statements $\mathbf{(P1)}(m,A)$ and $\mathbf{(P2)}(m,B)$ hold true.
Then the two statements $\mathbf{(P1)}(m+1,E)$ and $\mathbf{(P2)}(m+1,E)$ holds true with 
\[ E = D^{\max(2,e,h)+2}(a')^{\max(f,i)+1}(b')^{\max(g,j)+1}. \]
\end{claim}
\begin{proof}
We will continue to use the following notation from Claim \ref{claim:inductivestep1}:
\[ E' = D^{\max(2,e,h)+1}(a')^{\max(f,i)+1}(b')^{\max(g,j)+1}. \]
Also, Claim \ref{claim:inductivestep1} says that the statement $\mathbf{(P2)}(m+1,E')$ holds true. 
Since $E = E' \cdot D$, we immediately see that the statement $\mathbf{(P2)}(m+1,E)$ holds true as well.
However, we will use the slightly stronger fact that $\mathbf{(P2)}(m+1,E')$ holds true in our calculations below.

We consider $(P_4,Q_4) := \Phi(m+(m+1)x)$.
Since we will also be working with $(m+1)+(m+1)x+y$ in the proof of this claim, we denote $\Phi(m+1+(m+1)x+y)$ by $(P_3,Q_3)$ as we did in the proof of Claim \ref{claim:inductivestep1}.
As before, the idea is to write $m+(m+1)x$ as a sum (or difference) in two different ways:
\begin{align*}
m+(m+1)x &= ((m-1)+mx)+(1+x) \\ &= ((m+1)+(m+1)x+y) - (1+y).
\end{align*}
 
Similarly to the proof of Claim \ref{claim:inductivestep1}, we will work with the following elements of $R_{N'}$:
\begin{itemize}
\item $\Delta_0 := a'+b'$.
\item $\Delta_1 := a'b'+mb'$.
\item $\Delta_2 := a'b'+mb'+a'$.
\item $\Delta_2' := a'b'+(m+1)b'+a'$.
\end{itemize}

First, we use the fact that $m+(m+1)x = ((m-1)+mx) + (1+x)$ to deduce:
\[ \left|\begin{array}{cc}
    P_4-p(1+x) & p((m-1)+mx)-p(1+x) \\
    Q_4-q(1+x) & q((m-1)+mx)-q(1+x)
  \end{array}
\right| = 0. \]
Making the appropriate substitutions, we have
\[ \left|\begin{array}{cc}
    P_4-aD & p((m-1)+mx)-aD \\
    Q_4 & q((m-1)+mx)
  \end{array}
\right| = 0. \]
Now we multiply this determinant by $A \cdot \Delta_1$, and use statement $\mathbf{(P1)}(m,A)$ to deduce:
\[ A \cdot \left|\begin{array}{cc}
    P_4-aD & mDab'-\Delta_1\cdot aD \\
    Q_4 &  0
  \end{array}
\right| = 0. \]
Since $a$ is a unit, we see that 
\[ A \cdot \left|\begin{array}{cc}
    P_4-aD & mDb'-\Delta_1\cdot D \\
    Q_4 &  0
  \end{array}
\right| = 0. \]
Factoring out a $D$ and substituting into $\Delta_1$ we obtain:
\[ 0 = AD \cdot \left|\begin{array}{cc}
    P_4-aD & mb'-(a'b'+mb') \\
    Q_4 & 0 
  \end{array}
\right| =  AD \cdot \left|\begin{array}{cc}
    P_4-aD & -a'b' \\
    Q_4 & 0
  \end{array}
\right|. \]
Thus, the following equation holds true:
\begin{align}
\label{3.5}
Q_4 \cdot (ADa'b') = 0.
\end{align}

Now we use the fact that $m+(m+1)x = ((m+1)+(m+1)x+y)-(1+y)$ and thus:
\[ \left|\begin{array}{cc}
    P_4-p(1+y) & P_3-p(1+y) \\
    Q_4-q(1+y) & Q_3-q(1+y)
  \end{array}
\right| = 0. \]
Making the appropriate substitutions, we see that
\[ \left|\begin{array}{cc}
    P_4 & P_3 \\
    Q_4-bD & Q_3-bD
  \end{array}
\right| = 0. \]
We multiply the right-hand column of this matrix by $E' \cdot \Delta_2'$, and use the fact that $\mathbf{(P2)}(m+1,E')$ holds true (Claim \ref{claim:inductivestep1}) to deduce that:
\[ 0 = E' \cdot \left|\begin{array}{cc}
    P_4 & (m+1)Dab' \\
    Q_4-bD & Da'b-\Delta'_2 \cdot bD
  \end{array}
\right| =  E'D \cdot \left|\begin{array}{cc}
    P_4 & (m+1)ab' \\
    Q_4-bD & a'b-\Delta'_2 \cdot b
  \end{array}
\right|. \]
Now, we recall that $ADa'b' | E'D$ in $R_{N'}$, and that $E'D = E$.
Therefore, equation (\ref{3.5}) above implies that:
\[ E \cdot \left|\begin{array}{cc}
    P_4 & (m+1)ab' \\
    -bD & a'b-\Delta'_2 \cdot b
  \end{array}
\right| = 0. \]
Moreover, since $b$ is a unit, we obtain
\[ E \cdot \left|\begin{array}{cc}
    P_4 & (m+1)ab' \\
    -D & a'-\Delta'_2
  \end{array}
\right| = 0. \]
Now we substitute into $\Delta_2'$ to obtain:
\[ E \cdot \left|\begin{array}{cc}
    P_4 & (m+1)ab' \\
    -D & a'-(a'b'+(m+1)b'+a')
  \end{array}
\right| = 0. \]
We therefore deduce that
\[ E \cdot \left|\begin{array}{cc}
    P_4 & (m+1)ab' \\
    D & a'b'+(m+1)b'
  \end{array}
\right| = 0. \]
In particular, the following equation holds true: 
\begin{align}
\label{3.6}
{E : \ P_4\cdot (a'b'+(m+1)b') = (m+1)Dab'}.
\end{align}
Equations (\ref{3.5}) and (\ref{3.6}) together imply that the statement $\mathbf{(P1)}(m+1,E)$ holds true, and this completes the proof of the claim.
\end{proof}

\begin{claim}
\label{claim:final-induction}
In the notation above, the following two statements hold true for all integers $m \geq 1$:
\begin{itemize}
\item $\mathbf{(P1)}(m,D^{2m}(a')^{m-1}(b')^{m-1})$.
\item $\mathbf{(P2)}(m,D^{2m}(a')^{m-1}(b')^{m-1})$.
\end{itemize}
In particular, for each integer $m \geq 1$, there exists $\Pi_m \in R_{N'}$ such that:
\begin{align}
\label{eq:Pm}
 D^{2m}(a')^{m-1}(b')^{m} \cdot Dma = D^{2m}(a')^{m-1}(b')^{m} \cdot (a'+m) \cdot \Pi_m. 
\end{align}
\end{claim}
\begin{proof}
We proceed by induction on $m \geq 1$.
For the base case, $m = 1$, we first observe that $\Phi(x) = (D,0) = (a'b'+b':Dab':0)$ (since $a' = a-1$).
Also, by Claim \ref{claim:1+x+y} we have:
\[ \Phi(1+x+y) = D\cdot (a'b'+a'+b':Dab':Da'b). \]
Namely, the statements $\mathbf{(P1)}(1,1)$ and $\mathbf{(P2)}(1,D)$ hold true; in particular, the statements $\mathbf{(P1)}(1,D^2)$ and $\mathbf{(P2)}(1,D^2)$ holds true as well.

The inductive step follows from Claim \ref{claim:inductivestep2}.
More precisely, assume that the statements $\mathbf{(P1)}(m,D^{2m}(a')^{m-1}(b')^{m-1})$ and $\mathbf{(P1)}(m,D^{2m}(a')^{m-1}(b')^{m-1})$ hold true.
By Claim \ref{claim:inductivestep2}, we obtain that $\mathbf{(P1)}(m+1,D^{2(m+1)}(a')^m(b')^m)$ and $\mathbf{(P2)}(m+1,D^{2(m+1)}(a')^m(b')^m)$ are true as well.

Now we prove the statement concerning the existence of $\Pi_m \in R_{N'}$.
This follows from the fact that $\mathbf{(P1)}(m,D^{2m}(a')^{m-1}(b')^{m-1})$ holds true for all $m \geq 1$.
Namely, for all $m \geq 1$, the following equation holds true:
\[ \Phi((m-1)+mx) =  D^{2m}(a')^{m-1}(b')^{m-1} \cdot (a'b'+mb':mDab':0). \]
Therefore, we have:
\[ D^{2m}(a')^{m-1}(b')^{m-1} : \  p((m-1)+mx) \cdot (a'b'+mb') = mDab'. \]
By defining $\Pi_m := p((m-1)+mx) \in R_{N'}$, we see that 
\[  D^{2m}(a')^{m-1}(b')^{m} \cdot Dma = D^{2m}(a')^{m-1}(b')^{m} \cdot (a'+m) \cdot \Pi_m, \]
as required.
\end{proof}

At last, we are prepared to prove our original Claim \ref{claim:D-mod-ellM-is-zero}, that $D = 0 \mod \ell^M$, and therefore complete the proof of Theorem \ref{thm:main-c-pairs-thm} in the case where $n \in \Nb$.
\begin{proof}[Proof of Claim \ref{claim:D-mod-ellM-is-zero}]
For non-zero elements $\eta \in R_{N'}$ we will define $\mathbf{o}(\eta) := {\rm ord}_\ell(\tilde\eta)$ where $\tilde\eta$ denotes some lift of $\eta$ to $\Z_\ell$ and ${\rm ord}_\ell$ denotes the $\ell$-adic valuation.
We observe that $\mathbf{o}(rt) = \mathbf{o}(r)+\mathbf{o}(t)$ if $rt \neq 0$ in $R_{N'}$.

Assume, for a contradiction, that $D \neq 0 \mod \ell^M$.
Thus $\mathbf{o}(D) \leq M-1 = 2(n-1)$.
Furthermore, observe that $\mathbf{o}(a') \leq n-1$.
Let $m \in \Z$ be a representative for $(-a' \mod \ell^{3n-2})$ such that $1 \leq m \leq \ell^{3n-2}-1$.
Observe that the following inequality holds merely by the definition of $N'$:
\[N' = (6\ell^{3n-2}-7)(n-1)+3n-2 \geq (6m-1)(n-1)+3n-2. \]

By abuse of notation, we will also write $m$ for the image of $m$ in $R_{N'} = \Z/\ell^{N'}$.
Since $(-a')_n \neq 0$, we see that $\mathbf{o}(m) \leq n-1$.
Let us now consider the orders of the elements in the left-hand-side of Equation (\ref{eq:Pm}).
Since $\mathbf{o}(D) \leq 2n-2$ and $\mathbf{o}(a'),\mathbf{o}(b'),\mathbf{o}(m) \leq n-1$ we deduce that:
\[ 2m \cdot\mathbf{o}(D) + (m-1) \cdot\mathbf{o}(a') + m \cdot\mathbf{o}(b') +\mathbf{o}(D)+\mathbf{o}(m) < (6m-1)(n-1)+3n-2 \leq N'. \]
Moreover, we recall that $\mathbf{o}(a) = 0$ as $a$ is a unit.
Thus left-hand-side of equation (\ref{eq:Pm}) is non-zero as an element of $R_{N'}$.
Thus, by Equation (\ref{eq:Pm}), we deduce that 
\[ \mathbf{o}(D)+\mathbf{o}(m) = \mathbf{o}(a'+m) + \mathbf{o}(\Pi_m).\]

Since $\mathbf{o}(D) \leq 2n-2$ and $\mathbf{o}(m) \leq n-1$, we deduce that $\mathbf{o}(D)+\mathbf{o}(m) \leq 3n-3$.
However, $a'+m = 0 \mod \ell^{3n-2}$ which means that $\mathbf{o}(a'+m) \geq 3n-2$.
This yields our contradiction since:
\[ 3n-3 \geq \mathbf{o}(D)+\mathbf{o}(m) = \mathbf{o}(a'+m)+\mathbf{o}(\Pi_m) \geq 3n-2. \]
We therefore deduce that $D = 0 \mod \ell^M$, as required.
\end{proof}

Using the discussion preceding Claim \ref{claim:D-mod-ellM-is-zero}, this completes the proof of Theorem \ref{thm:main-c-pairs-thm} for $n \in \Nb$.

\subsection{Case $n = \infty$} 
Let us now show how to deduce the theorem for $n = \infty$; this will follow from a limit argument using the $n \in \Nb$ case proved above.

Let $f,g \in \Gc_K^a(\infty)$ be a given C-pair.
Equivalently, $f_n,g_n$ form a C-pair for all $n \in \Nb$.
Consider the following subgroups of $K^\times$:
\[  T := \ker f \cap \ker g, \ \ T_n := \ker f_n \cap \ker g_n. \]
Then $T_n \geq T_{n+1}$ and $T = \bigcap_n T_n$.
Let $\Psi$ denote the map $(f,g) : K^\times \rightarrow R_\infty \times R_\infty$ and let $\Psi_n$ denote the map $(f_n,g_n) : K^\times \rightarrow R_n \times R_n$.
Denote by $H$ the subgroup generated by $T$ and all $x \notin T$ such that $\Psi(1+x) \neq \Psi(1),\Psi(x)$.
Arguing as in the previous case using Theorem \ref{thm:rigid-summary}, it suffices to prove that $\Hom(K^\times/T,\Z_\ell) / \Hom(K^\times/H,\Z_\ell)$ is cyclic.
In order to show this, it suffices to prove that $(T_n \cdot H) / T_n$ is cyclic for all $n \in \Nb$.

For each $n \in \Nb$ denote by $H_n$ the subgroup of $K^\times$ which is generated by $T_n$ and all $x \notin T_n$ such that $\Psi_n(1+x) \neq \Psi_n(1),\Psi_n(x)$.
If $\Psi_n(x) \neq 0$ and $\Psi_n(1+x) \neq \Psi_n(1),\Psi_n(x)$ then also $\Psi_{N}(x) \neq 0$ and $\Psi_{N}(1+x) \neq \Psi_{N}(1),\Psi_{N}(x)$ for all $N \geq n$.
Thus, $H_n \leq T_n \cdot H_{N}$ and thus $H_n/T_n \leq (T_n \cdot H_{N}) / T_n$.
Therefore $(T_n \cdot H) / T_n = \bigcup_{N \geq n} (T_n \cdot H_N) / T_n$ is an inductive union.
By the proof of the $n \in \Nb$ case (Claim \ref{claim:key-claim-H-mod-T-cyclic} in particular), the quotient $H_N/T_N$ is cyclic for all $N$.
Thus $(T_n\cdot  H_N)/T_n$ is cyclic for all $N \geq n$.
Therefore, $(T_n \cdot H) / T_n$ is cyclic, as required.

%\bibliographystyle{alpha}
%\bibliographystyle{amsalpha}
%\bibliography{../../refs} %make sure reference location is correct!!!

% \bib, bibdiv, biblist are defined by the amsrefs package.
\begin{bibdiv}
\begin{biblist}

\bib{Arason1987}{article}{
      author={Arason, J.},
      author={Elman, R.},
      author={Jacob, B.},
       title={{Rigid elements, valuations, and realization of {W}itt rings}},
        date={1987},
        ISSN={0021-8693},
     journal={J. Algebra},
      volume={110},
      number={2},
       pages={449\ndash 467},
         url={http://dx.doi.org/10.1016/0021-8693(87)90057-3},
      review={\MR{910395 (89a:11041)}},
}

\bib{Bogomolov1991}{incollection}{
      author={Bogomolov, F.~A.},
       title={{On two conjectures in birational algebraic geometry}},
        date={1991},
   booktitle={{Algebraic geometry and analytic geometry ({T}okyo, 1990)}},
      series={{ICM-90 Satell. Conf. Proc.}},
   publisher={Springer},
     address={Tokyo},
       pages={26\ndash 52},
      review={\MR{1260938 (94k:14013)}},
}

\bib{Bogomolov2007}{incollection}{
      author={Bogomolov, F.~A.},
      author={Tschinkel, Y.},
       title={{Commuting elements of {G}alois groups of function fields}},
        date={2002},
   booktitle={{Motives, polylogarithms and {H}odge theory, {P}art {I}
  ({I}rvine, {CA}, 1998)}},
      series={{Int. Press Lect. Ser.}},
      volume={3},
   publisher={Int. Press, Somerville, MA},
       pages={75\ndash 120},
}

\bib{Bogomolov2008a}{article}{
      author={Bogomolov, F.~A.},
      author={Tschinkel, Y.},
       title={{Reconstruction of function fields}},
        date={2008},
        ISSN={1016-443X},
     journal={Geom. Funct. Anal.},
      volume={18},
      number={2},
       pages={400\ndash 462},
         url={http://dx.doi.org/10.1007/s00039-008-0665-8},
      review={\MR{2421544 (2009g:11155)}},
}

\bib{Efrat2006}{article}{
      author={Efrat, I.},
       title={{Quotients of {M}ilnor {$K$}-rings, orderings, and valuations}},
        date={2006},
        ISSN={0030-8730},
     journal={Pacific J. Math.},
      volume={226},
      number={2},
       pages={259\ndash 275},
         url={http://dx.doi.org/10.2140/pjm.2006.226.259},
      review={\MR{2247864 (2007h:19004)}},
}

\bib{Efrat2006b}{book}{
      author={Efrat, I.},
       title={{Valuations, orderings, and {M}ilnor {$K$}-theory}},
      series={{Mathematical Surveys and Monographs}},
   publisher={American Mathematical Society},
     address={Providence, RI},
        date={2006},
      volume={124},
        ISBN={0-8218-4041-X},
      review={\MR{2215492 (2007g:12006)}},
}

\bib{Efrat2007}{article}{
      author={Efrat, I.},
       title={{Compatible valuations and generalized {M}ilnor {$K$}-theory}},
        date={2007},
        ISSN={0002-9947},
     journal={Trans. Amer. Math. Soc.},
      volume={359},
      number={10},
       pages={4695\ndash 4709 (electronic)},
         url={http://dx.doi.org/10.1090/S0002-9947-07-04132-3},
      review={\MR{2320647 (2008g:19004)}},
}

\bib{Efrat1995}{article}{
      author={Efrat, I.},
       title={{Abelian subgroups of pro-{$2$} {G}alois groups}},
        date={1995},
        ISSN={0002-9939},
     journal={Proc. Amer. Math. Soc.},
      volume={123},
      number={4},
       pages={1031\ndash 1035},
         url={http://dx.doi.org/10.2307/2160698},
      review={\MR{1242081 (95e:12007)}},
}

\bib{Efrat1998}{article}{
      author={Efrat, I.},
       title={{Small maximal pro-{$p$} {G}alois groups}},
        date={1998},
        ISSN={0025-2611},
     journal={Manuscripta Math.},
      volume={95},
      number={2},
       pages={237\ndash 249},
         url={http://dx.doi.org/10.1007/s002290050026},
      review={\MR{1603329 (99e:12005)}},
}

\bib{Efrat1999}{article}{
      author={Efrat, I.},
       title={{Construction of valuations from {$K$}-theory}},
        date={1999},
        ISSN={1073-2780},
     journal={Math. Res. Lett.},
      volume={6},
      number={3-4},
       pages={335\ndash 343},
      review={\MR{1713134 (2001i:12011)}},
}

\bib{Engler1998}{article}{
      author={Engler, A.~J.},
      author={Koenigsmann, J.},
       title={{Abelian subgroups of pro-{$p$} {G}alois groups}},
        date={1998},
        ISSN={0002-9947},
     journal={Trans. Amer. Math. Soc.},
      volume={350},
      number={6},
       pages={2473\ndash 2485},
         url={http://dx.doi.org/10.1090/S0002-9947-98-02063-7},
      review={\MR{1451599 (98h:12004)}},
}

\bib{Efrat2011b}{article}{
      author={Efrat, I.},
      author={Min{\'a}\v{c}, J.},
       title={{On the descending central sequence of aboslute Galois groups}},
        date={2011},
     journal={American Journal of Mathematics},
      volume={133},
      number={6},
       pages={1503\ndash 1532},
      eprint={http://www.arxiv.org/abs/0809.2166},
}

\bib{Efrat2011a}{article}{
      author={Efrat, I.},
      author={Min{\'a}\v{c}, J.},
       title={Small {G}alois groups that encode valuations},
        date={2012},
        ISSN={0065-1036},
     journal={Acta Arith.},
      volume={156},
      number={1},
       pages={7\ndash 17},
         url={http://dx.doi.org/10.4064/aa156-1-2},
      review={\MR{2997568}},
}

\bib{Engler1994}{article}{
      author={Engler, A.~J.},
      author={Nogueira, J.~B.},
       title={{Maximal abelian normal subgroups of {G}alois pro-{$2$}-groups}},
        date={1994},
        ISSN={0021-8693},
     journal={J. Algebra},
      volume={166},
      number={3},
       pages={481\ndash 505},
         url={http://dx.doi.org/10.1006/jabr.1994.1164},
      review={\MR{1280589 (95h:12004)}},
}

\bib{Koenigsmann2001}{article}{
      author={Koenigsmann, J.},
       title={{Solvable absolute {G}alois groups are metabelian}},
        date={2001},
        ISSN={0020-9910},
     journal={Invent. Math.},
      volume={144},
      number={1},
       pages={1\ndash 22},
         url={http://dx.doi.org/10.1007/s002220000117},
      review={\MR{1821143 (2002a:12006)}},
}

\bib{Koenigsmann2003}{incollection}{
      author={Koenigsmann, J.},
       title={{Encoding valuations in absolute {G}alois groups}},
        date={2003},
   booktitle={{Valuation theory and its applications, {V}ol. {II} ({S}askatoon,
  {SK}, 1999)}},
      series={{Fields Inst. Commun.}},
      volume={33},
   publisher={Amer. Math. Soc.},
     address={Providence, RI},
       pages={107\ndash 132},
      review={\MR{2018554 (2004m:12012)}},
}

\bib{Koenigsmann1995}{article}{
      author={Koenigsmann, J.},
       title={{From {$p$}-rigid elements to valuations (with a
  {G}alois-characterization of {$p$}-adic fields)}},
        date={1995},
        ISSN={0075-4102},
     journal={J. Reine Angew. Math.},
      volume={465},
       pages={165\ndash 182},
         url={http://dx.doi.org/10.1515/crll.1995.465.165},
        note={With an appendix by Florian Pop},
      review={\MR{1344135 (96m:12003)}},
}

\bib{Koenigsmann1998}{article}{
      author={Koenigsmann, J.},
       title={{Pro-{$p$} {G}alois groups of rank {$\leq 4$}}},
        date={1998},
        ISSN={0025-2611},
     journal={Manuscripta Math.},
      volume={95},
      number={2},
       pages={251\ndash 271},
         url={http://dx.doi.org/10.1007/s002290050027},
      review={\MR{1603333 (99e:12004)}},
}

\bib{Mah'e2004}{article}{
      author={Mah{\'e}, L.},
      author={Min{\'a}\v{c}, J.},
      author={Smith, T.~L.},
       title={{Additive structure of multiplicative subgroups of fields and
  {G}alois theory}},
        date={2004},
        ISSN={1431-0635},
     journal={Doc. Math.},
      volume={9},
       pages={301\ndash 355},
      review={\MR{2117418 (2006b:11040)}},
}

\bib{Merkurjev1982}{article}{
      author={Merkurjev, A.~S.},
      author={Suslin, A.~A.},
       title={{{$K$}-cohomology of {S}everi-{B}rauer varieties and the norm
  residue homomorphism}},
        date={1982},
        ISSN={0373-2436},
     journal={Izv. Akad. Nauk SSSR Ser. Mat.},
      volume={46},
      number={5},
       pages={1011\ndash 1046, 1135\ndash 1136},
}

\bib{Neukirch1969a}{article}{
      author={Neukirch, J.},
       title={{Kennzeichnung der endlich-algebraischen {Z}ahlk{\"o}rper durch
  die {G}aloisgruppe der maximal aufl{\"o}sbaren {E}rweiterungen}},
        date={1969},
        ISSN={0075-4102},
     journal={J. Reine Angew. Math.},
      volume={238},
       pages={135\ndash 147},
      review={\MR{0258804 (41 \#3450)}},
}

\bib{Neukirch1969}{article}{
      author={Neukirch, J.},
       title={{Kennzeichnung der {$p$}-adischen und der endlichen algebraischen
  {Z}ahlk{\"o}rper}},
        date={1969},
        ISSN={0020-9910},
     journal={Invent. Math.},
      volume={6},
       pages={296\ndash 314},
      review={\MR{0244211 (39 \#5528)}},
}

\bib{Neukirch2008}{book}{
      author={Neukirch, J.},
      author={Schmidt, A.},
      author={Wingberg, K.},
       title={{Cohomology of number fields}},
     edition={Second},
      series={{Grundlehren der Mathematischen Wissenschaften}},
   publisher={Springer-Verlag},
     address={Berlin},
        date={2008},
      volume={323},
        ISBN={978-3-540-37888-4},
      review={\MR{2392026 (2008m:11223)}},
}

\bib{Pop2000}{incollection}{
      author={Pop, F.},
       title={{Alterations and birational anabelian geometry}},
        date={2000},
   booktitle={{Resolution of singularities ({O}bergurgl, 1997)}},
      series={{Progr. Math.}},
      volume={181},
   publisher={Birkh{\"a}user},
     address={Basel},
       pages={519\ndash 532},
      review={\MR{1748633 (2001g:11171)}},
}

\bib{Pop2006a}{misc}{
      author={Pop, F.},
       title={{Almost commuting elements in small {G}alois groups}},
        date={2006},
        note={In Oberwolfach Report 25/2006, Mathematisches Forschungsinstitut
  Oberwolfach, Pro-p Extensions of Global Fields and pro-p Groups, May 21-27
  2006, pg. 1495-1496},
}

\bib{Pop2006}{incollection}{
      author={Pop, F.},
       title={{Galois theory of {Z}ariski prime divisors}},
        date={2006},
   booktitle={{Groupes de {G}alois arithm{\'e}tiques et diff{\'e}rentiels}},
      series={{S{\'e}min. Congr.}},
      volume={13},
   publisher={Soc. Math. France},
     address={Paris},
       pages={293\ndash 312},
      review={\MR{2316355 (2008d:12006)}},
}

\bib{Pop2010b}{article}{
      author={Pop, F.},
       title={{On the birational {$p$}-adic section conjecture}},
        date={2010},
        ISSN={0010-437X},
     journal={Compos. Math.},
      volume={146},
      number={3},
       pages={621\ndash 637},
         url={http://dx.doi.org/10.1112/S0010437X09004436},
      review={\MR{2644930 (2011d:14045)}},
}

\bib{Pop2010}{article}{
      author={Pop, F.},
       title={{Pro-{$\ell$} abelian-by-central {G}alois theory of prime
  divisors}},
        date={2010},
        ISSN={0021-2172},
     journal={Israel J. Math.},
      volume={180},
       pages={43\ndash 68},
         url={http://dx.doi.org/10.1007/s11856-010-0093-y},
      review={\MR{2735055 (2012a:12010)}},
}

\bib{Pop2011a}{incollection}{
      author={Pop, F.},
       title={{$\mathbb{Z}/\ell$ abelian-by-central {G}alois theory of prime
  divisors}},
        date={2011},
   booktitle={The arithmetic of fundamental groups: Pia 2010},
   publisher={Springer-Verlag},
       pages={225\ndash 244},
}

\bib{Pop2011}{article}{
      author={Pop, F.},
       title={{On the birational anabelian program initiated by {B}ogomolov
  {I}}},
        date={2012},
        ISSN={0020-9910},
     journal={Invent. Math.},
      volume={187},
      number={3},
       pages={511\ndash 533},
         url={http://dx.doi.org/10.1007/s00222-011-0331-x},
      review={\MR{2891876}},
}

\bib{Pop1994}{article}{
      author={Pop, F.},
       title={{On {G}rothendieck's conjecture of birational anabelian
  geometry}},
        date={1994},
        ISSN={0003-486X},
     journal={Ann. of Math. (2)},
      volume={139},
      number={1},
       pages={145\ndash 182},
         url={http://dx.doi.org/10.2307/2946630},
      review={\MR{1259367 (94m:12007)}},
}

\bib{Uchida1976}{article}{
      author={Uchida, K.},
       title={{Isomorphisms of {G}alois groups}},
        date={1976},
        ISSN={0025-5645},
     journal={J. Math. Soc. Japan},
      volume={28},
      number={4},
       pages={617\ndash 620},
      review={\MR{0432593 (55 \#5580)}},
}

\bib{Ware1981}{article}{
      author={Ware, R.},
       title={{Valuation rings and rigid elements in fields}},
        date={1981},
        ISSN={0008-414X},
     journal={Canad. J. Math.},
      volume={33},
      number={6},
       pages={1338\ndash 1355},
         url={http://dx.doi.org/10.4153/CJM-1981-103-0},
      review={\MR{645230 (83i:10028)}},
}

\end{biblist}
\end{bibdiv}

\end{document}